\newcommand{\as}{a,s}
\newcommand{\awos}{ a}
\newcommand{\R}{\mathbb{R}}
\newcommand{\NN}{\mathcal{N}}
\newcommand{\sgn}{\text{\rm sgn}}
\newcommand{\Eps}{\mathcal{E}}
\newcommand{\sing}{\text{sing}}
\newcommand{\ve}{\varepsilon}
\newcommand{\f}{\varphi}
\newcommand{\T}{\mathcal{T}}
\renewcommand{\L}{\mathcal{L}}
\newcommand{\RCD}{\mathsf{RCD}}
\newcommand{\CD}{\mathsf{CD}}
\newcommand{\Geo}{{\rm Geo}}
\newcommand{\MCP}{\mathsf{MCP}}
\newcommand{\OptGeo}{{\rm OptGeo}}
\newcommand{\abs}[1]{\left\vert#1\right\vert}
\newcommand{\set}[1]{\left\{#1\right\}}
\newcommand{\brac}[1]{\left(#1\right)}
\newcommand{\Real}{\mathbb{R}}
\newcommand{\eps}{\varepsilon}
\renewcommand{\L}{\mathcal{L}}
\newcommand{\m}{\mathfrak{m}}
\newcommand{\q}{\mathfrak{q}}
\renewcommand{\P}{\mathbb P}
\newcommand{\ellc}{{\bar{\ell}}}
\newcommand{\varphic}{{\bar{\varphi}}}
\newcommand{\Phic}{{\bar{\Phi}}}
\renewcommand{\P}{\mathcal{P}}
\newcommand{\len}{\ell}
\newcommand{\spt}{{\rm spt}}
\newcommand{\mm}{\mathfrak m}
\newcommand{\qq}{\mathfrak q}
\newcommand{\ee}{{\rm e}}
\newcommand{\QQ}{\mathfrak Q}
\newcommand{\sfd}{\mathsf d}
\newcommand{\Opt}{\mathrm{Opt}}
\theoremstyle{plain}
\newtheorem{lemma}{Lemma}[section]
\newtheorem{theorem}[lemma]{Theorem}
\newtheorem{proposition}[lemma]{Proposition}
\newtheorem{corollary}[lemma]{Corollary}
\newtheorem*{theorem*}{Theorem}
\newtheorem*{maintheorem*}{Main Theorem}
\theoremstyle{definition}
\newtheorem{definition}[lemma]{Definition}
\newtheorem*{definition*}{Definition}
\newtheorem{remark}[lemma]{Remark}
\newtheorem*{example*}{Example}
\newtheorem{example}[lemma]{Example}
\numberwithin{equation}{section}
\newcounter{mycounter}
\begin{document}
\title{Independence of synthetic Curvature Dimension conditions on transport distance exponent\thanks{RM's research is supported in part by Natural Sciences and Engineering Research Council of Canada
Discovery Grants RGPIN--2015--04383 and 2020--04162.
}
}
\author{Afiny Akdemir\thanks{Department of Mathematics, 
University of Toronto, 
Toronto Ontario, 
Canada M5S 2E4
\tt afiny@math.toronto.edu, andrew.colinet@mail.utoronto.ca, mccann@math.toronto.edu}, 
Fabio Cavalletti\thanks{Mathematics Area, SISSA, Trieste (Italy) {\tt cavallet@sissa.it, fsantarc@sissa.it}}, 
Andrew Colinet$^\dagger$, 
Robert McCann$^\dagger$, 
Flavia Santarcangelo$^\ddagger$} 

\maketitle

\begin{abstract}
The celebrated Lott-Sturm-Villani theory of metric measure spaces furnishes
synthetic notions of  a Ricci curvature lower bound $ K$ joint with an upper bound $ N$ on the dimension.  
Their condition, called  the Curvature-Dimension condition and denoted by $\CD(K,N)$, 
is formulated in terms of a modified displacement convexity of an entropy functional 
along $W_{2}$-Wasserstein geodesics.
We  show that the choice of the squared-distance function as transport cost does not 
influence the theory.   By denoting  with $\CD_{p}(K,N)$ the analogous condition but with the cost as the $p^{th}$ power of the distance, we show that  $\CD_{p}(K,N)$ are all equivalent conditions for any $p>1$  --- at least in spaces whose
geodesics do not branch.

Following \cite{CMi},  we show that the trait d'union between all the seemingly unrelated $\CD_{p}(K,N)$ conditions  is the needle decomposition or  localization technique associated to the
$L^{1}$-optimal transport problem. We also establish  the local-to-global property of $\CD_{p}(K,N)$ spaces.
\end{abstract}

\tableofcontents

\section{Introduction}
The theory of optimal transport has been successfully used to  study
geometric and analytic properties of possibly singular spaces.  In their seminal works, Lott--Villani \cite{lottvillani:metric} and Sturm \cite{sturm:I, sturm:II} have established
for metric measure spaces $(X,\sfd,\m)$  consisting of a complete separable metric space $(X,\sfd)$
endowed with 
a Radon reference measure $\mm$,   a synthetic condition which encodes, in a generalized sense, a 
 combined lower bound  $K\in \R$ on the Ricci curvature  and upper bound  $N\in [1,\infty)$ on the dimension. Their condition is called  the Curvature-Dimension condition $\CD(K,N)$;
a general account 
on its history, huge developments and impacts  
goes far beyond the scope of this introduction. 

For our purposes, the cornerstone of the  Curvature-Dimension condition is the equivalence between 
a lower bound on the Ricci curvature  combined with an upper bound on the dimension
 and a certain convexity property of an entropy functional along $W_2$-Wasserstein geodesics in the setting of weighted manifolds
 \cite{CorderoMcCannSchmuckenschlager01} \cite{SturmvonRenesse05}.
The strength of the optimal transport approach permitted Lott--Villani and Sturm 
to obtain a stable notion  of convergence, with 
stability intended with respect to a suitable distance over the class of metric measure spaces.
  We refer to Section \ref{Ss:CD} for precise definitions.

 As the $\CD(K,N)$ condition for smooth manifolds is 
equivalent to a joint lower bound on the Ricci curvature and 
an upper bound on the dimension, it is natural to consider whether the squared-distance  cost function plays a special role in the theory.  Among  the possible transport cost functions, 
the power distance costs, namely $\sfd^{p}$ with $p>1$, are related to the geometry of the underlying space.
The  power distance costs have already appeared in the literature  in  the definition of the $p$-Wasserstein distance
$W_{p}$ that turns the space of probability measures with finite $p^{th}$-moments into a 
complete and separable metric space $(\mathcal{P}_{p}(X),W_{p})$.   Another natural setting for such spaces can also be seen in the case of  doubly-degenerate diffusion dyanamics \cite{Otto90s} \cite{Agueh03}. 
Accordingly, the modified displacement convexity of the entropy functional can be considered 
with respect to $W_{p}$-geodesics  -- and this in turn  furnishes  a straightforward and legitimate extension of the definition of $\CD(K,N)$ condition  proposed by Kell \cite{KellWp} and  denoted by $\CD_{p}(K,N)$.
The notation $\CD(K,N)$ will be reserved for the classical case $p = 2$. While  Kell established the equivalence of all $\CD_{p}(K,N)$ in the smooth setting  via the use of
Ricci curvature,  no previous results are known in the context of nonsmooth metric measure spaces.

 Our approach to obtaining this equivalence in the nonsmooth setting utilizes  two closely related $L^1$ optimal transportation curvature dimension conditions introduced by  Cavalletti and Milman \cite{CMi}, which we denote by $\CD^{1}(K,N)$ and $\CD^1_{Lip}(K,N)$.
The $\CD^{1}(K,N)$ condition has been successfully used in \cite{CMi} to establish the local-to-global property of $\CD(K,N)$ spaces.  Cavalletti and Milman's formulation is partially based on the  needle or localization paradigm 
introduced by Klartag \cite{klartag} in the smooth setting,  which was later generalized to the metric setting 
by Cavalletti and Mondino \cite{CM17a}.

Cavalletti and Milman established the local-to-global property by demonstrating the equivalence of the local version of $\CD(K,N)$ condition (namely $\CD_{loc}(K,N)$, Definition \ref{def:CDKNloc})  to the $\CD^1 (K,N)$ condition.  
In particular, the trait d'union between all  of the  curvature-dimension conditions 
 is in the behaviour of the  gradient flow lines of signed-distance functions, also known as transport rays.

 In this paper we will use this point of view to link two different curvature dimension conditions:  we will demonstrate the equivalence of $\CD_{p}(K,N)$ and $\CD_ {q}(K,N)$ for a 
general  metric measure space $(X,\sfd,\mm)$, for $p,q>1$ and $K,N\in\mathbb{R}$ with $N>1$,   under the requirement that  $(X,\sfd,\mm)$  is either non-branching or at least satisfies
appropriate versions of the
essential non-branching  condition of Definition \ref{D:PENB}.
More specifically, we obtain the following results:
\begin{theorem}[Equivalence of $\CD_{p}$ on $p>1$]\label{thm:main1}
Let $(X,\sfd,\m)$ be such that $\mm(X) = 1$. 
Assume  $(X,\sfd,\m)$  is  $p$-essentially non-branching and verifies $\CD_p(K,N)$ for some $p >1$. 
If $(X,\sfd,\m)$ is also $q$-essentially non-branching  for some $q>1$, then
it verifies $\CD_q(K,N)$.                                                                                                   
\end{theorem}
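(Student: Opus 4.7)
The core strategy, as signalled in the introduction, is to pass through the $L^{1}$ localization paradigm: rather than trying to transform a $W_p$-geodesic into a $W_q$-geodesic directly (which seems hopeless, as the two optimal couplings have very different structures), we use the needle decomposition as an intermediate invariant that is agnostic to the exponent. Concretely, the plan is to establish the two implications
\[
\CD_p(K,N) \;+\; p\text{-ENB} \;\Longrightarrow\; \CD^1_{\Lip}(K,N)
\qquad\text{and}\qquad
\CD^1_{\Lip}(K,N) \;+\; q\text{-ENB} \;\Longrightarrow\; \CD_q(K,N),
\]
and then chain them. Since $\CD^1_{\Lip}(K,N)$ is a condition purely about the $1$-dimensional densities obtained by disintegrating $\m$ along transport rays of arbitrary $1$-Lipschitz functions, it makes no reference to any $p$, and once the two halves above are in hand the theorem is immediate.

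For the first implication, I would fix a $1$-Lipschitz function $u\colon X \to \R$, form the associated transport set, and apply the measure-theoretic disintegration along its rays (as in Cavalletti--Mondino \cite{CM17a}). The task is to show that $\m$-almost every conditional measure $\m_\alpha$ on a ray has a density satisfying the $1$D $\CD(K,N)$ convexity inequality. To do this one constructs, for a suitable pair of marginals supported on the transport set and aligned along rays, a $W_p$-optimal dynamical plan and appeals to the $\CD_p(K,N)$ hypothesis. The $p$-essential non-branching assumption guarantees that this plan is induced by a single map and thus respects the ray decomposition, so that the global $\CD_p(K,N)$ displacement convexity descends to a pointwise inequality on each ray. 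Then the standard argument shows this pointwise inequality along the ray is equivalent to the $1$D $\CD(K,N)$ condition for the density of $\m_\alpha$.

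For the second implication, I take arbitrary $\mu_0,\mu_1 \in \P_q(X)$ with bounded support and finite entropy and seek a $W_q$-geodesic along which the $\CD(K,N)$ displacement convexity holds. Here the key device is a Kantorovich potential for the $W_1$ problem between $\mu_0$ and $\mu_1$: by $\CD^1_{\Lip}(K,N)$ the disintegration along its transport rays yields $1$D densities satisfying $\CD(K,N)$. On each ray I can solve the $W_q$ transport explicitly (it is $1$-dimensional and monotone, so $p$ disappears), obtain the $\CD(K,N)$ convexity of the conditional entropy there from the $1$D condition, and then integrate over the quotient. The role of $q$-essential non-branching is precisely to ensure that the global $W_q$-optimal dynamical plan one assembles from these ray-by-ray plans is still concentrated on non-branching geodesics, which in turn is what lets the convexity survive the integration and yields the $\CD_q(K,N)$ inequality on a genuine $W_q$-geodesic.

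The principal obstacle is the first implication, not the second: extracting the $1$D $\CD(K,N)$ condition along $u$-rays from a hypothesis phrased in terms of $W_p$-geodesics requires that one can approximate transports ``along rays'' by $W_p$-optimal dynamical plans and then pass to the limit, and this is where $p$-essential non-branching is really used (to ensure uniqueness and map-like structure of the approximants, and to avoid ray-crossing in the limit). A secondary subtlety, familiar from \cite{CMi}, is that the transport set of a $1$-Lipschitz function may fail to exhaust $X$; one must handle the complementary ``non-transport'' set, either by observing that it carries no mass for well-chosen potentials or by treating it as a trivial one-point needle. Once these measure-theoretic points are in place the argument is structurally parallel to the Cavalletti--Milman proof for $p=2$, with the $p^{\text{th}}$-power cost entering only through the existence/uniqueness of the auxiliary plans used to probe individual rays.
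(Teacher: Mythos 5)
Your high-level plan --- factor through $\CD^1_{\Lip}(K,N)$, relying on $p$-essential non-branching for the first leg and $q$-essential non-branching for the second --- is exactly the architecture of the paper's proof (Sections \ref{S:firstimplication} and \ref{S:anyp}). The first leg, as you describe it, is also in the right spirit: one probes a $1$-Lipschitz function $u$ by $W_p$-transports whose source lies on a single level set of $u$ and whose pairs land in $\Gamma_u$; the device that such a set is automatically $\sfd^p$-cyclically monotone (Lemma \ref{lem:pmonotone}) is what lets the $\CD_p(K,N)$ inequality descend to the ray densities.

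The second leg, however, has a genuine gap. You propose to take the $W_1$-Kantorovich potential between $\mu_0$ and $\mu_1$, disintegrate along its rays, and ``solve the $W_q$ transport on each ray.'' The ray-by-ray monotone rearrangements you assemble this way form a coupling that is $W_1$-optimal (any cyclically monotone-for-$\sfd$ coupling is), but it is \emph{not} $W_q$-optimal in general: for $q>1$ the $W_q$-optimal map is free to send mass across distinct $W_1$-rays when this lowers the $q^{\text{th}}$-power cost, and the equivalence ``monotone $\Leftrightarrow$ optimal for every convex cost'' only holds on a single one-dimensional fiber, not for a product of fibers carrying a transverse measure. So the inequality you would obtain after integrating holds along a curve in $\mathcal P_q(X)$ that is not a $W_q$-geodesic, and $\CD_q(K,N)$ asks for displacement convexity along an actual $W_q$-geodesic. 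The paper avoids this by never trying to align a $W_1$ and a $W_q$ transport problem directly. Instead it starts from a genuine $q$-Kantorovich potential $\varphi$, evolves it by the exponent-$q$ Hopf--Lax semigroup to obtain $\varphi_s$, and for each $s\in(0,1)$ and level $a$ forms the signed distance $d_{a,s}$ to $\{\varphi_s=a\}$; the Kantorovich geodesics $\gamma$ with $\varphi_s(\gamma_s)=a$ then lie inside $\T_{d_{a,s}}$ (Lemma \ref{lem:contenuto}), and one gets a whole two-parameter family of $L^1$-needle decompositions compatible with \emph{slices} of the $q$-transport. To recover information about $\rho_t/\rho_s$ along the true $W_q$-geodesic one must then compare two disintegrations --- one foliated by time along $G_{a,s}$, one by level sets of the propagated potential $\Phi_s^t$ --- and establish a change-of-variables formula (Theorem \ref{teo:changeof}) whose Jacobian factor $\partial_t\Phi_s^t$ combines with the $\CD(K,N)$ needle density to give the LY-decomposition of Theorem \ref{T:decomposition}. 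The concavity of the $L$-factor rests on the third-order differential inequalities for Hopf--Lax potentials (Theorem \ref{teo:zz}), which are precisely the new analytic input the paper must establish for general $p$. None of this machinery appears in your outline. Your closing remark that ``the principal obstacle is the first implication, not the second'' has it backwards: the first leg is essentially the Cavalletti--Mondino localization with the cyclical monotonicity lemma adapted to $\sfd^p$, whereas the second leg is where the Hopf--Lax analysis, the double disintegration, and the LY-decomposition all live.
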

 Recently, Cavalletti, Gigli, and Santarcangelo  \cite{CGS20}  have characterized $\CD^{1}_{Lip}(K,N)$  in terms of a modified displacement convexity of an entropy functional along 
a certain family of $W_{1}$ Wasserstein geodesics.
Hence, Theorem \ref{thm:main1} completes the picture  by showing that for any $p \geq 1$, 
all  of the $\CD_{p}(K,N)$ conditions,  when expressed in terms of displacement convexity, are equivalent,
provided the space $X$ satisfies the appropriate essentially non-branching condition.

Since we employ the strategy used in \cite{CMi} to distance costs with powers other  than $p =2$, 
we also establish the local-to-global property for $\CD_{p}(K,N)$ spaces.

\begin{theorem}[Local-to-Global]\label{thm:main2}
Fix any $p>1$ and $K,N \in \R$ with $N>1$. 
Let $(X,\sfd,\mm)$ be a $p$-essentially non-branching metric measure space verifying $\CD_{p,loc}(K,N)$ 
 from Definition \ref{def:CDKNloc} and such that 
$(X,\sfd)$ is a length space with $\spt (\mm) = X$ and $\mm(X) = 1$. 
Then $(X,\sfd,\mm)$ verifies $\CD_{p}(K,N)$.
\end{theorem}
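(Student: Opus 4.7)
I would adapt the Cavalletti--Milman strategy for the quadratic case \cite{CMi}, replacing $\sfd^2$ by $\sfd^p$ throughout. The natural pivot is the $L^1$ curvature-dimension condition $\CD^1(K,N)$, defined via the one-dimensional density of $\mm$ along transport rays of arbitrary $1$-Lipschitz functions (through the needle decomposition of Cavalletti--Mondino \cite{CM17a}), and which is \emph{manifestly independent of $p$}. The proof would then split into two steps: (i) $\CD_{p,loc}(K,N)$ together with $p$-essential non-branching implies $\CD^1(K,N)$; and (ii) $\CD^1(K,N)$, under the standing hypotheses of the theorem, implies $\CD_p(K,N)$.

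\textbf{Key technical steps.} For step (i), I would fix a $1$-Lipschitz function $u$ on $X$, disintegrate $\mm = \int \mm_\alpha\, \q(d\alpha)$ along its transport rays, and verify that $\q$-a.e.\ $\mm_\alpha$ satisfies the required one-dimensional density inequality. To do so I would approximate the conditional measures by pairs of probability measures concentrated on a short subsegment of a single needle, apply $\CD_{p,loc}(K,N)$ in a small ball containing this segment, and pass to the limit. Here $p$-essential non-branching should be used to identify the $L^p$-optimal plan with the monotone map along the needle (since $r\mapsto r^p$ is strictly convex for $p>1$), so that the resulting one-dimensional inequality is independent of $p$ and coincides with the defining inequality of $\CD^1(K,N)$. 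For step (ii), given $\mu_0,\mu_1\in\P(X)$ with bounded supports and bounded $\mm$-densities, I would take an $L^1$-Kantorovich potential $u$ associated to $\mu_1-\mu_0$, disintegrate $\mu_0,\mu_1$ and the $W_p$-geodesic $(\nu_t)_{t\in[0,1]}$ joining them along the needles of $u$, and observe that on each needle the transport problem becomes one-dimensional. The desired convexity inequality would then follow by applying the one-dimensional density bound provided by $\CD^1(K,N)$ on each needle and integrating against the quotient measure $\q$.

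\textbf{Main obstacle.} I expect the principal difficulty to lie in step (i): $\CD_{p,loc}$ gives distortion information only along \emph{short} $W_p$-geodesics in small balls, whereas the one-dimensional estimate in $\CD^1(K,N)$ is a statement along entire (possibly long) needles. Each needle is itself a minimising geodesic and the one-dimensional distortion inequality is a local differential property; once verified on every sufficiently short subsegment it should propagate to the whole needle by standard ODE/comparison arguments. The technical crux will thus be to control overlaps between the small local balls and to ensure that essential non-branching prevents local optimal plans from leaking off the needle, so that the local $\CD_p$ inequality can honestly be rewritten as a one-dimensional density bound. This mirrors the role played by essential non-branching in \cite{CMi} for $p=2$, and the extension to $p>1$ should succeed because $\sfd^p$ retains the strict convexity and monotonicity features of $\sfd^2$ that make the Cavalletti--Milman argument go through.
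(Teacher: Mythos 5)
Your high-level plan is right as far as the pivot: the paper does pass through the $p$-independent condition $\CD^1(K,N)$, and your step~(i) is essentially the content of Theorem~\ref{teo:cm}, where the key technical device is Lemma~\ref{lem:pmonotone} (pairs whose lower endpoint lies on a fixed level set of a $1$-Lipschitz $u$ form a $\sfd^p$-cyclically monotone set for all $p>1$), which lets one read the one-dimensional $\CD(K,N)$ density inequality directly from short $W_p$-transports inside the local neighbourhoods supplied by $\CD_{p,loc}$.

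The genuine gap is in your step~(ii), and it is in fact the principal obstacle rather than the one you flag. You propose to take an $L^1$-Kantorovich potential $u$ for $\mu_1-\mu_0$, disintegrate along its needles, and ``observe that on each needle the transport problem becomes one-dimensional.'' This does not hold: the $W_p$-geodesic $(\mu_t)$ joining $\mu_0$ to $\mu_1$ does not evolve along the $L^1$-needles of $u$, because the $\sfd$- and $\sfd^p$-optimal couplings of $\mu_0,\mu_1$ are supported on different cyclically monotone sets and in general pair points lying on different $L^1$-rays. Applying the $\CD^1$ density bound to the $L^1$-needles would therefore give you an inequality for the $W_1$-interpolation, not the $W_p$-geodesic whose R\'enyi entropy \eqref{eq:CDKN-def} must be estimated. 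Overcoming exactly this misalignment is the heart of the Cavalletti--Milman globalization argument and of Section~\ref{S:anyp} of the paper. What actually happens there: one fixes a $p$-Kantorovich potential $\varphi$ for the $W_p$-problem, forms the Hopf--Lax interpolants $\varphi_s$, and works with the signed distance $d_{a,s}=d_{\varphi_s-a}$ to a \emph{level set} of $\varphi_s$. The $(\varphi,p)$-Kantorovich geodesics $G_{a,s}$ through the level $\{\varphi_s=a\}$ at time $s$ are shown (Lemma~\ref{lem:contenuto}) to lie in the transport set $\T_{d_{a,s}}$; one then compares \emph{two} disintegrations of $\mm$ over $\ee_{[0,1]}(G_{a,s})$ --- one indexed by time $t$ (from the $L^1$ partition, \eqref{E:disintL1}) and one indexed by the level $a$ (from the propagated potential $\Phi_s^t$, \eqref{E:disintlq}) --- and relates them by Fubini (Theorem~\ref{thm:comparison}), so that the Jacobian of the $W_p$-evolution factors as $\rho_t/\rho_s = (\partial_t\Phi_s^t/\ell^p)\cdot (1/h^{a,s})$ (Theorem~\ref{teo:changeof}). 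Feeding in the third-order Hopf--Lax estimates of Section~\ref{Ss:ThirdOrder} (Theorem~\ref{teo:zz}, Lemma~\ref{lem:concavita}) yields the ``LY'' split $1/\rho_t(\gamma_t)=L(t)Y(t)$ with $L$ concave and $Y$ a $\CD(K_0,N)$ density (Theorem~\ref{T:decomposition}), and H\"older then gives \eqref{eq:CDKN-def} geodesic by geodesic. None of this is captured by ``disintegrate along the needles of $u$ for $\mu_1-\mu_0$ and observe one-dimensionality,'' so as written step~(ii) of your proposal would not close.
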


 In Theorem \ref{thm:main1} and Theorem \ref{thm:main2} we are assuming 
$\mm(X) = 1$.  This assumption is also used  in \cite{CMi} but  we believe that it is most likely  a purely technical assumption. At the moment, the main obstacle to the case of a general Radon measure 
$\mm$ is the lack of a canonical disintegration theorem once a ``measurable'' 
partition is given. For some preliminary results in this direction we refer to 
\cite{CM18a}.

 Another motivation to studying distance costs with powers other than $p=2$ comes from the recent works of McCann \cite{McCann} and Mondino-Suhr \cite{MoSu},  where the authors analyze the relation between optimal transportation and timelike Ricci curvature bounds in the smooth Lorentzian  setting. Analogously to the Riemannian setting, timelike Ricci curvature lower bounds can be equivalently characterised in terms of convexity properties of the Bolzmann-Shannon entropy functional along $\ell_{p}$-geodesics of probability measures, where 
$\ell_{p}$ denotes the causal  transport  distance  with exponent $p \in (0,1]$.
This point of view has been pushed forward in \cite{CM20a} 
and \cite{McCann20p} where   the authors proposed a synthetic formulation of the  
Strong Energy condition, denoted by $\mathsf{TCD}_{p}(K,N)$, which is valid for non-smooth Lorentzian spaces.  Unlike the Riemmannian case, the Lorentzian setting does not have a  distinguished $p$;  and one of the next steps of the theory will be to address whether 
$TCD_p(K,N)$ depends on $p$ or not.

\subsection{Structure of the paper}\label{Ss:structure}
We start this note by recalling basic definitions of Optimal Transport  as
well as the notions of synthetic lower curvature bounds as introduced by 
Lott-Sturm-Villani in Section \ref{Ss:preprequisites}.

Section \ref{S:HopfLax} is devoted to a careful analysis of Kantorovich 
potentials and their evolution via the Hopf-Lax semigroup with a general exponent 
$p > 1$. In particular, we will obtain second and third order information                                    
on the time behaviour of $\varphi_{t}$ leading to the fundamental Theorem 
\ref{teo:zz} where a new third order inequality is obtained that plays
a crucial role in the rest of the paper.

In Section \ref{S:firstimplication} we show that a local version of 
$\CD_{p}(K,N)$ 
implies $\CD^{1}(K,N)$ in the version reported in Theorem \ref{teo:cm}.
Finally, in Section \ref{S:anyp} we obtain a complete equivalence 
of all $\CD_{p}(K,N)$ (Theorem \ref{thm:main1}) and 
each of them also enjoys the local-to-global property (Theorem \ref{thm:main2}).
\subsection{Brief Overview}\label{Ss:overview}
Throughout this  overview we will be working on a metric measure space $(X,\sfd,\mm)$ 
satisfying suitable hypotheses.
We will also be considering the transport of a measures $\mu_{0}$ to $\mu_{1}$ where both measures are absolutely continuous with
respect to $\mm$.
We denote the interpolant measure by $\mu_{t}$ and we write $\rho_{t}$ for their densities with respect to $\mm$.

In Section $\ref{S:HopfLax}$ the goal is to obtain information about the time derivative of the $t$-propagated $s$-Kantorovich potential $\Phi_{s}^{t}$ as
defined in Section $\ref{Ss:IntermediateKantorovichPotential}$.
This quantity is  crucial for the Jacobian factor that appears when comparing interpolant measures, $\mu_{t}$, between measures
$\mu_{0}$ and $\mu_{1}$ along a transport geodesic at two times.
To achieve this goal, Section $\ref{Ss:HopfLaxDef}$ - $\ref{Ss:FirstAndSecondRegularity}$ are dedicated to a detailed study of the regularity properties of
the Hopf-Lax transform.
In particular  we establish second order regularity  for the Hopf-Lax transform of a Kantorovich potential as well as a few identities related to the positional
information stored in a Kantorovich potential.
From here, Section $\ref{Ss:ThirdOrder}$ demonstrates, through a delicate argument, third order temporal regularity of time propagated
Kantorovich potentials along transport geodesics.

In Section $\ref{S:firstimplication}$, we remind the reader of the standard definitions of $L^{1}$-optimal transport.
 We show in Section $\ref{TransportSets}$ that the non-branched transport set partitions a space into transport rays.
 This partition allows us to decompose measures into a collection of  one-dimensional conditional measures concentrated on transport rays.
This disintegration also gives the advantage of passing curvature information from the total space down to the $L^1$-transport rays 
no matter from which $\CD_{p}(K,N)$ we are starting,  as we show in Section $\ref{Ss:curvature}$.
This is highlighted in Theorem $\ref{teo:cm}$ where 
we demonstrate that any 
$p$-essentially non-branching metric measure space verifying $\CD_{p}(K,N)$ 
also verifies $\CD^{1}_{Lip}(K,N)$.
This will be useful in Section $\ref{S:anyp}$ when we want to compare the restriction of measure to a Kantorovich geodesic at two different times.
To propagate a measure from one time to another we will use the time propagated Kantorovich potential  from Section $\ref{S:HopfLax}$.

In Section $\ref{S:anyp}$, the goal is to transfer the curvature properties along transport geodesics back to the total space through $q$-Wasserstein
geodesics and hence proving that an enhanced version of $\CD^{1}_{Lip}(K,N)$
implies $\CD_{q}(K,N)$.
This will be done by proving, in the terminology of \cite{CMi},
an ``LY''-decomposition for the densities $\rho_{t}$ of the $q$-Wasserstein geodesic $\mu_{t}$ (see Theorem $\ref{T:decomposition}$). 
More precisely, this ``LY''-decomposition provides a factorization of the ratio 
$\rho_{t}/\rho_{s}$ into two factors: the first one --- denoted by $L$  --- is a concave function taking 
into account only the one dimensional distortion due to the volume stretching in the direction 
of the geodesic. The second factor is denoted by $Y$ and contains the 
volume distortion in the transversal directions.

To achieve this goal we first use the Disintegration theorem from 
Section $\ref{S:firstimplication}$ to represent $\mm$ as an average of measures that live on
$L^{1}$-transport geodesics for the signed distance to any given level set of a $p$-Kantorovich potential.
In this disintegration of $\mm$ we follow the evolution of a specific collection of Kantorovich geodesics.
More specifically, we fix $a\in\mathbb{R}$ and $s\in(0,1)$, and consider $q$-Kantorovich geodesics $\gamma$ which satisfy $\varphi_{s}(\gamma_s)=a$, 
where $\varphi_{s}$ is the  evolved 
Kantorovich potential for the $q$-Wasserstein geodesic.
We denote such geodesics by $G_{a,s}$ and we disintegrate $\mm$  
over $\{\gamma_t \colon \gamma \in G_{a,s}\}_{t \in [0,1]}$ to obtain
\begin{equation*}
\mm\llcorner_{\ee_{[0,1]}(G_{a,s})}= \int_{[0,1]} \mm_t^{a,s}\, \L^1(dt),
\end{equation*}
Then we compare this to a disintegration of $\mm$  over $\{\varphi_s^{-1}(a)\}_{a \in \R}$ on the time $t$ evaluation of a 
 sufficiently large set of Kantorovich geodesics denoted by $G$.
Specifically, we obtain
\begin{equation*}
\mm \llcorner_{\ee_{t}(G)}= \int_{\varphi_s(\ee_s(G))} \mm^t_{a,s}\mathcal{L}^1(da)
\end{equation*}
This leads to two measures,  $\mm^{a,s}_t$  and $\mm^t_{a,s}$, that live on $\ee_{t}(G_{a,s})$.
In Section $\ref{Ss:comparison}$ we compare these two disintegrations to deduce that $\mm_t^{a,s}$ and $\mm^t_{a,s}$ differ only by
$\partial_{t}\Phi_{s}^{t}$.
This information is used in Section $\ref{Ss:change}$ to deduce the Jacobian factor between $\rho_{t}(\gamma_{t})$ and $\rho_{s}(\gamma_s)$.
This formula for the Jacobian factor allows us to conclude the desired ``LY'' decomposition. Once the ``LY'' decomposition is at  our disposal, we can invoke \cite{CMi} to conclude 
that the space satisfies $\CD_{q}(K,N)$.
\section{Prerequisites}\label{Ss:preprequisites}
\subsection{Geodesics and Measures}
\label{Ss:geomeas}

Let $(X,\sfd)$ be a complete and separable metric space.
A map $\gamma : [0,1] \to X$ satisfying 
$\sfd(\gamma_{t},\gamma_s) = |t - s| \sfd(\gamma_{0},\gamma_{1})$  for all $s,t \in [0,1]$
is called a geodesic connecting $\gamma_{0}$ to $\gamma_{1}$. 
We regard $\Geo(X)$ as a subset of all Lipschitz  curves $\text{Lip}([0,1], X)$ endowed with the uniform topology. 

We say the metric space $(X, \sfd)$ is a geodesic metric space if  for each $x, y \in
X$ there is a geodesic connecting $x$ and  $y$.

 For any $t\in [0,1]$, we denote the continuous evaluation map $\ee_{t} : \Geo(X) \to X$ 
as $\ee_{t}(\gamma) = \gamma_{t}$.
We will also adopt the following abbreviations: 
given $I \subset [0,1]$ and $G \subset \Geo(X)$
\begin{align*}
\ee_t(G) = G(t) & =  \set{ \gamma_t \; ; \; \gamma \in G }, 
\quad \ee_I(G) := \cup_{t \in I} \ee_t(G).
\end{align*}

The space of all Borel probability measures over $X$  is denoted by $\mathcal{P}(X)$, and  
$\mathcal{P}_{p}(X)$  is the subspace of $\mathcal{P}(X)$ consisting of measures with finite $p^{th}$-moment.   Given a non-negative Radon measure $\mm$, we call the space
$(X,\sfd,\mm)$ a metric measure space, and   $\mathcal{P}_{p}(X,\sfd,\mm)$ 
will denote the subspace of $\mathcal{P}_{p}(X)$ consisting of probability measures  that are
absolutely continuous with respect to $\mm$.   Unless otherwise noted, we assume $\mm(X)=1$ to permit disintegration of $\mm$ into conditional measures as needed.
For any $p\geq 1$,
the $L^{p}$-Wasserstein distance $W_{p}$ is defined  for any  $\mu_0,\mu_1 \in \mathcal{P}(X)$ as
\begin{equation}\label{eq:Wdef}
  W_p^p(\mu_0,\mu_1) := \inf_{ \pi\in \Pi(\mu_{0},\mu_{1})} \int_{X\times X} \sfd^p(x,y) \, \pi(dx , dy),
\end{equation}
where $\Pi(\mu_{0},\mu_{1})$ is the set of $\pi \in \mathcal{P}(X\times X)$ with 
$(P_{1})_{\sharp}\pi=\mu_0$ and $(P_{2})_{\sharp}\pi=\mu_1$.

It is known that the infimum in (\ref{eq:Wdef}) is always attained for any $\mu_0,\mu_1 \in \mathcal{P}(X)$; the set of optimal plans will be denoted by $\Opt_{p}(\mu_{0},\mu_{1})$.

As $(X,\sfd)$ is a complete and separable metric space, so is $(\mathcal{P}_p(X), W_p)$. 
 It is also known that $(X,\sfd)$ is geodesic if and only if $(\mathcal{P}_p(X), W_p)$ is geodesic.
Moreover, if $(X,\sfd)$ is a geodesic space, then the following two statements are equivalent (see for instance 
\cite[Theorem 3.10]{ambro:userguide}):
\begin{itemize}
\item $[0,1] \ni t \mapsto \mu_{t} \in\mathcal{P}_{p}(X)$ is a $W_{p}$-geodesic;
\item there exists $\nu \in \mathcal{P}(\Geo(X))$ such that $(\ee_{0},\ee_{1})_{\sharp}\nu \in \Opt_{p}(\mu_{0},\mu_{1})$ and $\mu_{t} = (\ee_{t})_{\sharp} \nu$.
\end{itemize}
The set of $\nu \in \mathcal{P}(\Geo(X))$ verifying the last point are 
called dynamical optimal plans and are denoted by $\OptGeo_{p}(\mu_{0},\mu_{1})$. Notice that 
if $\nu \in \OptGeo_{p}(\mu_{0},\mu_{1})$, then also $(\ee_{t},\ee_{s})_{\sharp}\nu$ is $p$-optimal between its marginals.

\begin{definition}[\emph{p}-Essentially Non-Branching]\label{D:PENB}
A subset $G \subset \Geo(X)$ of geodesics is called non-branching if for any $\gamma^{1}, \gamma^{2} \in G$ the following holds:
$$
\gamma_{0}^{1} = \gamma_{0}^{2}, \ \gamma_{\bar t}^{1} = \gamma_{\bar t}^{2}, \ \bar t\in (0,1)  
\quad 
\Longrightarrow 
\quad 
\gamma^{1}_{s} = \gamma^{2}_{s}, \quad \forall s \in [0,1].
$$
 The space $(X,\sfd)$ is called \emph{non-branching} if $\Geo(X)$ is non-branching; 
$(X,\sfd, \mm)$ is called \emph{p-essentially non-branching}  if for all $\mu_{0},\mu_{1} \in \mathcal{P}_{p}(X,\sfd,m)$, any $\nu \in \OptGeo_{p}(\mu_{0},\mu_{1})$ is concentrated on a Borel non-branching set $G\subset \Geo(X)$, in agreement with the terminology of \cite{RS2014} when $p=2$.
\end{definition}
We remark that examples of branched spaces which are essentially non-branching may be found in Ohta \cite{Ohta2}.
\subsection{Curvature-Dimension conditions}\label{Ss:CD}
We recall the definition of volume distortion coefficients.
\begin{definition}[$\sigma_{K,\NN}$-coefficients] \label{def:sigma}
Given $K \in \Real$ and $\NN \in (0,\infty]$, define:
\[
D_{K,\NN} := \begin{cases}  \frac{\pi}{\sqrt{K/\NN}}  & K > 0 \;,\; \NN < \infty,\\ +\infty & \text{otherwise}.\end{cases}
\]
In addition, given $t \in [0,1]$ and $0 \leq \theta < D_{K,\NN}$, define:
\[
\sigma^{(t)}_{K,\NN}(\theta) := \frac{\sin(t \theta \sqrt{\frac{K}{\NN}})}{\sin(\theta \sqrt{\frac{K}{\NN}})} = 
\begin{cases}   
\frac{\sin(t \theta \sqrt{\frac{K}{\NN}})}{\sin(\theta \sqrt{\frac{K}{\NN}})}  & K > 0 \;,\; \NN < \infty \\
t & K = 0 \text{ or } \NN = \infty \\
 \frac{\sinh(t \theta \sqrt{\frac{-K}{\NN}})}{\sinh(\theta \sqrt{\frac{-K}{\NN}})} & K < 0 \;,\; \NN < \infty 
\end{cases}                                                                                                                                                        
\]
and set $\sigma^{(t)}_{K,\NN}(0) = t$ and $\sigma^{(t)}_{K,\NN}(\theta) = +\infty$ for $\theta \geq D_{K,\NN}$. 
\end{definition}

\begin{definition}[$\tau_{K,N}$-coefficients]
Given $K \in \Real$ and $N=\mathcal{N}+1 \in (1,\infty]$, define:
\[
\tau_{K,N}^{(t)}(\theta) := t^{\frac{1}{N}} \sigma_{K,N-1}^{(t)}(\theta)^{1 - \frac{1}{N}} .
\]
When $N=1$, set $\tau^{(t)}_{K,1}(\theta) = t$ if $K \leq 0$ and $\tau^{(t)}_{K,1}(\theta) = +\infty$ if $K > 0$. 
\end{definition}

We will use the following definition introduced in \cite{sturm:II} for the case $p=2$. 
Recall that given $N \in [1,\infty)$, the $N$-R\'enyi relative-entropy functional 
$\mathcal{E}_N : \P(X) \rightarrow [0,\infty]$ is defined as:
$$
\Eps_N(\mu) := \int \rho^{1 - \frac{1}{N}} d\mm ,
$$
where $\mu = \rho \mm + \mu^{\sing}$ is the Lebesgue decomposition of $\mu$ with $\mu^\sing \perp \mm$. It is known \cite{sturm:II} that $\mathcal{E}_N$ is upper semi-continuous with respect to the weak topology on $\P(X)$. 

\begin{definition}[$\CD_{p}(K,N)$] \label{def:CDKN}
Given $K,N \in \R$ with $N\geq 1$, $(X,\sfd,\mm)$ is said to satisfy $\CD_{p}(K,N)$ if for all 
$\mu_0,\mu_1 \in \P_p(X,\sfd,\mm)$, there exists $\nu \in \OptGeo_{p}(\mu_0,\mu_1)$ so that for all $t\in[0,1]$, $\mu_t := (\ee_t)_{\sharp} \nu \ll \mm$, and for all $N' \geq N$:
\begin{equation} \label{eq:CDKN-def}
\Eps_{N'}(\mu_t) \geq \int_{X \times X} \brac{\tau^{(1-t)}_{K,N'}(\sfd(x_0,x_1)) \rho_0^{-1/N'}(x_0) + \tau^{(t)}_{K,N'}(\sfd(x_0,x_1)) \rho_1^{-1/N'}(x_1)} \pi(dx_0,dx_1) ,
\end{equation}
where $\pi = (\ee_0,\ee_1)_{\sharp}(\nu)$ and $\mu_i = \rho_i \mm$, $i=0,1$. 
\end{definition}

When we omit the subscript $p$ from $\CD_{p}(K,N)$, we tacitly mean                                                                                                         
the classical $p=2$, as introduced independently by Lott-Villani in \cite{lottvillani:metric} and Sturm in \cite{sturm:I,sturm:II}.

As a natural curvature notion, $\CD_{p}(K,N)$ has a local version that is denoted by 
$\CD_{p,loc}(K,N)$.

\begin{definition}[$\CD_{p,loc}(K,N)$] \label{def:CDKNloc}
Given $K,N \in \R$ with $N\geq 1$, $(X,\sfd,\mm)$ is said to satisfy $\CD_{p,loc}(K,N)$ 
if for any $o \in \spt(\mm)$, there exists a neighborhood $X_o \subset X$ of $o$, so that for all $\mu_0,\mu_1 \in \P_p(X,\sfd,\mm)$ supported in $X_o$, there exists $\nu \in \OptGeo_{p}(\mu_0,\mu_1)$ so that for all $t\in[0,1]$, $\mu_t := (\ee_t)_{\sharp} \nu \ll \mm$, and for all $N' \geq N$, (\ref{eq:CDKN-def}) holds. 
\end{definition}
\noindent
Note that $(\ee_t)_{\sharp} \nu$  from the definition of $\CD^{p}_{loc}(K,N)$  is not required to be supported in $X_o$ for intermediate times $t \in(0,1)$.

Requiring the $\CD(K,N)$ condition to hold whenever $\mu_1$ degenerates to $\delta_o$, a delta-measure at $o \in \spt(\mm)$, goes by the name of  Measure Contraction Property and is denoted by $\MCP(K,N)$.
 This property was introduced independently by Ohta in \cite{Ohta1} and Sturm in \cite{sturm:II}. 
Since $\OptGeo_{p}(\mu_0, \delta_{o} )$ does not depend on $p$, whenever $p > 1$, 
the  superscript will be omitted. We now record the version of the definition of $\MCP(K,N)$ found in \cite{Ohta1}.

\begin{definition}[$\MCP(K,N)$] \label{D:Ohta1}
The space $(X,\sfd,\mm)$ is said to satisfy $\MCP(K,N)$ if for any $o \in \spt(\mm)$ and  $\mu_0 \in \P_2(X,\sfd,\mm)$ of the form $\mu_0 = \frac{1}{\mm(A)} \mm\llcorner_{A}$ for some Borel set $A \subset X$ with $0 < \mm(A) < \infty$ (and with $A \subset B(o, \pi \sqrt{(N-1)/K})$ if $K>0$), there exists $\nu \in \OptGeo_{2}(\mu_0, \delta_{o} )$ such that:
\begin{equation} \label{eq:MCP-def}
\frac{d}{d\mm} \left[(\ee_{t})_{\sharp} \big( \tau_{K,N}^{(1-t)}(\sfd(\gamma_{0},\gamma_{1}))^{N} \nu(d \gamma) \big)\right]  
\le \frac{1}{\mm(A)}  
\;\;\; \forall t \in [0,1] .
\end{equation}
\end{definition}

As one would expect, $\CD_{p}(K,N)$ implies $\MCP(K,N)$ 
(see \cite[Lemma 6.11]{CMi} for the case $p =2$; the proof works the same 
for any $p > 1$), without any type of essential non-branching. When coupled with the $p$-essentially non-branching condition, 
$\MCP$ yields nice properties for $W_{p}$-geodesics. 
 A weaker contraction property 
\cite{CHues} of $(X,\sfd,\mm)$ is called 
{\em qualitative non-degeneracy}, which asserts for each ball $B_R(x_0)$,
there is a ratio $f(t)\in(0,1]$ with $\limsup_{t\to 0}f(t)>1/2$ 
which bounds the decrease  in measure whenever any Borel set 
$A \subset B_R(x_0)$ is contracted a fraction $t$
of the distance towards any $x \in B_R(x_0)$:
\begin{equation}\label{QND}
\mm(\ee_{t}(G)) \ge f(t) \mm(\ee_{0}((G))
\end{equation}
for $G=(\ee_{0} \times \ee_{1})^{-1}(A \times \{x\})$.
 Thus $\MCP$ permits one to invoke the following:
\begin{theorem}[Optimal dynamic transport is unique iff the space is essentially non-branching \cite{KellENB}]\label{teo:kell}
Let $(X,\sfd, \m)$ be a metric measure space with $\mm$ 
qualitatively non-degenerate. Then the following properties are equivalent:
\begin{enumerate}
\item[i)] $(X,\sfd,\m)$ is $p$-essentially non-branching;

\item[ii)]  
for every $\mu_0,\mu_1\in \mathcal{P}_p(X)$ with $\mu_0 \ll \mm$ there is a unique 
$\nu \in \OptGeo_{p}(\mu_{0},\mu_{1})$. Moreover, the $p$-optimal coupling 
$(\ee_0,\ee_1)_{\sharp}\nu$ is induced by a transport map and each interpolant
$\mu_t=(\ee_t)_{\sharp}\nu$, where $t \in{[0,1)}$, is absolutely 
continuous with respect to $\mm$.
\end{enumerate}
\end{theorem}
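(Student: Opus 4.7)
The plan is to prove the two implications separately. The implication (ii) $\Rightarrow$ (i) is the easier one: if some $\nu \in \OptGeo_p(\mu_0,\mu_1)$ were concentrated on a family of branching geodesics at some interior time $\bar t \in (0,1)$, then by a measurable selection and disintegration argument one can split $\nu$ at time $\bar t$ into its backward and forward halves, and recombine the forward half of one branch with the backward half of another to produce a distinct dynamical plan $\tilde\nu \ne \nu$ with the same marginals $\mu_0,\mu_1$. By the standard restriction and gluing properties of $p$-optimal plans (each half is optimal since the whole is, and the glued measure is $p$-optimal because the concatenated curves remain geodesics at the common branching point), $\tilde\nu$ lies in $\OptGeo_p(\mu_0,\mu_1)$, contradicting the uniqueness required in (ii).

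The harder implication is (i) $\Rightarrow$ (ii), whose key first step is to show that if $\mu_0 \ll \mm$ then $(\ee_t)_\sharp \nu \ll \mm$ for every $t \in [0,1)$ and every $\nu \in \OptGeo_p(\mu_0,\mu_1)$. This is where qualitative non-degeneracy \eqref{QND} enters essentially. Arguing by contradiction, assume an interpolant $\mu_t = (\ee_t)_\sharp \nu$ admits a singular part concentrated on a Borel set $S$ with $\mm(S) = 0$. Restricting $\nu$ to the family $G$ of geodesics that visit $S$ at time $t$ and iterating the contraction estimate $\mm(\ee_s(G)) \ge f(s)\,\mm(\ee_0(G))$ along a fine partition of $[0,t]$, one exploits $\limsup_{s\to 0} f(s) > 1/2$ to transfer a quantitative positive lower bound on the $\mm$-measure of the image at times approaching $t$ from below, and then a limiting argument forces positive $\mm$-mass to accumulate on $S$, contradicting $\mm(S)=0$.

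With absolute continuity of interpolants in hand, uniqueness follows via a convex combination argument. If $\nu^1,\nu^2 \in \OptGeo_p(\mu_0,\mu_1)$ were distinct, the average $\nu := \tfrac{1}{2}(\nu^1+\nu^2)$ is still in $\OptGeo_p(\mu_0,\mu_1)$ by linearity of the cost; by the previous step its interpolants are absolutely continuous, and by $p$-essential non-branching it is concentrated on a Borel non-branching set. This forces the two couplings $(\ee_0,\ee_1)_\sharp \nu^i$ to be supported on the same Borel graph of a transport map $T$ produced by the standard non-branching/absolute-continuity machinery, hence they share the same endpoint coupling; a second application of the non-branching condition to the geodesics emanating from $\mu_0$-typical starting points and joining $x$ to $T(x)$ promotes equality of couplings to equality of the full dynamical plans, yielding $\nu^1 = \nu^2$ together with the Monge map representation.

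The hard part will be the absolute continuity step in (i) $\Rightarrow$ (ii). The bound \eqref{QND} is only a one-step contraction estimate, and upgrading it to concentration bounds at arbitrary intermediate times requires a careful iteration along a partition together with measurable selection of the geodesics witnessing the putative singular part, both of which rely delicately on $p$-essential non-branching to avoid pathological recombinations. Once this step is in place the remaining structural arguments are comparatively soft, but it is this conversion of qualitative non-degeneracy into quantitative regularity of interpolants that carries the main weight of Kell's theorem.
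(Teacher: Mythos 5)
The paper offers no proof of this result: it is Kell's theorem, cited verbatim from \cite{KellENB}, and the text following the statement simply invokes it. So there is no internal argument to compare your proposal against; what I can do is flag the gaps in the sketch itself.

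Your implication (ii) $\Rightarrow$ (i) via the cut-and-recombine construction along a branching time is a reasonable outline. But two steps that you treat as ``soft'' in (i) $\Rightarrow$ (ii) are in fact the content of the theorem, and your sketch does not supply them. First, the contraction hypothesis \eqref{QND} is stated \emph{only} for sets of geodesics contracting toward a single point $x$, i.e.\ $G=(\ee_0\times\ee_1)^{-1}(A\times\{x\})$; your set $G$ of geodesics that visit $S$ at time $t$ has a diffuse endpoint distribution and is not of this form. To make the contraction usable one must discretize $\mu_1$ into finitely many atoms, apply \eqref{QND} to each fibre separately, combine, and pass to the limit --- an approximation argument in which the threshold $\limsup_{s\to 0}f(s)>1/2$ plays a precise role that your ``iterate along a fine partition'' phrasing does not reproduce. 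As written, your argument does not even produce the quantity $\mm(\ee_0(G))$ in a form where \eqref{QND} applies. Second, after absolute continuity of interpolants is in hand, the assertion that non-branching ``forces the two couplings to be supported on the same Borel graph'' is exactly the claim to be proved, not a standard consequence you can quote. The known route (Cavalletti--Huesmann \cite{CHues}, refined by Kell) is a contradiction argument: if optimality held for two essentially distinct plans, one restricts each to obtain two sub-plans whose time-$1/2$ interpolants must be mutually singular (by essential non-branching, else geodesics would branch at time $1/2$), yet both are absolutely continuous with controlled densities on a common positive-$\mm$-measure set --- a contradiction. Your ``average $\nu=\tfrac12(\nu^1+\nu^2)$'' move is the right starting point, but the passage from ``$\nu$ is concentrated on a non-branching set'' to ``the couplings agree and are induced by a map'' requires precisely this mutual-singularity-plus-absolute-continuity contradiction, which is absent from your proposal.
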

\begin{remark}\label{R:locp}
It is also worth recalling that the local version of 
$\CD(K,N)$, denoted by $\CD_{loc}(K,N)$,  is known to 
imply $\MCP(K,N)$ provided that  
$(X,\sfd)$ is a non-branching length space, see \cite{cavasturm:MCP}.
Since any $\CD_{p,loc}(K,N)$ gives the same information when considered                 
for Wasserstein geodesics arriving at a Dirac mass, we can conclude 
that the same argument of \cite{cavasturm:MCP}  shows
$\CD_{loc,p}(K,N)$ implies $\MCP(K,N)$, provided $(X,\sfd)$ is a non-branching length space.
 
Moreover, it has already been observed and used in the 
literature that the non-branching assumption 
can be weakened to essentially non-branching when $p =2$: 
the non-branching property in 
\cite{cavasturm:MCP} was used to obtain a partition of 
$X$ formed of all geodesics arriving at the same point $o \in X$
and subsequently to ensure uniqueness of 
a dynamical optimal plan connecting $\mu_{0}$ to $\mu_{1}$
with $\mu_{0}\ll \mm$. Both properties 
can be deduced from $p$-essentially non-branching together 
with Theorem \ref{teo:kell}; for more details 
see Section \ref{Ss:L1OT}. 
Hence,  we will tacitly use
that  for each $p>1$, a metric measure space                                                                                        
satisfying $\CD_{p,loc}(K,N)$ and which is
a $p$-essentially non-branching length space also verifies 
$\MCP(K,N)$. 
\end{remark}

We conclude this subsection with the $\CD^{1}(K,N)$ condition introduced 
in \cite{CMi}. Notice that this definition uses the additional assumption 
that $\mm(X) = 1$.
We will also need to recall some classical terminology from the distance cost optimal transport theory 
that we briefly recall. 
 
To any $1$-Lipschitz function $u : X \to \R$ there is a naturally associated $\sfd$-cyclically monotone set   
\begin{equation}\label{TransportSet1}
\Gamma_{u} : = \{ (x,y) \in X\times X : u(x) - u(y) = \sfd(x,y) \},
\end{equation}
 which we call the {\em transport ordering.} We write $x \ge_u y$ if and only if $(x,y) \in \Gamma_u$; the $1$-Lipschitz condition on $u$ implies 
$\ge_u$ is a partial-ordering.
The \emph{transport relation} $R_u$ and the 
\emph{transport set} $\mathcal{T}_{u}$ are defined as:
\begin{equation}\label{E:R1}
R_{u} := \Gamma_{u} \cup \Gamma^{-1}_{u} ~,~ \mathcal{T}_{u} := P_{1}(R_{u} \setminus \{ x = y \}) ,
\end{equation}
where $\{ x = y\}$ denotes the diagonal $\{ (x,y) \in X^{2} : x=y \}$, 
$P_{i}$ the projection onto the $i$-th component 
and  $\Gamma^{-1}_{u}= \{ (x,y) \in X \times X : (y,x) \in \Gamma_{u} \}$.
Since $u$ is $1$-Lipschitz, $\Gamma_{u}, \Gamma^{-1}_{u}$ and $R_{u}$ are closed sets, and so are $\Gamma_u(x)$ and $R_u(x)$
(recall that $\Gamma_u(x) = \set{y \in X \; ;\; (x,y) \in \Gamma_u}$ and similarly for $R_u(x)$). 
Consequently $\mathcal{T}_{u}$ is a projection of a Borel set and hence analytic; it follows that it is universally measurable, and in particular, $\mm$-measurable \cite{Srivastava}.

Following \cite[Definition 7.7]{CMi}, a maximal chain $R$ in $(X,\sfd,\leq_u)$ is called a \emph{transport ray} if it is isometric to a closed interval $I$ in $(\Real,\abs{\cdot})$ of positive (possibly infinite) length.

\begin{definition}\label{D:defCD1}
($\CD^1_{u}(K,N)$ when $\spt(\m)=X$) Let $(X,\sfd,\m)$ be a metric measure space such that $\spt(\m)=X$ and $\mm(X) = 1$. 
Let us consider $K,N\in{\mathbb{R}}$, $N>1$ and let $u:(X,\sfd)\to \mathbb{R}$ be a 1-Lipschitz function. We say that $(X,\sfd,\m)$ satisfies the $\CD^1_u$ condition if there exists a family $\{X_{\alpha}\}_{\alpha\in{Q}}\subset X$ such that:

\begin{enumerate}
\item[(1)]
There exists a disintegration of $\m \llcorner_{\mathcal{T}_{u}}$ on $\{X_{\alpha}\}_{\alpha\in{Q}}$:
\[
\m\llcorner_{\mathcal{T}_{u}}= \int_{Q} \m_{\alpha}\, \mathfrak{q}(d\alpha), \,\,\,\text{where}\, \m_{\alpha}(X_{\alpha})=1,\,\, \text{for}\,\mathfrak{q}\text{-a.e.} \,\alpha\in{Q}.                                
\]

\item[(2)] For $\mathfrak{q}$-a.e. $\alpha\in{Q}$, $X_{\alpha}$ 
is a \emph{transport ray} for $\Gamma_u$. 

\item[(3)] For $\mathfrak{q}$-a.e. $\alpha\in{Q}$, $\m_{\alpha}$ is supported on $X_{\alpha}$.

\item[(4)] For $\mathfrak{q}$-a.e. $\alpha\in{Q}$, the metric measure space $(X_{\alpha},\sfd,\m_{\alpha})$ satisfies $\CD(K,N)$.
\end{enumerate}
\end{definition}

\begin{remark}[The assumption $\mm(X) = 1$]\label{R:normalization}
For an overview (and a self-contained proof) of the                    
Disintegration Theorem we refer to \cite{biacar:cmono,Fre:measuretheory4} (see also \cite{CMi}).
It is worth mentioning here that the assumption $\mm(X) = 1$ is most probably purely technical. 
In the framework of general Radon measure, the Disintegration Theorem does not furnish a unique 
family of conditional measures and one has to consider an additional 
normalization function; for additional details we refer to \cite{CM18a} where 
a localization of synthetic lower Ricci curvature bounds has been obtained also for 
general Radon measure.
\end{remark}

Let us recall that it is well known  that 
the last condition of Definition \ref{D:defCD1} is equivalent to  
asking $\mm_{\alpha} \sim h_{\alpha} \mathcal{L}^{1}\llcorner_{[0,|X_{\alpha}|]}$ 
where $|X_{\alpha}|=\ell(X_\alpha)$ denotes the length of the transport ray $X_{\alpha}$
and $\sim$ means up to isometry of the space, and the density $h_{\alpha}$ 
has to satisfy the power-concavity inequality
$$
\left(h_{\alpha}^{1/(N-1)}\right)'' + \frac{K}{N-1}h_{\alpha}^{1/(N-1)}\leq 0,
$$
in the distributional sense.

Finally, we will say that  the metric measure space $(X,\sfd,\m)$ satisfies $\CD^1_{Lip}(K,N)$ if $(\spt(\m), \sfd,\m)$ verifies $\CD^1_u(K,N)$ for all $1$-Lipschitz functions $u:(\spt(\m),\sfd)\to \mathbb{R}$, and 
satisfies $\CD^1(K,N)$ if $(\spt(\m), \sfd,\m)$ verifies $\CD^1_u(K,N)$   whenever
$u$ is a  signed distance function defined as follows:
given a continuous function $f : (X,\sfd) \to \R$ such that $\set{f = 0} \neq \emptyset$, the function
\begin{equation}\label{E:levelsets}
d_{f} : X \to \R, \qquad d_{f}(x) : = \text{dist}(x, \{ f = 0 \} ) \sgn(f),                                       
\end{equation}
is called the signed distance function (from the zero-level set of $f$).  
Notice that $d_f$ is $1$-Lipschitz on $\set{f \geq 0}$ and $\set{f \leq 0}$. If $(X,\sfd)$ is a length space, then $d_f$ is $1$-Lipschitz on the entire $X$. 
\subsection{Derivatives} \label{subsec:prelim-derivatives}
In order to carry out a third order analysis of Kantorovich potentials, we will frequently use incremental ratios 
over arbitrary subsets of $\R$. We will use the following 
notation: for a function $g : A \rightarrow \Real$ on a subset $A \subset \Real$, denote its upper and lower derivatives at a point $t_0 \in A$ which is an accumulation point of $A$ by:
\[
\frac{\overline{d}}{dt} g(t_0) = \limsup_{A \ni t \rightarrow t_0} \frac{g(t) - g(t_0)}{t-t_0}  ~,~ \underline{\frac{d}{dt}} g(t_0) = \liminf_{A \ni t \rightarrow t_0} \frac{g(t) - g(t_0)}{t-t_0} .
\]
We will say that $g$ is differentiable at $t_0$ iff $\frac{d}{dt} g(t_0) := \frac{\overline{d}}{dt} g(t_0) = \underline{\frac{d}{dt}} g(t_0) \in\mathbb{R}$. 
This is a slightly more general definition of differentiability than the traditional one which requires that $t_0$  is an interior point of $A$.
\begin{remark} \label{R:diff-restriction}
Note that there are only a countable number of isolated points in $A$, so a.e. point in $A$ is an accumulation point. In addition, it is clear that if $t_0 \in B \subset A$ is an accumulation point of $B$ and $g$ is differentiable at $t_0$, then $g|_B$ is also differentiable at $t_0$ with the same derivative. In particular, if $g$ is a.e. differentiable on $A$ then $g|_B$ is also a.e. differentiable on $B$ and the derivatives coincide.
\end{remark}
\begin{remark}\label{R:differentiabilitydensity}
Denote by $A_1 \subset A$ the subset of density one points of $A$ (which are in particular accumulation points of $A$). By Lebesgue's Density Theorem $\L^1(A \setminus A_1) = 0$, where we denote by $\L^1$ the Lebesgue measure on $\Real$ throughout this work. If $g : A \rightarrow \Real$ is  Lipschitz, consider any  Lipschitz extension $\hat g : \Real \to \Real$ of $g$. Then it is easy to check that for $t_0 \in A_1$, $g$ is differentiable in the above sense at $t_0$ if and only if $\hat g$ is differentiable at $t_0$ in the usual sense, in which case the derivatives coincide. In particular, as $\hat g$ is a.e. differentiable on $\Real$, it follows that $g$ is a.e. differentiable on $A_1$ and hence on $A$, and it holds that $\frac{d}{dt} g = \frac{d}{dt} \hat g$ a.e. on $A$.
\end{remark}

\medskip
If $f : I \rightarrow \Real$ is a convex function on an open interval $I \subset \Real$, it is a  well-known fact that the left and right derivatives $f^{\prime,-}$ and $f^{\prime,+}$ exist at every point in $I$ and that $f$ is locally Lipschitz. 
In particular, $f$ is differentiable at a given point if
and only if the left and right derivatives coincide there. Denoting by $D \subset I$ the differentiability points of $f$ in $I$, it is also well-known that $I \setminus D$ is at most countable. Consequently, any point in $D$ is an accumulation point, and we may consider the differentiability in $D$ of $f' : D \rightarrow \Real$ as defined above. 

We will recall the following classical one-dimensional result
about twice differentiability a.e. of convex functions on $\Real^n$. 
The result extends to locally semi-convex and semi-concave functions as well; recall that a function $f : I \rightarrow \Real$ is called semi-convex (semi-concave) if there exists $C \in \Real$ so that $I \ni x \mapsto f(x) + C x^2$ is convex (concave).
\begin{lemma}[Second Order Differentiability of Convex Function] 
\label{lem:convex-2nd-diff}
Let $f : I \rightarrow \Real$ be a convex function on an open interval $I \subset \Real$, and let $\tau_0 \in I$ and $\Delta \in \Real$. Then the following statements are equivalent:
\begin{enumerate}
\item $f$ is differentiable at $\tau_0$, and if $D \subset I$ denotes the subset of differentiability points of $f$ in $I$, then $f' : D \rightarrow \Real$ is differentiable at $\tau_0$ with:
\[
(f')'(\tau_0) := \lim_{D \ni \tau \rightarrow \tau_0} \frac{f'(\tau) - f'(\tau_0)}{\tau-\tau_0} = \Delta .
\]
\item The right derivative $f^{\prime,+} : I \rightarrow \Real$ is differentiable at $\tau_0$ with $(f^{\prime,+})'(\tau_0) = \Delta$. 
\item The left derivative $f^{\prime,-}: I \rightarrow \Real$ is differentiable at $\tau_0$ with $(f^{\prime,-})'(\tau_0) = \Delta$. 
\item $f$ is differentiable at $\tau_0$ and has the following second order expansion there:
\[
f(\tau_0 + \eps) = f(\tau_0) + f'(\tau_0) \eps + \Delta \frac{\eps^2}{2} + o(\eps^2)  \text{ as $\eps \rightarrow 0$}. 
\]
In this case, $f$ is said to have a second Peano derivative at $\tau_0$. 
\end{enumerate}
\end{lemma}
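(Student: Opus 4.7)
The plan is to prove the chain $(2) \Leftrightarrow (3) \Leftrightarrow (1) \Leftrightarrow (4)$, using only standard one-variable facts about convex functions: $f^{\prime,+}$ and $f^{\prime,-}$ are everywhere-defined and non-decreasing on $I$, $f^{\prime,-} \le f^{\prime,+}$ pointwise with equality precisely on $D$, $f^{\prime,+}$ is right-continuous while $f^{\prime,-}$ is left-continuous, and $\sup_{\sigma < \tau} f^{\prime,+}(\sigma) = f^{\prime,-}(\tau)$, together with Newton--Leibniz $f(b) - f(a) = \int_{a}^{b} f^{\prime,+}(s)\,ds$.

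First I would show $(2) \Rightarrow f$ is differentiable at $\tau_{0}$: if instead $f^{\prime,-}(\tau_{0}) < f^{\prime,+}(\tau_{0})$, then for all $\eps < 0$ the identity above gives $f^{\prime,+}(\tau_{0} + \eps) \le f^{\prime,-}(\tau_{0})$, so the incremental ratio $(f^{\prime,+}(\tau_{0} + \eps) - f^{\prime,+}(\tau_{0}))/\eps$ is bounded below by $(f^{\prime,-}(\tau_{0}) - f^{\prime,+}(\tau_{0}))/\eps$, which diverges to $+\infty$ as $\eps \to 0^{-}$, contradicting the finite derivative $\Delta$. Once $f$ is differentiable at $\tau_{0}$, Remark \ref{R:diff-restriction} applied to $f^{\prime,+}|_{D} = f'$ immediately yields $(2) \Rightarrow (1)$. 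For $(1) \Rightarrow (2)$ I would sandwich: for $\tau \notin D$ close to $\tau_{0}$, pick $\tau_{1}, \tau_{2} \in D$ with $\tau_{1} < \tau < \tau_{2}$ and $|\tau_{i} - \tau| \le (\tau - \tau_{0})^{2}$, so that monotonicity of $f^{\prime,+}$ gives $f'(\tau_{1}) \le f^{\prime,+}(\tau) \le f'(\tau_{2})$, and the factorisation $(f'(\tau_{i}) - f'(\tau_{0}))/(\tau - \tau_{0}) = [(f'(\tau_{i}) - f'(\tau_{0}))/(\tau_{i} - \tau_{0})] \cdot [(\tau_{i} - \tau_{0})/(\tau - \tau_{0})]$ shows both extremes of the sandwich tend to $\Delta$. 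The argument for $(1) \Leftrightarrow (3)$ is entirely symmetric in $f^{\prime,+}$ and $f^{\prime,-}$.

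The implication $(1) \Rightarrow (4)$ would then follow by integrating Newton--Leibniz: $f(\tau_{0} + \eps) - f(\tau_{0}) = \int_{0}^{\eps} f^{\prime,+}(\tau_{0} + s)\, ds$ together with $f^{\prime,+}(\tau_{0} + s) = f^{\prime,+}(\tau_{0}) + \Delta s + o(s)$ supplied by $(2)$ gives directly the second order Peano expansion.

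The main obstacle is the converse $(4) \Rightarrow (2)$. After subtracting the affine part by setting $g(\eps) := f(\tau_{0} + \eps) - f(\tau_{0}) - f'(\tau_{0}) \eps$, one reduces to a convex $g$ near $0$ with $g(0) = 0$, $g^{\prime,+}(0) = 0$ and $g(\eps) = \Delta \eps^{2}/2 + o(\eps^{2})$, and must show $g^{\prime,+}(\eps) = \Delta \eps + o(\eps)$. For $\eps > 0$ and a free parameter $h > 0$, convexity yields the two-sided slope bound
\[
\frac{g(\eps) - g(\eps - h)}{h} \le g^{\prime,+}(\eps) \le \frac{g(\eps + h) - g(\eps)}{h};
\]
substituting the Peano expansion and choosing $h = \delta \eps$ for a fixed small $\delta \in (0, 1)$ produces
\[
\Delta \eps (1 - \delta/2) + o(\eps) \le g^{\prime,+}(\eps) \le \Delta \eps (1 + \delta/2) + o(\eps),
\]
since the Peano remainder $o(\eps^{2})$ divided by $h = \delta \eps$ becomes $o(\eps)/\delta$, which is $o(\eps)$ for fixed $\delta$. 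Sending $\delta \to 0^{+}$ after $\eps \to 0^{+}$ finishes the argument for $\eps > 0$, and the case $\eps < 0$ is handled by applying the same reasoning to $\tilde{g}(\sigma) := g(-\sigma)$. The delicacy lies entirely in the order of limits: the scaling $h \sim \delta \eps$ is just coarse enough that the Peano remainder stays negligible at order $o(\eps)$, while $\delta$ must be kept free so it can be sent to $0$ only after passing to the limit in $\eps$.
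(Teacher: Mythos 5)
The paper states Lemma \ref{lem:convex-2nd-diff} as a classical one-dimensional fact and supplies no proof, so there is no argument to compare against; your proof stands on its own and is essentially correct. The standard convexity facts you invoke (monotone one-sided derivatives, $f^{\prime,-}\le f^{\prime,+}$ with equality on $D$, one-sided continuity, the secant-slope sandwich, Newton--Leibniz) are all valid, the sandwich argument $(1)\Rightarrow(2)$ with $|\tau_i-\tau|\le(\tau-\tau_0)^2$ is a clean way to absorb the countable set $I\setminus D$, and the scaled increment $h=\delta\eps$ followed by the iterated limit $\eps\to0^+$ then $\delta\to0^+$ is exactly the mechanism that turns the Peano expansion into differentiability of $f^{\prime,+}$; since $g$ above its tangent forces $\Delta\ge0$, the two sides of the squeeze stay consistently ordered.

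One small point worth tightening in $(4)\Rightarrow(2)$ for $\eps<0$: the substitution $\tilde g(\sigma)=g(-\sigma)$ has $\tilde g^{\prime,+}(\sigma)=-g^{\prime,-}(-\sigma)$, so applying the $\eps>0$ analysis to $\tilde g$ controls $g^{\prime,-}(\eps)$ for $\eps<0$, not directly $g^{\prime,+}(\eps)$; to close the gap you need one more monotonicity step (for instance $g^{\prime,-}(\eps)\le g^{\prime,+}(\eps)\le g^{\prime,-}(\eps(1-|\eps|))$ and expand the right-hand side), or simply run the same $h=\delta|\eps|$ sandwich directly at negative $\eps$, which works without modification. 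This is an easily repaired imprecision, not a flaw in the method.
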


For a locally semi-convex or semi-concave function $f$, we will say that $f$ is twice differentiable at $\tau_0$ if any (all) of the above equivalent conditions hold for some $\Delta \in \Real$, and write $(\frac{d}{d\tau})^{2}|_{\tau = \tau_0} f(\tau) = \Delta$.

\smallskip
Finally, we will recall the following slightly different version
of the second order differential.
\begin{definition}[Upper and lower second Peano derivatives]\label{D:Peano}
Given an open interval $I \subset \Real$ and a function $f : I \rightarrow \Real$ which is differentiable at $\tau_0 \in I$, we define its upper and lower second Peano derivatives at $\tau_0$, denoted $\overline{\P}_2 f(\tau_0)$ and  $\underline{\P}_2 f(\tau_0)$ respectively, by:
\begin{equation}\label{def-upper and lower second derivatives}
 \overline{\P}_2 f(\tau_0) := \limsup_{\eps\rightarrow 0} \frac{h(\eps)}{\eps^2} \geq \liminf_{\eps\rightarrow 0} \frac{h(\eps)}{\eps^2} =: \underline{\P}_2 f(\tau_0) ,
\end{equation}
where:
\begin{equation}\label{E:def-h}
h(\eps) := 2( f(\tau_0 + \eps) - f(\tau_0) - \eps f'(\tau_0)) .
\end{equation}
We say that $f$ has a second Peano derivative at $\tau_0$ iff $\overline{\P}_2 f(\tau_0) = \underline{\P}_2 f(\tau_0) \in\mathbb{R}$. 
\end{definition}
\begin{lemma} \label{lem:peano-inq}
Given an open interval $I \subset \Real$ and a locally absolutely continuous function $f : I \rightarrow \Real$ which is differentiable at $\tau_0 \in I$, we have:
\[
\underline{\frac{d}{dt}} f'(\tau_0) \leq \underline{\P}_2 f(\tau_0) \leq \overline{\P}_2 f(\tau_0) \leq \frac{\overline{d}}{dt} f'(\tau_0)  .
\]
\end{lemma}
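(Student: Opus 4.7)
The middle inequality $\underline{\P}_2 f(\tau_0) \leq \overline{\P}_2 f(\tau_0)$ is immediate from the definition since $\liminf \leq \limsup$. The two outer inequalities are symmetric (one follows from the other by replacing $f$ with $-f$, which interchanges $\limsup$ and $\liminf$ on both sides), so I would focus on proving $\overline{\P}_2 f(\tau_0) \leq \frac{\overline{d}}{dt} f'(\tau_0)$.

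The plan is to convert $h(\eps)$ from Definition \ref{D:Peano} into an integral of $f'$ using local absolute continuity, and then control it by the incremental ratio of $f'$. Specifically, by local absolute continuity on a neighborhood of $\tau_0$,
\[
h(\eps) = 2 \int_{\tau_0}^{\tau_0 + \eps} (f'(s) - f'(\tau_0)) \, ds = 2 \int_0^{\eps} (f'(\tau_0 + u) - f'(\tau_0)) \, du,
\]
where the integrand is defined a.e.\ (on the full-measure set $D$ of differentiability points of $f$). Introducing $\varphi(u) := (f'(\tau_0 + u) - f'(\tau_0))/u$, defined on $(D - \tau_0) \setminus \{0\}$, we can rewrite this as
\[
h(\eps) = 2 \int_0^{\eps} \varphi(u) \, u \, du.
\]

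Set $M := \frac{\overline{d}}{dt} f'(\tau_0) = \limsup_{D \ni \tau \to \tau_0} \varphi(\tau - \tau_0)$. If $M = +\infty$ the inequality is trivial, so assume $M < \infty$. For any $\delta > 0$, there exists $\eta > 0$ such that $\varphi(u) \leq M + \delta$ for all $u \in (D - \tau_0) \setminus \{0\}$ with $|u| < \eta$. Then for $0 < \eps < \eta$, using $u > 0$ on the integration interval,
\[
\frac{h(\eps)}{\eps^2} = \frac{2}{\eps^2} \int_0^{\eps} \varphi(u) \, u \, du \leq \frac{2(M + \delta)}{\eps^2} \int_0^{\eps} u \, du = M + \delta.
\]
For $\eps \in (-\eta, 0)$, one substitutes $v = -u$ and observes that both $u$ and $du$ change sign, so the integrand $\varphi(u) u$ transforms to $\varphi(-v)(-v) \cdot (-dv)$; the two sign changes in the integrand combine with the sign change in the limits to again yield
\[
\frac{h(\eps)}{\eps^2} = \frac{2}{\eps^2}\int_0^{|\eps|} \varphi(-v) \, v \, dv \leq M + \delta,
\]
since the bound on $\varphi$ holds on both sides of $0$. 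Taking $\limsup_{\eps \to 0}$ gives $\overline{\P}_2 f(\tau_0) \leq M + \delta$, and letting $\delta \downarrow 0$ yields the desired inequality.

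The symmetric argument with $\liminf$ and $m := \underline{\frac{d}{dt}} f'(\tau_0)$ gives the left inequality. I do not expect any real obstacle: the only point requiring modest care is the sign bookkeeping for $\eps < 0$ (making sure that both $u$ and the orientation of the integral change sign, so that the inequality direction is preserved). The fact that $\varphi$ is defined only on the a.e.\ set $D - \tau_0$ is harmless because the integrand in $h(\eps)$ is an $L^1$ function and we can freely redefine it on the null set $I \setminus D$.
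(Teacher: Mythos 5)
The paper states Lemma \ref{lem:peano-inq} without any proof, so there is no authorial argument to compare against. Your proof is correct: local absolute continuity of $f$ gives the identity $h(\eps)=2\int_0^\eps\bigl(f'(\tau_0+u)-f'(\tau_0)\bigr)\,du$, the bound $(f'(\tau_0+u)-f'(\tau_0))/u\le M+\delta$ holds for a.e.\ $u$ in a two-sided punctured neighborhood of $0$ (which suffices since the integrand is $L^1$ and the exceptional set $I\setminus D$ is null), and your sign bookkeeping for $\eps<0$ is right, so $h(\eps)/\eps^2\le M+\delta$ on both sides of $0$. The reduction of the lower bound to the upper bound via $f\mapsto -f$ is also sound.
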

\subsection{Notation}
Given a subset $D \subset X \times \Real$, we denote its sections by:
\[
D(t) := \set{ x \in X \;;\; (x,t) \in D} ~,~ D(x) := \set{t \in \Real \; ; \; (x,t) \in D} .
\]
Given a subset $G \subset \Geo(X)$, we denote by $\mathring{G} := \set{ \gamma|_{(0,1)} \;;\; \gamma \in G}$ the corresponding open-ended geodesics on $(0,1)$. For a subset of (closed or open) geodesics $\tilde{G}$, we denote:
\begin{equation}\label{E:image}
Im(\tilde{G}) := \set{ (x,t) \in X \times \Real \; ; \; \exists \gamma \in \tilde{G} ~,~ t \in \text{Dom}(\gamma) \;, \; x = \gamma_t } .
\end{equation}
\section{Hopf-Lax transform with exponent $p$}\label{S:HopfLax}
In this  section we review the basic properties of the Hopf-Lax transform                  
in a metric measure space setting with a general exponent $p > 1$.
Some of following properties are well-known for the case $p = 2$,  hence we  omit  the proofs for general $p$ whenever they follow the same line of reasoning as the corresponding proofs for $p = 2$.
The main references for most of the definitions and proofs will be \cite{AGS11a,AGS11b,CMi}; 
further developments related to ours may also be found in 
\cite{AmbrosioFeng14} \cite{GozlanRobertoSamson14} \cite{GangboSwiech15} \cite{Bessi20+}  and their references. \\ 

As motivation for the needed properties of the metric measure space 
Hopf-Lax transform we remind the reader
of the relationship between the Hopf-Lax transform and the Eulerian view of optimal transport.
 We also provide a comparison between the results found in this paper to familiar results from 
 Euclidean spaces.

We illustrate the main relationship for the case $(\mathbb{R}^{n},d)$ with $d$ as the Euclidean distance, and the cost function
    $c(x,y)=\frac{d(x,y)^{p}}{p}$ where $p>1$.
    Recall that in the Eulerian view of optimal transport, the Wasserstein distance can be interpreted as the minimizing energy to the problem
    \begin{equation}\label{ContinuityEquation}
        \begin{cases}
           \rho_{t}+\nabla\cdot\left(\rho{}v\right)=0& 
                \text{in }\mathbb{R}^{n}\times(0,1)\\
            \rho(\cdot,0)=\rho_{0}&
                \text{in }\mathbb{R}^{n}\\
            \rho(\cdot,1)=\rho_{1}&
                \text{in }\mathbb{R}^{n}\\
        \end{cases}
    \end{equation}
    where $\rho,v$ are the distribution of mass and the velocity at position $x$ at time $t$ respectively \cite[Theorem 8.3.1]{AGSbook}. By choosing $v= DH\left(\nabla\varphi\right)$,  where in our case $ H(w)={|w|^{p'}}/{p'}$, and  $\varphi$ is a solution to the Hamilton-Jacobi equation
    \begin{equation}\label{HamiltonJacobiPDE}
        \begin{cases}
        \partial_{t}\varphi+H(\nabla \varphi)=0
        & \text{in }\mathbb{R}^{n}\times(0,\infty)\\
        \varphi(x,0)=\varphi_{0}(x)& \text{for }x\in\mathbb{R}^{n},
        \end{cases}
    \end{equation}
    where $\varphi_{0}$ is a Kantorovich potential for the optimal transport problem and $p'$ is the real number satisfying $\frac{1}{p}+\frac{1}{p'}=1$.
    That is, $p'$ is the H\"{o}lder dual of $p$.
     The method of characteristics   gives a solution to the Hamilton-Jacobi equation for a convex Hamiltonian $H$ 
     \cite{Evans:PDE}.  Furthermore, this solution can be expressed  by the Hopf-Lax formula
    \begin{equation*}
        \varphi(x,t)=
        \inf_{y\in\mathbb{R}^{n}}\left\{\varphi_{0}(y)+tL\left(\frac{x-y}{t}\right)\right\},
    \end{equation*}
    where the Lagrangian $L$ is defined by
    \begin{equation*}
        L(z)=\inf_{w\in\mathbb{R}^{n}}\left\{z\cdot{}w-H(w)\right\}.
    \end{equation*}
    In our case,  the Lagrangian is explicitly computed as $L(v)=\frac{|v|^{p}}{p}$, hence
    \begin{equation}\label{HopfLaxTransform}
        \varphi(x,t)=\inf_{y\in\mathbb{R}^{n}}\left\{\varphi_{0}(y)+\frac{|x-y|^{p}}{pt^{p-1}}\right\}.
    \end{equation}
     Finally, in the context of smooth manifolds, we can compute the spatial gradient as
    \begin{equation*}
        \nabla \varphi(x)=\frac{|x-y|^{p-2}(x-y)}{t^{p-1}},
    \end{equation*}
    where $y$ is chosen to  be a minimizer in the Hopf-Lax infimum \eqref{HopfLaxTransform}.
    Hence,
    \begin{equation}\label{NormGradient}
        \frac{|\nabla\varphi(x,t)|^{p'}}{p'}=\frac{(p-1)|x-y|^{p}}{pt^{p}}.
    \end{equation}
    Note that, due to $\eqref{NormGradient}$, $\eqref{HamiltonJacobiPDE}$ and $\eqref{HopfLaxTransform}$ can be compared to conclusion $3$ of  Theorem
    $\ref{teo:ags}$ and  Corollary \ref{cor:loclipfi} respectively.
    In particular,  the expression  in $\eqref{NormGradient}$ depends on $x$ only through its distance to the minimizing value $y$.
    This should be compared to Definition $\ref{DistanceFunctionDefinition}$.
    With the above in mind, we  now present  the details of the nonsmooth case.

In the following sections, we will only consider the cost function $c=\sfd^p/p$ on $X \times X$. 
\begin{definition*}[$c$-Concavity, Kantorovich Potential]
The $c$-transform of a function $\psi : X \rightarrow \Real \cup \set{\pm\infty}$ is defined as the following (upper semi-continuous) function:
\[
\psi^c(x) = \inf_{y \in X} \frac{\sfd(x,y)^p}{p} - \psi(y) . 
\]
A function $\varphi : X \rightarrow \Real \cup \set{\pm \infty}$ is called $c$-concave if $\varphi = \psi^c$ for some $\psi$ as above. 
It is well known that $\varphi$ is $c$-concave iff $(\varphi^c)^c = \varphi$. 
A $c$-concave function $\varphi : X \rightarrow \Real \cup \set{-\infty}$ which is not identically equal to $-\infty$ is also 
known as a Kantorovich  (or $p$-Kantorovich) potential, and this is how we will refer to such functions in this work. 
In that case, $\varphi^c : X \rightarrow \Real \cup \set{-\infty}$ is also a Kantorovich potential, called the dual or conjugate potential.
\end{definition*}

In these sections, we only assume that $(X,\sfd)$ is a \textbf{proper geodesic metric space}. (Here proper refers to the requirement that closed balls are compact).
\subsection{General definitions}\label{Ss:HopfLaxDef}
\begin{definition}[Hopf-Lax transform]\label{Hopf-Lax transform}
Let $f:X\to \mathbb{R}\cup\{\pm\infty\}$ be not identically $+\infty$ and $t>0$, $p>1$. The Hopf-Lax transform $Q_t f:X\to \mathbb{R}\cup{\{-\infty}\}$ is defined as \begin{equation}
\label{equ:hopf}
Q_t f(x):= \inf_{y\in{X}} \frac{{\sfd(x,y)}^p}{pt^{p-1}}+f(y).
\end{equation}
\end{definition}

\noindent
 If $Q_tf(\bar{x})\in \R$ for some $\bar{x}\in {X}$ and $t>0$, then $Q_sf(x)\in \R$ for all $x\in X$ and  $0<s \leq t$.  Hence defining
\[
t_{*}(f):=\sup \{t>0:  Q_tf \not\equiv -\infty\},
\]
 where we set $t_{*}(f)=0$ if the supremum is over an empty set, it holds that $Q_tf(x)\in \mathbb{R}$ for every $x\in{X}, t\in{(0,t_*(f))}$.  Moreover, we set $Q_0f:=f$.
The definition of $Q_tf$ can be extended to negative times $t<0$ by  setting
\begin{equation}\label{E:minust}
Q_tf(x)=-Q_{-t}(-f)(x)= \sup_{y\in{X}} - \frac{\sfd(x,y)^p}{p(-t)^{p-1}}+f(y), \quad t<0.
\end{equation}

If $(X,\sfd)$ is a length space (and in particular, if it is geodesic),
the Hopf-Lax transform is in fact a semi-group on $[0,\infty)$:
\[
Q_{s+t} f = Q_s \circ Q_t f \;\;\; \forall t,s \geq  0 .
\]
Being the infimum of continuous functions in $(t,x)$, the map $(0,\infty)\times X \ni (t,x)\mapsto Q_tf(x)$  is 
upper semi-continuous. Moreover, by definition  $[0,\infty)\ni t \mapsto Q_tf(x)$ is monotone non-increasing;  hence, it  is continuous from the left. 

We  define the {\em distance progressed}  as the 
length of the geodesic segment in $X$ along which information propagates from the initial values to $(t,x)$;
this geodesic plays the role of a characteristic curve.
Since we are modeling optimal transport, shocks do not form before unit time has elapsed \cite{Vil:topics}.
\begin{definition}\label{DistanceFunctionDefinition}{(Distance progressed $D^{\pm}_f)$}.
 Given $f:X\to \mathbb{R}\cup\{+\infty\}$ not identically $+\infty$, we define 
\[
D^+_f(x,t):= \sup \limsup_{n\to +\infty} \sfd(x,y_n)\geq \inf \liminf_{n\to +\infty} \sfd(x,y_n)=: D^-_f(x,t)
\]
where the supremum and the infimum are taken on the set of minimizing sequences $\{y_n\}_{n\in{\mathbb{N}}}$ in the definition of Hopf-Lax transform.  Using a diagonal argument, it is possible to show that the  supremum and infimum are attained, though they may differ in the presence of  shocks.
\end{definition}

For $p=2$, the following properties were established in 
\cite[Chapter 3]{AGS11a}. For a proof adopted to a similar framework 
we refer to \cite[Section 3.2]{CMi}.
\begin{theorem}[Hopf-Lax solution to metric space Hamilton-Jacobi equations] 
\label{teo:ags}
For any metric space $(X,\sfd)$ the following properties hold:
\begin{itemize}

\item[1.] Both functions $D^{\pm}_f(x,t)$ are locally finite on $X\times (0,t_*(f))$ and $(x,t) \mapsto Q_tf(x)$ is locally Lipschitz there.

\item[2.] The map  $(x,t) \mapsto D^{+}_f(x,t)$  $\bigl{(}(x,t) \mapsto D^{-}_f(x,t) \bigr{)}$  is upper (lower) semi-continuous on $X\times(0,t_*(f))$.

\item[3.] For every $x\in{X}$, 
$$
\partial^{\pm}_t Q_tf(x)=- \frac{(p-1)D^{\pm}_f(x,t)^{p}}{p t^{p}}, \qquad 
\forall \ t\in (0,t_*(f)),
$$ 
where $\partial^{-}_t$ and $\partial^{+}_t$   denote the left and right partial derivatives respectively. In particular, the map $(0,t_*(f)) \ni t \mapsto Q_t f(x)$ is locally Lipschitz and locally semi-concave. Moreover, it is differentiable at $t\in{(0,t_*(f))}$  if and only if $D^+_f(x,t)=D^-_f(x,t)$.
\end{itemize}
\end{theorem}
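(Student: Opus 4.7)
My plan begins with a single reparametrization that exposes a hidden concavity and lets it do most of the work. I would set $\tau(t) := -t^{1-p}/(p-1)$, smooth and strictly increasing on $(0,\infty)$ with $d\tau/dt = t^{-p}$. A direct algebraic check gives $1/(pt^{p-1}) = -(p-1)\tau/p$, so for each fixed $x,y$,
\[
\frac{\sfd(x,y)^p}{pt^{p-1}} + f(y) = -\frac{(p-1)\tau}{p}\sfd(x,y)^p + f(y)
\]
is affine in $\tau$. Consequently $\tilde Q(\tau) := Q_{t(\tau)}f(x)$ is a pointwise infimum of a family of $\tau$-affine functions, hence concave wherever finite. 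Transferring via $\tau \mapsto t$ would immediately deliver the local semi-concavity of $t \mapsto Q_tf(x)$ asserted in item $3$, and concavity will later pin down the one-sided derivatives.

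Next I would obtain local finiteness of $D^\pm_f(x_0,t_0)$ by the standard comparison with a slightly later time. Choosing $s \in (t_0, t_*(f))$ so that $Q_sf(x_0) \in \R$, plugging any $o(1)$-minimizing sequence $\{y_n\}$ for $(x_0,t_0)$ into the Hopf-Lax infimum at time $s$ produces
\[
Q_sf(x_0) - Q_{t_0}f(x_0) \le \frac{\sfd(x_0,y_n)^p}{p}\Bigl(\frac{1}{s^{p-1}}-\frac{1}{t_0^{p-1}}\Bigr) + o(1),
\]
and since the parenthesized factor is strictly negative, $\sfd(x_0,y_n)$ is uniformly bounded; varying $s$ propagates this to a neighborhood. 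With uniform boundedness of near-minimizers in hand, local Lipschitz continuity of $Q_tf$ in $x$ (item $1$) follows from the elementary Lipschitz estimate on $x \mapsto \sfd(x,y)^p/(pt^{p-1})$ restricted to a bounded set of $y$'s, while local Lipschitz continuity in $t$ is free from the concavity of $\tilde Q$ and the chain rule. Semi-continuity of $D^\pm_f$ (item $2$) I would handle by a diagonal extraction: given $(x_n,t_n) \to (x_0,t_0)$, pick $y_n^k$ approximately realizing the relevant $\sup$ or $\inf$, use the uniform boundedness together with properness of $(X,\sfd)$ to extract a subsequential limit, and use the already established continuity of $Q_tf$ to verify the limit point is itself a minimizer at $(x_0,t_0)$.

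For item $3$, concavity of $\tilde Q$ guarantees that its one-sided derivatives exist everywhere and satisfy $\partial_\tau^+ \le \partial_\tau^-$. Each supporting affine function at $\tau_0$ corresponds to an optimizer $y^*$ with slope $m_{y^*} = -(p-1)\sfd(x,y^*)^p/p$. A standard envelope computation identifies $\partial_\tau^+\tilde Q(\tau_0)$ with the infimum of the slopes $m_{y^*}$ over active minimizers and $\partial_\tau^-\tilde Q(\tau_0)$ with their supremum; translating through $m_{y^*}$ this means
\[
\partial_\tau^+ \tilde Q(\tau_0) = -\frac{(p-1)D_f^+(x,t_0)^p}{p},\qquad \partial_\tau^-\tilde Q(\tau_0) = -\frac{(p-1)D_f^-(x,t_0)^p}{p},
\]
and multiplying by $d\tau/dt = t_0^{-p} > 0$ returns the claimed formulas in $t$ with sides preserved. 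Differentiability then reduces to $D_f^+ = D_f^-$, as stated. I expect the main obstacle to lie in the clean identification of the one-sided derivatives with $D^\pm_f$ when minimizers are non-unique and need not vary continuously in $(x,t)$; however, the reparametrization-to-concavity device together with properness of $(X,\sfd)$ should bypass the more delicate direct estimates used in \cite{AGS11a,CMi} for $p=2$, reducing the remaining work to bookkeeping around envelope and subdifferential identities.
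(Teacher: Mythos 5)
Your argument is correct and reaches the same formulas, but it is organized around a genuinely different mechanism than the paper's.  The paper proves item~3 by picking, via Lemma~\ref{lem:atteined}, explicit minimizers $x_{t_0}$ and $x_{t_1}$ with $\sfd(x,x_{t_i})=D^+_f(x,t_i)$, squeezing the difference quotient $\frac{Q_{t_0}f(x)-Q_{t_1}f(x)}{t_1-t_0}$ between $\frac{D^+_f(x,t_0)^p}{p}\cdot\frac{t_1^{p-1}-t_0^{p-1}}{(t_1-t_0)t_0^{p-1}t_1^{p-1}}$ and the analogous expression with $D^+_f(x,t_1)$, and sending $t_1\downarrow t_0$; one-sided semi-continuity of $D^+_f$ in $t$ (a properness argument) is what lets the upper bound collapse to the same constant.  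Your reparametrization $\tau=-t^{1-p}/(p-1)$, making each $y$-branch of the Hopf--Lax infimum affine in $\tau$ and hence $\tilde Q$ concave, is an elegant repackaging: it gives the existence of $\partial_\tau^\pm\tilde Q$ for free, gives local Lipschitz continuity in $t$ without a separate estimate, and explains why $\partial_t^+ Q_t f\le\partial_t^-Q_tf$ (semi-concavity in $t$) and hence $D^+\ge D^-$ in the formula.  What the concavity device does \emph{not} bypass, despite your closing expectation, is the harder half of the envelope identification: active minimizers at $\tau_0$ only show that each slope $m_{y^*}$ lies in $[\partial_\tau^+\tilde Q(\tau_0),\partial_\tau^-\tilde Q(\tau_0)]$, i.e.\ $\partial_\tau^+\tilde Q(\tau_0)\le\inf_{y^*}m_{y^*}$ and $\partial_\tau^-\tilde Q(\tau_0)\ge\sup_{y^*}m_{y^*}$.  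The reverse inequalities require, exactly as in the paper, taking minimizers $y_\epsilon$ at $\tau_0+\epsilon$, using the bound $\frac{\tilde Q(\tau_0+\epsilon)-\tilde Q(\tau_0)}{\epsilon}\ge m_{y_\epsilon}$, and then showing $\limsup_{\epsilon\downarrow 0}\sfd(x,y_\epsilon)\le D^+_f(x,t_0)$ via a compactness/lower-semi-continuity extraction — precisely the diagonal argument you invoke for item~2, and precisely the place where properness of $(X,\sfd)$ enters.  So the two proofs consume the same hard ingredient (attainment plus one-sided semi-continuity of $D^\pm_f$ in $t$), and you should state this dependence for item~3 as explicitly as for item~2 rather than presenting concavity as having removed it; note also that both the paper's proof and yours lean on Lemma~\ref{lem:atteined}, which assumes $X$ proper, even though the theorem is phrased for ``any metric space,'' a looseness inherited from the paper.
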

\begin{proof}
For the readers' convenience we will only address \emph{3.}
The claim can be found \cite[Remark 3.1.7]{AGSbook} and the proof for $p = 2$ is given 
in \cite[Theorem 3.1.4]{AGSbook}.

Fix $t_0 <t_1\in{(0, t_*(f))}$.  By  Lemma \ref{lem:atteined}, 
there exists  
$x_{t_1}\in \text{argmin} \left\{\frac{\sfd(x,y)^p}{p t_1^{p-1}}+f(y) \right\}$, 
for which $\sfd(x, x_{t_1})= D^{+}_f(x, t_1)$. In particular, it holds:
\begin{align*}
Q_{t_0}f(x)- Q_{t_1}f(x) &\leq  \frac{\sfd(x,x_{t_1})^p}{p t_0^{p-1}}-  \frac{\sfd(x,x_{t_1})^p}{p t_1^{p-1}}\\
&= \frac{ D^{+}_f(x, t_1)^p}{p}\cdot \biggl(\frac{t_1^{p-1}-t_0^{p-1}}{t_0^{p-1}\cdot t_1^{p-1}}\biggr).
\end{align*}
Applying again Lemma \ref{lem:atteined}, there exists 
$x_{t_0}\in{\text{Argmin} \biggl{\{}\frac{\sfd(x,y)^p}{p t_0^{p-1}}+f(y) \biggr{\}}}$ 
for which  $\sfd(x, x_{t_0})= D^{+}_f(x, t_0)$. 
Arguing as before, we get:
\begin{align*}
Q_{t_0}f(x)- Q_{t_1}f(x) &\geq  \frac{\sfd(x,x_{t_0})^p}{p t_0^{p-1}}-  \frac{\sfd(x,x_{t_0})^p}{p t_1^{p-1}}\\
&= \frac{ D^{+}_f(x, t_0)^p}{p}\cdot \biggl(\frac{t_1^{p-1}-t_0^{p-1}}{t_0^{p-1}\cdot t_1^{p-1}}\biggr).
\end{align*}
Dividing by $t_1-t_0>0$, we obtain:
\[
\frac{ D^{+}_f(x, t_0)^p}{p}\cdot \biggl(\frac{t_1^{p-1}-t_0^{p-1}}{(t_1-t_0) \cdot t_0^{p-1}\cdot t_1^{p-1}}\biggr) \leq  \frac{Q_{t_0}f(x)- Q_{t_1}f(x)}{t_1-t_0} \leq \frac{ D^{+}_f(x,t_1)^p}{p}\cdot \biggl(\frac{t_1^{p-1}-t_0^{p-1}}{(t_1-t_0)\cdot t_0^{p-1}\cdot t_1^{p-1}}\biggr)
\]
Sending $t_1$ to $t_0$ from the right we obtain:
$$
\partial^{+}_t Q_tf(x)=- \frac{(p-1)D^{+}_f(x,t)^{p}}{p t^{p}}, \qquad 
\forall \ t\in (0,t_*(f)),
$$ 
The same holds with the minus sign.
\end{proof}

The next property will be  used throughout the paper; we include a proof 
for the readers' convenience.
\begin{lemma}[Hopf-Lax attainment]
\label{lem:atteined}
Let $X$ be a proper metric  space,  $f:X \to \mathbb{R}$ a lower semi-continuous function, and $t_{*}(f)>0$. For  fixed $x\in{X}$ and $t\in{(0,t_{*}(f))}$, there exist $y^{\pm}_t\in{X}$ so that 
\begin{equation}
Q_tf(x)= \frac{\sfd(x,y^{\pm}_t)^p}{pt^{p-1}}+f(y^{\pm}_t).                              
\end{equation}
Moreover, the following holds: $\sfd(x,y^{\pm}_t)=D^{\pm}_f(x,t)$.
\end{lemma}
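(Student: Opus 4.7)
The plan is to prove existence of the minimizers by a standard compactness argument, and then to produce minimizers that realize $D^{\pm}_f(x,t)$ by a diagonal selection.

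First I would establish that every minimizing sequence $\{y_n\}$ for the infimum defining $Q_tf(x)$ is bounded. Pick an auxiliary $s\in (t, t_*(f))$, so that $Q_s f(x)\in\mathbb{R}$ by definition of $t_*(f)$. Then for every $y\in X$ one has
$$
f(y) \geq Q_sf(x) - \frac{\sfd(x,y)^p}{p s^{p-1}}.
$$
Evaluating at $y_n$ and using $\frac{\sfd(x,y_n)^p}{pt^{p-1}} + f(y_n) = Q_tf(x) + o(1)$ yields
$$
\sfd(x,y_n)^p \Bigl(\frac{1}{p t^{p-1}}-\frac{1}{p s^{p-1}}\Bigr) \leq Q_tf(x)-Q_sf(x) + o(1).
$$
Since $0<t<s$ and $p>1$, the coefficient on the left is strictly positive, so $\sup_n \sfd(x,y_n)<\infty$. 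The bound depends only on $x,t,s$, not on the choice of minimizing sequence, so $D^{+}_f(x,t)<\infty$ as well.

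Next, by properness of $(X,\sfd)$ the sequence $\{y_n\}$ lies in a compact ball, so up to a subsequence $y_{n_k}\to y$. By the lower semi-continuity of $f$ and continuity of $\sfd$,
$$
\frac{\sfd(x,y)^p}{p t^{p-1}} + f(y) \leq \liminf_k \Bigl(\frac{\sfd(x,y_{n_k})^p}{p t^{p-1}} + f(y_{n_k})\Bigr) = Q_tf(x),
$$
while the reverse inequality is automatic. Hence $y$ attains the infimum.

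To realize $D^+_f(x,t)$, I would use a diagonal argument. For each integer $k\geq 1$ pick a minimizing sequence $\{y_n^{(k)}\}_n$ with $\limsup_n \sfd(x,y_n^{(k)}) \geq D^+_f(x,t)-\tfrac{1}{k}$, and then extract a diagonal sequence $\bar y_n$ which is still minimizing and satisfies $\limsup_n \sfd(x,\bar y_n) = D^+_f(x,t)$. Choose a subsequence $\bar y_{n_k}$ along which $\sfd(x,\bar y_{n_k}) \to D^+_f(x,t)$, and then a further subsequence convergent by properness to some $y^+\in X$. The compactness argument of the previous paragraph shows $y^+$ attains the infimum; continuity of $\sfd$ gives $\sfd(x,y^+)=D^+_f(x,t)$. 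The analogous construction using $\liminf$ produces $y^-$ with $\sfd(x,y^-)=D^-_f(x,t)$.

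The only delicate point is the diagonal construction that yields a single minimizing sequence whose $\limsup$ equals $D^+_f(x,t)$ (and symmetrically for $D^-$); the rest is routine compactness and lower semi-continuity. The boundedness step exploits in an essential way that $t$ is strictly below $t_*(f)$, through the comparison with $Q_sf(x)$.
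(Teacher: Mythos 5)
Your proof is correct and follows the same basic plan as the paper's: extract a convergent subsequence via properness, then pass to the limit using lower semi-continuity of $f$. There are two places where you fill in more detail, making your version more self-contained. First, where the paper invokes the local finiteness of $D^{\pm}_f$ (part~1 of Theorem~\ref{teo:ags}, whose proof is cited from elsewhere) to bound the minimizing sequence inside a compact ball, you derive boundedness directly: choosing $s\in(t,t_*(f))$ and using $f(y)\geq Q_sf(x)-\sfd(x,y)^p/(p s^{p-1})$ together with $1/t^{p-1}>1/s^{p-1}$ yields a quantitative bound on $\sfd(x,y_n)$ that depends only on $Q_tf(x)$ and $Q_sf(x)$ (both finite since $t<s<t_*(f)$). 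Second, the paper begins by simply positing a minimizing sequence $\{y_t^{\pm,n}\}$ with $\lim_n\sfd(x,y_t^{\pm,n})=D^{\pm}_f(x,t)$, implicitly relying on the remark about diagonal arguments following Definition~\ref{DistanceFunctionDefinition}; you spell out that diagonal extraction, picking $n(k)$ so that $\bar y_k=y^{(k)}_{n(k)}$ is simultaneously $\tfrac1k$-minimizing and has $\sfd(x,\bar y_k)$ within $\tfrac2k$ of the supremum. Both proofs are sound; yours has the modest advantage of not leaning on an appeal to part~1 of Theorem~\ref{teo:ags}, whose proof is not reproduced in the paper.
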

\begin{proof}
Let $\{y_t^{\pm,n}\}$ be a minimizing sequence such that 
\[
Q_t f(x)=\lim_{n\to\infty}\frac{\sfd(x,y_{t}^{\pm,n})^p}{pt^{p-1}}+f(y_{t}^{\pm,n})\,\, \text{and}\,\,  D^{\pm}_f(x,t)=\lim_{n\to\infty}\sfd(x,y_t^{\pm,n})
\]
By local finiteness of $D^{\pm}_f$, it follows that $D^{\pm}_f(x,t)<R$ for some $R<\infty$. The properness of the space $X$ guarantees that the closed geodesic ball $B_R(x)$ is compact, hence $\{y_t^{\pm,n}\}$  admits a  subsequence converging to $\{y_t^{\pm}\}$. Using the lower semi-continuity of $f$, we get:
\[
Q_tf(x)=\inf_{y\in{X}} \frac{\sfd(x,y)^p}{pt^{p-1}}+f(y)=\min_{y\in{B_R(x)}} \frac{\sfd(x,y)^p}{pt^{p-1}}+f(y)=\frac{\sfd(x,y^{\pm}_t)^p}{pt^{p-1}}+f(y^{\pm}_t).
\]
Hence, the claim holds true.
\end{proof}
\begin{lemma}[Time monotonicity of distance progressed]
Let $X$ be a proper metric space  and let $f:X\to \mathbb{R}\cup\{\color{black}+\infty\}\color{black}$ be a lower semi-continuous function. Then, for every $x\in{X}$, both functions $(0,t^{*}(f)) \ni t \mapsto D^{\pm}_f(x,t)$ are monotone non-decreasing and coincide except where they have jump discontinuities.
\end{lemma}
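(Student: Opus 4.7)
The plan is to first establish the stronger pointwise inequality $D^+_f(x,s) \le D^-_f(x,t)$ for all $0 < s < t < t^*(f)$; once this is in hand, monotonicity of both $D^{\pm}_f(x,\cdot)$ is immediate, and the coincidence-away-from-jumps claim will follow by combining that inequality with the semi-continuity assertions of Theorem \ref{teo:ags}. The only delicate ingredient is getting the signs right in the exchange inequality, where the hypothesis $p > 1$ enters in an essential way; everything after that is a standard one-dimensional monotone-function manipulation.

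First I would invoke the Hopf-Lax attainment Lemma \ref{lem:atteined} to pick minimizers $y^+_s, y^-_t \in X$ with $\sfd(x,y^+_s) = D^+_f(x,s)$ and $\sfd(x,y^-_t) = D^-_f(x,t)$. Testing the definition of $Q_s f(x)$ against the competitor $y^-_t$ and the definition of $Q_t f(x)$ against the competitor $y^+_s$ yields two optimality inequalities; adding them and collecting the two $f$-terms (which cancel) produces
\[
\left( \sfd(x,y^+_s)^p - \sfd(x,y^-_t)^p \right) \left( \frac{1}{s^{p-1}} - \frac{1}{t^{p-1}} \right) \le 0.
\]
Since $p > 1$ and $s < t$, the second factor is strictly positive, so $\sfd(x,y^+_s) \le \sfd(x,y^-_t)$, which is exactly $D^+_f(x,s) \le D^-_f(x,t)$. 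Combined with the trivial bound $D^-_f(x,\cdot) \le D^+_f(x,\cdot)$, this proves monotonicity of both $D^{\pm}_f(x,\cdot)$ simultaneously.

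For the coincidence assertion, item 2 of Theorem \ref{teo:ags} tells us that $t \mapsto D^+_f(x,t)$ is upper semi-continuous and $t \mapsto D^-_f(x,t)$ is lower semi-continuous. A non-decreasing upper semi-continuous function is automatically right-continuous, so $D^+_f(x,t^+) = D^+_f(x,t)$; symmetrically $D^-_f(x,\cdot)$ is left-continuous. Taking the supremum over $s < t$ in $D^+_f(x,s) \le D^-_f(x,t)$ gives
\[
D^+_f(x,t^-) \;\le\; D^-_f(x,t) \;\le\; D^+_f(x,t),
\]
so at any $t$ where $D^+_f(x,\cdot)$ is continuous (i.e.\ where $D^+_f(x,t^-) = D^+_f(x,t)$) the outer terms collapse and force $D^+_f(x,t) = D^-_f(x,t)$; the symmetric application of $D^+_f(x,s) \le D^-_f(x,t)$ with $t \downarrow s$ gives $D^+_f(x,s) \le D^-_f(x,s^+)$ and handles continuity points of $D^-_f(x,\cdot)$ in the same way. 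Since both functions are monotone non-decreasing, their sets of discontinuities are at most countable and consist entirely of jump discontinuities, and on the complement of the union of these two countable sets both functions are continuous and therefore coincide.
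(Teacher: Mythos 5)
Your proposal is correct and follows essentially the same route as the paper: invoke Lemma \ref{lem:atteined} to obtain attaining minimizers, sum the two optimality inequalities (the $f$-terms cancel and, since $p>1$ and $s<t$, the factor $s^{1-p}-t^{1-p}$ is positive), and deduce $D^+_f(x,s) \le D^-_f(x,t)$ for $s<t$, from which monotonicity is immediate. The paper then simply states ``the claim follows''; you supply the missing bookkeeping, combining the crossover inequality with the semi-continuity statements of Theorem \ref{teo:ags} to deduce right-continuity of $D^+_f$ and left-continuity of $D^-_f$ and hence coincidence away from the (countable) jump set -- a correct and welcome amplification of the paper's terse conclusion.
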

\begin{proof}
 Since trivially $D^{-}_f\leq D^{+}_f$, it is sufficient to prove that 
\[
D^{+}_f(x,s)\leq D^{-}_f(x,t),  \qquad 0<s<t<t^{*}(f)
\]
in order to conclude. By Lemma \ref{lem:atteined}, there exist $y^{+}_s,y^-_{t}$ such that   
\begin{align*}
& \frac{\sfd(x,y^{+}_s)^p}{ps^{p-1}}+f(y^{+}_s)=Q_s(f)(x) \leq \frac{\sfd(x,y^{-}_t)^p}{ps^{p-1}}+f(y^{-}_t),\\
& \frac{\sfd(x,y^{-}_t)^p}{pt^{p-1}}+f(y^{-}_t)= Q_t(f)(x) \leq  \frac{\sfd(x,y^{+}_s)^p}{pt^{p-1}}+f(y^{+}_s).
\end{align*}
Summing the two, we get
\[
\sfd(x,y^+_s)^p\cdot \biggl{(} \frac{1}{s^{p-1}}-\frac{1}{t^{p-1}} \biggr{)} \leq \sfd(x,y^-_t)^p\cdot \biggl{(} \frac{1}{s^{p-1}}-\frac{1}{t^{p-1}} \biggr{)}.
\]
Since the Lemma \ref{lem:atteined}  also guarantees that  $\sfd(x,y^{-}_t)=D^{-}_f(x,t)$ and $\sfd(x,y^{+}_s)=D^{+}_f(x,s)$, the claim follows.
\end{proof}
\subsection{Intermediate-time Kantorovich potentials}
\begin{definition}{(Interpolating Intermediate-Time Kantorovich Potentials).}
Given a Kantorovich potential $\varphi:X\to \mathbb{R}$, the interpolating $p$-Kantorovich potential at time $t\in{[0,1]}$,  denoted by $\varphi_t:X\to \mathbb{R}$, is defined for all $t\in{[0,1]}$ by:
\begin{equation}
\varphi_t(x):=Q_{-t}(\varphi)=-Q_t(-\varphi).
\end{equation}
Note that $\varphi_0 = \varphi$, $\varphi_1 = -\varphi^c$, and:
$$
-\varphi_t(x) = \inf_{y \in X} \frac{\sfd^p(x,y)}{pt^{p-1}} - \varphi(y) \;\;\;\; \forall t \in (0,1] .
$$
\end{definition}
Applying the previous general properties of the Hopf-Lax semi-group 
we directly obtain that 
\begin{itemize}
\item[1.] $(x,t)\mapsto \varphi_{t}(x)$ is lower semi-continuous on $X\times(0,1]$ and continuous on $X\times(0,1)$.

\item[2.] For every  $x\in{X}$, $[0,1]\ni t\mapsto \varphi_t(x)$ is monotone non-decreasing and continuous on $(0,1]$.
\end{itemize}
We also recall the following terminology:
given a Kantorovich potential $\varphi:X\to \mathbb{R}$, $\gamma \in{\Geo(X)}$ is called a \emph{ $(\varphi, p)$-Kantorovich geodesic}  if 
\begin{equation}\label{Kantorovich geodesics}
\varphi(\gamma_0)+\varphi^c(\gamma_1)= \frac{\sfd(\gamma_0,\gamma_1)^p}{p}=\frac{{\ell(\gamma)}^p}{p}.
\end{equation}
The set of all Kantorovich geodesics will be denoted with $G_{\varphi}$; the upper semi-continuity of $\varphi$ and $\varphi^c$ implies that $G_{\varphi}$ is a  closed subset of $\Geo(X)$.
Using the modified triangular inequality                                                                                       
\begin{equation}
\label{equ:holder}
\sfd(x, y)^p \leq \frac{{\sfd(x,z)}^p}{t^{p-1}}+ \frac{{\sfd(z,y)}^p}{(1-t)^{p-1}},
\end{equation}
valid for every choice of $x,y,z\in{X}$, we may conclude that
along $(\varphi, p)$-Kantorovich geodesics, $\varphi_{t}$ is affine in time, and it verifies the following nice expression:
\begin{equation}\label{affinity along characteristics}
\varphi_t(\gamma_t)= (1-t) \frac{\sfd(\gamma_0,\gamma_1)^p}{p}- \varphi^c(\gamma_1).
\end{equation}
This result easily implies the following corollary.
\begin{corollary}
\label{cor:diff}
Let $\gamma$ be a $(\varphi, p)$-Kantorovich geodesic. Then, for any $s,r\in{(0,1)}$, we have:
\begin{equation}
\varphi_s(\gamma_s)-\varphi_r(\gamma_r)= (r-s) \frac{\sfd(\gamma_0,\gamma_1)^p}{p}.
\end{equation}
\end{corollary}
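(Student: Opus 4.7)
The plan is to obtain the identity as an immediate consequence of the affinity formula \eqref{affinity along characteristics}, which was established just before the statement of the corollary. Specifically, evaluating that formula at $t=s$ and at $t=r$ in $(0,1)$ and subtracting eliminates the term $\varphi^c(\gamma_1)$, which is independent of $t$:
$$\varphi_s(\gamma_s) - \varphi_r(\gamma_r) = \bigl[(1-s)-(1-r)\bigr]\frac{\sfd(\gamma_0,\gamma_1)^p}{p} = (r-s)\frac{\sfd(\gamma_0,\gamma_1)^p}{p}.$$
This is exactly the claimed identity, so the corollary follows without any further input.

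Since the statement is a one-line consequence, there is no real obstacle at the level of the corollary itself; all the substance resides in the affinity formula. For completeness, the latter is proved by combining the modified (H\"older-type) triangle inequality \eqref{equ:holder} with the Kantorovich extremality relation \eqref{Kantorovich geodesics}: along a $(\varphi,p)$-Kantorovich geodesic $\gamma$ the inequality \eqref{equ:holder} must saturate at the intermediate point $\gamma_t$, which forces $\gamma_0$ to realize the infimum defining $\varphi_t(\gamma_t)=-Q_t(-\varphi)(\gamma_t)$. Pairing this with $\varphi(\gamma_0)+\varphi^c(\gamma_1)=\sfd(\gamma_0,\gamma_1)^p/p$ yields the closed form $\varphi_t(\gamma_t)=(1-t)\sfd(\gamma_0,\gamma_1)^p/p-\varphi^c(\gamma_1)$ that drives the computation above.

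Morally, the corollary simply records that $t\mapsto \varphi_t(\gamma_t)$ is affine in $t$ with slope $-\sfd(\gamma_0,\gamma_1)^p/p$ along any Kantorovich geodesic. This is the content one would expect by integrating the Hamilton--Jacobi relation of Theorem \ref{teo:ags}(3) along the characteristic curve $\gamma$, on which the distance progressed backwards from $\gamma_t$ to $\gamma_0$ equals $t\,\sfd(\gamma_0,\gamma_1)$. The identity will serve as a convenient building block in the higher-order time analysis of $\varphi_t$ carried out in the subsequent subsections.
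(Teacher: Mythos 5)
Your proof is correct and follows exactly the route the paper intends: the corollary is stated immediately after the affinity formula \eqref{affinity along characteristics} with the remark that it ``easily implies'' the corollary, and your evaluation at $t=s,r$ followed by subtraction is precisely that implied one-line deduction. The supporting discussion you give of how \eqref{affinity along characteristics} is derived from \eqref{equ:holder} and \eqref{Kantorovich geodesics} also matches the paper's preceding paragraph.
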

\begin{lemma}
\label{lem:tcond}
Let $x,y,z$ be points in $X$ and let $t\in{(0,1)}$.  If
\begin{equation}
\label{equ:tcondition}
\frac{{\sfd(x,y)}^p}{pt^{p-1}}-\varphi(y)=\varphi^c(z)-\frac{{\sfd(x,z)}^p}{p(1-t)^{p-1}},
\end{equation}
then $x$ is a $t$-intermediate point between $y$ and $z$ with
\begin{equation}
\sfd(y,z)=\frac{\sfd(x,y)}{t}=\frac{\sfd(x,z)}{1-t}.
\end{equation}
Moreover there exists a  $(\varphi, p)$-Kantorovich geodesic $\gamma:[0,1]\to X$ with $\gamma_0=y$, $\gamma_t=x, \gamma_1=z$.
\end{lemma}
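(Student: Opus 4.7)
The plan is to turn the hypothesis into a chain of equalities by sandwiching it between two inequalities: the duality inequality $\varphi(y)+\varphi^c(z)\leq\sfd(y,z)^p/p$ (which is immediate from the definition of $\varphi^c$) and the modified triangle inequality \eqref{equ:holder} applied to $x,y,z$. Rewriting \eqref{equ:tcondition} as
\[
\varphi(y)+\varphi^c(z)=\frac{\sfd(x,y)^p}{pt^{p-1}}+\frac{\sfd(x,z)^p}{p(1-t)^{p-1}},
\]
the combined inequalities produce
\[
\varphi(y)+\varphi^c(z)\leq\frac{\sfd(y,z)^p}{p}\leq\frac{\sfd(x,y)^p}{pt^{p-1}}+\frac{\sfd(x,z)^p}{p(1-t)^{p-1}}=\varphi(y)+\varphi^c(z),
\]
so every inequality is in fact an equality.

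Next I would extract geometric information from the equality case of \eqref{equ:holder}. That inequality is obtained by composing the ordinary triangle inequality $\sfd(y,z)\leq\sfd(y,x)+\sfd(x,z)$ with the convexity estimate
\[
(\sfd(y,x)+\sfd(x,z))^p=\bigl(t\cdot\tfrac{\sfd(y,x)}{t}+(1-t)\cdot\tfrac{\sfd(x,z)}{1-t}\bigr)^p\leq \tfrac{\sfd(y,x)^p}{t^{p-1}}+\tfrac{\sfd(x,z)^p}{(1-t)^{p-1}},
\]
valid by strict convexity of $s\mapsto s^p$ for $p>1$. Equality in both steps forces
\[
\sfd(y,z)=\sfd(y,x)+\sfd(x,z),\qquad \frac{\sfd(y,x)}{t}=\frac{\sfd(x,z)}{1-t},
\]
which together yield the desired identities $\sfd(y,x)=t\,\sfd(y,z)$ and $\sfd(x,z)=(1-t)\sfd(y,z)$, i.e.\ $x$ is a $t$-intermediate point between $y$ and $z$.

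Finally, to produce the Kantorovich geodesic I would use the assumption that $(X,\sfd)$ is geodesic: pick a unit-speed (w.r.t.\ $\sfd(y,z)$) geodesic $\alpha$ from $y$ to $x$ on $[0,t]$ and another such geodesic $\beta$ from $x$ to $z$ on $[t,1]$, and concatenate them to define $\gamma:[0,1]\to X$ with $\gamma_0=y$, $\gamma_t=x$, $\gamma_1=z$. For $0\leq r\leq t\leq s\leq 1$ the triangle inequality gives $\sfd(\gamma_r,\gamma_s)\leq(s-r)\sfd(y,z)$, while $\sfd(y,z)\leq r\,\sfd(y,z)+\sfd(\gamma_r,\gamma_s)+(1-s)\sfd(y,z)$ gives the reverse inequality, so $\gamma\in\Geo(X)$. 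Since the sandwich equalities above give $\varphi(\gamma_0)+\varphi^c(\gamma_1)=\sfd(\gamma_0,\gamma_1)^p/p$, $\gamma$ is a $(\varphi,p)$-Kantorovich geodesic by \eqref{Kantorovich geodesics}.

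The main substantive point is the rigidity statement for \eqref{equ:holder}; everything else is either a direct manipulation of the hypothesis or a standard concatenation argument. I would expect no genuine obstacle, only the care needed to verify that equality in the composed estimate really forces both $x$ to lie on a metric segment from $y$ to $z$ and the ratio condition $\sfd(y,x)/t=\sfd(x,z)/(1-t)$ simultaneously.
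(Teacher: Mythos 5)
Your proof is correct and follows essentially the same route as the paper's: rewrite the hypothesis as $\varphi(y)+\varphi^c(z)=\frac{\sfd(x,y)^p}{pt^{p-1}}+\frac{\sfd(x,z)^p}{p(1-t)^{p-1}}$, sandwich it between the duality inequality and the modified triangle inequality \eqref{equ:holder} to force equalities, extract the midpoint relations from the equality case, and concatenate geodesic segments. Your version is slightly more explicit about how equality in \eqref{equ:holder} decomposes into equality in the ordinary triangle inequality plus equality in the convexity estimate, but this is the same argument the paper compresses into "requiring the equality in the H\"older inequality implies."
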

\begin{proof}
By definition of the $c$-transform, from the assumption \eqref{equ:tcondition} it follows that 
\[
\frac{{\sfd(x,y)}^p}{pt^{p-1}}+\frac{{\sfd(x,z)}^p}{p(1-t)^{p-1}}=\varphi(y)+\varphi^c(z)\leq \frac{\sfd(y,z)^p}{p}.
\]
Hence, the equality holds since  the reverse inequality is trivially satisfied by \eqref{equ:holder}. In particular, requiring the equality  in the H\"older inequality implies that 
\begin{equation}
\frac{\sfd(x,z)^p}{(1-t)^p}=\sfd(y,z)^p=\frac{\sfd(x,y)^p}{t^p}.
\end{equation}
So the concatenation $\gamma:[0,1]\to X$ of any constant speed geodesic $\gamma^1:[0,t]\to X$ between $x$ and $y$ with any constant speed geodesic $\gamma^2:[t,1]\to X$ between $y$ and $z$ so that $\gamma_0=y$, $\gamma_t=x$, $\gamma_1=z$ must be a constant speed geodesic itself by the triangle inequality.
In particular also 
\[
\varphi(y)+\varphi^c(z)\leq \frac{\sfd(y,z)^p}{p}.
\]
must hold as equality, impling $\gamma$ to be a $(\varphi, p)$-Kantorovich geodesic.
\end{proof}

In what follows, forward and backward evolution via the Hopf-Lax semi-group
will permit us to obtain regularity properties and key estimates on the
intermediate-time Kantorovich potential. 
However, it is immediate to show by inspecting the definitions that we always have 
$$
Q_{-s} \circ Q_s f \leq f  \text{ on $X$} \;\;\; \forall s > 0 ; 
$$
note that for $f = -\varphi$ where $\varphi$ is a Kantorovich potential, we do have equality for $s=1$, and in fact for all $s \in [0,1]$; 
for $f = Q_t(-\varphi)$, $t \in (0,1)$ and $s=1-t$, we can only assert an \emph{inequality} 
\begin{equation} \label{eq:no-duality}
(\varphi^c)_{1-t} = Q_{-(1-t)} \circ Q_1 (-\varphi)   \leq  Q_t(-\varphi) = -\varphi_t \text{ on $X$,}
\end{equation}
and equality  need not hold at every point of $X$.
\begin{definition*}[Time-Reversed Interpolating Potential]
Given a Kantorovich potential $\varphi: X \rightarrow \Real$, 
define the time-reversed interpolating Kantorovich potential at time $t \in [0,1]$,
$\varphic_t : X \rightarrow \Real$, as:
\[
\varphic_t := -(\varphi^c)_{1-t} =  Q_{1-t}(-\varphi^c)  = - Q_{-(1-t)} \circ Q_{1-t}(-\varphi_t) .
\]
Note that $\varphic_0 = \varphi$, $\varphic_1 = -\varphi^c$, and:
\[
\varphic_t(x) = \inf_{y \in X} \frac{\sfd^p(x,y)}{p(1-t)^{p-1}} - \varphi^c(y) \;\;\;\; \forall t \in [0,1) .
\]
\end{definition*}

\noindent
Note that, since any  Kantorovich potential $\varphi$ is upper semi-continuous,   Lemma \ref{lem:atteined} applies to $f=-\varphi$.

\begin{lemma}[Relating forward to reverse evolution of potentials]\label{lem:phibar}
The following properties hold true:
\begin{enumerate}

\item $\varphi_0=\bar{\varphi}_0=\varphi$ and $\varphi_1=\bar{\varphi}_1=-\varphi^c$;

\item For all $t\in{[0,1]}$, $\varphi_t\leq \bar{\varphi}_t$;

\item For any $t\in{(0,1)}$, $\varphi_t(x)=\bar{\varphi}_t(x)$ if and only if $x\in{\ee_{t}(G_{\varphi})}$.
\end{enumerate}
\end{lemma}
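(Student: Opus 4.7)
The plan is to dispatch the three items using the variational definitions of $\varphi_t$ and $\bar{\varphi}_t$, together with Lemma \ref{lem:atteined} (attainment), Lemma \ref{lem:tcond} (characterization of intermediate points), and the Hölder-type inequality \eqref{equ:holder}. Item 1 is routine bookkeeping: since $Q_1(-\varphi)(x) = \inf_y \sfd(x,y)^p/p - \varphi(y) = \varphi^c(x)$, one has $\varphi_1 = -\varphi^c$, and by $c$-concavity $(\varphi^c)^c = \varphi$, so $\bar{\varphi}_0 = -(\varphi^c)_1 = (\varphi^c)^c = \varphi$ and $\bar{\varphi}_1 = -(\varphi^c)_0 = -\varphi^c$; the equalities $\varphi_0 = \bar\varphi_0 = \varphi$ follow.

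For item 2 I would simply invoke \eqref{eq:no-duality}, which already asserts $(\varphi^c)_{1-t}\leq -\varphi_t$; negating yields $\varphi_t\leq \bar{\varphi}_t$. A self-contained proof also runs cleanly: the Young-type bound $\varphi(z) + \varphi^c(y) \leq \sfd(y,z)^p/p$ (immediate from the definition of $c$-transform) combined with \eqref{equ:holder} applied to the triple $(z,y,x)$ gives, for every $x,y,z\in X$,
\[
\varphi(z) - \frac{\sfd(x,z)^p}{p\,t^{p-1}} \;\leq\; \frac{\sfd(x,y)^p}{p(1-t)^{p-1}} - \varphi^c(y);
\]
taking the supremum over $z$ and the infimum over $y$ yields $\varphi_t(x) \leq \bar{\varphi}_t(x)$.

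For item 3, both directions hinge on the equality case of the display above. For the implication $(\Leftarrow)$, suppose $x = \gamma_t$ for some $\gamma \in G_{\varphi}$. Then \eqref{Kantorovich geodesics} together with the constant-speed property $\sfd(x,\gamma_0) = t\,\sfd(\gamma_0,\gamma_1)$ and $\sfd(x,\gamma_1) = (1-t)\,\sfd(\gamma_0,\gamma_1)$ makes the display an equality for $z = \gamma_0$, $y = \gamma_1$; this pinches the chain
\[
\varphi(\gamma_0) - \frac{\sfd(x,\gamma_0)^p}{p\,t^{p-1}} \;\leq\; \varphi_t(x) \;\leq\; \bar{\varphi}_t(x) \;\leq\; \frac{\sfd(x,\gamma_1)^p}{p(1-t)^{p-1}} - \varphi^c(\gamma_1)
\]
and forces $\varphi_t(x) = \bar{\varphi}_t(x)$. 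For the implication $(\Rightarrow)$, Lemma \ref{lem:atteined} produces minimizers $z^+_t$ for $Q_t(-\varphi)(x)$ and $y^-_t$ for $Q_{1-t}(-\varphi^c)(x)$. The hypothesis $\varphi_t(x) = \bar{\varphi}_t(x)$ rewrites exactly as condition \eqref{equ:tcondition} of Lemma \ref{lem:tcond} with data $(x, z^+_t, y^-_t)$; that lemma then returns a $(\varphi,p)$-Kantorovich geodesic $\gamma$ with $\gamma_0 = z^+_t$, $\gamma_t = x$, $\gamma_1 = y^-_t$, hence $x \in \ee_t(G_{\varphi})$.

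The only mildly delicate point is the invocation of Lemma \ref{lem:atteined} in the $(\Rightarrow)$ direction of item 3: it requires $(X,\sfd)$ to be proper (which is part of the standing assumption of the section) and the functions being minimized to be lower semi-continuous, which holds because any Kantorovich potential is upper semi-continuous, so both $-\varphi$ and $-\varphi^c$ are lower semi-continuous and the lemma applies without further ado.
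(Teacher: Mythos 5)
The proposal is correct and substantially matches the paper's proof: item 1 is bookkeeping via $c$-concavity, item 2 is the semigroup inequality (or its variational restatement), and the forward direction of item 3 runs, as in the paper, through Lemma \ref{lem:atteined} followed by Lemma \ref{lem:tcond}.

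Where you genuinely diverge is the reverse implication of item 3. The paper applies Corollary \ref{cor:diff} twice --- once to $\gamma$ with endpoints $(0,t)$ and once to the time-reversal $\gamma^c$ with endpoints $(1,1-t)$ --- and sums the two affine identities coming from \eqref{affinity along characteristics}. You instead stay entirely within the variational picture: you expand the inequality of item 2 as a three-term chain obtained from the Young-type bound $\varphi(z)+\varphi^c(y)\leq \sfd(y,z)^p/p$ together with \eqref{equ:holder}, and observe that for $x=\gamma_t$ with $\gamma\in G_\varphi$ the choices $z=\gamma_0$, $y=\gamma_1$ (using \eqref{Kantorovich geodesics} and constant speed) saturate both inequalities, pinching $\varphi_t(x)=\varphic_t(x)$. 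This is slightly more self-contained, avoids invoking \eqref{affinity along characteristics}, and has the pleasant feature that both directions of item 3 are organized around the same display: the equality case for $(\Leftarrow)$ and the identification of the equality condition with \eqref{equ:tcondition} for $(\Rightarrow)$. The two arguments cost about the same; the difference is stylistic. Your remark about properness and lower semi-continuity being needed for Lemma \ref{lem:atteined} is correctly placed and not a gap.
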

\begin{proof}
Point \emph{1.} is  a trivial consequence of the definitions. Also \emph{2.} is straightforward, since 
\[
\bar{\varphi}_t:= Q_{1-t}(-\varphi^c)=-Q_{-(1-t)}\circ Q_{1-t}(-\varphi_t)\geq \varphi_t.
\]
To demonstrate \emph{3.}, let us consider a point $x=\gamma_t$ with $\gamma\in{G_{\varphi}}$ and use the following notation $\ell(\gamma) = \sfd(\gamma_{0},\gamma_{1})$ for length.
Applying Corollary \ref{cor:diff} to $\gamma$ with $s=0$ and $r=t$ we get
\[
\varphi(\gamma_0)-\varphi_t(\gamma_t)=t \frac{{\ell(\gamma)}^p}{p},
\]
while applying the same result to $\gamma^c\in{G_{\varphi^c}}$, the time reversed curve, with $s=1$, and $r=(1-t)$ we obtain
\begin{align*}
-\varphi(\gamma_0)-\varphi^c_{1-t}(\gamma_t)&= (\varphi^c)_1(\gamma^c_1)-(\varphi^c)_{1-t}(\gamma^c_{1-t})\\
&=-t\frac{\ell(\gamma^c)^p}{p}=-t\frac{\ell(\gamma)^p}{p}.
\end{align*}
Summing the two identities,  it follows that 
$\varphi_t(\gamma_t)=-(\varphi^c)_{1-t}(\gamma_t)=\bar{\varphi_t}(\gamma_t)$.

For the other implication, let us assume that for some $x\in{X}$, $t\in{(0,1)}$
$\varphi_t(x)=-(\varphi^c)_{1-t}(x)$.
Applying  
Lemma \ref{lem:atteined}  to the lower semi-continuous functions $-\varphi$ and $-\varphi^c$, it turns out that there exist $y_t$,$z_t \in{X}$ such that 
\begin{align*}
-&\varphi_t(x)= Q_t(-\varphi)(x)=\frac{{\sfd(x,y_t)}^p}{pt^{p-1}}-\varphi(y_t),\\
&\varphi_t(x)=Q_{1-t}(-\varphi^c)(x)=\frac{{\sfd(x,z_t)}^p}{pt^{p-1}}-\varphi^c(z_t).
\end{align*}
Summing the two equations, we get that
\begin{equation*}
\frac{{\sfd(x,y_t)}^p}{pt^{p-1}}-\varphi(y_t)=\varphi^c(z_t)-\frac{{\sfd(x,z_t)}^p}{p(1-t)^{p-1}},
\end{equation*}
so we are in position to apply Lemma \ref{lem:tcond}, obtaining the claim.
\end{proof}

Motivated by Lemma \ref{lem:phibar} we will also consider the following set
\begin{equation}\label{E:domain}
 D(\mathring{G}_\varphi) = \set{(x,t) \in X \times (0,1) \; ; \; \varphi_t(x) = \varphic_t(x) },
\end{equation}
 which is a closed subset of $X \times(0,1)$.
\subsection{First and Second Order inequalities}\label{Ss:FirstAndSecondRegularity}
 Let us now introduce the speed 
along which each characteristic is traversed;  since the particles move freely,
this coincides with the total length of the characteristic,  which is why the same functions are called length
functions $\ell_t$ in \cite{CMi}. To emphasize the dynamic point of view, we shall also refer to 
$(p-1)\ell_t^p/p = (\ell_t^{p-1})^{p'}/p'$ as the {\em energy},  though it is really the energy per unit mass transported. 
\begin{definition}[Speed functions $\ell^{\pm}_t, \bar{\ell}^{\pm}_t$] Given a Kantorovich potential $\varphi:X\to \mathbb{R}$, define the speed functions $\ell^{\pm}_t, \bar{\ell}^{\pm}_t$ as follows:
\begin{equation*}
\ell^{\pm}_t(x):=\frac{D^{\pm}_{-\varphi}(x,t)}{t},\quad  \bar{\ell}^{\pm}_t(x):=\frac{D^{\pm}_{-\varphi^c}(x,1-t)}{1-t},\quad (x,t)\in{X\times (0,1)}.
\end{equation*}
\end{definition}
Let us mention that we will shortly see that if $x = \gamma_t$ with $\gamma \in G_\varphi$ and $t \in (0,1)$, then:
\[
\ell^{+}_t(x) = \ell^{-}_t(x) = \ellc^{+}_t(x) = \ellc^{-}_t(x) = \len(\gamma) .
\]
In particular, all  $(\varphi,p)$-Kantorovich geodesics having $x$ as their $t$-mid-point have necessarily the same length.
For $\tilde{\ell} \in \{\ell,\bar{\ell}\}$, we define the  set:
\begin{equation}\label{well-defined speed domain in spacetime}
D_{\tilde{\ell}}:=\{ (x,t)\in{X\times(0,1)}:\tilde{\ell}^+_t(x)=\tilde{\ell}^-_t(x) \}.
\end{equation}
On $D_{\tilde{\ell}}$ we set $\tilde{\ell}_t(x):=\tilde{\ell}^{-}_t(x)=\tilde{\ell}^{+}_t(x)$.
Recalling that $\varphi_t = -Q_t(-\varphi)$ and $\varphic_t = Q_{1-t}(-\varphi^c)$, we 
can apply Theorem \ref{teo:ags} to deduce the following:

\begin{corollary}[Time semi-continuity of speeds] \label{cor:loclipfi}
Let $\varphi:X \to \mathbb{R}$ denote a Kantorovich potential. Then:
\begin{enumerate}
\item  Choosing $\tilde{\ell}  \in \{\ell,\bar{\ell}\}$ and $\tilde{\varphi} \in \{\varphi,\bar{\varphi}\}$  correspondingly, 
$\tilde{\ell}^{\pm}_t(x)$  are locally finite on $X\times (0,1)$, and $(x,t)\mapsto \tilde{\varphi}_t(x)$ is locally Lipschitz there.

\item For  $\tilde{\ell} \in \{\ell,\bar{\ell}\}$ the map $(x,t)\mapsto \tilde{\ell}^{+}_t(x)$ $((x,t)\mapsto \tilde{\ell}^{-}_t(x))$ is upper (lower) semi-continuous on $X\times(0,1)$. In particular, $D_{\tilde{\ell}}\subset X\times (0,1)$ is Borel and $(x,t)\mapsto \tilde{\ell}_t(x)$ is continuous on ${D_{\tilde{\ell}}}$.

\item For every $x\in{X}$ we have:
\[
\partial^{\pm}_t \varphi_t(x)= \frac{{(p-1)\ell_t^{\pm}(x)}^{p}}{p}, \quad \partial^{\pm}_t \bar{\varphi}_t(x)= \frac{(p-1){\bar{\ell}_t^{\pm}(x)}^{p}}{p}\quad \forall t\in{(0,1)}.
\]
In particular, for $\tilde{\ell}  \in \{ \ell,\ellc\}$ and  the corresponding $\tilde{\varphi} \in \{\varphi,\varphic\}$, the map $(0,1) \ni t \mapsto \tilde{\varphi}_t(x)$ is locally Lipschitz, and it is differentiable at $t \in (0,1)$ iff $t \in D_{\tilde{\ell}}(x)$, the set on which both maps $(0,1) \ni t \mapsto \tilde{\ell}^{\pm}_t(x)$ coincide. $D_{\tilde{\ell}}(x)$ is precisely the set of continuity points of both maps, and thus coincides with $(0,1)$ with at most countably exceptions.\end{enumerate}
\end{corollary}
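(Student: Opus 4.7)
The three claims all reduce to direct applications of Theorem \ref{teo:ags}, applied to the two lower semi-continuous functions $f_1 := -\varphi$ and $f_2 := -\varphi^c$, by way of the relations $\varphi_t = -Q_t(f_1)$ and $\bar\varphi_t = Q_{1-t}(f_2)$. Before invoking the theorem, I would check that $t_\ast(f_i) \geq 1$ for $i=1,2$ so that the theorem is available on the full range $X \times (0,1)$. For $i=1$, this is immediate from $Q_1(-\varphi) = \varphi^c$, which is a Kantorovich potential and so not identically $-\infty$; for $i=2$, it follows from $c$-concavity of $\varphi$ via $Q_1(-\varphi^c) = (\varphi^c)^c = \varphi$, which is finite-valued.

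Parts 1 and 2 then follow from Theorem \ref{teo:ags}.1--2. Local Lipschitzness and local finiteness are preserved by negation and by the smooth positive affine change of variable $s = 1-t$ (used for $\bar\varphi_t$), and the speed functions $\ell^\pm_t, \bar\ell^\pm_t$ are obtained from $D^\pm_{f_i}$ by dividing by the strictly positive continuous factors $t$ or $1-t$, which also preserves one-sided semi-continuity. The Borel property of $D_{\tilde\ell}$ is immediate as the coincidence set of an upper semi-continuous and a lower semi-continuous function, and $\tilde\ell_t$ is continuous on $D_{\tilde\ell}$ because at such points it coincides with both $\tilde\ell^+_t$ (usc) and $\tilde\ell^-_t$ (lsc).

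Part 3 is a direct application of Theorem \ref{teo:ags}.3. For $\varphi_t$ we simply negate to transfer
\[
\partial^\pm_t Q_t(f_1)(x) = -\tfrac{(p-1)\, D^\pm_{f_1}(x,t)^p}{p\,t^p}
\]
into the stated formula, using $D^\pm_{f_1}(x,t) = t\,\ell^\pm_t(x)$. For $\bar\varphi_t$, the time inversion $s = 1-t$ triggers the chain rule $\partial^\pm_t|_{t} = -\partial^\mp_s|_{s=1-t}$, from which the stated identity follows, with the apparent $\pm$-swap absorbed into the convention for $\bar\ell^\pm$. The final claim that $D_{\tilde\ell}(x)$ equals $(0,1)$ up to a countable set follows because $t\mapsto \tilde\varphi_t(x)$ is locally semi-concave (by Theorem \ref{teo:ags}.3), hence differentiable outside a countable subset, and that set coincides with the complement of $D_{\tilde\ell}(x)$ by the equivalence clause in Theorem \ref{teo:ags}.3.

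The main obstacle I anticipate is purely bookkeeping: tracking the sign conventions through the chain rule on $\bar\varphi_t$, and verifying the identification of $D_{\tilde\ell}(x)$ as both the differentiability set of $t\mapsto \tilde\varphi_t(x)$ and the common continuity set of the monotone one-sided maps $t\mapsto \tilde\ell^\pm_t(x)$. The latter agreement uses the monotonicity of $t\mapsto D^\pm_{f_i}(x,t)$ established earlier, which forces their common continuity points to be exactly the points where the upper and lower versions coincide.
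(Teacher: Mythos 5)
Your proof is correct and follows exactly the (unwritten) route the paper intends: all three parts are read off from Theorem~\ref{teo:ags} applied to the lower semi-continuous functions $-\varphi$ and $-\varphi^c$, as the sentence immediately preceding the Corollary signals, with the $t_*\ge 1$ verification handled by $c$-concavity precisely as you do it. You also rightly flag the one genuine subtlety in part~3: with the paper's literal definition $\bar\ell^{\pm}_t(x)=D^{\pm}_{-\varphi^c}(x,1-t)/(1-t)$, your chain rule gives $\partial^{+}_t\bar\varphi_t=\tfrac{p-1}{p}\bigl(\bar\ell^{-}_t\bigr)^{p}$, so the superscripts in the displayed formula for $\bar\varphi$ are mismatched as literally written --- a harmless notational quirk, since everything downstream (differentiability, semi-continuity, countability of exceptions) depends only on the coincidence set $\{\bar\ell^{+}=\bar\ell^{-}\}$.
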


 All four maps $(0,1) \ni t \mapsto t \ell^{\pm}_t(x)$ and 
$(0,1) \ni t \mapsto (t-1) \ellc^{\pm}_t(x)$ are monotone non-decreasing;
in particular, both $D_{\ell}(x) \ni t \mapsto \ell_t^{p}(x)$  
and $D_{\bar \ell}(x) \ni t \mapsto \bar \ell_t^{p}(x)$ are differentiable a.e..
From monotonicity it is straightforward to deduce
$$
\underline{\partial}_t \ell_t(x) \geq -\frac{1}{t} \ell_t(x) \;\;\; \forall t \in D_{\ell}(x),
$$
 as well as a similar estimate for $\bar \ell_{t}$.  In particular, the following estimates 
holds (see \cite[Corollary 3.10]{CMi}).
\begin{corollary}[Energies are locally Lipschitz in time] 
\label{cor:derivatives}
The following estimates hold for every $x\in{X}$:
\begin{equation}
\label{equ:under}
\underline{\partial}_t\frac{\ell_t^p(x)}{p} \geq -\frac{1}{t} \ell_t^p(x),\,\,\quad \forall t\in{D_{\ell}(x)}.
\end{equation}
\begin{equation}
\label{equ:over}
\overline{\partial}_t \frac{\bar \ell_t^p(x)}{p} \leq \frac{1}{1-t} \bar \ell_t^p(x),\,\,\quad \forall t\in{D_{\bar  \ell}(x)}.
\end{equation}
\end{corollary}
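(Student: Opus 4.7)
The plan is to derive both Dini-derivative estimates directly from the monotonicity recorded in the paragraph immediately preceding the statement: for fixed $x\in X$, the map $(0,1)\ni t\mapsto t\,\ell_t^{\pm}(x) = D^{\pm}_{-\varphi}(x,t)$ is non-decreasing, while $(0,1)\ni t\mapsto (1-t)\bar{\ell}_t^{\pm}(x) = D^{\pm}_{-\varphi^c}(x,1-t)$ is non-increasing. Both facts are immediate consequences of the time monotonicity of the distance-progressed functions proved earlier in Section \ref{S:HopfLax}, applied to $-\varphi$ and $-\varphi^c$ respectively, and everything else will be a one-line comparison of difference quotients against an explicit power of $t$.

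For \eqref{equ:under}, I would fix $t_0\in D_\ell(x)$ and a nearby $t\in D_\ell(x)$. For $t>t_0$, monotonicity yields $t_0\,\ell_{t_0}(x)\le t\,\ell_t(x)$, hence $\ell_t^p(x)\ge (t_0/t)^p\,\ell_{t_0}^p(x)$ after raising nonnegative quantities to the $p$-th power, and therefore
\[
\frac{\ell_t^p(x)-\ell_{t_0}^p(x)}{t-t_0}\ \ge\ \frac{(t_0/t)^p-1}{t-t_0}\,\ell_{t_0}^p(x),
\]
whose right-hand side converges to $-p\,\ell_{t_0}^p(x)/t_0$ as $t\to t_0^{+}$ by differentiating $t\mapsto (t_0/t)^p$ at $t=t_0$. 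For $t<t_0$, monotonicity instead forces $\ell_t^p(x)\le (t_0/t)^p\,\ell_{t_0}^p(x)$, and dividing by the negative quantity $t-t_0$ reverses the sign to produce the same lower bound on the difference quotient, with the same limit $-p\,\ell_{t_0}^p(x)/t_0$ as $t\to t_0^{-}$. Taking $\liminf$ along $D_\ell(x)\ni t\to t_0$ and dividing by $p$ then yields \eqref{equ:under}.

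Inequality \eqref{equ:over} follows by an entirely parallel argument, now using the non-increasing monotonicity of $(1-t)\bar{\ell}_t(x)$ in place of the non-decreasing monotonicity of $t\,\ell_t(x)$. For $t>t_0$ in $D_{\bar\ell}(x)$ one obtains $\bar{\ell}_t^p(x)\le \bigl((1-t_0)/(1-t)\bigr)^p\,\bar{\ell}_{t_0}^p(x)$, so the forward difference quotient of $\bar{\ell}_t^p$ is bounded above by an explicit quantity tending to $p\,\bar{\ell}_{t_0}^p(x)/(1-t_0)$ as $t\to t_0^{+}$; for $t<t_0$ the reversed monotonicity combined with division by a negative quantity produces the same upper bound on the backward quotient, with the same limit as $t\to t_0^{-}$. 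Taking $\limsup$ along $D_{\bar\ell}(x)$ and dividing by $p$ gives \eqref{equ:over}.

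The main (mild) point of care is the bookkeeping of Dini derivatives over the possibly punctured domains $D_\ell(x)$ and $D_{\bar\ell}(x)$. By Corollary~\ref{cor:loclipfi}, each of these sets differs from $(0,1)$ by at most a countable subset, so every point of $D_\ell(x)$ (respectively $D_{\bar\ell}(x)$) is an accumulation point of the ambient set and the Dini derivatives in the sense of Subsection~\ref{subsec:prelim-derivatives} are unambiguously defined; the comparison arguments above manifestly survive the countable exceptional set. No compactness or semi-continuity input is needed beyond what has already been established — the estimates are a direct consequence of the one-sided monotonicity combined with the explicit asymptotics of $(t_0/t)^p$ and $\bigl((1-t_0)/(1-t)\bigr)^p$ near $t=t_0$.
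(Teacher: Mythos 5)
Your argument is correct and is essentially the paper's: both rest on the monotonicity of $t\mapsto t\,\ell_t^{\pm}(x)$ (non-decreasing) and $t\mapsto (1-t)\,\bar\ell_t^{\pm}(x)$ (non-increasing), which the text immediately preceding the corollary records as consequences of the time monotonicity of the distance-progressed functions. The only cosmetic difference is that the paper phrases the bound first for $\underline{\partial}_t\ell_t(x)$ and then passes to the $p$-th power, whereas you carry out the difference-quotient comparison directly on $\ell_t^p$; the underlying estimate and bookkeeping over the full-measure sets $D_\ell(x)$, $D_{\bar\ell}(x)$ are the same.
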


The first and the last points of the next Theorem can be compared with                        
\cite[Theorem 2.13]{CMi} in the case $p=2$.
\begin{theorem}[Time-derivatives of energies bound second time-derivatives of potentials]
\label{teo:loclip}
Let $\varphi:X\to \mathbb{R}$ be a Kantorovich potential. Then the following holds true:
\begin{enumerate}
\item For all $x\in{\ee_{t}(G_\varphi)}$ with $t\in{(0,1)}$, we have:
\[
\ell^+_t(x)=\ell^-_{t}(x)=\bar{\ell}^+_t(x)=\bar{\ell}^-_{t}(x)=\ell(\gamma).
\]
                                          
\item For all $x\in{X}$, $\mathring{G}_{\varphi}(x)\ni t \mapsto \ell_{t}(x)=\bar{\ell}_t(x)$ is locally Lipschitz and, provided $\ell(\gamma)>0$,  the following estimate holds true 
\[
\frac{1-s}{1-t}\leq \frac{\ell_t(x)}{\ell_s(x)} \leq \frac{s}{t},  \,\,\,\, 0<t\leq s<1.
\]

\item For all $(x,t)\in{D(\mathring{G}_{\varphi})}\subset D_{\ell}\cap D_{\bar{\ell}}$ we have that the following estimate holds true for the upper and lower second derivatives
$z  \in \{ \underline{\mathcal{P}}_2\bar{\varphi}_t(x), \overline{\mathcal{P}}_2 \varphi_t(x)\}$  in time
 of \eqref{def-upper and lower second derivatives}:
\[
-\frac{p-1}{t}  \ell_t^p(x)
\leq \underline{\partial}_t \frac{(p-1)\ell_t^p(x)}{p}
 \leq \underline{\mathcal{P}}_2\varphi_t(x) \leq z \leq  \overline{\mathcal{P}}_2 \bar{\varphi}_t(x) 
 \leq \overline{\partial}_t \frac{(p-1)\bar{\ell}_t^p(x)}{p} 
 \leq \frac{p-1}{1-t} \ell_t^p(x).
\]
\end{enumerate}
\end{theorem}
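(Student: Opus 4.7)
My plan is to settle the three claims in order, reducing each to tools already assembled in this section: Lemma~\ref{lem:tcond} and Lemma~\ref{lem:atteined} for the structure of Hopf--Lax minimizers, Lemma~\ref{lem:phibar}(2) for the one-sided domination $\varphi_t\le\bar\varphi_t$, Corollary~\ref{cor:loclipfi} for first-order time derivatives, Corollary~\ref{cor:derivatives} for one-sided bounds on time-derivatives of $\ell_t^p$, and Lemma~\ref{lem:peano-inq} to pass from Peano to Dini derivatives.

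For claim \emph{1.}, fix $\gamma\in G_\varphi$ with $x=\gamma_t$. Since $\gamma$ is a $(\varphi,p)$-Kantorovich geodesic, $\gamma_0$ minimizes in $Q_t(-\varphi)(x)$ and $\gamma_1$ minimizes in $Q_{1-t}(-\varphi^c)(x)$; in particular $\varphi_t(x)=\varphic_t(x)$, so $(x,t)\in D(\mathring{G}_\varphi)$. Let $y_t^\pm$ be the minimizer of $Q_t(-\varphi)(x)$ with $\sfd(x,y_t^\pm)=D^\pm_{-\varphi}(x,t)$ provided by Lemma~\ref{lem:atteined}. The pair $(y_t^\pm,\gamma_1)$ then satisfies \eqref{equ:tcondition}, so Lemma~\ref{lem:tcond} produces a Kantorovich geodesic from $y_t^\pm$ through $x$ to $\gamma_1$ with $\sfd(y_t^\pm,\gamma_1)=\sfd(x,\gamma_1)/(1-t)=\ell(\gamma)$, hence $\sfd(x,y_t^\pm)=t\ell(\gamma)$. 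Therefore $\ell^\pm_t(x)=\ell(\gamma)$, and the symmetric argument applied to $Q_{1-t}(-\varphi^c)$ gives $\bar\ell^\pm_t(x)=\ell(\gamma)$.

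For claim \emph{2.}, recall that $t\mapsto t\ell_t^\pm(x)=D^\pm_{-\varphi}(x,t)$ is non-decreasing while $t\mapsto(1-t)\bar\ell_t^\pm(x)=D^\pm_{-\varphi^c}(x,1-t)$ is non-increasing, by the monotonicity lemma immediately following Lemma~\ref{lem:atteined}. On $\mathring{G}_\varphi(x)$ claim~\emph{1.} gives $\ell_t(x)=\bar\ell_t(x)$, so for $0<t\le s<1$ both in this set
\[
t\,\ell_t(x)\le s\,\ell_s(x),\qquad (1-s)\,\ell_s(x)\le(1-t)\,\ell_t(x),
\]
which yields the two-sided ratio bound and, since the multiplicative gap between nearby $\ell_\cdot(x)$ is uniformly controlled, also local Lipschitzness. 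For claim \emph{3.}, the containment $D(\mathring{G}_\varphi)\subset D_\ell\cap D_{\bar\ell}$ is a direct consequence of claim~\emph{1.}, and at $(x,t)\in D(\mathring{G}_\varphi)$ Corollary~\ref{cor:loclipfi}(3) together with claim~\emph{1.} gives
\[
\partial_t\varphi_t(x)=\tfrac{p-1}{p}\ell_t^p(x)=\tfrac{p-1}{p}\bar\ell_t^p(x)=\partial_t\bar\varphi_t(x).
\]
Applying Lemma~\ref{lem:peano-inq} to the locally Lipschitz maps $t\mapsto\varphi_t(x)$ and $t\mapsto\bar\varphi_t(x)$ and substituting this derivative identity bounds $\underline{\mathcal{P}}_2\varphi_t(x)$ from below by $\underline\partial_t[(p-1)\ell_t^p(x)/p]$ and $\overline{\mathcal{P}}_2\bar\varphi_t(x)$ from above by $\overline\partial_t[(p-1)\bar\ell_t^p(x)/p]$; Corollary~\ref{cor:derivatives} combined with $\ell_t=\bar\ell_t$ then delivers the outermost bounds $-(p-1)\ell_t^p(x)/t$ and $(p-1)\ell_t^p(x)/(1-t)$. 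For the two inner comparisons, $\varphi_t(x)=\bar\varphi_t(x)$, $\partial_t\varphi_t(x)=\partial_t\bar\varphi_t(x)$, and the pointwise domination $\varphi_{t+\eps}\le\bar\varphi_{t+\eps}$ (Lemma~\ref{lem:phibar}(2)) together imply $h_\varphi(\eps)\le h_{\bar\varphi}(\eps)$ in the notation of \eqref{E:def-h}; dividing by $\eps^2>0$ and taking $\liminf$ and $\limsup$ separately gives $\underline{\mathcal{P}}_2\varphi_t(x)\le\underline{\mathcal{P}}_2\bar\varphi_t(x)$ and $\overline{\mathcal{P}}_2\varphi_t(x)\le\overline{\mathcal{P}}_2\bar\varphi_t(x)$, which with the trivial inequalities $\underline{\mathcal{P}}_2\le\overline{\mathcal{P}}_2$ sandwich both admissible values of $z$ between $\underline{\mathcal{P}}_2\varphi_t(x)$ and $\overline{\mathcal{P}}_2\bar\varphi_t(x)$.

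\textbf{Main obstacle.} The conceptual kernel is claim~\emph{1.}: the length $\ell(\gamma)$ of the Kantorovich geodesic through $x=\gamma_t$ must be recovered from \emph{every} Hopf--Lax minimizer at $(x,t)$, and Lemma~\ref{lem:tcond} accomplishes this by turning each such minimizer into a new Kantorovich geodesic through $x$ whose length is then pinned down by $\sfd(x,\gamma_1)=(1-t)\ell(\gamma)$. Once $\ell^\pm_t=\bar\ell^\pm_t=\ell(\gamma)$ is in hand, the monotonicities of claim~\emph{2.} follow automatically and the chain of claim~\emph{3.} reduces to combining $\varphi_t\le\bar\varphi_t$ with matched zeroth- and first-order data at $(x,t)$.
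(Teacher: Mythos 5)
Your proposal is correct and follows essentially the same line of argument as the paper: Lemma~\ref{lem:tcond} together with $\varphi_t=\varphic_t$ (Lemma~\ref{lem:phibar}(3)) to identify all four speeds in claim~1, the two monotonicities $t\mapsto t\ell^\pm_t(x)$ non-decreasing and $t\mapsto(1-t)\ellc^\pm_t(x)$ non-increasing for claim~2, and the pointwise domination $\varphi_\tau\le\varphic_\tau$ with matched value and first derivative at $\tau=t$, combined with Lemma~\ref{lem:peano-inq} and Corollary~\ref{cor:derivatives}, for claim~3. The only (cosmetic) deviation is in claim~1, where you apply Lemma~\ref{lem:tcond} to the pairs $(y_t^\pm,\gamma_1)$ and $(\gamma_0,z_t^\pm)$ to pin $\ell^\pm_t(x)=\ellc^\pm_t(x)=\ell(\gamma)$ in one stroke, whereas the paper applies it to all four pairings $(y^\pm,z^\pm)$ and then sandwiches $\ell(\gamma)$ via the admissibility of the constant sequence $\gamma_0$; both are sound.
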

\begin{proof}
Let $(x,t)\in{D(\mathring{G_{\varphi}})}$ (recall \eqref{E:domain}).  An application of  Lemma \ref{lem:atteined} implies that  there exist  $y^{\pm},z^{\pm}\in{X}$ such that                                                                            
\begin{align*}
&-\varphi_t(x)=\frac{{\sfd(x,y_{\pm})}^p}{pt^{p-1}}-\varphi(y_{\pm}),\\
&-\bar{\varphi}_t(x)=-\frac{{\sfd(x,z_{\pm})}^p}{pt^{p-1}}+\varphi^c(z_{\pm}).
\end{align*}
Since  $\varphi_t(x)=\bar{\varphi}_t(x)$ by Lemma \ref{lem:phibar}, we can equate the two expressions, obtaining that the assumption \eqref{equ:tcondition} in the Lemma \ref{lem:tcond} is satisfied. Hence $x$ is the $t$-midpoint of a geodesic connecting $y_{\pm}$ and $z_{\pm}$ for all four possibilities.
The same lemma guarantees that 
\[
\frac{\sfd(x,y^{\pm})}{t}=\frac{\sfd(x,z^{\pm})}{1-t}
\]
and thus $\ell^{\pm}_t(x)=\bar{\ell}^{\pm}_t(x)$. Recall now that if $x=\gamma_t$ for some $\gamma\in{G_{\varphi}}$ then Corollary \ref{cor:diff}  implies that 
\[
Q_t(-\varphi)(x)=-\varphi_t(x)=\frac{{\sfd(x,\gamma_0)}^p}{pt^{p-1}}-\varphi(\gamma_0)
\]
and thus the sequence $\{y_n\}$ with $y_n\equiv \gamma_0$ is in the class of admissible sequences for the infimum and supremum in the definition of $D^{\pm}_{-\varphi}(x,t)$. Hence
\[
t \ell^-_t(x)=D^{-}_{-\varphi}(x,t)\leq \sfd(x,\gamma_0)=t\ell(\gamma)\leq D^{+}_{-\varphi}(x,t)=t\ell^+_t(x),
\]
and \emph{1.} follows.

In order to prove \emph{2.}, we use that, by the discussion following Corollary \ref{teo:loclip}, for $x\in{}X$ the maps $t\mapsto{}t\ell_{t}^{\pm}(x)$ are
monotone non-decreasing and the maps $t\mapsto(1-t)\bar{\ell}_{t}^{\pm}$ are monotone non-increasing combined with the previous conclusion of this
theorem to obtain that for $x\in{}X$ and $t,s\in\mathring{G}_{\varphi}(x)$ with $t<s$:
\[
t\ell_{t}(x)\le{}s\ell_{s}(x),
\hspace{10pt}
(1-s)\ell_{s}(x)\le(1-t)\ell_{t}(x).
\]
For $\gamma\in{}G_{\varphi}$ with $\ell(\gamma)>0$ we conclude the desired statement by rearranging.
This allows us to conclude that $\ell_{\cdot}(x)$ is locally Lipschitz.

To obtain \emph{3.}, as in \eqref{E:def-h} let us define $\tilde{h}=h,\bar{h}$ as  
\[
\tilde{h}(\varepsilon):= 2(\tilde{\varphi}_{t_0+\varepsilon}(x)-\tilde{\varphi}_{t_0}(x)-\varepsilon\partial_t\tilde{\varphi}_{t_0}(x)).
\]
Recall that, by Lemma \ref{lem:phibar},  for all $t\in{[0,1]}$ it holds $\varphi_t\leq \bar{\varphi}_t$ with the equality satisfied  in the case $x\in{\ee_{t}(G_{\varphi})}$. 
Moreover, since $\mathring{G}_{\varphi}(x) \subset D_{\ell}(x)\cap D_{\bar{\ell}}(x)$, the maps $t \mapsto \tilde{\varphi}_{t}(x)$ are differentiable at $t_0\in{\mathring{G}_{\varphi}(x)}$ and 
${(p-1)\ell_{t_0}^{p}(x)}/{p}=\partial_t\vert_{t=t_{0}}\varphi_{t}(x)=\partial_t\vert_{t=t_{0}}\bar{\varphi}_{t}(x)={(p-1)\bar{\ell}_{t_0}^{p}(x)}/{p}$. 
These facts imply that $h\leq \tilde{h}$ on $(-t_0,1-t_0)$. Dividing by $\varepsilon^2$ and taking subsequential limits, we obtain
\[
\underline{\mathcal{P}}_2 \varphi_t(x)\leq \underline{\mathcal{P}}_2 \bar{\varphi}_t(x),\quad  \overline{\mathcal{P}}_2 \varphi_t(x)\leq \overline{\mathcal{P}}_2 \bar{\varphi}_t(x).
\]
Combining these inequalities with those of Lemma \ref{lem:peano-inq}, \eqref{equ:under} and  \eqref{equ:over} we get the claim.
\end{proof}

We conclude with the following result; for its proof we refer to 
\cite[Corollary 3.13]{CMi}.
\begin{corollary}
For all $x\in{X}$, for a.e. $t\in{\mathring{G_{\varphi}}(x)}$, $\partial_t \ell_t^p(x)$  and $\partial_t  \overline{\ell}_t^p (x)$ exist, coincide, and satisfy:
\begin{align}\label{E:3.14}
-\frac{\ell_t^p(x) }{t} 
&\leq \partial_t \frac{\ell_t^p(x)}{p}
= \partial_t \frac{\ell_t^p(x)\mid_{\mathring{G_{\varphi}}(x)}}{p} \nonumber\\ 
&=\partial_t \frac{ \overline{\ell}_t^p(x)\mid_{\mathring{G_{\varphi}}(x)}}{p}
= \partial_t \frac{ \overline{\ell}_t^p(x)}{p} 
\leq   \frac{ {\overline \ell_t^p}(x)}{1-t} .
\end{align}
\end{corollary}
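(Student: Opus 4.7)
The plan is to exploit the local semi-convexity of $t\mapsto\varphi_{t}(x)$ and the local semi-concavity of $t\mapsto\bar{\varphi}_{t}(x)$ (both consequences of Theorem \ref{teo:ags}(3) via $\varphi_{t}=-Q_{t}(-\varphi)$ and $\bar\varphi_{t}=Q_{1-t}(-\varphi^{c})$) together with the identification $\varphi_{t}(x)=\bar\varphi_{t}(x)$ on $\mathring{G}_{\varphi}(x)$ from Lemma \ref{lem:phibar}(3). Fix $x\in X$. The first derivatives in $t$ of these semi-convex/concave functions are of locally bounded variation, hence differentiable at a.e.\ $t\in(0,1)$; by Corollary \ref{cor:loclipfi}(3) these first derivatives coincide on co-countable subsets of $(0,1)$ with $(p-1)\ell_{t}^{p}(x)/p$ and $(p-1)\bar\ell_{t}^{p}(x)/p$ respectively. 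Consequently $\partial_{t}\ell_{t}^{p}(x)$ and $\partial_{t}\bar\ell_{t}^{p}(x)$ both exist at almost every $t\in(0,1)$.

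By Theorem \ref{teo:loclip}(1) we have $\ell_{t}^{p}(x)=\bar\ell_{t}^{p}(x)$ for every $t\in\mathring{G}_{\varphi}(x)$, so the restrictions $\ell_{t}^{p}(x)|_{\mathring{G}_{\varphi}(x)}$ and $\bar\ell_{t}^{p}(x)|_{\mathring{G}_{\varphi}(x)}$ are literally the same function of $t$. At any $t_{0}\in\mathring{G}_{\varphi}(x)$ where both unrestricted derivatives exist, Remark \ref{R:diff-restriction} forces the corresponding restricted derivatives to exist with the same values; since the restricted functions are identical, their restricted derivatives must moreover coincide. Chaining these observations yields, at a.e.\ $t\in\mathring{G}_{\varphi}(x)$,
\[
\partial_{t}\ell_{t}^{p}(x) \;=\; \partial_{t}\ell_{t}^{p}(x)|_{\mathring{G}_{\varphi}(x)} \;=\; \partial_{t}\bar\ell_{t}^{p}(x)|_{\mathring{G}_{\varphi}(x)} \;=\; \partial_{t}\bar\ell_{t}^{p}(x),
\]
which is the inner chain of equalities in \eqref{E:3.14}.

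The two outer inequalities follow from Corollary \ref{cor:derivatives}: at any point of ordinary differentiability, the upper and lower one-sided derivatives coincide with $\partial_{t}$, so the one-sided bounds $\underline{\partial}_{t}\ell_{t}^{p}(x)/p\ge-\ell_{t}^{p}(x)/t$ and $\overline{\partial}_{t}\bar\ell_{t}^{p}(x)/p\le\bar\ell_{t}^{p}(x)/(1-t)$ provided there transfer directly into bounds on $\partial_{t}\ell_{t}^{p}(x)/p$ and $\partial_{t}\bar\ell_{t}^{p}(x)/p$. The main technical care is the null-set bookkeeping: one intersects the full-measure subsets of $(0,1)$ on which each of the four derivative expressions in the chain exists, so that the entire string of (in)equalities holds simultaneously on a common full-measure subset of $\mathring{G}_{\varphi}(x)$.
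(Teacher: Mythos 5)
Your argument is correct and follows the route the paper intends (the paper itself defers to \cite[Corollary 3.13]{CMi} rather than writing out a proof): establish a.e.\ differentiability of $t\mapsto \ell_t^p(x)$ and $t\mapsto\ellc_t^p(x)$, pass to the restriction on $\mathring{G_\varphi}(x)$ via Remark \ref{R:diff-restriction}, use $\ell_t=\ellc_t$ on $\mathring{G_\varphi}(x)$ from Theorem \ref{teo:loclip}(1) to get the middle equalities, and invoke Corollary \ref{cor:derivatives} for the two outer bounds. The only minor variation is that you rederive the a.e.\ differentiability from the local semi-convexity/semi-concavity of $\varphi_t,\varphic_t$ rather than quoting the monotonicity of $t\ell_t^\pm(x)$ and $(t-1)\ellc_t^\pm(x)$ already recorded in the text just before Corollary \ref{cor:derivatives}; both routes are valid and reach the same full-measure differentiability set to intersect.
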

\begin{remark}
Recall that we already proved that $\partial^{\pm}_{\tau}|_{\tau=s} \varphi_{\tau}(x)=(p-1)\ell^{\pm}_{s}(x)^p/{p}$ and $\ell^{\pm}_s(\gamma_s)=\ell$ for all $s\in{(0,1)}$.
\end{remark}
\subsection{Third order inequality}\label{Ss:ThirdOrder}
Just as  the solution to a Hamilton-Jacobi equation with Hamiltonian $H(w)=|w|^{p'}/p'$ behaves affinely in time on its characteristics,  \eqref{affinity along characteristics}  similarly shows that the $t$ interpolant $\varphi_t$ 
of a Kantorovich potential becomes an affine function of time $t$ along a $\varphi$-Kantorovich geodesic $\gamma_t$.
The goal of this and the next sections  is to show that $\partial^2_t\varphi_t$ is non-decreasing 
along such curves and provide a positive lower bound \eqref{equ:geomean}--\eqref{equ:terzordine} for 
 the slope of $z(t) := [\partial^2_t\varphi_t](\gamma_t)$ --- at least
under certain regularity hypotheses which can be subsequently verified  
 for a large enough family of $\varphi$-Kantorovich geodesics  that serve our purposes.  
For $p=p'=2$, such estimates were discovered in \cite{CMi}, but
their proof does not generalize to our case.  However,  
Cavalletti and Milman \cite{CMi} also provided a heuristic argument  
in the smooth setting 
which can be adapted to $p\ne 2$ as follows.

Start from the Hamilton-Jacobi equation 
\begin{equation*}
\partial_{t}\varphi_{t}=H(\nabla\varphi_{t})
\end{equation*}
satisfied by the time $t$ interpolant $\varphi_t$ of a Kantorovich potential $\varphi$ on a Riemannian manifold.
Differentiating in $t$ gives
\begin{equation}\label{HJE_t}
\partial_{t}^{2}\varphi_{t}= DH|_{\nabla\varphi_{t}}(\nabla\partial_{t}\varphi_{t}).
\end{equation}
Setting
$
z(t)=\left[\partial_{t}^{2}\varphi_{t}\right](\gamma_{t})
$
where $\gamma_{t}$ is the time $t$ evaluation of a $\varphi$-Kantorovich geodesic,
we observe using $\gamma'(t)=-DH(\nabla\varphi_{t})$ that
\begin{equation*}
z'(t)=\partial_{t}^{3}\varphi_{t}(\gamma_{t})-\left<\nabla\partial_{t}^{2}\varphi_{t}(\gamma_{t}),DH(\nabla\varphi_{t}(\gamma(t)))\right>.
\end{equation*}
On the other hand
\begin{equation*}
\partial_{t}^{3}\varphi_{t}=D^2H|_{\nabla \varphi_t}( \nabla\partial_{t}\varphi_{t}, \nabla\partial_{t}\varphi_{t} )
+DH|_{\nabla\varphi_{t}}(\nabla\partial_{t}^{2}\varphi_{t} ).
\end{equation*}
Inserting this into the previous equation yields
\begin{align*}
z'(t)&=D^2H|_{\nabla \varphi_t}( \nabla\partial_{t}\varphi_{t}, \nabla\partial_{t}\varphi_{t} )
\\ &=  |\nabla\varphi_{t}(\gamma(t))|^{p'-2}  |\nabla\partial_{t}\varphi_{t}(\gamma(t))|^2 
+ (p'-2)  |\nabla\varphi_{t}(\gamma(t))|^{p'-4}  \left<\nabla\varphi_{t}(\gamma(t)),  \nabla\partial_{t}\varphi_{t}(\gamma(t))
\right>^2.
\end{align*}
Convexity of $H(w)=|w|^{p'}/p'$ shows  that $z(t)$ is non-decreasing (hence confirming differentiability a.e.)
and allows its derivative to be estimated from below in terms of $|\nabla\varphi_{t}(\gamma(t))|$ and 
$|\nabla\partial_{t}\varphi_{t}(\gamma(t))|$ --- both of which exist a.e. since $\varphi_t$ is 
locally semiconvex in the halfspace $t>0$.  
The Cauchy-Schwarz inequality gives
\begin{align*}
z'(t) 
&\ge (p'-1) {\left|\nabla\varphi_{t}(\gamma(t))\right|^{p'-4}}
{\left<\nabla\varphi_{t}(\gamma(t)),\nabla\partial_{t}\varphi_{t}(\gamma(t))\right>^{2}}
\\&=\frac{1}{p-1}\frac{z(t)^{2}}{\ell_{t}^{p}},
\end{align*}
where 
 $\ell_t = |DH(\nabla \varphi_t)|$ and $(p-1)(p'-1)=1$, and
\eqref{HJE_t} has been used to identify
$
z(t)
=|{\nabla\varphi_{t}(\gamma_{t})}|^{p'-2} 
\left<{\nabla\varphi_{t}(\gamma_{t})}, \nabla\partial_{t}\varphi_{t}(\gamma_{t})\right>
$.
At least heuristically, this establishes \eqref{equ:terzordine}.

In order to obtain rigorous estimates on third order variations of Kantorovich potentials,  we introduce   the quantities  $\tilde{r} \in \{r,\bar{r}\}$   which measure the time partial of energies along 
a fixed $\varphi$-Kantorovich geodesic (which plays the role of a characteristic in the nonsmooth setting);
for every $s\in{(0,1)}$ set
\begin{align*} 
&\tilde{r}_{+}^{\gamma}(s)
=\tilde{r}_{+}(s)
:=\overline{\partial}_{\tau}|_{\tau=s} \frac{(p-1)}{p} \tilde{\ell}_\tau^p(\gamma_s)
=(p-1) \tilde{\ell}^{p-1}\overline{\partial}_{\tau}|_{\tau=s}\tilde{\ell}_{\tau}(\gamma_s),\\
&\tilde{r}_{-}^{\gamma}(s)
=\tilde{r}_{-}(s):=\underline{\partial}_{\tau}|_{\tau=s} \frac{(p-1)}{p}\tilde{\ell}^p_{\tau}(\gamma_s)
=(p-1)\tilde{\ell}^{p-1}\underline{\partial}_{\tau}|_{\tau=s}\tilde{\ell}_{\tau}(\gamma_s).
\end{align*}
By definition,  $\tilde{r}_{-}(s)\leq\tilde{r}_{+}(s)$; moreover, equality holds $\tilde{r}_{-}(s)=\tilde{r}_{+}(s)=\tilde{r}$ if and only if the map $\tau \mapsto (p-1)\tilde{\ell}_\tau^p(\gamma_s)/p$ is differentiable at $\tau=s$ with derivative $\tilde{r}$.

\noindent
 We also define upper and lower  second order Peano derivatives  in time (Definition \ref{D:Peano})
$\tilde{q}_{\pm} \in \{q_{\pm},\bar{q}_{\pm}\}$ of the (forward and backward) interpolated Kantorovich potentials respectively,
evaluated along the same characteristic,  as follows:
\begin{align*}
& \tilde{q}_{+}(s):= \overline{\mathcal{P}}_2 \tilde{\varphi}_s(x)|_{x=\gamma_s}= \limsup_{\varepsilon\to 0} \frac{\tilde{h}(s,\varepsilon)}{\varepsilon^2},\\
& \tilde{q}_{-}(s):= \underline{\mathcal{P}}_2 \tilde{\varphi}_s(x)|_{x=\gamma_s}=\liminf_{\varepsilon\to 0} \frac{\tilde{h}(s,\varepsilon)}{\varepsilon^2},
\end{align*}
where $\tilde h(s,\varepsilon)$ is defined analogously to \eqref{E:def-h}.
By definition, $\tilde{q}_{-}(s)=\tilde{q}_{+}(s)=\tilde{q}$  hold if and only if the map $\tau \mapsto \tilde{\varphi}_{\tau}(\gamma_s)$ has second-order Peano derivative at $\tau=s$ given by $\tilde{q}$.
We summarize the relation between $\tilde q_{\pm}$ and $\tilde r_{\pm}$ 
implied by Lemma \ref{lem:convex-2nd-diff} and Lemma \ref{lem:peano-inq} 
in the following :
\begin{corollary}[First differentiability of energy is equivalent to second differentiability of potential]
\label{cor:confrontoder}
The following statements are equivalent for a given $s\in{(0,1)}$:
\begin{enumerate}
\item $\tilde{r}_{-}(s)=\tilde{r}_{+}(s)=\tilde{r}\in{\mathbb{R}}$, i.e. the map  $D_{\tilde{\ell}}(\gamma_s) \ni \tau \mapsto (p-1)\tilde{\ell}_\tau^p(\gamma_s)/p$ is differentiable at $\tau=s$ with derivative $\tilde{r}$.

\item  $\tilde{q}_{-}(s)=\tilde{q}_{+}(s)=\tilde{q}\in{\mathbb{R}}$, i.e. the map  $(0,1)\ni \tau \mapsto \tilde{\varphi}_{\tau}(\gamma_s)$ has second order Peano derivative at $\tau=s$ given by $\tilde{q}$.
\end{enumerate}

If one of the two conditions above is satisfied, the map $(0,1)\ni \tau \mapsto \tilde{\varphi}_{\tau}(\gamma_s)$ is twice differentiable at $\tau=s$, and we have :
\[
\partial^{2}_{\tau}|_{\tau=s} \tilde{\varphi}_{\tau}(\gamma_s)
=\partial_{\tau}|_{\tau=s} \frac{(p-1)\tilde{\ell}_\tau^p(\gamma_s)}{p}
= (p-1)\tilde{\ell}^{p-1} \cdot \partial_{\tau}|_{\tau=s} \ell_{\tau}(\gamma_s)=\tilde{r}=\tilde{q}.
\]
\end{corollary}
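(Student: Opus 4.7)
The plan is to reduce the statement to a direct application of Lemma \ref{lem:convex-2nd-diff} to the one-variable function
\[
f(\tau) := \tilde{\varphi}_\tau(\gamma_s), \qquad \tau \in (0,1),
\]
with base point $\tau_0 = s$. By Theorem \ref{teo:ags} (as transferred to $\tilde{\varphi}_\tau = \pm Q_{\pm(\cdot)}(\mp\varphi)$), the map $f$ is locally Lipschitz and locally semi-concave on $(0,1)$, so the version of Lemma \ref{lem:convex-2nd-diff} extended to semi-concave functions applies to $f$ (equivalently, to $-f$, which is locally semi-convex). The equivalence of items (1) and (4) of that lemma is exactly the equivalence we need, once the objects $\tilde{r}_\pm$ and $\tilde{q}_\pm$ are matched with its statement.

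To match the objects, recall from Corollary \ref{cor:loclipfi}(3) that the set $D \subset (0,1)$ of points at which $f$ is differentiable equals $D_{\tilde{\ell}}(\gamma_s)$, which coincides with $(0,1)$ except for at most countably many exceptions, and that on $D$ one has
\[
f'(\tau) \;=\; \partial_\tau \tilde{\varphi}_\tau(\gamma_s) \;=\; \frac{(p-1)\tilde{\ell}_\tau^{p}(\gamma_s)}{p}.
\]
In particular $s$ is an accumulation point of $D = D_{\tilde{\ell}}(\gamma_s)$. Thus condition (1) of the corollary — differentiability at $\tau=s$ of $\tau \mapsto (p-1)\tilde{\ell}_\tau^{p}(\gamma_s)/p$ along $D_{\tilde{\ell}}(\gamma_s)$, with derivative $\tilde{r}$ — is verbatim condition (1) of Lemma \ref{lem:convex-2nd-diff} with $\Delta = \tilde{r}$. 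Condition (2) of the corollary — existence of the second Peano derivative of $f$ at $\tau=s$ with value $\tilde{q}$ — is verbatim condition (4) of Lemma \ref{lem:convex-2nd-diff} with $\Delta = \tilde{q}$. Invoking the equivalence (1)$\Leftrightarrow$(4) of the lemma yields both the claimed equivalence and the identity $\tilde{r} = \tilde{q}$, along with twice-differentiability of $f$ at $s$ in the sense of Lemma \ref{lem:convex-2nd-diff}.

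It then remains to assemble the displayed chain of equalities. Twice differentiability of $f$ at $s$ gives $\partial^{2}_{\tau}|_{\tau=s}\tilde{\varphi}_\tau(\gamma_s) = \tilde{q}$; the equivalence just established gives $\tilde{q} = \tilde{r} = \partial_\tau|_{\tau=s}(p-1)\tilde{\ell}_\tau^{p}(\gamma_s)/p$; and the chain rule — legitimate precisely because $\tau \mapsto \tilde{\ell}_\tau(\gamma_s)$ inherits differentiability at $s$ from the differentiability of its $p$-th power (using $\tilde{\ell}_s(\gamma_s) = \tilde{\ell} > 0$; the degenerate case $\tilde{\ell}=0$ reduces to a trivial check that both sides vanish) — converts this into $(p-1)\tilde{\ell}^{p-1}\partial_\tau|_{\tau=s}\tilde{\ell}_\tau(\gamma_s)$.

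The main bookkeeping issue, and the only place any care is required, is ensuring that the ``differentiability set'' $D$ entering Lemma \ref{lem:convex-2nd-diff} is the same $D_{\tilde{\ell}}(\gamma_s)$ that enters the hypothesis of the corollary; this is exactly the content of the last assertion in Corollary \ref{cor:loclipfi}(3). Beyond that, the proof is a straightforward translation between the analytic language of second Peano derivatives and the geometric language of (first) differentiability of the energy along a characteristic.
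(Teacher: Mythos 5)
Your proof is correct and follows essentially the same route the paper intends, namely applying the equivalence (1)$\Leftrightarrow$(4) of Lemma~\ref{lem:convex-2nd-diff} to the semi-convex (for $\tilde\varphi=\varphi$)/semi-concave (for $\tilde\varphi=\varphic$) function $\tau\mapsto\tilde\varphi_\tau(\gamma_s)$, with Corollary~\ref{cor:loclipfi}(3) identifying the differentiability set $D$ with $D_{\tilde\ell}(\gamma_s)$ and the first derivative with $(p-1)\tilde\ell_\tau^p(\gamma_s)/p$. The paper also cites Lemma~\ref{lem:peano-inq}, but as you observe Lemma~\ref{lem:convex-2nd-diff} alone already gives both directions of the equivalence; the only slip is calling $\varphi_\tau(\gamma_s)$ semi-concave (it is semi-convex, being $-Q_\tau(-\varphi)$), which is inconsequential since the lemma handles both cases.
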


We are now  in a position to obtain lower bounds on 
the incremental ratio of $\tilde q$.                                                                                        
This provides the required third-order information concerning $\varphi_t$ even when the upper and lower derivatives in question do not agree.
For the geometric interpretation of the following  discretized differential
inequalities, we refer to  the discussion of the case $p = 2$ in \cite[Section 5.1]{CMi}.
\begin{theorem}[Third-order difference quotient bounds on potential along its characteristics]
\label{teo:mainestimate}
For all $0<s<t<1$ and both possibilities for $\pm$, we have
\begin{equation}
\label{equ:qq}
\frac{q_+(t)-q_-(s)}{t-s}\geq \frac{s}{t} \frac{r_{\pm}(s)^2}{(p-1) \ell^p},
\end{equation}
\begin{equation}
\label{equ:qbar}
\frac{\bar{q}_{+}(t)-\bar{q}_{-}(s)}{t-s}\geq  \frac{1-t}{1-s} \frac{\bar{r}_{\pm}(t)^2}{(p-1)\ell^p}.
\end{equation}
\end{theorem}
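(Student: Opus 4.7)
The approach is to exploit the Hopf-Lax semigroup identity $\varphi_t = -Q_{t-s}(-\varphi_s)$, valid for $0<s<t<1$, which provides the inequality
\[
\varphi_t(x) \geq \varphi_s(y) - \frac{\sfd(x,y)^p}{p(t-s)^{p-1}} \qquad \forall\, x,y \in X,
\]
together with the equality case (Corollary~\ref{cor:diff}) attained at $x=\gamma_t$ and $y=\gamma_s$ along any $(\varphi,p)$-Kantorovich geodesic $\gamma$. The strategy parallels \cite[Section~5.1]{CMi} for $p=2$; the novel ingredient is the Taylor expansion of $u \mapsto u^p$, which produces the characteristic $(p-1)$ factors and $(t-s)^{p-1}$ denominators throughout.

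First I would perturb the maximiser $y=\gamma_s$ along the characteristic, substituting $y=\gamma_{s+\sigma}$ for small $\sigma$ of appropriate sign. Using $\sfd(\gamma_t,\gamma_{s+\sigma}) = (t-s-\sigma)\ell$ and expanding the distance term yields
\[
\varphi_s(\gamma_{s+\sigma}) - \varphi_s(\gamma_s) \leq -\sigma\ell^p + \frac{(p-1)\sigma^2\ell^p}{2(t-s)} + O(\sigma^3),
\]
while Corollary~\ref{cor:diff} together with the one-sided Peano expansion at the moving base point $\gamma_{s+\sigma}$ gives the complementary identity
\[
\varphi_s(\gamma_{s+\sigma}) - \varphi_s(\gamma_s) = -\sigma\ell^p + \tfrac{\sigma^2}{2}\, q_\pm(s+\sigma) + o(\sigma^2).
\]
Matching the two expressions furnishes one-sided control on $q_\pm$ near $s$, and hence on $r_\pm(s)$ by Corollary~\ref{cor:confrontoder}.

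Next I would iterate with a simultaneous time shift: apply the Hopf-Lax inequality at $x=\gamma_t$, $y=\gamma_{s+\sigma}$ but also replace $s,t$ by $s+\beta\tau,t+\alpha\tau$ for an auxiliary small parameter $\tau$. The first-order terms cancel by combining the characteristic identity $\partial_\tau[\varphi_\tau(\gamma_\tau)] = -\ell^p/p$ (Corollary~\ref{cor:diff}) with $\partial_\tau \varphi_\tau(x)\vert_{x=\gamma_\tau} = (p-1)\ell^p/p$ (Corollary~\ref{cor:loclipfi}); the second-order remainder couples $q_+(t)$, $q_-(s)$, and a cross term coming from the expansion of $(t-s+(\alpha-\beta)\tau)^{1-p}$ to order $\tau^2$. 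Completing the square in the ratio $\sigma/\tau$ and optimising over $\alpha,\beta$ extracts the desired lower bound $(s/t)\, r_\pm(s)^2/((p-1)\ell^p)$ for $(q_+(t)-q_-(s))/(t-s)$; the factor $s/t$ reflects the relative lengths $\sfd(\gamma_0,\gamma_s) = s\ell$ and $\sfd(\gamma_0,\gamma_t) = t\ell$ along the affinely parametrised characteristic. Inequality \eqref{equ:qbar} follows by applying the symmetric argument to the backward interpolant $\bar{\varphi}_\tau$, using $\bar{\varphi}_s = Q_{t-s}(\bar{\varphi}_t)$ and $\bar{\varphi}_\tau = \varphi_\tau$ along $\gamma$ (Lemma~\ref{lem:phibar}); the factor $(1-t)/(1-s)$ then appears in place of $s/t$ via the time reversal $\tau \mapsto 1-\tau$.

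The principal obstacle is twofold. First, the asymmetry between $q_+(t)$ (a $\limsup$) and $q_-(s)$ (a $\liminf$) in the conclusion must be matched to an upper bound on $\varphi_\tau(\gamma_t)$ and a lower bound on $\varphi_\tau(\gamma_s)$ in the Hopf-Lax inequality; any incorrect pairing destroys the one-sided structure that the theorem exploits. Second, whereas for $p=2$ the completion-of-squares is a routine quadratic optimisation, for general $p>1$ the Taylor coefficients $(p-1)/2$ at second order and $(p-1)(p-2)/6$ at third order in the expansion of $u^p$ must combine with the $q$- and $r$-contributions in precisely the right proportions to produce the clean factor $1/((p-1)\ell^p)$ appearing in the final bound.
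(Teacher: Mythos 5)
Your proposal takes a genuinely different route from the paper's proof, and the first of the two phases you describe contains a real gap. The paper never invokes the semigroup identity $\varphi_t = -Q_{t-s}(-\varphi_s)$, nor does it perturb the spatial point $y$ along the characteristic. It keeps both characteristic base points $\gamma_s$ and $\gamma_t$ fixed throughout, perturbs only the Hopf--Lax time parameter from $s,t$ to $s+\varepsilon, t+\varepsilon$ applied to the \emph{original} datum $-\varphi$, selects the minimizer $y^\pm_\varepsilon$ realizing $Q_{s+\varepsilon}(-\varphi)(\gamma_s)$ via Lemma~\ref{lem:atteined}, and then re-uses this same $y^\pm_\varepsilon$ as a sub-optimal competitor in $Q_{t+\varepsilon}(-\varphi)(\gamma_t)$. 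Since $\partial_\tau|_{\tau=s}\varphi_\tau(\gamma_s)=\partial_\tau|_{\tau=t}\varphi_\tau(\gamma_t)=(p-1)\ell^p/p$ and $\varphi_s(\gamma_s)-\varphi_t(\gamma_t)=(t-s)\ell^p/p$, subtracting the resulting inequality from the exact Hopf--Lax identity at $\gamma_s$ produces precisely $\tfrac12(h(t,\varepsilon)-h(s,\varepsilon))$ --- the difference of two Peano remainders each defined at a \emph{fixed} base point --- on the left, and a residual on the right that is controlled by the triangle inequality $\sfd(y^\pm_\varepsilon,\gamma_t)\le D^\pm_{s+\varepsilon}+(t-s)\ell$ followed by the elementary second-order convexity estimate $\lambda f(x)+(1-\lambda)f(y)-f(\lambda x+(1-\lambda)y) \ge \min f'' \cdot \tfrac{\lambda(1-\lambda)}{2}(x-y)^2$ for $f(u)=u^p$. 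Only second-order information about $u\mapsto u^p$ enters; the third-order coefficient $(p-1)(p-2)/6$ you anticipate never appears.

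The genuine gap is the claimed identity
\[
\varphi_s(\gamma_{s+\sigma}) - \varphi_s(\gamma_s) = -\sigma\ell^p + \tfrac{\sigma^2}{2}\, q_\pm(s+\sigma) + o(\sigma^2),
\]
which you call the ``one-sided Peano expansion at the moving base point.'' The quantity $q_\pm(s+\sigma)$ is by Definition~\ref{D:Peano} a $\limsup$/$\liminf$ of a difference quotient over an increment $\eta\to 0$ taken at the \emph{fixed} base point $(\gamma_{s+\sigma},\,s+\sigma)$; its defining $o(\eta^2)$ error term degenerates as the base point varies. When you set the increment $\eta=-\sigma$ and let $\sigma\to 0$, both the increment and the base point collapse simultaneously, which is not the diagonal along which the Peano remainder is controlled. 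Since at this stage of the argument no continuity or monotonicity of $t\mapsto q_\pm(t)$ is available (monotonicity is only deduced \emph{after} Theorem~\ref{teo:mainestimate}), there is no way to justify the displayed equality, and the ``matching'' of the two expansions collapses. The subsequent phase ``iterate with a simultaneous time shift $s+\beta\tau$, $t+\alpha\tau$ and optimise over $\alpha,\beta$'' is described too schematically to evaluate; as stated it inherits the same moving-base-point issue, and the paper's proof does not require any such two-parameter perturbation or completion of squares. I recommend abandoning the spatial perturbation along the characteristic and instead using the paper's strategy of time-perturbing the Hopf--Lax transform of $-\varphi$ at fixed spatial points, which yields the difference $h(t,\varepsilon)-h(s,\varepsilon)$ natively and converts the geometric content into a single convexity estimate for $u\mapsto u^p$.
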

The proof of the analogous estimate for $p = 2$ (\cite[Theorem 5.2]{CMi}) does not work in the general case $p > 1$. 
\begin{proof}
By definition of the Hopf-Lax transform and by Lemma \ref{lem:atteined}, we have that
\[
-\varphi_{s+\varepsilon}(\gamma_s)=Q_{s+\varepsilon}(-\varphi)(\gamma_s)= \frac{\sfd(y^{\pm}_{\varepsilon},\gamma_s)^p}{p(s+\varepsilon)^{p-1}}-\varphi(y^{\pm}_\varepsilon),
\]
with $\sfd(y^{\pm}_{\varepsilon},\gamma_s)=D^{\pm}_{-\varphi}(\gamma_s,s+\varepsilon)=(s+\varepsilon) \ell^{\pm}_{s+\varepsilon}(\gamma_s)=:D^{\pm}_{s+\varepsilon}$. Moreover, the following inequality trivially holds:
\[
-\varphi_{t+\varepsilon}(\gamma_t)\leq \frac{\sfd(y^{\pm}_{\varepsilon},\gamma_t)^p}{p(t+\varepsilon)^{p-1}}-\varphi(y^{\pm}_\varepsilon).
\]
Subtracting the  two expressions above, we obtain:
\[
\varphi_{t+\varepsilon}(\gamma_t)-\varphi_{s+\varepsilon}(\gamma_s)\geq -\frac{\sfd(y^{\pm}_{\varepsilon},\gamma_t)^p}{p(t+\varepsilon)^{p-1}}+\frac{\sfd(y^{\pm}_{\varepsilon},\gamma_s)^p}{p(s+\varepsilon)^{p-1}},
\]
hence  recalling \eqref{E:def-h}
\begin{align}
\label{equ:convessita}
\frac{1}{2} (h(t,\varepsilon)-h(s,\varepsilon)) &\geq -\varphi_t(\gamma_t)+\varphi_s(\gamma_s)-\frac{\sfd(y^{\pm}_{\varepsilon},\gamma_t)^p}{p(t+\varepsilon)^{p-1}}+\frac{\sfd(y^{\pm}_{\varepsilon},\gamma_s)^p}{p(s+\varepsilon)^{p-1}}, \notag\\ \notag
&= {(t-s)}\frac{\ell^p}{p} -\frac{\sfd(y^{\pm}_{\varepsilon},\gamma_t)^p}{p(t+\varepsilon)^{p-1}}+\frac{\sfd(y^{\pm}_{\varepsilon},\gamma_s)^p}{p(s+\varepsilon)^{p-1}},\\ 
&=  {(t-s)}\frac{\ell^p}{p} -\frac{\sfd(y^{\pm}_{\varepsilon},\gamma_t)^p}{p(t+\varepsilon)^{p-1}}+\frac{(s+\varepsilon)(\ell^{\pm}_{s+\varepsilon}(\gamma_s))^p}{p}
 .
\end{align}
We need now an estimate from below of the second term. In order to do that, let us observe that 
\[
\sfd(y^{\pm}_{\varepsilon},\gamma_t)\leq \sfd(y^{\pm}_{\varepsilon},\gamma_s)+ \sfd(\gamma_s,\gamma_t)=D^{\pm}_{s+\varepsilon}+D_t-D_s,
\]
where we put $D_r=r\ell=\sfd(\gamma_r,\gamma_0)$, for $r=s,t$. In particular, 
\begin{align*}
D^{\pm}_{s+\varepsilon}+D_t-D_s&=(s+\varepsilon)(\ell^{\pm}_{s+\varepsilon}(\gamma_s)-\ell_s(\gamma_s))+(t+\varepsilon)\ell\\
& =(t+\varepsilon) \biggl[ \frac{s+\varepsilon}{t+\varepsilon}\ell^{\pm}_{s+\varepsilon}(\gamma_s)+ \biggl( 1- \frac{s+\varepsilon}{t+\varepsilon} \biggr)\ell \biggr].
\end{align*}
Thus, substituting this expression in \eqref{equ:convessita}, we get
\begin{equation}
\label{equ:conv}
\frac{h(t,\varepsilon)-h(s,\varepsilon)}{2\varepsilon^2}  \geq \frac{t+\varepsilon}{p\varepsilon^2} \biggl[\frac{t-s}{t+\varepsilon}\ell^p+\frac{s+\varepsilon}{t+\varepsilon }(\ell^{\pm}_{s+\varepsilon}(\gamma_s))^p  -  \biggl( \frac{s+\varepsilon}{t+\varepsilon}\ell^{\pm}_{s+\varepsilon}(\gamma_s)+  \frac{t-s}{t+\varepsilon} \ell \biggr)^p \biggl].
\end{equation}
In other words, denoting with $f(x):=x^p$  and defining  for every $\lambda\in{[0,1]}$ the functions 
\begin{align*}
s_{x,y}(\lambda)= \lambda f(x)+ (1-\lambda)f(y),\qquad g_{x,y}(\lambda)=f(\lambda x+(1-\lambda)y),
\end{align*}
we want to estimate from below the quantity $s_{x,y}(\lambda)-g_{x,y}(\lambda)$ for the following choices of $\lambda,x,y$:
\begin{equation}
\label{equ:choice}
\lambda=\frac{s+\varepsilon}{t+\varepsilon}, \qquad x=\ell^{\pm}_{s+\varepsilon}(\gamma_s),\qquad y=\ell=\ell_s(\gamma_s).
\end{equation}
Appling the following inequality $s_{x,y}(\lambda)-g_{x,y}(\lambda) \geq   \min_{[y,x]}{f''}\cdot \frac{\lambda(1-\lambda)}{2} (x-y)^2$, 
for all $\lambda\in [0,1]$,
with the choices of $x,y,\lambda$ given by \eqref{equ:choice}, we get
\begin{equation}
\label{equ:aaa}
\frac{h(t,\varepsilon)-h(s,\varepsilon)}{2\varepsilon^2}  \geq \frac{t+\varepsilon}{p\varepsilon^2} \biggl[ \min_{z\in[\ell_s(\gamma_s),\ell^{\pm}_{s+\varepsilon}(\gamma_s)]}z^{p-2}\cdot\frac{ p(p-1)}{2} \cdot \frac{t-s}{t+\varepsilon}\cdot\frac{s+\varepsilon}{t+\varepsilon } \cdot  (\ell^{\pm}_{s+\varepsilon}(\gamma_s)-\ell_s(\gamma_s))^2 \biggl].
\end{equation}
Taking appropriate subsequential limits as $\varepsilon \to 0$, we obtain
\[
\frac{q^+(t)-q^-(s)}{2(t-s)}\geq \frac{s}{t} \frac{(p-1)}{2} \ell^{p-2} (\partial_{\tau}|_{\tau=s} \ell^{\pm}_{\tau}(\gamma_s))^2.
\]
In particular, it turns out that
\[
\frac{q^+(t)-q^-(s)}{t-s}\geq \frac{s}{t} \frac{r_{\pm}(s)^2}{(p-1) \ell^{p}}.
\]
Next, we will deduce inequality \eqref{equ:qbar} from \eqref{equ:qq} by simply using the duality between $\varphi$ and $\varphi^c$. Indeed, since by definition it holds that  $\bar{\varphi}_t=-\varphi_{1-t}^c$, we deduce that : 
\[
\bar{h}^{\varphi}_{\gamma}(r,\varepsilon)=-h^{\varphi^c}_{\gamma^c}(1-r,-\varepsilon).
\]
Moreover, it holds
\[
\frac{(p-1)(\ell^{\varphi^c,\pm}_{1-r-\varepsilon}(\gamma^c_{1-r}))^p}{p}=-\partial^{\mp}_r \varphi^c_{1-r-\varepsilon}(\gamma^c_{1-r})=\partial_r^{\mp} \varphi_{r+\varepsilon}(\gamma_r)=\frac{(p-1)(\bar{\ell}^{\varphi,\pm}_{r+\varepsilon}(\gamma_r))^p}{p};
\]
 hence, choosing as $\varphi$, $\gamma$, $\varepsilon$, $s$, $t$  respectively  $\varphi^c$, $\gamma_c$, $-\varepsilon$,$1-t$, $1-s$ we get the second claim. 
 \end{proof}
\subsection{Consequences}
We start by noticing an immediate consequence of Theorem \ref{teo:mainestimate}:
\begin{corollary}
For both $\tilde{q}=q,\bar{q}$, the functions  $t\mapsto \tilde{q}_{\pm}(t)$ are monotone non-decreasing on $(0,1)$.
\end{corollary}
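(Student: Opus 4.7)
The plan is to read off this corollary almost for free from Theorem \ref{teo:mainestimate}. The key observation is that the right-hand sides of both \eqref{equ:qq} and \eqref{equ:qbar} are manifestly non-negative, being squares divided by strictly positive quantities; hence for every $0<s<t<1$ one has $q_+(t)\ge q_-(s)$ and $\bar q_+(t)\ge \bar q_-(s)$. Combined with the trivial pointwise inequalities $q_-\le q_+$ and $\bar q_-\le \bar q_+$, this cross-comparison already contains the desired monotonicity at every point where the one-sided second Peano derivatives happen to coincide.

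To upgrade the cross inequality to separate monotonicity of each of $q_\pm$ and $\bar q_\pm$, I would invoke the corollary immediately preceding the one at hand (the a.e.\ existence of $\partial_t\ell_t^p$ and $\partial_t\bar\ell_t^p$ on $\mathring G_{\varphi}(x)$) together with Corollary \ref{cor:confrontoder}: these imply that at almost every $s\in(0,1)$ the map $\tau\mapsto\varphi_\tau(\gamma_s)$ admits a genuine second Peano derivative, so that $q_+(s)=q_-(s)=:q(s)$ on a full-measure set $E\subset(0,1)$, and analogously $\bar q_+=\bar q_-=:\bar q$ on a corresponding full-measure set. Applying \eqref{equ:qq} at any pair $s_1<s_2$ in $E$ gives $q(s_2)\ge q(s_1)$, so $q$ is non-decreasing on $E$; the same argument via \eqref{equ:qbar} handles $\bar q$.

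The last step is to transfer monotonicity from $E$ to every point of $(0,1)$, for each of the four functions $q_\pm$ and $\bar q_\pm$ individually. For arbitrary $s_1<s_2$ in $(0,1)$, the cross-inequality from Theorem \ref{teo:mainestimate} can be applied at approximations $s_1^{(k)}\in E$ with $s_1^{(k)}\to s_1$, yielding $q_+(s_2)\ge q_-(s_1^{(k)})=q(s_1^{(k)})$; combined with the inequality $q_+(s_1+\eps)\ge q_-(s_1)$ (evaluated along $\eps\to 0^+$ within $E$) and the monotonicity of $q$ on $E$, one concludes $q_+(s_2)\ge q_+(s_1)$ and $q_-(s_2)\ge q_-(s_1)$. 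The analogous limiting argument with \eqref{equ:qbar} handles $\bar q_\pm$.

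The main obstacle I anticipate is this final extension step: the quantities $q_\pm$ and $\bar q_\pm$ are defined as pointwise limsup and liminf in the auxiliary parameter $\varepsilon$ and do not a priori possess any semi-continuity in the base variable $s$, so the passage from the dense regular set $E$ to every $s\in(0,1)$ is not automatic. However, point (3) of Theorem \ref{teo:loclip} supplies local boundedness of all four functions (keeping the relevant limits finite), and the cross-comparison from Theorem \ref{teo:mainestimate} relates values at different times with opposite $\pm$ superscripts; these two ingredients, together with the density of $E$, should suffice to close the argument, though this is the most technical portion.
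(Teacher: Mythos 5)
Your derivation of the cross inequalities $q_+(t)\ge q_-(s)$ and $\bar q_+(t)\ge\bar q_-(s)$ for $s<t$ is correct, but by themselves these, even together with the pointwise inequalities $q_-\le q_+$ and $\bar q_-\le\bar q_+$, do \emph{not} force each of the four functions to be non-decreasing: abstractly, $q_-\equiv 0$ with $q_+$ any bounded non-negative but decreasing function satisfies all the displayed constraints. The repair you attempt has a genuine gap. The unlabeled corollary after Theorem \ref{teo:loclip}, in combination with Corollary \ref{cor:confrontoder}, gives differentiability of $\tau\mapsto\tilde\ell_\tau^p(x)$, for each \emph{fixed} $x$, at a.e.\ $\tau\in\mathring G_\varphi(x)$; but $q_\pm(s)$ involve this derivative precisely at the diagonal point $\tau=s$ while the base point $x=\gamma_s$ slides simultaneously, and there is no Fubini-type device available at this stage to convert ``for each $x$, a.e.\ $\tau$'' into ``for a.e.\ $s$ on the diagonal $\tau=s$, $x=\gamma_s$.'' So the full-measure set $E$ on which $q_+=q_-$ is not produced by the cited results. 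You rightly flag the final transfer step as delicate, and in fact it does not close either: \eqref{equ:qq} relates $q_+(s_1+\eps)$ to $q_-(s_1)$, never to $q_+(s_1)$, so approaching $s_1$ from above along $E$ gives a lower bound for $q_+(s_2)$ in terms of $q_-(s_1)$, not $q_+(s_1)$.

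The argument the paper has in mind lives inside the \emph{proof} of Theorem \ref{teo:mainestimate}, not in its statement. The chain of estimates culminating in \eqref{equ:aaa} establishes, for every $0<s<t<1$ and every sufficiently small $\eps$ of either sign, the $\eps$-level monotonicity $h(t,\eps)\ge h(s,\eps)$: the right-hand side of \eqref{equ:aaa} is manifestly non-negative for each such fixed $\eps$, before any limit is taken. The same holds for $\bar h$ via the duality at the end of the proof. Dividing by $\eps^2>0$ and then applying the \emph{same} limiting operation to both sides — $\limsup_{\eps\to 0}$ for the ``$+$'' functions, $\liminf_{\eps\to 0}$ for the ``$-$'' functions — immediately yields $q_+(t)\ge q_+(s)$, $q_-(t)\ge q_-(s)$, $\bar q_+(t)\ge\bar q_+(s)$ and $\bar q_-(t)\ge\bar q_-(s)$, with no appeal to a.e.\ regularity, density arguments, or semicontinuity in the base variable. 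The difficulty you correctly sensed disappears once one works at the level of fixed $\eps$ rather than at the level of the one-sided Peano derivatives obtained after passing to the limit.
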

We now put together previous  regularity results on time behaviour 
of Kantorovich potential together with Theorem \ref{teo:mainestimate} 
in order to have a clear statement on the third order variation of Kantorovich 
potentials.

\begin{theorem}[A priori third-order bounds for potential along its characteristics]
\label{teo:zz}
Assume that for \textbf{a.e.} $t\in{(0,1)}$:
\begin{equation}
\label{equ:differentiability}
(0,1) \ni{\tau} \mapsto \tilde{\varphi}_{\tau}(\gamma_t) 
\quad \text{is twice differentiable at }\tau=t  
\quad \text{for both}\quad  \tilde{\varphi}= \varphi, \bar{\varphi}, 
\end{equation}
in any of the equivalent senses of Corollary \ref{cor:confrontoder} and that moreover:
\[
\partial^2_{\tau}|_{\tau=t} \varphi_{\tau}(\gamma_t)=\partial^2_{\tau}|_{\tau=t} \bar{\varphi}_{\tau}(\gamma_t)\quad \text{for a.e.}\,\, t\in{(0,1)}.
\]
If there exists a continuous function $z$ for which
\[
\partial^2_{\tau}|_{\tau=t} \varphi_{\tau}(\gamma_t)=\partial^2_{\tau}|_{\tau=t} \bar{\varphi}_{\tau}(\gamma_t)=z(t)\quad \text{for a.e.}\,\, t\in{(0,1)},
\]
then \eqref{equ:differentiability} holds for \textbf{all} $t\in{(0,1)}$ and for all $t\in (0,1)$
\begin{equation}
\label{equ:pointw}
\partial^2_{\tau}|_{\tau=t} \varphi_{\tau}(\gamma_t)=\partial^2_{\tau}|_{\tau=t} \bar{\varphi}_{\tau}(\gamma_t)= \partial_{\tau}|_{\tau=t} \frac{(p-1)\ell^p_{\tau}(\gamma_t)}{p}= \partial_{\tau}|_{\tau=t} \frac{(p-1)\bar{\ell}^p_{\tau}(\gamma_t)}{p}= z(t).
\end{equation}
Finally, the following third order information on $\varphi_t(x)$ at $x=\gamma_t$ holds true:
\begin{equation}
\label{equ:geomean}
\frac{z(t)-z(s)}{t-s}\geq \sqrt{\frac{s}{t}\frac{1-t}{1-s}} \frac{|z(s)||z(t)|}{(p-1) \ell^p},\quad \forall\,\,0<s<t<1.
\end{equation}
In particular, for any point $t\in{(0,1)}$ where $z(t)$ is differentiable we have
\begin{equation}
\label{equ:terzordine}
z'(t)\geq \frac{z(t)^2}{(p-1)\ell^p}.
\end{equation}
\end{theorem}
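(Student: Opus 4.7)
The strategy has three steps: (i) upgrade the almost-everywhere second-differentiability hypothesis to a pointwise identity using continuity of $z$ together with the monotonicity of the one-sided Peano derivatives; (ii) feed the resulting pointwise identities into the discretized third-order bounds of Theorem \ref{teo:mainestimate}; and (iii) pass to the limit to obtain \eqref{equ:terzordine}.

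For step (i), I would fix an arbitrary $t_0\in (0,1)$ and choose sequences of ``good'' times $t_n\uparrow t_0$ and $s_n\downarrow t_0$ at which the hypothesis $q_+=q_-=z$ holds (these form a full-measure subset of $(0,1)$ and are in particular dense). The corollary immediately preceding this theorem asserts that $t\mapsto q_{\pm}(t)$ are non-decreasing, so
\[
z(t_n)=q_+(t_n)\le q_-(t_0)\le q_+(t_0)\le q_-(s_n)=z(s_n),
\]
and continuity of $z$ forces $q_-(t_0)=q_+(t_0)=z(t_0)$. The same squeeze applies to $\bar q_{\pm}$. Corollary \ref{cor:confrontoder} then promotes these pointwise identities to the assertion that for \emph{every} $t\in (0,1)$ both $\tau\mapsto\varphi_\tau(\gamma_t)$ and $\tau\mapsto\bar\varphi_\tau(\gamma_t)$ are twice-differentiable at $\tau=t$ and that the full chain \eqref{equ:pointw} holds; in particular $r_{\pm}(t)=\bar r_{\pm}(t)=z(t)$ pointwise.

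For step (ii), I would substitute these pointwise identities into both bounds \eqref{equ:qq} and \eqref{equ:qbar} of Theorem \ref{teo:mainestimate}, applied at arbitrary $0<s<t<1$. They become
\[
\frac{z(t)-z(s)}{t-s} \ge \frac{s}{t}\frac{z(s)^2}{(p-1)\ell^p} \quad\text{and}\quad \frac{z(t)-z(s)}{t-s} \ge \frac{1-t}{1-s}\frac{z(t)^2}{(p-1)\ell^p}.
\]
Both right-hand sides are non-negative, so the common left-hand side also dominates their maximum and hence their geometric mean $\sqrt{AB}$; but the latter is precisely the right-hand side of \eqref{equ:geomean}. For step (iii), letting $s\to t$ in \eqref{equ:geomean} at any differentiability point of $z$ and using continuity to pass $|z(s)|$ to $|z(t)|$ yields \eqref{equ:terzordine}.

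The main obstacle is step (i). The hypothesis supplies only an a.e.\ identity between the two-sided second Peano derivatives and the continuous function $z$, and nothing in it by itself excludes that $q_{\pm}$ or $\bar q_{\pm}$ jump away from $z$ at exceptional times; such jumps would invalidate the pointwise application of Theorem \ref{teo:mainestimate} needed in step (ii). It is precisely the monotonicity of $q_{\pm}$ and $\bar q_{\pm}$ that permits the squeeze argument above, upgrading the a.e.\ information to a genuine everywhere statement; the remaining steps are then essentially algebraic.
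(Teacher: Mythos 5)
Your proposal is correct and follows essentially the same route as the paper: upgrade the a.e.\ identity $q_\pm = \bar q_\pm = z$ to an everywhere identity via the monotonicity of $\tilde q_\pm$ and continuity of $z$, transfer through Corollary \ref{cor:confrontoder} to get $\tilde r_\pm = z$ pointwise, then take the geometric mean of \eqref{equ:qq} and \eqref{equ:qbar} and pass $s\to t$. Your squeeze $z(t_n)=q_+(t_n)\le q_-(t_0)\le q_+(t_0)\le q_-(s_n)=z(s_n)$ just makes explicit what the paper compresses into the phrase ``by the monotonicity of $\tilde q_\pm$ and the continuity of $z$''.
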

\begin{proof}
By  Corollary \ref{cor:confrontoder}, it follows 
that  $\tilde{q}_{-}(t)=\tilde{q}_{+}(t)=z(t)$ for a.e. $t\in{(0,1)}$. 
More precisely, the same holds true for every $t\in{(0,1)}$ 
by the monotonicity of $\tilde{q}_{\pm}$ and the continuity of $z$; 
thus, \eqref{equ:pointw} is satisfied. 
Moreover, Corollary \ref{cor:confrontoder} 
also implies that $\tilde{r}_{-}(t)=\tilde{r}_{+}(t)=z(t)$ for both $\tilde{r}=r,\bar{r}$ and for all $t\in{(0,1)}$. Taking the 
geometric mean of \eqref{equ:qq} and \eqref{equ:qbar}, we get \eqref{equ:geomean}. Finally, passing to the limit as  $s\to t$ in \eqref{equ:geomean}, we obtain \eqref{equ:terzordine}.
\end{proof}

 The assumptions of Theorem \ref{teo:zz} 
will hold true for a.e. $t \in (0,1)$ 
only for a certain family of Kantorovich geodesics.  Nonetheless, this family shall be sufficient for our purposes.

Finally, inequality \eqref{equ:terzordine} will be crucial to deduce 
concavity of certain one-dimensional factors. We include here 
a result that will be used later. For its proof we refer to 
\cite[Lemma 5.7]{CMi}.

\begin{lemma}[Concavity restatement]
\label{lem:concavita}
 Assume that for some locally absolutely continuous function $z$ on $(0,1)$ we have: 
\[
\partial_{\tau}|_{\tau=t} \frac{(p-1)\ell^p_{\tau}(\gamma_t)}{p}= z(t)\quad \text{for a.e.} \,\, t\in{(0,1)}.  
\]
Then for any fixed $r_0\in{(0,1)}$, the function:
\[
L(r)=\exp \biggl{(}-\frac{1}{\ell^p (p-1)} \int_{r_0}^r \partial_{\tau}|_{{\tau}=t}  \frac{(p-1)\ell^p_{\tau}(\gamma_t)}{p}\,dt \biggr{)}= \exp \biggr{(} -\frac{1}{\ell^p(p-1)}  \int_{r_0}^r z(t)\,dt \biggr  {)}
\]
is concave on $(0,1)$.
\end{lemma}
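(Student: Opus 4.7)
The plan is to reduce concavity of $L$ to a pointwise inequality for $L''$ almost everywhere, and then invoke the third-order bound \eqref{equ:terzordine} established in Theorem \ref{teo:zz}. First I would note that local absolute continuity of $z$ makes $r \mapsto \int_{r_0}^r z(t)\,dt$ a $C^1$ function, so $L$ is $C^1$ and strictly positive on $(0,1)$, with
\[
L'(r) = -\frac{z(r)}{(p-1)\ell^p}\, L(r).
\]

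Next, since $L$ is $C^1$ and $z$ is locally absolutely continuous, the product $L'$ is itself locally absolutely continuous. Hence $L''$ exists almost everywhere, and a direct differentiation yields
\[
L''(r) = -\frac{L(r)}{(p-1)\ell^p}\left[z'(r) - \frac{z(r)^2}{(p-1)\ell^p}\right]
\]
at a.e. $r \in (0,1)$. The third-order inequality \eqref{equ:terzordine} asserts exactly that the bracket is non-negative at every point where $z$ is differentiable; combined with the positivity of $L$ this gives $L''(r) \leq 0$ almost everywhere on $(0,1)$.

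To pass from this a.e.\ inequality to concavity, I would invoke the fact that a locally absolutely continuous function $L'$ with non-positive derivative almost everywhere is monotone non-increasing, which is exactly concavity of $L$.

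I do not anticipate any substantial obstacle: the computation is essentially the Riccati-type transformation $L = \exp\!\bigl(-\frac{1}{(p-1)\ell^p}\int z\bigr)$ which converts the nonlinear inequality $z' \geq z^2/((p-1)\ell^p)$ into the linear inequality $L'' \leq 0$. The only subtle point is the bookkeeping of regularity, namely verifying that $L'$ is locally absolutely continuous (so that the a.e.\ formula for $L''$ actually controls the monotonicity of $L'$ globally) and that the hypotheses of Theorem \ref{teo:zz} hold for the characteristic $\gamma$ under consideration, so that \eqref{equ:terzordine} is at our disposal pointwise a.e.
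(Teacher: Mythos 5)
Your proposal is essentially correct and captures the right idea: the Riccati-type substitution $L=\exp\bigl(-\frac{1}{(p-1)\ell^p}\int z\bigr)$ converts the differential inequality $z'\geq z^2/((p-1)\ell^p)$ into $L''\leq 0$, and your regularity bookkeeping (strict positivity and $C^1$ regularity of $L$, local absolute continuity of $L'$ as a product of a locally absolutely continuous $z$ with the $C^1$ function $L$, hence $L'$ non-increasing from $L''\leq 0$ a.e.) is sound. The paper does not supply its own proof of Lemma \ref{lem:concavita}, deferring instead to \cite[Lemma 5.7]{CMi}, so there is no internal proof to compare against.

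The caveat you flag at the end, however, is not cosmetic: the stated hypothesis of Lemma \ref{lem:concavita} refers only to $\ell$ --- equivalently, via Corollary \ref{cor:confrontoder}, only to the second Peano derivative of $\tau\mapsto\varphi_\tau(\gamma_t)$ --- whereas Theorem \ref{teo:zz} additionally requires twice differentiability of $\tau\mapsto\bar\varphi_\tau(\gamma_t)$ with the \emph{same} second derivative. The sandwich in Theorem \ref{teo:loclip}(3) only gives $\underline{\mathcal P}_2\varphi_t\le\overline{\mathcal P}_2\varphi_t\le\overline{\mathcal P}_2\bar\varphi_t$, so equality of the two Peano derivatives of $\varphi$ does not pin down those of $\bar\varphi$; you therefore cannot invoke \eqref{equ:terzordine} directly under the lemma's stated hypothesis. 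The cleanest repair is to bypass Theorem \ref{teo:zz} entirely and derive the needed a.e.\ inequality from the \emph{unconditional} difference-quotient bound \eqref{equ:qq} in Theorem \ref{teo:mainestimate}: for a.e.\ $t$, Corollary \ref{cor:confrontoder} gives $q_+(t)=q_-(t)=r_\pm(t)=z(t)$, whence \eqref{equ:qq} reads
\[
\frac{z(t)-z(s)}{t-s}\;\geq\;\frac{s}{t}\,\frac{z(s)^2}{(p-1)\ell^p}
\]
for a.e.\ $s<t$. Letting $t\downarrow s$ through this full-measure set, at a differentiability point $s$ of $z$ of full density, yields $z'(s)\geq z(s)^2/((p-1)\ell^p)$ a.e., which is all your computation of $L''$ requires. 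With that substitution the rest of your argument stands.
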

\subsection{Time propagation of Intermediate Kantorovich potentials}\label{Ss:IntermediateKantorovichPotential}
Finally we recall the definition of time-propagated 
intermediate Kantorovich potentials as introduced in \cite{CMi}.
\begin{definition}Given a Kantorovich potential $\varphi:X\to \mathbb{R}$ and $s,t\in{(0,1)}$, define the $t$-propagated $s$-Kantorovich potential $\Phi_s^t$ on  the domain $D_\ell(t)$ where forward speed is well-defined
 and its time-reversed version $\Phic_s^t$ on  the domain $D_{\ellc}(t)$ from \eqref{well-defined speed domain in spacetime}, by:   
\[
\Phi_s^t := \varphi_t + (t-s) \frac{\ell_t^p}{p}  
\text{ on $D_{\ell}(t)$},
\qquad
\Phic_s^t := \varphic_t + (t-s) \frac{\ellc_t^p}{p} 
\text{ on $D_{\ellc}(t)$}.
\]
\end{definition}

\noindent
Using Theorem \ref{teo:loclip}, it follows that for all $s,t\in{(0,1)}$:
\begin{equation}
\Phi_s^t=\bar{\Phi}_s^t= \varphi_s\circ e_s\circ (\ee_{t} |_{G_\varphi}^{-1}), \qquad \text{on}\, \,\,\ee_{t}(G_{\varphi}).
\end{equation}
Indeed, for any $\gamma\in{G_{\f}}$ it holds
$$
\Phi_s^t (\gamma_t) = \varphi_t(\gamma_t) + (t-s)\frac{{\ell_t(\gamma_t)}^p}{p}
= \varphi_t(\gamma_t) + (t-s)\frac{{\ell(\gamma)}^p}{p}
=\varphi_s(\gamma_s).
$$
 Consequently, on $\ee_t(G_\varphi)$, $\Phi_s^t=\Phic_s^t$ is identified as the push-forward of $\varphi_s$ via $\ee_t \circ \ee_s^{-1}$, i.e. its propagation along $G_\varphi$ from time $s$ to time $t$. 
\begin{proposition}[Linear expansion of energy in time generates propagation of potential]
\label{prop:Phi}
For any $s\in{(0,1)}$, the following properties hold:
\begin{enumerate}
\item The maps $(x,t)\mapsto \Phi^t_s(x)$ and $(x,t)\mapsto \bar{\Phi}^t_s(x)$ are continuous  on $D_{\ell}$ and on $D_{\bar{\ell}}$ respectively;

\item For each $x\in{X}$, denoting  $\tilde{\Phi} \in \{\Phi,\bar{\Phi}\}$ and the corresponding
$\tilde{\ell}\in \{\ell, \bar{\ell}\}$, the map $D_{\tilde{\ell}}(x)\ni t\mapsto \tilde{\Phi}^t_s(x)$ is differentiable at $t$ 
 if and only if $D_{\tilde{\ell}}(x)\ni t \mapsto \tilde{\ell}^p_t(x)$ is differentiable at $t$ or if $t=s\in{D_{\tilde{\ell}}(x)}$. In particular,  $t \mapsto  \tilde{\Phi}^t_s(x)$  is a.e.\ differentiable. At any point of differentiability:
\[
\partial_t \tilde{\Phi}^t_s(x)= {\tilde{\ell}}^p_t(x)+ (t-s) \frac{\partial_{t}\tilde{\ell}^p_t(x)}{p}
\]
In particular, if $s\in{D_{\tilde{\ell}}(x)}$ then $\partial_t|_{t=s} \tilde{\Phi}^t_s(x)$ exists and is given by $\tilde{\ell}^p_t(x)$.

\item For each $x\in{X}$, the map $G_{\varphi}\ni t\mapsto \Phi^t_s(x)=\bar{\Phi}^t_s(x)$ is locally Lipschitz;

\item For all $t\in{(0,1)}$:
\begin{equation}
\begin{cases}
{\displaystyle \underline{\partial}_t \Phi_s^t(x)\geq \frac{s}{t}\ell^p_t(x)}, 
		& t\geq s \crcr \\
{\displaystyle \overline{\partial}_t \Phi_s^t(x)\leq \frac{s}{t}\ell^p_t(x) }, 
		& t\leq s
\end{cases}
\quad \forall x\in{D_{\ell}(t)};
\qquad
\begin{cases}
{\displaystyle \overline{\partial}_t \bar{\Phi}_s^t(x)\leq \frac{1-s}{1-t}\bar{\ell}^p_t(x)}, 
		& t\geq s \crcr \\
{\displaystyle \underline{\partial}_t \bar{\Phi}_s^t(x)\geq \frac{1-s}{1-t}\bar{\ell}^p_t(x) }, 
		& t\leq s
\end{cases}
\quad \forall x\in{D_{\bar{\ell}}(t)}.
\end{equation}
\end{enumerate}
\end{proposition}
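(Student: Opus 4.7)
The plan is to treat $\Phi_s^t(x) = \varphi_t(x) + (t-s)\ell_t^p(x)/p$ and its counterpart $\bar\Phi_s^t$ as algebraic combinations of two previously analysed scalar quantities, reducing every assertion to regularity and comparison information already available from Corollary~\ref{cor:loclipfi}, Theorem~\ref{teo:loclip}, and Corollary~\ref{cor:derivatives}. I will describe only the forward case, since the argument for $\bar\Phi_s^t$ is identical up to replacing $(\varphi,\ell)$ by $(\bar\varphi,\bar\ell)$ and exchanging the one-sided bound \eqref{equ:under} for \eqref{equ:over}.

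For items 1 and 3 there is essentially nothing to do. Item 1 is joint continuity of a polynomial expression in the jointly continuous functions $(x,t)\mapsto\varphi_t(x)$ on $X\times(0,1)$ and $(x,t)\mapsto\ell_t(x)$ on $D_\ell$ from Corollary~\ref{cor:loclipfi}(1)--(2). For item 3 (interpreting $G_\varphi \ni t$ as $\mathring G_\varphi(x) \ni t$), I combine the time-Lipschitz property of $\varphi_t(x)$ from Corollary~\ref{cor:loclipfi}(1) with the time-Lipschitz property of $\ell_t(x)$ along characteristics from Theorem~\ref{teo:loclip}(2); the identification $\Phi_s^t = \bar\Phi_s^t$ on $\ee_t(G_\varphi)$ comes from the remark directly preceding the proposition. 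For item 2 I use $\partial_t\varphi_t(x) = (p-1)\ell_t^p(x)/p$ on $D_\ell(x)$ from Corollary~\ref{cor:loclipfi}(3), so that differentiability of $\Phi_s^t(x)$ in $t$ reduces to that of $(t-s)\ell_t^p(x)/p$: this product is differentiable at $t\neq s$ iff $\ell_t^p(x)$ is, since the scalar factor is smooth and nonzero there; at $t=s\in D_\ell(x)$ it is differentiable with derivative $\ell_s^p(x)/p$ by direct evaluation of the difference quotient, exploiting only continuity of $\ell_\tau$ at $s$ from Corollary~\ref{cor:loclipfi}(2). Summing $(p-1)\ell_t^p(x)/p$ and $\ell_t^p(x)/p + (t-s)\partial_t\ell_t^p(x)/p$ yields the stated formula.

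Item 4 is the substance of the proposition. The key identity is
\begin{equation*}
\frac{\Phi_s^\tau(x) - \Phi_s^t(x)}{\tau - t}
= \frac{\varphi_\tau(x) - \varphi_t(x)}{\tau - t} + \frac{\ell_\tau^p(x)}{p} + (t-s)\,\frac{\ell_\tau^p(x)/p - \ell_t^p(x)/p}{\tau - t},
\end{equation*}
along which I pass to $\liminf$ (for $\underline\partial_t$) or $\limsup$ (for $\overline\partial_t$) as $\tau \to t$ within $D_\ell(x)$. The first summand converges to $(p-1)\ell_t^p(x)/p$ by Corollary~\ref{cor:loclipfi}(3), the second to $\ell_t^p(x)/p$ by continuity. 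In the third, multiplication by the signed scalar $(t-s)$ converts both targeted one-sided limits into the \emph{same} quantity $(t-s)\,\underline\partial_t\ell_t^p(x)/p$: directly when $t \geq s$ (so $(t-s)\geq 0$ passes through $\liminf$) and via the negative-factor swap of $\limsup$ and $\liminf$ when $t \leq s$. Substituting the lower bound $\underline\partial_t\ell_t^p(x)/p \geq -\ell_t^p(x)/t$ from \eqref{equ:under} with the appropriate inequality flip yields the algebraic collapse $1 - (t-s)/t = s/t$, producing $\underline\partial_t\Phi_s^t(x) \geq (s/t)\ell_t^p(x)$ for $t \geq s$ and $\overline\partial_t\Phi_s^t(x) \leq (s/t)\ell_t^p(x)$ for $t \leq s$. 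The same decomposition, now invoking the upper bound $\overline\partial_t\bar\ell_t^p(x)/p \leq \bar\ell_t^p(x)/(1-t)$ of \eqref{equ:over}, produces the two bounds for $\bar\Phi_s^t$ via the collapse $1 + (t-s)/(1-t) = (1-s)/(1-t)$.

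The main obstacle is the sign bookkeeping in item 4: the asymmetry between \eqref{equ:under} (a lower bound on $\underline\partial_t\ell^p$) and \eqref{equ:over} (an upper bound on $\overline\partial_t\bar\ell^p$) forces one to match the sign of $(t-s)$ and the choice of upper versus lower one-sided derivative of $\Phi_s^t$ or $\bar\Phi_s^t$ carefully, and each of the four inequalities in item~4 fails if any of these matchings is mis-made. A minor technical point is that every one-sided derivative here is implicitly in the restricted sense of Remark~\ref{R:diff-restriction} on $D_\ell(x)$ or $D_{\bar\ell}(x)$; since these sets differ from $(0,1)$ by at most countably many points, every point of interest is automatically an accumulation point and the usual incremental-quotient calculus applies unchanged.
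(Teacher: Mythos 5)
Your proof is correct and follows essentially the same route as the paper's: the decomposition of the difference quotient of $\Phi_s^t$ you write in item 4 yields exactly the identities $\underline{\partial}_t \tilde{\Phi}_s^t=\tilde{\ell}_t^p+(t-s)\underline{\partial}_t\tilde{\ell}^p_t/p$ (for $t\geq s$) and $\underline{\partial}_t \tilde{\Phi}_s^t=\tilde{\ell}_t^p+(t-s)\overline{\partial}_t\tilde{\ell}^p_t/p$ (for $t\leq s$), together with their $\overline{\partial}_t$ analogues, which is precisely the starting point the paper states before invoking Corollary~\ref{cor:derivatives}. Items 1--3 likewise reduce to the regularity facts the paper cites, so the two arguments coincide up to how much detail is written out.
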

\begin{proof}
By lower semi-continuity and  Corollary \ref{cor:diff},  1) and 2) follow trivially.  By Corollary \ref{cor:loclipfi} and Theorem \ref{teo:loclip}, $3)$ holds true.
\noindent
To see 4), observe that for every $x\in{D_{\tilde{\ell}}}(t)$,
\begin{align*}
&\underline{\partial}_t \tilde{\Phi}_s^t(x)=\tilde{\ell}_t^p(x)+(t-s)\underline{\partial}_t \frac{\tilde{\ell}^p_t(x)}{p}, \quad t\geq s\\
&\underline{\partial}_t \tilde{\Phi}_s^t(x)=\tilde{\ell}_t^p(x)+(t-s)\overline{\partial}_t \frac{\tilde{\ell}^p_t(x)}{p}, \quad t\leq s
\end{align*}
with analogous identities holding for $\overline{\partial}_t \tilde{\Phi}_s^t(x)$. Using  estimates \eqref{equ:under} and \eqref{equ:over} of Corollary \ref{cor:derivatives}, the claim follows.
\end{proof}
\section{Curvature-Dimension conditions: from $p>1$ to $p =1$}
\label{S:firstimplication}
We will now focus on the main  goal of this paper: to show that for 
essentially non-branching spaces,  the synthetic $(p=2)$ curvature-dimension condition can be equivalently formulated in terms 
of entropic convexity conditions along $p$-Wasserstein geodesics for any  other $p > 1$.
Our approach is to show that for essentially non-branching spaces, the  $\CD_p(K,N)$ for  $p >1$ is equivalent to  $\CD^1(K,N)$,
which is an appropriate concavity statement about the factor measures which arise whenever 
$\mm$ is disintegrated 
along the needles of the signed distance to the zero level-set of an arbitrary continuous function.

The first implication that we will address is the following one: 
if $(X,\sfd,\mm)$ is a $p$-essentially non-branching metric measure space 
verifying $\CD_{p}(K,N)$ 
then it satisfies $\CD^{1}(K,N)$ (actually the stronger $\CD^{1}_{Lip}(K,N)$). 

Before we begin the proof,  we recall the concepts used in $L^{1}$ optimal transport  theory.
For simplicity, we will illustrate the case $X=\mathbb{R}^{n}$ paired with the
Euclidean metric,  and a restriction of Lebesgue as the ambient measure.
  
\smallskip
To a $1$-Lipschitz function, $u:\mathbb{R}^{n}\to\mathbb{R}$, 
we associate a transport ordering,
$\Gamma_{u}$, defined as in $\eqref{TransportSet1}$.
Membership of $(x,y)$ in this set should be understood as ``$y$ travels to $x$ along a transport
 ray determined by $u$".
In particular, it is helpful to consider $u(x)=\left| x \right|$ 
in which case the transport rays
are polar rays emanating from $0$, and $(x,y)\in\Gamma_{u}$ means that $x$ and $y$ lie on the
 same polar ray  with $x$ being larger in norm than $y$.
    
 It is desirable to associate the points that travel along a geodesic with the geodesic itself.
    Towards this goal,  consider a symmetric relation  $R_u$ composed of $\Gamma_{u}$ 
    together with its inverse relation,  and denote the projection of $R_u$ onto its first component  by $\mathcal{T}_{u}$ 
(see \eqref{E:R1}).
    We refer to $R_{u}$ as the transport relation and $\mathcal{T}_{u}$ as the transport set.
     Even though $R_{u}$ is a symmetric relation over $\mathcal{T}_{u}$, it is not transitive. This obstruction to transitivity is called branching,  where two distinct points, $z$ and $w$, travel to or from a point $x$ but no transport ray  of  $u$ transports $z$ to $w$ or vice versa.  More specifically we have the forward and backward branching points $A_{+}$, $A_{-}$ as defined in $\eqref{APlus}$ and $\eqref{AMinus}$.
    To overcome this difficulty we simply remove the offending points and consider the resulting
    equivalence relation.
    That is, we consider $\mathcal{T}_{u}^{b}:=\mathcal{T}_{u}\setminus{}(A_{+}\cup{}A_{-})$
    and $R_{u}^{b}=R_{u}\cap\left(\mathcal{T}_{u}^{b}\times\mathcal{T}_{u}^{b}\right)$.   As seen in Theorem $\ref{teo:branch}$, this procedure removes only a negligible  set of points.  
    
\smallskip
We then use this equivalence relation to break $\mm$, restricted to the branched transport set, into measures
    supported on each of the transport rays determined by $u$.
    We do this by applying the Disintegration  Theorem.
    Using our example of $u(x)=|x|$ over
    $(\mathbb{R}^{n},|\,\cdot\,|,\frac{1}{\omega_{n}}\mathcal{L}^{n}\big\vert_{B_{1}(0)})$ we
    arrive at
\begin{equation}\label{PolarRayDisintegration}
\mathcal{L}^{n} \llcorner_{B_{1}(0)}(\mathrm{d}x)
= \int_{S^{n-1}} |x|^{n-1}\mathcal{H}^{1} \llcorner_{[0, \alpha]}
{\mathcal H}^{n-1}( \mathrm{d}\alpha)
\end{equation}
where ${\mathcal H}^{k}$ is $k$-dimensional Hausdorff measure 
and  $\frac{\omega_{n}}{n}=\mathcal{L}^{n}\left(B_{1}(0)\right)$.
Hence, in this case, disintegration gives polar integration.
We observe that $\eqref{PolarRayDisintegration}$ can be compared 
to $\eqref{E:disint}$ where $Q=S^{n-1}$ and polar rays form the set of
non-branched transport geodesics.
    
Finally, we remind the reader about $\CD(K,N)$.
The $\CD(K,N)$ condition represents, 
in a very generalized sense, a Ricci curvature bound from
below by $K\in\mathbb{R}$ and a dimension bound from above by $N\in(1,\infty)$.
In particular, over an interval in $\mathbb{R}$ and a measure
$\mm=h\mathcal{L}^{1}\bigm\vert_{[0,L]}$, where $h>0$ on $(0,L)$, the $\CD(K,N)$ 
condition reduces to
\begin{equation*}
\left(h^{\frac{1}{N-1}}\right)''+\frac{K}{N-1}h^{\frac{1}{N-1}}\le0.
\end{equation*}
 This condition is equivalent to ($K,N$) convexity  of $-\log(h)$ \cite{EKS}.
That is, $-\log(h)$ satisfies
\begin{equation*}
\left(-\log h\right)''\ge\frac{1}{N-1}\left(\left(-\log h\right)'\right)^{2}+K.
\end{equation*}
Hence, in one-dimensional space, the $\CD(K,N)$ condition amounts to a concavity condition on the density of the reference measure with respect to Lebesgue.
In particular, using the example of $\mm=nr^{n-1}\mathcal{L}^{1} \llcorner_{(0,1)}$ over a polar
ray of length one, the $\CD(K,N)$ conditions becomes
\begin{equation*}
Kr^{2}\le (n-1)\biggl(1- \frac{n-1}{N-1} \biggr)\hspace{10pt}\text{for }0\le{}r\le1.
\end{equation*}
This is satisfied for all $K \le 0$ and $N \ge n$ which is consistent with the curvature 
and dimensionality of $\mathbb{R}^{n}$.
\subsection{$L^1$ optimal transport}\label{TransportSets} 
We  recall a standard fact about $1$-Lipschitz functions
and their associated transport set.

To any $1$-Lipschitz function $u : X \to \R$ there is a naturally associated $\sfd$-cyclically monotone set $\Gamma_{u}$ defined in \eqref{TransportSet1}  
that we call the transport ordering; we write $x \ge_u y$ if and only if $(x,y) \in \Gamma_u$ and we recall that $\ge_u$ is a partial-ordering.
The \emph{transport relation} $R_u$ and the 
\emph{transport set} $\mathcal{T}_{u}$ are defined  in \eqref{E:R1}.

The transport ``flavor'' of  the previous definitions can be seen in the next property that is immediate to verify: for any $\gamma\in \Geo(X)$ such that 
$(\gamma_0,\gamma_1)\in \Gamma_{u}$, then
\[
(\gamma_s,\gamma_t)\in \Gamma_{u},\quad \forall \  0\leq s\leq t\leq 1.
\]

\noindent
Finally, recall the definition of the  \emph{forward and backward branching points} of $\T_{u}$ that  was introduced in \cite{cava:MongeRCD}:
\begin{align}
&A_+:=\{x\in{\T_{u}: \exists\,z,w\in{\Gamma_{u}(x)}, (z,w)\notin{R_{u}}}\},\label{APlus}\\
&A_-:=\{x\in{\T_{u}: \exists\,z,w\in{\Gamma_{u}(x)^{-1}}, (z,w)\notin{R_{u}}}\}.\label{AMinus}
\end{align}

\noindent
Once branching points are removed, we obtain the \emph{non-branched transport set}  and the \emph{non-branched transport relation},
$$
\T^b_{u}:=\T_{u}\setminus (A_+\cup A_{-}), \quad
R_{u}^b:=R_{u}\cap (\T_{u}^b\times \T_{u}^b).
$$
The following was obtained in \cite{cava:MongeRCD} and highlights the motivation for removing branching points.

\begin{proposition}[Transport relation is an equivalence relation on the non-branched transport set]
The non-branched transport relation
$R_{u}^b\subset X\times X$ is an equivalence relation on $\T_{u}^b$.
\end{proposition}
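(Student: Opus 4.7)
The plan is to verify the three defining properties of an equivalence relation --- reflexivity, symmetry, and transitivity --- on $\mathcal{T}_u^b$. Reflexivity and symmetry are immediate from the definitions: since $u(x) - u(x) = 0 = \sfd(x,x)$, the diagonal is contained in $\Gamma_u \subseteq R_u$, so every $x \in \mathcal{T}_u^b$ satisfies $(x,x) \in R_u^b$; and $R_u = \Gamma_u \cup \Gamma_u^{-1}$ is manifestly symmetric, so intersecting with the symmetric rectangle $\mathcal{T}_u^b \times \mathcal{T}_u^b$ preserves symmetry. The nontrivial content of the proposition therefore lies entirely in transitivity.

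To prove transitivity, I would fix $x,y,z \in \mathcal{T}_u^b$ with $(x,y),(y,z) \in R_u$ and split into four cases according to which of $x \ge_u y$, $y \ge_u x$ and $y \ge_u z$, $z \ge_u y$ holds. In the two \emph{monotone} cases $x \ge_u y \ge_u z$ and $z \ge_u y \ge_u x$, one simply adds the two defining identities (e.g., $u(x)-u(y) = \sfd(x,y)$ and $u(y)-u(z) = \sfd(y,z)$), obtaining $u(x)-u(z) = \sfd(x,y)+\sfd(y,z) \ge \sfd(x,z)$, while the $1$-Lipschitz property of $u$ forces the reverse inequality; hence $(x,z) \in \Gamma_u \subseteq R_u$.

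The two remaining cases are exactly where the hypothesis $y \notin A_+ \cup A_-$ enters. If $y$ is a common descendant of $x$ and $z$, i.e.\ $x, z \in \Gamma_u^{-1}(y)$, then failing to have $(x,z) \in R_u$ would place $y$ in the backward-branching set $A_-$ by its very definition, contradicting $y \in \mathcal{T}_u^b$; the common-ancestor case $x, z \in \Gamma_u(y)$ is dual and rules out $y \in A_+$. In every case one concludes $(x,z) \in R_u$, and since $x, z \in \mathcal{T}_u^b$ by hypothesis, $(x,z) \in R_u^b$.

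There is no real obstacle: the definitions of $A_\pm$ are calibrated precisely to excise the configurations that would obstruct transitivity, so the argument reduces to unwinding these definitions in the branching cases and to the $1$-Lipschitz/triangle-inequality identity in the monotone cases.
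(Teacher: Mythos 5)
Your argument is correct and is essentially the same case analysis used to prove this proposition in \cite{cava:MongeRCD}, which the paper cites rather than reproducing: reflexivity and symmetry are immediate from the definitions of $\Gamma_u$ and $R_u$, the two monotone chains $x\ge_u y\ge_u z$ and $z\ge_u y\ge_u x$ reduce to the $1$-Lipschitz/triangle-inequality identity, and the two branching configurations are precisely excluded by $y\notin A_+\cup A_-$, with the upgrade from $(x,z)\in R_u$ to $(x,z)\in R_u^b$ following because $x,z\in\T_u^b$ by hypothesis.
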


\noindent
Noticing that once we fix $x\in \T_{u}^b$, for any choice of  
$z,w\in R_{u}(x)$, there exists $\gamma\in  \Geo(X)$ such that 
\[
\{x,z,w\} \subset  \{\gamma_s: s \in{[0,1]}\},
\]
it is not hard to deduce that each equivalence class is a geodesic. 
 These equivalence classes are sometimes called transport rays 
\cite{EvansGangbo99} 
or needles \cite{klartag}.

It is a classical procedure then to construct an $\mm$-measurable quotient map 
$\QQ$ for the equivalence relation $R^{b}_{u}$ over $\T_{u}^{b}$; 
in particular, there will be an $\mm$-measurable quotient 
set $Q \subset \T_{u}^{b}$ which is the image of $\QQ$. The Disintegration Theorem then implies the following disintegration formula:
\begin{equation}\label{E:disint}
\m \llcorner_{\T^b_{u}}= \int_{Q} \m_{\alpha} \mathfrak{q}(d\alpha),
\end{equation}
where $\qq = \QQ_{\sharp} \m \llcorner_{\T_{u}^b}$, and for 
$\qq$-a.e. $\alpha \in Q$
we have $\mm_{\alpha} \in \mathcal{P}(X)$, 
$\mm_{\alpha}(X \setminus X_{\alpha}) = 0$, where we have used the notation $X_{\alpha}$ to denote the equivalence class of the element $\alpha \in Q$ 
(indeed $X_{\alpha} = R_{u}^{b}(\alpha)$).

\begin{remark}\label{R:disintegration}
For a brief account on  the Disintegration Theorem, we refer to \cite{biacar:cmono} 
and references therein (see also \cite{CMi}). It is worth mentioning here 
that the map $Q \ni \alpha \mapsto \mm_{\alpha} \in \mathcal{P}(X)$ is essentially unique 
(meaning that any two maps for which \eqref{E:disint} holds true have to coincide 
$\qq$-a.e.) thanks to the assumption $\mm(X) = 1$,
while $\mm_{\alpha}(X \setminus X_{\alpha}) = 0$ 
(also called strongly consistence of the disintegration) is a consequence of 
the existence an $\mm$-measurable quotient map $\QQ$.
\end{remark} 

Again in \cite{cava:MongeRCD}, it was proved also that assuming the $\RCD(K,N)$ condition 
(which enhances $\CD(K,N)$ with an infinitesmal Hilbertianity assumption),
the measure of the set of branching points is zero. As already observed several times in the literature, the  $p=2$ proof only requires all optimal plans to be maps,
and so the same argument works for $CD_p(K,N)$ and any $p > 1$:    
\begin{theorem}[Negligibility of forward and backward branching points]
\label{teo:branch}
Let $(X,\sfd, \mm)$ be a m.m.s. such that for any 
$\mu_0,\mu_1\in{\mathcal{P}_p(X)}$ with $\mu_0 \ll \m$ any optimal transference plan for $W_p$ is concentrated on the graph of a function. Then
\[
\m(A_{+})=\m(A_{-})=0.
\]
\end{theorem}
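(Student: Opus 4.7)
The plan is to argue by contradiction. Since replacing $u$ by $-u$ swaps the roles of $A_+$ and $A_-$, it suffices to prove $\mm(A_+)=0$, so assume $\mm(A_+)>0$. The goal is to produce $\mu_0 \ll \mm$, a measure $\mu_1$, and a $p$-optimal plan between them which is \emph{not} concentrated on a graph, violating the hypothesis. First, I would apply measurable selection: the set
\[
\widehat{A} := \{(x,z,w) \in X^{3} : (x,z),(x,w)\in \Gamma_u,\ (z,w)\notin R_u\}
\]
is Borel because $\Gamma_u$ and $R_u$ are closed, and $A_+$ is its projection onto the first coordinate, hence analytic. Jankov--von Neumann then provides $\mm$-measurable (hence, after adjustment on a null set, Borel) maps $F,G : A_+ \to X$ which select two incomparable successors at every point of $A_+$. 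By partitioning $A_+$ into countably many Borel pieces according to discretised values of the three distances $\sfd(x,F(x))$, $\sfd(x,G(x))$, $\sfd(F(x),G(x))$, I pass to a Borel $B\subset A_+$ of positive $\mm$-measure on which $\sfd(x,F(x)),\sfd(x,G(x))\in[\delta_1,\delta_2]$ and $\sfd(F(x),G(x))\geq \eta$ for fixed constants $0<\delta_1\le\delta_2$ and $\eta>0$.

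Next, I would set $\mu_0 := \mm(B)^{-1}\,\mm\llcorner_{B}$, $\mu_1 := \tfrac{1}{2}(F_\sharp\mu_0 + G_\sharp\mu_0)$, and define the candidate plan
\[
\pi := \tfrac{1}{2}\,\bigl[(\id,F)_\sharp \mu_0 + (\id,G)_\sharp \mu_0\bigr],
\]
whose marginals are $\mu_0\ll\mm$ and $\mu_1$. The disintegration of $\pi$ against its first marginal attributes the probability measure $\tfrac{1}{2}(\delta_{F(x)}+\delta_{G(x)})$ to $\mu_0$-a.e.\ $x\in B$; since $(F(x),G(x))\notin R_u$ forces $F(x)\neq G(x)$, this is not a Dirac mass, so $\pi$ is not concentrated on a graph. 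Once $p$-optimality of $\pi$ is established, the hypothesis that every $p$-optimal plan with absolutely continuous source is a map fails, yielding the required contradiction and hence $\mm(A_+)=0$.

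The hard part will be establishing that $\pi$ is genuinely $p$-optimal between $\mu_0$ and $\mu_1$, equivalently that $\supp(\pi)$ is $c$-cyclically monotone for $c=\sfd^p/p$. The available inputs are the equalities $\sfd(x,F(x)) = u(x)-u(F(x))$ and $\sfd(x,G(x)) = u(x)-u(G(x))$ enjoyed by pairs in $\supp(\pi)$, together with the $1$-Lipschitz lower bound $\sfd(x,y)\geq u(x)-u(y)$ valid at every other pair. These together provide sharp control of any rearrangement of finitely many pairs at the $L^{1}$-level. Upgrading from this $L^{1}$ statement to $c$-cyclical monotonicity for the strictly convex cost $c=\sfd^p/p$ is the delicate point: my plan is to shrink $B$ further by a density argument or a countable partition into narrow $u$-level slices, so that the oscillations of $u|_B$, $u\circ F$ and $u\circ G$ are all much smaller than $\eta$. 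On such a localised $B$, a finite pairwise swap either rearranges pairs lying on a common transport ray (handled by one-dimensional $L^p$ monotone rearrangement), or exchanges mass between the $F$- and $G$-branches; the latter case is controlled by the uniform separation $\sfd(F(x),G(x))\geq \eta$, which together with the near-constancy of $u$-values forces the $L^p$ cost of the swap to strictly exceed that of the identity pairing.
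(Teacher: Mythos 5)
Your overall strategy --- assume $\mm(A_+)>0$, use measurable selection and countable partitioning to extract a positive-measure Borel $B$ with Borel maps $F,G\colon B\to X$ picking out incomparable successors, and then exhibit a $p$-optimal plan $\pi=\tfrac12\bigl[(\id,F)_\sharp\mu_0+(\id,G)_\sharp\mu_0\bigr]$ from $\mu_0=\mm(B)^{-1}\mm\llcorner_B$ that is not concentrated on a graph --- is the right one and is precisely the mechanism behind the cited result \cite{cava:MongeRCD}. The gap is in your verification that $\pi$ is $\sfd^p$-cyclically monotone, and the ``narrow $u$-level slice'' device does not close it. Consider the cross-branch swap that replaces $(x,F(x)),(x',G(x'))$ by $(x,G(x')),(x',F(x))$. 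Writing $a:=u(x)-u(F(x))=\sfd(x,F(x))$, $b:=u(x')-u(G(x'))=\sfd(x',G(x'))$, $\Delta:=u(G(x'))-u(F(x))$, the $1$-Lipschitz bound on the two new pairs gives a swap cost of at least $(a-\Delta)^p+(b+\Delta)^p$ against $a^p+b^p$. These two pairs of nonnegative numbers have the same sum $a+b$ (and under your oscillation constraints $|\Delta|\ll\delta_1\le a,b$, so both entries are positive), and a direct computation shows $(a-\Delta)^p+(b+\Delta)^p\ge a^p+b^p$ holds \emph{exactly when} $\bigl(u(x)-u(x')\bigr)\bigl(u(F(x))-u(G(x'))\bigr)\ge 0$ --- the very sign hypothesis of Lemma~\ref{lem:pmonotone}. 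Shrinking oscillations of $u$, $u\circ F$, $u\circ G$ makes both factors small but does nothing to control their signs; and the off-ray slack $\sfd(x,G(x'))-\bigl(u(x)-u(G(x'))\bigr)$ that you are implicitly counting on to compensate has no effective lower bound coming from $\sfd(F(x),G(x))\ge\eta$: that separation involves a \emph{common} base point, whereas $G(x')$ with $x'\ne x$ can sit arbitrarily close to, or even exactly on, a transport ray through $x$. So $\pi$ as built need not be $\sfd^p$-cyclically monotone, hence need not be $p$-optimal, and the contradiction does not follow.

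The correct replacement is exact, not approximate, level control of the targets. Note $x\in A_+$ if and only if for some $a_1<u(x)$ there exist two \emph{distinct} points of $\Gamma_u(x)\cap\{u=a_1\}$: distinct points sharing a $u$-level are automatically $R_u$-incomparable (two comparable points with the same $u$-value coincide), while conversely, given incomparable $F,G\in\Gamma_u(x)$, sliding the one with smaller $u$-value up its transport ray to the level of the other produces two distinct points on a common level. A countable exhaustion over $a_1\in\Q$ together with your partitioning then yields a positive-measure $B\subset A_+$, one fixed $a_1$, and Borel $F,G\colon B\to\{u=a_1\}$ with $F\ne G$ on $B$ and $(x,F(x)),(x,G(x))\in\Gamma_u$. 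With $u\circ F\equiv u\circ G\equiv a_1$ the sign hypothesis of Lemma~\ref{lem:pmonotone} is satisfied trivially, and --- as the paper records immediately after that lemma in Section~\ref{Ss:curvature} --- any subset of $(C\times\{u=a_1\})\cap\Gamma_u$ is $\sfd^p$-cyclically monotone for every $p>1$. With this in place $\pi$ is $p$-optimal, $\mu_0\ll\mm$, and $\pi$ is visibly not a graph, which closes the contradiction.
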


\noindent
From Theorem \ref{teo:kell}, the $p$-essentially non-branching  hypothesis implies that  for every $\mu_0,\mu_1\in \mathcal{P}_p(X)$ with $\mu_0 \ll m$ there exists a  unique $p$-optimal plan and it is induced by a map. 
Hence, the assumptions of Theorem \ref{teo:branch} are satisfied, and therefore   
\begin{equation} \label{E:measurezero}
\m(A_{+})=\m(A_{-})=0.
\end{equation}
Putting together \eqref{E:disint} and  \eqref{E:measurezero} we obtain: 
\begin{equation}\label{E:disint2}
\m \llcorner_{\T_{u}}= \int_{Q} \m_{\alpha} \mathfrak{q}(d\alpha).
\end{equation}
In what follows we will prove that $(X,\sfd,\mm)$ verifies $\CD_{u}^{1}(K,N)$.
\subsection{Curvature estimates}
\label{Ss:curvature}
Recalling Definition \ref{D:defCD1}, one will observe that to prove $(X,\sfd,\mm)$ verifies $\CD_{u}^{1}(K,N)$ it suffices to show that, for $\mathfrak{q}$-a.e. $\alpha\in{Q}$, the one dimensional metric measure space 
$(X_{\alpha},\sfd,\mm_{\alpha})$ is a $\CD(K,N)$ space, i.e. if 
$X_{\alpha}$ is isometric to $[0,L_{\alpha}]$ where 
$L_{\alpha}$ is the length of $X_{\alpha}$ then, 
$$
\mm_{\alpha} = h_{\alpha} \mathcal{L}^{1}\llcorner_{[0,L_{\alpha}]}, 
\qquad 
\left(h_{\alpha}^{\frac{1}{N-1}}\right)'' + \frac{K}{N-1} h_{\alpha}^{\frac{1}{N-1}} \leq 0,
$$
where the inequality has to be understood in the distributional sense.
Notice indeed that, by construction, the transport rays $X_{\alpha}$ 
are the maximal  totally-ordered subsets of $\T_u^b\subset X$ 
under the partial-ordering $\leq_{u}$ given
 by $\Gamma_u$.

First we recall a result relating $\sfd^{p}$-cyclically monotone sets 
to $\sfd$-cyclically monotone set, presented in \cite{cava:decomposition} for $p = 2$.

\begin{lemma}[Certain $\sfd$-cyclically monotone sets are also $\sfd^p$-cyclical monotone] 
\label{lem:pmonotone}
Let $p > 1$ be any real number and let $\Delta \subset \Gamma_{u}$ be any set such that 
\[
(x_0,y_0), (x_1,y_1)\in{\Delta} \,\, 
\implies (u(y_1)-u(y_0))\cdot (u(x_1)-u(x_0)) \geq 0.
\]
Then $\Delta$ is $\sfd^p$-cyclically monotone.
\end{lemma}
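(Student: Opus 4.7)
The strategy is to reduce $\sfd^p$-cyclical monotonicity of $\Delta$ to a purely one-dimensional rearrangement inequality, by pushing everything through the $1$-Lipschitz function $u$ and exploiting the fact that $t \mapsto |t|^p$ is convex. The spatial geometry enters only through the two-sided estimate $\sfd(x_i,y_j) \ge |u(x_i) - u(y_j)|$, which is an equality on $\Delta$ by definition of $\Gamma_u$.

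Concretely, given finitely many $(x_i,y_i)_{i=0}^n \subset \Delta$ and a permutation $\sigma$, I would set $a_i := u(x_i)$ and $b_i := u(y_i)$. Membership in $\Gamma_u$ gives $\sfd(x_i,y_i) = a_i - b_i \ge 0$, while the $1$-Lipschitz property of $u$ gives $\sfd^p(x_i,y_{\sigma(i)}) \ge |a_i - b_{\sigma(i)}|^p$. The standing hypothesis $(u(y_j) - u(y_i))(u(x_j) - u(x_i)) \ge 0$ means the sequences $(a_i)$ and $(b_i)$ are totally monotone in a compatible way, so after relabeling I may assume $a_0 \le a_1 \le \cdots \le a_n$ and $b_0 \le b_1 \le \cdots \le b_n$. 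Thus it suffices to prove the one-dimensional inequality
\[
\sum_{i=0}^n (a_i - b_i)^p \;\le\; \sum_{i=0}^n |a_i - b_{\sigma(i)}|^p .
\]

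The core ingredient is the submodularity (or ``anti-Monge'' property) of the cost $(a,b) \mapsto |a-b|^p$: for $a \le a'$ and $b \le b'$,
\[
|a-b|^p + |a'-b'|^p \;\le\; |a-b'|^p + |a'-b|^p .
\]
Setting $s := a'-a \ge 0$, $t := b'-b \ge 0$ and $x := a-b$, this reduces to $f(x+s) - f(x) \ge f(x+s-t) - f(x-t)$ for $f(\cdot) = |\cdot|^p$, which is immediate from convexity of $f$, since $u \mapsto f(u+s) - f(u)$ is non-decreasing. With this pairwise fact in hand, the general rearrangement inequality follows by the standard bubble-sort argument: any permutation $\sigma$ can be reduced to the identity by a sequence of adjacent transpositions that successively decrease inversions, and each such swap weakly decreases $\sum |a_i - b_{\sigma(i)}|^p$ by the submodular inequality applied to the corresponding two-coordinate block. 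Iterating yields the claim, and hence the $\sfd^p$-cyclical monotonicity of $\Delta$. The only delicate point is verifying the submodularity estimate in the form stated; everything else is a matter of bookkeeping.
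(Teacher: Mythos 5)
Your proof is correct and follows essentially the same route as the paper: reduce to $\R$ through $u$, use the $\Gamma_u$-equality on $\Delta$ and the $1$-Lipschitz bound off $\Delta$, and then invoke cyclical monotonicity of the resulting monotone subset of $\R^2$ for the cost $|a-b|^p$. The only difference is that the paper cites that one-dimensional statement as a standard fact (valid for any cost $\vartheta(|a-b|)$ with $\vartheta$ convex, $\vartheta(0)=0$), whereas you prove it directly via the submodularity inequality for $|\cdot|^p$ and a bubble-sort reduction of permutations to the identity.
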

\begin{proof}
By hypothesis the set 
\[
\Lambda:=\{ (u(x),u(y)):(x,y)\in{\Delta} \} \subset \R^2
\]
is monotone in the Euclidean sense. Since $\Lambda\subset \R^2$,  it is a standard fact that it is also c-cyclically monotone, for any  cost $c(x,y)=\vartheta( |x-y|)$ with $\vartheta:[0,+\infty) \to [0,+\infty)$ convex and such that $\vartheta(0)=0$. Hence, in particular, $\Lambda$ is $|\cdot|^p$-cyclically monotone.

Fix now $\{(x_i,y_i)\}_{i=1}^n\subset \Delta$. Using that   $u$ is 1-Lipschitz and  $\Delta\subset \Gamma$,  it turns out that 
\begin{align*}
\sum_{i=1}^n \sfd^p(x_i,y_i)&=\sum_{i=1}^n |u(x_i)-u(y_i)|^p\\
&\leq \sum_{i=1}^n |u(x_i)-u(y_{i+1})|^p
\leq \sum_{i=1}^n \sfd^p(x_i,y_{i+1}).
\end{align*}
Hence the claim.
\end{proof}
\begin{example}[Dimensional count in the smooth case]
If $d$ is the geodesic distance on an $n$-dimensional Riemannian manifold $X$ (or Euclidean space),
then --- away from the cut locus --- any $d$-cyclically monotone subset $\Delta$ is contained in a $n+1$ dimensional subset of $X^2$, the extra dimension being due to the degeneracy of $d$ along the direction of transport \cite{Pass12}. 
On the other hand,  if the left projection $P_1(\Delta) \subset \{\tilde u=0\}$ for some
$C^1$ function $\tilde u$ whose derivative is non-vanishing on its zero set,  we expect the dimension of $\Delta$
to be reduced to $n$, which coincides with the dimensional bound on a $d^p$-cyclically monotone set for $p>1$.
This example helps motivate both the previous lemma and the construction to follow.
\end{example}

 Similarly, in the nonsmooth setting, fixing $\delta \in \R$ and considering pairs $\Delta \subset \Gamma_u$ 
of partners $(x,y) \in \Delta$ whose lower endpoint lies on a fixed level set $u(y)=\delta$,
it follows that $\Delta$ is $\sfd^p$-cyclically monotone for all $p>1$.  Equivalently,
for each  $C\subset \T_{u}^b$ and $\delta\in{\R}$, the set $\Delta:=(C\times \{u=\delta\})\cap\Gamma_{u}$ is $\sfd^p$-cyclically monotone. 
Setting
$$
C_{\delta}=P_1((C\times \{u=\delta\})\cap \Gamma_{u}),
$$ 
 we see that if $\m(C_{\delta})>0$, then by Theorem \ref{teo:kell}, 
there exists a unique $\nu\in{\OptGeo_{p}(\mu_{0},\mu_{1})}$ such that
\[
(\ee_0)_{\sharp}\nu=\m(C_{\delta})^{-1}\m\llcorner{C_{\delta}}, \qquad (\ee_0,\ee_1)_{\sharp}\nu(C\times \{u=\delta\}\cap \Gamma_{u})=1,
\]
 and whose push-forwards by $\ee_{t}$ verify the entropic concavity statement \eqref{eq:CDKN-def} for all $t \in [0,1]$. 
Letting $C$ and $\delta$ vary, it is a standard procedure, see for example 
\cite{cava:MongeRCD}, to deduce that: 
\begin{enumerate}
\item[-] for $\q$-a.e. $\alpha\in{Q}$, the conditional probabilities $\mm_{\alpha}$ are absolutely continuous w.r.t. $\L^1\llcorner_{X_{\alpha}}$;
\item[-] if $\mm_{\alpha} = h_{\alpha}\L^1\llcorner_{X_{\alpha}}$, then 
$h_{\alpha} > 0$ in the relative interior of $X_{\alpha}$ and is locally Lipschitz.
\end{enumerate}

\begin{figure}[hbt!]
    \centering
    \scalebox{0.5}{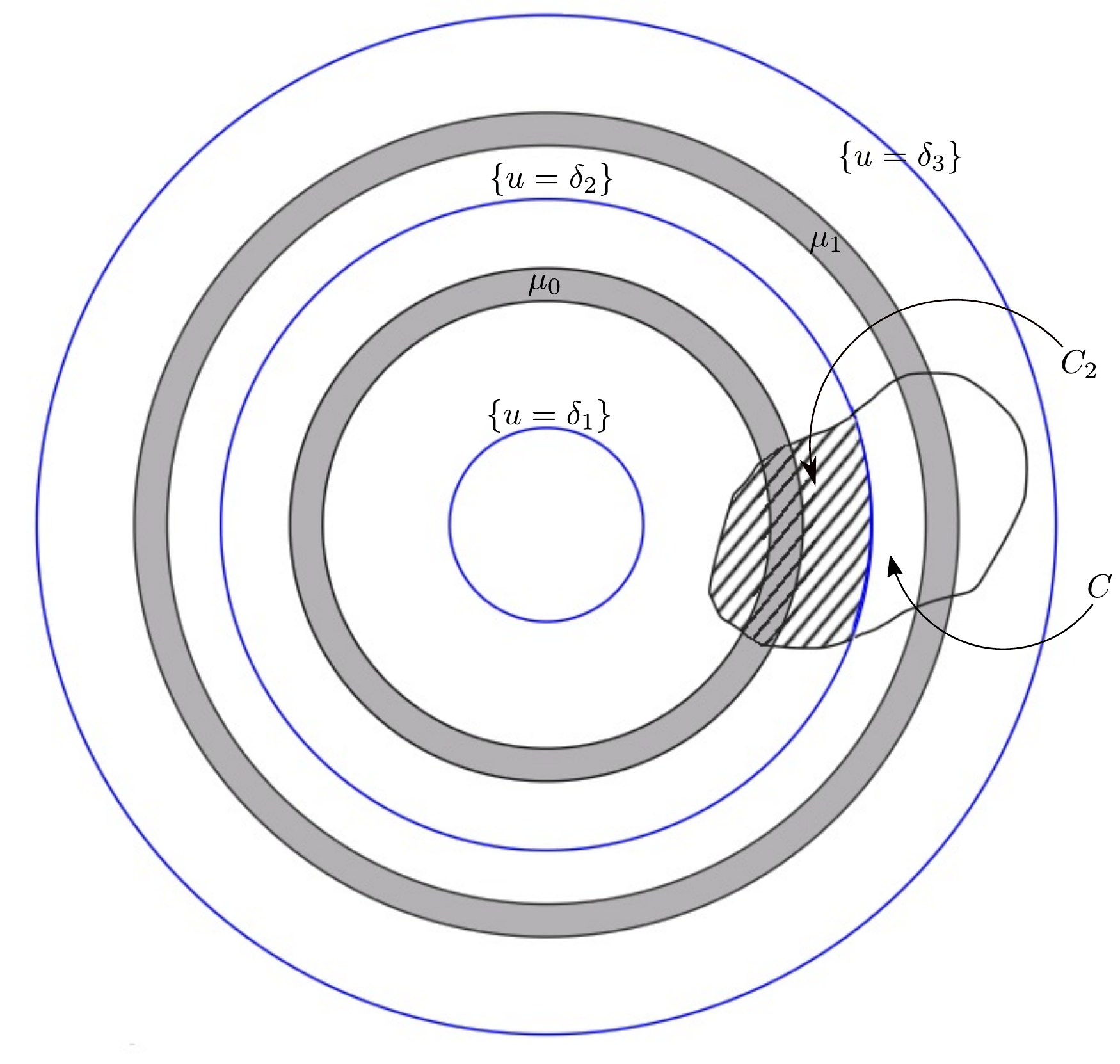}
   \caption{(The sets $C_\delta$) Transporting the sets $\mu_0$ to $\mu_1$ along radial transport geodesics determined by a radial 1-Lipschitz function $u$ associated to the radial Kantorovich potential $\varphi$. If we assume that $u$ behaves like the euclidean norm, then we see that $C_{\delta_1}=\varnothing, C_{\delta_2}=C_2, C_{\delta_3}=C.$ }
\end{figure}
The next step is to prove the $\CD(K,N)$ inequality 
for $\qq$-a.e.\ one-dimensional density $h_{\alpha}$.
This follows repeating verbatim the proof of \cite[Theorem 4.2]{CM17a} where the same 
implication was proved assuming $\CD_{2,loc}(K,N)$ and $2$-essentially non-branching. 
The main ingredient being Lemma \ref{lem:pmonotone} for $p = 2$, the argument
carries over for any $p > 1$. 

Putting together what has been discussed so far, we see that we have obtained the following:
\begin{theorem}[Non-branching $\CD_{p,loc}$ spaces are $\CD^1_{Lip}$]
\label{teo:cm} 
Let $(X,\sfd, \m)$ be a p-essentially non-branching m.m.s. 
satisfying the $\CD_{p,loc}(K,N)$ condition for some $p\in{(1,+\infty)}$, $K\in{\R}$, and $N\in{[1,\infty)}$ and $\mm(X) = 1$. 

Then, for any fixed $1$-Lipschitz function $u:X\to \R$, the transport relation 
$R^{b}_{u}$ induces on the  transport set a disintegration of $\m\llcorner_{\T_u}$ into conditional measures, $\mm_{\alpha}$, that for $\qq$-a.e. $\alpha$ satisfy
$\mm_{\alpha} = h_{\alpha} \mathcal{L}^{1}\llcorner_{X_{\alpha}}$ and:
\[
h_{\alpha}((1-s)t_0+st_1)^{1/(N-1)}\geq \sigma_{K,N-1}^{(1-s)}(t_1-t_0)h_{\alpha}(t_0)^{1/(N-1)}+\sigma_{K,N-1}^{(s)}(t_1-t_0)h_{\alpha}(t_1)^{1/(N-1)},
\]
for all $s\in{[0,1]}$ and for $t_0,t_1\in[0,L_{\alpha}]$ with $t_0<t_1$,
where we have identified the transport ray $X_{\alpha}$ 
with the real interval $[0,L_{\alpha}]$ having the same length.
\end{theorem}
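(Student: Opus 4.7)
The plan is to follow the roadmap already laid out in Section \ref{Ss:curvature}, combining the needle decomposition produced by $R^b_u$ with the $L^p$-optimal transport selections enabled by the $p$-essentially non-branching hypothesis. There are three stages: produce the disintegration, establish one-dimensionality (absolute continuity and Lipschitz densities of the conditionals), and upgrade to the pointwise $\CD(K,N)$ concavity for each $h_\alpha$.

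First I would set up the disintegration. Equipped with the partial order $\leq_u$, the set $\T^b_u$ carries the equivalence relation $R^b_u$, whose classes are order-isometric to intervals. A standard measurable selection yields an $\mm$-measurable quotient map $\QQ$ onto a set $Q$, and the Disintegration Theorem (Remark \ref{R:disintegration}) gives $\mm\llcorner_{\T^b_u} = \int_Q \mm_\alpha\,\qq(d\alpha)$ with $\mm_\alpha(X_\alpha)=1$. Since $\CD_{p,loc}(K,N)$ together with $p$-essential non-branching and Remark \ref{R:locp} implies $\MCP(K,N)$, Theorem \ref{teo:kell} applies, so every $p$-optimal coupling from an absolutely continuous marginal is induced by a map. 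Theorem \ref{teo:branch} then gives $\mm(A_+\cup A_-)=0$ and the disintegration extends to $\mm\llcorner_{\T_u}$ as in \eqref{E:disint2}.

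Next I would prove that for $\qq$-a.e.\ $\alpha$ the conditional $\mm_\alpha$ is absolutely continuous with strictly positive, locally Lipschitz density on the relative interior of $X_\alpha$. Fix a Borel $C\subset \T^b_u$ with $\mm(C)>0$ and $\delta\in\R$, and put $\Delta := (C\times\{u=\delta\})\cap \Gamma_u$. Since the second coordinate satisfies $u=\delta$, the monotonicity hypothesis of Lemma \ref{lem:pmonotone} is trivially fulfilled, so $\Delta$ is $\sfd^p$-cyclically monotone. Choosing $\mu_0$ proportional to $\mm\llcorner_{C_\delta}$ and $\mu_1$ supported on $\{u=\delta\}\cap P_2(\Delta)$, with supports cut down to fit inside a neighborhood where $\CD_{p,loc}(K,N)$ holds, Theorem \ref{teo:kell} supplies a unique dynamical $p$-optimal plan, and $\CD_{p,loc}(K,N)$ yields the absolute continuity of $(\ee_t)_\sharp \nu$ for $t\in[0,1)$, together with the required entropic inequality. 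Letting $C$ and $\delta$ vary across a countable generating family and invoking the essential uniqueness of the disintegration force $\mm_\alpha \ll \L^1\llcorner_{X_\alpha}$ with a locally Lipschitz, strictly positive density $h_\alpha$ on the interior of $X_\alpha$, exactly as in \cite{cava:MongeRCD}.

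Finally I would pass from the integrated to the pointwise $\CD(K,N)$ inequality for $h_\alpha$. Since the target inequality is local in $t_0,t_1\in[0,L_\alpha]$, it suffices to test $\CD_{p,loc}(K,N)$ on thin bundles of transport rays, building $\mu_0,\mu_1$ whose conditionals along $\qq$ are concentrated near levels $t_0$ and $t_1$ and whose pairing is $\sfd^p$-cyclically monotone by Lemma \ref{lem:pmonotone}. The entropic inequality \eqref{eq:CDKN-def}, disintegrated along $\qq$ by Fubini and then localized by Lebesgue differentiation in $(t_0,t_1,s)$, yields the pointwise $\sigma_{K,N-1}$-concavity of $h_\alpha^{1/(N-1)}$ along $\qq$-a.e.\ ray. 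This last step is exactly the one carried out in \cite[Theorem 4.2]{CM17a} for $p=2$, and, as the excerpt emphasizes, its only $p$-dependent ingredient is the $c$-monotonicity input already supplied by Lemma \ref{lem:pmonotone}.

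The main technical obstacle I anticipate is the interplay between the \emph{local} nature of $\CD_{p,loc}(K,N)$ and the possibly long-range geometry of the rays $X_\alpha$: one must cover each ray by small charts in which the condition applies, yet still preserve the $\sfd^p$-cyclical monotonicity of the test pairings across the bundle. This is handled by restricting both endpoints $\mu_0,\mu_1$ to thin sections of the bundle before applying $\CD_{p,loc}$, and then letting the section shrink in a Lebesgue-differentiation limit; the remaining steps are essentially formal and parallel the $p=2$ theory.
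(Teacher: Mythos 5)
Your proposal follows essentially the same route as the paper: set up the disintegration via $R^b_u$, use Remark~\ref{R:locp}, Theorem~\ref{teo:kell} and Theorem~\ref{teo:branch} to obtain maps and negligibility of branching, apply Lemma~\ref{lem:pmonotone} to turn level-set test pairings into $\sfd^p$-cyclically monotone sets, and then import the pointwise concavity argument of \cite[Theorem 4.2]{CM17a} as sketched in Remark~\ref{Theorem4.4Proof}. Your closing remarks about the localization and the countable covering by rational level sets correctly anticipate the issues handled there.
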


Notice that the $\qq$-measurability of the disintegration, ensured by the Disintegration 
Theorem, implies joint measurability of the map  $(\alpha,t) \to h_{\alpha}(t)$.

\begin{remark}[Enhancing $\CD^1_{Lip}$]\label{R:CD1strong}
It is worth underlining that the conclusion of Theorem \ref{teo:cm} is actually stronger than claiming that $(X,\sfd,\mm)$ verifies $\CD^{1}_{Lip}(K,N)$. 
Notice, indeed, that while $\CD^{1}_{Lip}(K,N)$ asks for \emph{a} disintegration of 
$\mm\llcorner_{\T_{u}}$  (no partition required, see Definition \ref{D:defCD1}) where each conditional measure is concentrated along 
a maximal transport ray and verifies $\CD(K,N)$, Theorem \ref{teo:cm} 
shows that we have a \emph{partition} of the 
transport set made of maximal transport rays and the associated 
\emph{essentially unique} disintegration verifies $\CD(K,N)$ (recall Remark \ref{R:disintegration}).  
In what follows we will show that this property is enough to prove that 
$(X,\sfd,\mm)$ also verifies $\CD_{q}(K,N)$ for any $q > 1$, 
provided it is also $q$-essentially non-branching.

To complete the picture we mention that in \cite[Proposition 8.13]{CMi} 
it is shown that $\CD_{Lip}^{1}(K,N)$ coupled with 
essentially non-branching (hence $p =2$) 
implies that the disintegration of $\mm\llcorner_{\T_{u}^{b}}$ 
coming from the partition induced by the transport relation 
$R_{u}^{b}$ indeed verifies all the conditions required by $\CD_{Lip}^{1}(K,N)$. 
We refer to \cite[Proposition 8.13]{CMi} for additional details.
\end{remark}

\begin{remark}[Strategy of proof]\label{Theorem4.4Proof}
Here we briefly comment on the technique used in \cite{CM17a} to prove Theorem $\ref{teo:cm}$.
The idea is to first establish the existence of a disintegration of $\mm$ into a collection of conditional 
measures, $\left\{\mm_{\alpha}\right\}_{\alpha\in{}Q}$, that are
supported along transport rays determined by an arbitrary $1$-Lipschitz function $u$ as in $\eqref{E:disint2}$.
In particular, we may express $\mm_{\alpha}$, the portion of measure of $\mm$ that lives on the transport geodesic of parameter $\alpha$, as
\begin{equation*}
\mm_{\alpha}=g(\alpha,\cdot)_{\sharp}\left(h_{\alpha}(t)\mathcal{L}^{1}\left(dt\right)\right)
\end{equation*}
where $g:Q\times[0,1]\to{}X$ is such that for each $\alpha$ we have that $\text{dom}(g(\alpha,\cdot))$ is convex and
$h_{\alpha}:\text{dom}(g(\alpha,\cdot))\to[0,\infty)$.
Next, the quotient set $Q$, which labels the various transport rays, is covered by a countable disjoint collection of sets
$\left\{Q_{i}\right\}_{i\in{}I}$ where each $Q_{i}$ is contained in a rational level set of $u$.
Finally, along each $Q_{i}$ we consider the transport of one uniform measure to another, of possibly differing size, along the transport rays of $u$.
More specifically, our countable decomposition is constructed to provide for each $i$ 
a uniform subinterval
\begin{equation*}
(a_{0},a_{1})\subset\text{dom}(g(\alpha,\cdot))\hspace{10pt}\text{for all }\alpha\in{}Q_{i}
\end{equation*}
as well as real numbers  $A_{0}, A_{1} \in (a_0,a_1)$  and 
$L_{0},L_{1} \in (0,\infty)$ such that
\begin{equation*}
A_{0}+L_{0}<A_{1}\ \mbox{\rm and}\ A_{1}+L_{1}<a_{1}.
\end{equation*}
This allows us to consider the measures
\begin{equation*}
\mu_{0}=\int_{Q_{ i}}\!{}g(\alpha,\cdot)_{\sharp}\left(\frac{1}{L_{0}}\mathcal{L}_{[A_{0},A_{0}+L_{0}]}^{1}(dt)\right)\qq(d\alpha),
\hspace{20pt}
\mu_{1}=\int_{Q_{i}}\!{}g(\alpha,\cdot)_{\sharp}\left(\frac{1}{L_{1}}\mathcal{L}_{[A_{1},A_{1}+L_{1}]}^{1}(dt)\right) \qq(d\alpha).
\end{equation*}
Transporting these measures allows us to  deduce concavity information for the 
density $h_{\alpha}$ of $\qq$-a.e.\ $\mm_{\alpha}$ from the entropic 
concavity \eqref{eq:CDKN-def} asserted by $\CD_{p}(K,N)$.
\end{remark}
\section{Curvature-Dimension conditions: from $p=1$ to $q >1$}
\label{S:anyp}
Before tackling  Theorems $\ref{thm:main1}$ and $\ref{thm:main2}$ we explore an example which illustrates some of the strategies and notations used.
\begin{example}[Radial transport]\label{E:radial}
Let $X=\mathbb{R}^{n}$, $\sfd$ be Euclidean distance, and set $\mm=\mathcal{L}^{n}$.
Let $\mu_{0}( dx)=\frac{1}{\omega_{n}|x|^{n-1}} \mathcal{L}^{n} \llcorner_{A_{1,2}}(dx)$ and
$\mu_{1}(dx)=\frac{1}{\omega_{n}|x|^{n-1}}\mathcal{L}^{n} \llcorner_{A_{3,4}}(dx)$ where for
$0<s<r<\infty$,  $A_{s,r}$ is defined as the spherical shell
\begin{equation*}
A_{s,r}=B_{r}(0)\setminus{}B_{s}(0).
\end{equation*}
We use the cost $c(x,y)=\frac{\sfd(x,y)^{q}}{q}$ where $1<q<\infty$.
For this transport problem, the optimal map is
$T(x)=(|x|+2)\frac{x}{|x|}$, 
the Kantorovich potential is $\varphi(x)=-2^{q-1}|x|$, 
and its interpolated potentials are
\begin{equation*}
\varphi_{t}(x)
=
\begin{cases}
\frac{-|x|^{q}}{qt^{q-1}},& {\rm if}\ |x|\le{}2t,\\
-2^{q-1}\left[|x|-\frac{2t}{q'}\right],& {\rm if}\ 2t<|x|,
\end{cases}
\end{equation*}
where $q'$ is the H\"{o}lder dual to $q$.
It is possible to show that the set $G_\varphi$ of $(\varphi, q)$-Kantorovich geodesics \eqref{Kantorovich geodesics}
consist of all segments of length two pointed away from the origin. 
Notice that not all such geodesics are involved in the transport of $\mu_{0}$ to $\mu_{1}$: indeed only 
those starting in the source $A_{1,2}$ (and therefore ending in the target $A_{3,4}$) 
 are.
In particular, only the subset of geodesics starting at a point in $A_{1,2}$ will have mass passing along 
them at all times $t\in(0,1)$.
This restriction should be compared to condition $3$ from  Definition $\ref{GoodGeodesics}$.
In particular, we use $G$ to denote a good subset of $G_{\varphi}$ of full measure which meet the stipulations of  Definition $\ref{GoodGeodesics}$.

Since we wish to apply the Disintegration Theorem, we have to associate the geodesics of $G_{\varphi}$ with the transport rays of a $1$-Lipschitz function.
 We do so by choosing our $1$-Lipschitz function to be the signed distance 
to a level set of $\varphi$. 
In our example we can use the norm since $\varphi$ is a monotone radial function.
However, in the general case, we must use the signed distance 
$d_{a,s} := d_{\varphi_s-a}$
with respect to the $a$ level set of $\varphi$.
Note that the ordinary distance function was not used so that we could refer to the level sets of $ d_{a,s}$ uniquely.
This idea is the basis of the discussion in subsection $\ref{Ss:L1OT}$.
In both cases we see that we are working with a subset, $G$, of the transport set according to the $1$-Lipschitz 
function we chose.
This should be compared to Lemma $\ref{lem:contenuto}$.

Finally,  we demonstrate how the change of variables formula from Theorem $\ref{teo:changeof}$ applies to our example.
 For $0<t<1$ and $\gamma\in{}G$, the
interpolating maps, measures, and densities are given by:
\begin{align*}
T_{t}(x)&=\left(|x|+2t\right)\frac{x}{|x|},\\
\mu_{t}(dx)&=\frac{1}{\omega_{n}|x|^{n-1}}
\mathcal{L}^{n} \llcorner_{A_{1+2t,2+2t}}(dx), \\
\mbox{\rm and}\ \rho_{t}(\gamma_{t})&=\frac{1}{\omega_{n}\left(|\gamma_{0}|+2t\right)^{n-1}}.
\end{align*}
Hence, for $s,t\in(0,1)$ we have
\begin{equation}\label{ChangeOfVariablesCalculation}
\frac{\rho_{t}(\gamma_{t})}{\rho_{s}(\gamma_s)}=\left(\frac{1+\ell+2s}{1+\ell+2t}\right)^{n-1}
\end{equation}
if $|\gamma_{0}|=1+\ell$.
For fixed $s\in(0,1)$, we note that if $a=-2^{q-1}\left[1+\ell+\frac{2s}{q}\right]$ for $0\le\ell\le1$ and $\gamma\in{}G$ is a geodesic such that
$\left|\gamma_{0}\right|=1+\ell$ then
\begin{equation*}
\varphi_{s}\left(\gamma_s\right)=a.
\end{equation*}
In particular, using this notation we can write $G_{\as}=\left\{\gamma\in{}G:\varphi_{s}(\gamma_s)=\awos\right\}$.
Hence,
\begin{align*}
\ee_{s}\left(G_{a,s}\right)&=\partial{}B_{1+\ell+2s}(0)\\
\ee_{[0,1]}\left(G_{a,s}\right)&=A_{1+\ell,3+\ell}
\end{align*}
where $-\ell=2^{1-q}a+ 1 + \frac{2s}q$. 
Using the Disintegration Theorem, 
as in $\eqref{PolarRayDisintegration}$, for any $1\le \ell \le 1$, 
we obtain
\begin{equation*}
\mathcal{L}^{n}\llcorner_{A_{1+\ell,3+\ell}}(dx)
= \int_{\partial B_1(0)} |x|^{n-1}{\mathcal H}^1\llcorner_{\{ r\alpha \mid 1+\ell \le r \le 3+ \ell\}} {\mathcal H}^{n-1}(d\alpha)
\end{equation*}
where ${\mathcal H}^{k}$ denotes $k$-dimensional Hausdorff measure.
Notice that we can rewrite this as
\begin{align}
\mathcal{L}^{n}\llcorner_{A_{1+2\ell,3+2\ell}}
&=\int_{\partial{}B_{1+\ell+2s}(0)}\!{}
g^{a,s}\left(\alpha,\cdot\right)_{\sharp}\left(2\left(\frac{1+\ell+2t}{1+\ell+2s}\right)^{n-1}\chi_{[0,1]}(t){d}t\right){\mathcal H}^{n-1}(d\alpha)
\nonumber\\
&=\int_{0}^{1}g^{a,s}(\cdot,t)_{\sharp}\left(2\left(\frac{1+\ell+2t}{1+\ell+2s}\right)^{n-1}
\chi_{[0,1]}(t){d}{\mathcal H}^{n-1}\right)\mathcal{L}^1(dt)
\label{PushforwardRepresentation}
\end{align}
where $g^{a,s}:{\ee}_{s}(G_{a,s})\times[0,1]\to{}X$ and $g^{a,s}(\alpha,\cdot)= \ee_{s} \llcorner_{G_{a,s}}^{-1}(\alpha)$.
Hence, $h_{\alpha}^{a,s}(t)=\left(\frac{1+\ell+2t}{1+\ell+2s}\right)^{n-1}$
where we have normalized this function so that $h_{\alpha}^{a}(s)=1$.
Next notice that
\begin{equation}\label{TransitionPhi}
\Phi_{s}^{t}(x)=-2^{q-1}\left[|x|-2t+\frac{2s}{q}\right].
\end{equation}
We may also compute that
\begin{equation*}
\partial_{\tau}\Big\vert_{\tau=t}\Phi_{s}^{\tau}(x)=2^{q}.
\end{equation*}
This allows us to show that
\begin{equation*}
\frac{\partial_{\tau}\Big\vert_{\tau=t}\Phi_{s}^{\tau}(\gamma_{t})}{\ell^{q}(\gamma)}\cdot\frac{1}{h_{\gamma_s}^{\varphi_{s}(\gamma_s), s}(t)}
=\left(\frac{1+\ell+2s}{1+\ell+2t}\right)^{n-1}
\end{equation*}
if $\left|\gamma_{0}\right|=1+\ell$ which, of course, matches $\eqref{ChangeOfVariablesCalculation}$ and verifies Theorem $\ref{teo:changeof}$.
Note that in general one will not have such explicit information.
As such, an expression like $\eqref{PushforwardRepresentation}$ will be not at disposal; hence, it is necessary to  deduce information by comparing the
disintegration described in $\eqref{PushforwardRepresentation}$ with another one.
Observe that the measure being pushed forward in $\eqref{PushforwardRepresentation}$ lives on 
$\ee_{t}(G_{a,s})$ and was obtained from a
disintegration with respect to a time varying partition of $\ee_{t}(G_{a,s})$.
For the second disintegration we instead focus on varying the level set values $a$ to form a partition of $\ee_{t}(G)$.
This description should be compared with subsection $\ref{Ss:LqOT}$ and the comparison done in subsection $\ref{Ss:comparison}$.
\end{example}

Let $(X,\sfd,\m)$ be a $p$-essentially non-branching metric measure space satisfying $\CD_p(K,N)$ and, consequently from Theorem \ref{teo:cm}, 
also the enhanced  $\CD^{1}_{Lip}(K,N)$ described in Remark \ref{R:CD1strong}.
This will be needed to close the argument: 
the enhanced $\CD^{1}_{Lip}(K,N)$ will give a ``canonical'' way of disintegrating 
the measure $\mm$ that will be crucial in the implementation of  the strategy outlined in 
the last few lines of Example \ref{E:radial}.

Given any $q > 1$, we will prove that $(X,\sfd,\m)$ also verifies $\CD_{q}(K,N)$,
provided the space is $q$-essentially non-branching as well.
Recall that without loss of generality we can assume $\spt(\m)=X$ and 
we have the standing assumption that $\mm(X) = 1$.                                    

Fix $\mu_0,\mu_1\in{\mathcal{P}_{q}(X,\sfd,\mm)}$.                                                            
From the curvature assumption it follows that $(X,\sfd)$ is a geodesic space,
hence, from Section \ref{Ss:geomeas}, $(\mathcal{P}_{q}(X),W_{q})$ 
is a geodesic space as well;
therefore the set of $q$-optimal dynamical plan $\OptGeo_{q}(\mu_0,\mu_1)$ 
is non-empty.

Recall moreover that $\CD_{p}(K,N)$ implies
qualitative non-degeneracy \eqref{QND} by 
\cite{KellENB},
hence Theorem~\ref{teo:kell} yields 
a unique $\nu \in \OptGeo_{q}(\mu_0,\mu_1)$ and 
$$
[0,1] \ni t \mapsto \mu_t:=(\ee_t)_{\sharp}\nu = \rho_{t} \mm.
$$

Finally, let $\varphi:X\to \mathbb{R}$ be a Kantorovich potential for the Optimal transport problem from $\mu_0$ to $\mu_1$ associated to the  cost $c:=\sfd^q/q$. Recall that $G_{\varphi}\subset \Geo(X)$ denotes the set of $(\varphi,q)$-Kantorovich geodesics, i.e. all the geodesics $\gamma$ for which
\[
\varphi(\gamma_0)+\varphi^c(\gamma_1)=\frac{\sfd^q(\gamma_0,\gamma_1)}{q}.
\]
We  denote with $G_{\varphi}^0$ the  set of null $(\varphi,q)$-Kantorovich geodesics defined as follows:
\[
G_{\varphi}^0:=\{\gamma\in{G_{\varphi}}:\ell(\gamma)=0\},
\]
and its complement in $G_{\varphi}$ by $G^{+}_{\varphi}$.

Using \cite[Proposition 9.1]{CMi}, 
the $\MCP(K,N)$  condition implies some non-trivial regularity properties on the time
behaviour of the density $\rho_{t}$: indeed the implication $(1) \Rightarrow (4)$ 
of \cite[Proposition 9.1]{CMi} gives a Lipschitz-type bound whenever $\mu_{1}$ reduces 
to a Dirac mass $\delta_{o}$ for some $o \in X$ (notice that from 
\cite[Remark 9.4]{CMi} this implication does not require any type of essential non-branching property). Then the case of a general $\mu_{1}$ can be obtained via approximation: using the $q$-essential non-branching property in its equivalent 
formulation given by Theorem \ref{teo:kell}, one can repeat the arguments 
of \cite[Proposition 9.1]{CMi} in the implications $(4) \Rightarrow (2)$ and  
$(2) \Rightarrow (3)$ where the main points were uniqueness of optimal dynamical plans and upper semi-continuity of entropies, which are  both still valid in our framework.
We summarize this discussion in the next statement:
\begin{corollary}[Logarithmic finite difference bounds for interpolating densities along characteristics]
\label{cor:version}
Let $(X,\sfd,\m)$ be a $q$-essentially non-branching m.m.s.~verifying $\MCP(K,N)$. 
Then for all $\mu_0,\mu_1 \in{\mathcal{P}_q(X)}$  with $\mu_0 \ll \mm$ there exists a unique 
$\nu\in \OptGeo_{q}(\mu_0,\mu_1)$ and a map $S:X \to \Geo(X)$ such that  $\nu=S_{\sharp}\mu_0$. 

Moreover $\mu_t=(\ee_t)_{\sharp}\nu \ll \m$ for $t\in{[0,1)}$ and there exist versions of the densities $\rho_t=\frac{d\mu_t}{d\m}$,  such that  for $\nu$-a.e. $\gamma\in{\Geo(X)}$, for all $0\leq s\leq t<1$, it holds
\begin{equation}\label{E:Lipestim}
\rho_s(\gamma_s)>0, \quad \biggl{(} \tau_{K,N}^{(\frac{s}{t})}(\sfd(\gamma_0,\gamma_t)) \biggr{)}^N \leq \frac{\rho_t(\gamma_t)}{\rho_s(\gamma_s)}\leq \biggl{(} \tau_{K,N}^{(\frac{1-t}{1-s})}(\sfd(\gamma_s,\gamma_1)) \biggr{)}^{-N}.
\end{equation}
In particular, for $\nu$-a.e. $\gamma$, the map $t\mapsto \rho_t(\gamma_t)$ is locally Lipschitz on $(0,1)$ and upper semi-continuous at $t=0$.
\end{corollary}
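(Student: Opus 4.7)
The plan is to follow the same reduction strategy as \cite[Proposition 9.1]{CMi}, carried out for a general exponent $q>1$, in three stages. First, since $\MCP(K,N)$ implies qualitative non-degeneracy \eqref{QND}, I can invoke Theorem \ref{teo:kell} directly: in the presence of $q$-essential non-branching, for every $\mu_0 \ll \mm$ and every $\mu_1 \in \mathcal{P}_q(X)$ the set $\OptGeo_q(\mu_0,\mu_1)$ is a singleton $\{\nu\}$, the associated $q$-optimal coupling is induced by a map, and the interpolants $\mu_t = (\ee_t)_\sharp \nu$ are absolutely continuous for $t \in [0,1)$. Lifting the transport map to $\Geo(X)$ (using that $\nu$ is supported on a non-branching Borel set and that $(\ee_0)_\sharp \nu = \mu_0$) gives the measurable selection $S : X \to \Geo(X)$ with $\nu = S_\sharp \mu_0$.

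Next, I establish \eqref{E:Lipestim} in the special case $\mu_1 = \delta_o$, which is the content of the implication $(1) \Rightarrow (4)$ in \cite[Proposition 9.1]{CMi}. The key observation is that $\OptGeo_q(\mu_0,\delta_o) = \OptGeo_2(\mu_0,\delta_o)$ does not depend on the chosen exponent $q>1$, so the $\MCP(K,N)$ inequality \eqref{eq:MCP-def} applies verbatim. A contraction of a Borel set $A \subset X$ towards $o$ along the unique dynamical plan yields, for any $0 \le s \le t < 1$, two-sided Jacobian-type comparisons of the push-forward densities along the fibers $\gamma_s \mapsto \gamma_t$; these can be rewritten in terms of the $\tau_{K,N}$ coefficients by the standard identities between the $\sigma$- and $\tau$-distortions. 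As noted in \cite[Remark 9.4]{CMi}, this step does not use any essential non-branching assumption.

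The third and main stage is to promote the bound to an arbitrary $\mu_1 \in \mathcal{P}_q(X)$. I would approximate $\mu_1$ by a sequence of finitely supported measures $\mu_1^n = \sum_{i=1}^{k_n} c_i^n \delta_{y_i^n}$ with $W_q(\mu_1^n,\mu_1) \to 0$, and for each $n$ decompose the unique $\nu^n \in \OptGeo_q(\mu_0,\mu_1^n)$ along the atoms of $\mu_1^n$. On each piece the special case above yields \eqref{E:Lipestim}, with constants depending only on $\sfd(\gamma_0,\gamma_t)$ and $\sfd(\gamma_s,\gamma_1)$; gluing these piecewise bounds gives the full estimate for $\nu^n$-a.e. $\gamma$. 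Passing to the limit $n \to \infty$ uses (i) the $W_q$-stability of optimal dynamical plans together with uniqueness coming from Theorem \ref{teo:kell} to identify the limit with $\nu$, and (ii) upper semi-continuity of $\mathcal{E}_{N'}$ and the continuity of the $\tau_{K,N}$ kernels to transfer the inequalities to $\nu$-a.e. $\gamma$. This is precisely the role of the implications $(4) \Rightarrow (2) \Rightarrow (3)$ of \cite[Proposition 9.1]{CMi}, which depend only on uniqueness of $q$-optimal dynamical plans and on entropy semi-continuity, both of which are available in our setting.

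The hardest point will be the limit argument in the third stage: one has to select, for each $n$, versions of the densities $\rho_t^n$ for which the pointwise bound holds on a Borel set of geodesics, and then show that these sets are compatible as $n \to \infty$ so that a single good version of $\rho_t$ along $\nu$-a.e. $\gamma$ is obtained. This is typically handled by choosing everywhere-defined representatives via the lower/upper semi-continuous envelopes of the densities, as in \cite[Proposition 9.1]{CMi}. Once \eqref{E:Lipestim} holds for $\nu$-a.e. $\gamma$ and all $0 \le s \le t < 1$, the final assertions are immediate: local boundedness of $\tau_{K,N}^{(s/t)}(\sfd(\gamma_0,\gamma_t))$ and $\tau_{K,N}^{((1-t)/(1-s))}(\sfd(\gamma_s,\gamma_1))$ on compact subintervals of $(0,1)$ gives local Lipschitz continuity of $t \mapsto \log \rho_t(\gamma_t)$, while the upper bound specialized to $s = 0$ yields upper semi-continuity at $t=0$.
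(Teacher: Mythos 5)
Your proposal takes essentially the same route as the paper: reduce to the Dirac-target case via the implication $(1)\Rightarrow(4)$ of \cite[Proposition 9.1]{CMi} (observing, per \cite[Remark 9.4]{CMi}, that this step needs no non-branching hypothesis), then promote to general $\mu_1$ by adapting $(4)\Rightarrow(2)\Rightarrow(3)$, where the only ingredients are uniqueness of $q$-optimal dynamical plans (here supplied by Theorem \ref{teo:kell}) and upper semi-continuity of the R\'enyi entropies. The paper's own argument is precisely this two-stage reduction together with the remark that both ingredients survive for general $q>1$, so your outline is consistent with it.
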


A further consequence of Corollary \ref{cor:version} can be obtained considering 
the regularity property of the map $t \mapsto \mm(\ee_{t}(G))$, for some compact subset $G$
of $\varphi$-Kantorovich geodesics (see for instance \cite[Proposition 9.6]{CMi}).
\begin{proposition}[Near continuity of the evolution of $\spt \mu_t$] 
\label{P:densityreversed}
Let $(X,\sfd,\mm)$ be a $q$-essentially non-branching m.m.s. 
verifying $\MCP(K,N)$. For $\mu_{0}, \mu_{1} \in \mathcal{P}_{q}(X)$ 
with $\mu_{0} \ll \mm$, let $\nu$ 
denote the unique element of  $\OptGeo_{q}(\mu_{0},\mu_{1})$.

Then for any compact set $G \subset \Geo(X)$ with $\nu(G) > 0$,  such that \eqref{E:Lipestim} holds true for all $\gamma \in G$ and $0 \leq s\leq t < 1$, 
it holds for any $t \in (0,1)$: 
$$
\lim_{\ve \to 0+} \frac{\mathcal{L}^{1} \big( G(x) \cap (t- \ve , t+\ve) \big)   }{2\ve} = 1 \;\;\; \text{ in $L^1(\ee_{t}(G),\mm)$},
$$
where
$G(x)=   \bigcup\limits_{\gamma \in G} \gamma^{-1}(x)$.
\end{proposition}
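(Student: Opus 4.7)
The plan is to reduce the claimed $L^1$-convergence to a measure-theoretic statement about the time-slices $\ee_s(G)$, and then to verify it using the density bound \eqref{E:Lipestim} together with the induced-by-map structure furnished by $q$-essential non-branching.

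First I would apply Fubini. Setting $I_\varepsilon(x):=\mathcal{L}^1(G(x)\cap(t-\varepsilon,t+\varepsilon))/(2\varepsilon)\in[0,1]$, the claimed $L^1$-convergence of $I_\varepsilon$ to $1$ on $(\ee_t(G),\mm)$ is equivalent, using the sign $(1-I_\varepsilon)\ge 0$ there, to
\[
\lim_{\varepsilon\to 0^+}\frac{1}{2\varepsilon}\int_{t-\varepsilon}^{t+\varepsilon}\mm\bigl(\ee_t(G)\cap\ee_s(G)\bigr)\,ds=\mm(\ee_t(G)).
\]
Since the integrand is bounded above by $\mm(\ee_t(G))$, a Fatou argument reduces the task to the pointwise-in-$s$ estimate $\mm(\ee_t(G)\setminus\ee_s(G))\to 0$ as $s\to t$, which I will refer to as the \emph{target estimate}.

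Next I would invoke Corollary \ref{cor:version} to write $\nu=S_\sharp\mu_0$ for a Borel map $S:X\to\Geo(X)$, set $T_u:=\ee_u\circ S$, and put $\mu_u^G:=(\ee_u)_\sharp(\nu|_G)=\rho_u^G\,\mm$, whose support is $\ee_u(G)$. Compactness of $G$ bounds the diameters of $\gamma\in G$ uniformly, so the $\tau$-factors in \eqref{E:Lipestim} are uniformly controlled for $\gamma\in G$ and $u$ in a compact subinterval $[t-\eta,t+\eta]\subset(0,1)$; this yields a constant $C=C(K,N,G,t,\eta)$ with $C^{-1}\rho_t(\gamma_t)\le\rho_u(\gamma_u)\le C\rho_t(\gamma_t)$ throughout, which in turn gives a uniform bound $\rho_u^G\le C'$ $\mm$-a.e.\ on $\ee_u(G)$. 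Consequently
\[
\mm\bigl(\ee_t(G)\cap\ee_s(G)\bigr)\;\ge\;(C')^{-1}\mu_s^G(\ee_t(G))\;=\;(C')^{-1}\nu\bigl(\{\gamma\in G:\gamma_s\in \ee_t(G)\}\bigr),
\]
so the target estimate is implied by $\nu(\{\gamma\in G:\gamma_s\in\ee_t(G)\})\to\nu(G)$ as $s\to t$.

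For this last convergence I would parametrise $\gamma\in G$ by $y=\ee_0(\gamma)\in S^{-1}(G)$, so that $T_s(y)\in\ee_t(G)=T_t(S^{-1}(G))$ becomes the question of whether $T_s(y)$ lies in the image of $T_t$ restricted to $S^{-1}(G)$. Using Lusin's theorem applied to $S|_{S^{-1}(G)}$ I would select a compact $K\subseteq S^{-1}(G)$ with $\mu_0(S^{-1}(G)\setminus K)<\delta$ on which $S$ is continuous; then $(y,u)\mapsto T_u(y)$ is uniformly continuous on $K\times[t-\eta,t+\eta]$, and by $q$-essential non-branching combined with Theorem \ref{teo:kell} the map $T_t|_K$ is essentially injective, hence (after possibly shrinking $K$ to its $\mu_0$-density-one subset) a homeomorphism of $K$ onto $T_t(K)$. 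An inverse-function-type continuity argument then forces $T_s(y)\in T_t(K)\subseteq\ee_t(G)$ for all $y\in K$ outside a boundary layer whose $\mu_0$-measure vanishes as $s\to t$; letting $\delta\to 0$ closes the argument.

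The main obstacle is this final step --- translating weak convergence $\mu_s^G\rightharpoonup\mu_t^G$ into genuine control on the supports $\ee_s(G)$ in $\mm$-measure. Weak convergence alone is insufficient, since supports can fluctuate on null sets of the limit. Both the non-branching hypothesis (which promotes $T_u$ to a bona fide map with essentially continuous inverse on compacts) and the quantitative Lipschitz bound \eqref{E:Lipestim} (which pins $\rho_u^G$ uniformly bounded in $u$ near $t$) are essential: indeed, \eqref{E:Lipestim} is precisely the ingredient that upgrades the conclusion from \emph{a.e.}\ $t\in(0,1)$ (which would follow from Lebesgue differentiation applied to the Fubini disintegration of $\mm\otimes\mathcal{L}^1$ restricted to $\mathrm{Im}(G)$) to \emph{every} $t\in(0,1)$.
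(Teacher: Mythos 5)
Your setup through the Fubini step is correct: writing $I_\ve(x)=\tfrac{1}{2\ve}\mathcal L^1(G(x)\cap(t-\ve,t+\ve))$, using $0\le I_\ve\le 1$ on $\ee_t(G)$, and reducing to
\[
\frac{1}{2\ve}\int_{t-\ve}^{t+\ve}\mm\bigl(\ee_t(G)\cap\ee_s(G)\bigr)\,ds\ \longrightarrow\ \mm(\ee_t(G))
\]
is exactly the right first move, and is also how the paper (via \cite[Prop.~9.6]{CMi}) begins. The difficulties start at the ``target estimate'' $\mm(\ee_t(G)\setminus\ee_s(G))\to 0$, where your argument has two genuine gaps.

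First, the intermediate claim that \eqref{E:Lipestim} ``gives a uniform bound $\rho^G_u\le C'$ $\mm$-a.e.\ on $\ee_u(G)$'' is not justified. The estimate \eqref{E:Lipestim} controls \emph{ratios} $\rho_t(\gamma_t)/\rho_s(\gamma_s)$ along $\gamma\in G$; it yields $\rho_u(\gamma_u)\le C\,\rho_t(\gamma_t)$, but $\rho_t(\gamma_t)$ (and hence $\rho^G_u\le\rho_u$) can be arbitrarily large --- no hypothesis of the proposition bounds $\rho_0$ or $\rho_t$. So the constant $C'$ does not exist in general.

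Second, and more seriously, even granting such a $C'$, the reduction is logically unsound. From
\[
\mm\bigl(\ee_t(G)\cap\ee_s(G)\bigr)\ \ge\ (C')^{-1}\,\nu\bigl(\{\gamma\in G:\gamma_s\in\ee_t(G)\}\bigr)
\]
and $\nu(\{\gamma\in G:\gamma_s\in\ee_t(G)\})\to\nu(G)$, one obtains only $\liminf_{s\to t}\mm(\ee_t(G)\cap\ee_s(G))\ge (C')^{-1}\nu(G)$, which is generically \emph{strictly less} than $\mm(\ee_t(G))$; this gives no control on $\mm(\ee_t(G)\setminus\ee_s(G))$. What one actually needs is an \emph{upper} bound on $\mm(\ee_t(G)\setminus\ee_s(G))$ that vanishes as $s\to t$, and the density inequality you wrote points the wrong way. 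Worse, the quantity $\nu(\{\gamma\in G:\gamma_s\in\ee_t(G)\})$ is not a natural one here: ``$\gamma_s\in\ee_t(G)$'' asks $\gamma_s$ to coincide exactly with the time-$t$ evaluation of some other $G$-geodesic, and there is no reason this should happen with high $\nu$-probability even for $s$ very close to $t$ (for a single non-constant $\gamma$ it essentially never does). Relatedly, your final Lusin/homeomorphism argument does not repair this: uniform continuity on $K\times[t-\eta,t+\eta]$ gives $T_s(y)\to T_t(y)$ uniformly, but $T_t(K)$ is merely a compact subset of $X$ (not open), and $T_t|_K$ being a homeomorphism onto its image is not an open map of $X$; proximity of $T_s(y)$ to the set $T_t(K)$ does not force membership in it.

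The paper's own proof follows \cite[Prop.~9.6]{CMi}, where \eqref{E:Lipestim} is exploited precisely as a two-sided \emph{ratio} bound on $\rho_u(\gamma_u)$ along each characteristic --- so that one can change variables between time slices $\ee_t(G)$ and $\ee_s(G)$ without ever needing an absolute density bound --- together with the observation that $\theta_u:=(\ee_u)_\sharp\bigl(\rho_u(\gamma_u)^{-1}\,\nu\llcorner_G\bigr)\le \mm\llcorner_{\ee_u(G)}$, rather than via the upper-density inequality you use. That is the mechanism that upgrades a Lebesgue-density (a.e.-$t$) statement about $Im(G)\subset X\times(0,1)$ to every $t\in(0,1)$; your intuition that \eqref{E:Lipestim} is ``precisely the ingredient'' for this upgrade is right, but the way it enters is as a change-of-variables comparison between $\mm(\ee_t(H))$ and $\mm(\ee_s(H))$ for Borel $H\subset G$, not as a bound on $\rho^G_u$.
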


Finally, we conclude this first part by recalling the definition
of a special class of Kantorovich geodesics.

\begin{definition}[Good collections of geodesics]\label{GoodGeodesics}
Given $\mu_{0}, \mu_{1} \in \mathcal{P}_{q}(X)$ with $\mu_0 \ll \mm$,
we say that $G\subset G_{\varphi}^{+}$ is a \emph{good} subset of geodesics
if the following properties hold true:

\begin{enumerate}
\item $G$ is  compact;
\item there exists a constant $c>0$ such that for every $\gamma \in{G}$:
$c\leq \ell(\gamma)\leq 1/c$;
\item for every $\gamma\in{G}$, $\rho_t(\gamma_t)>0$ for all $t\in{[0,1]}$ and the map $(0,1)\ni t \mapsto \rho_t(\gamma_t)$ is continuous;
\item the claim of Proposition \ref{P:densityreversed} holds true for $G$;
\item The map $\ee_t|_{G}:G \to X$ is injective.
\end{enumerate}
\end{definition}

From now on we will assume $G \subset G_{\varphi}^{+}$ to 
be  a good subset. In particular all the results contained in Sections 
\ref{Ss:L1OT}, \ref{Ss:LqOT}, and \ref{Ss:comparison} 
will be obtained tacitly 
assuming any optimal dynamical plan to be concentrated on a
good subset of geodesics.                                                                                        

We will dispose of this assumption in Section \ref{Ss:change} via an approximation 
argument. Notice indeed that under $q$-essentially non-branching and $\MCP(K,N)$ 
 for any $\nu \in \OptGeo_{q}(\mu_{0},\mu_{1})$ with $\mu_{0}\ll \mm$, and any 
$\varepsilon > 0$ there exists a \emph{good} compact subset $G^{\varepsilon}\subset G^{+}_{\varphi}$ such that $\nu(G^{\varepsilon})\geq \nu(G_{\varphi}^{+})-\varepsilon$ for any $\varepsilon>0$. 
Without loss of generality,  we can also assume that $G^{\varepsilon}$ increases 
 along any given sequence of $\varepsilon$ decreasing to $0$.
\smallskip

In what follows we will use a suitable collection of $L^{1}$-optimal transport problems
to decompose the Jacobian of the evolution of the $W_{q}$-geodesic 
$t\to \mu_{t}$ and to obtain key estimates on both components: our interest will be focused 
on finding a codimension-$1$ Jacobian orthogonal to the evolution 
and a one-dimensional counterpart. 
For both of these factors,
curvature estimates will be obtained via $L^{1}$- optimal transport techniques, 
in particular Theorem \ref{teo:cm},
 by comparing two families of conditional measures: one coming from 
the aforementioned $L^{1}$-optimal transport problem and the other one from 
the $q$-Kantorovich potential.

The decomposition technique will be very similar to the one developed 
in \cite{CMi}; we will not repeat all the proofs but just list 
the main differences and include additional details where needed.

\subsection{$L^1$ Partition}\label{Ss:L1OT}
For $s\in{[0,1]}$ and $a\in{\mathbb{R}}$, we define the set of geodesics $G_{a,s}\subset G_{\varphi}$ as follows:
\[
G_{a,s} 
=\{ \gamma\in{G}:\varphi_s(\gamma_s)=a\}.
\]
Let us observe that since $G$ is compact and $\ee_s:G\to X$ is continuous, 
$\ee_s(G)$ is still compact. 
Moreover, for $s\in{(0,1)}$, $\varphi_s:X\to\mathbb{R}$ is continuous and hence $G_{a,s}$ is compact as well.

\noindent
Let us fix $a\in{\varphi_s(\ee_s(G))}$. The aim of the next subsection will be to analyze  the structure of the evolution of the set $G_{a,s}$, i.e.  
$\ee_{[0,1]}(G_{a,s})$. 

From now on we will denote the signed-distance function from a level set 
$a$ of $\varphi_s$ with $d_{a,s}:=d_{\varphi_s-a}$ (recall the notation of \eqref{E:levelsets}). Since $d_{a,s}$ is a $1$-Lipschitz function, 
we can associate to it all the sets introduced in Section \ref{TransportSets},
 including
the transport ordering $\Gamma_{d_{a,s}}=\le_{d_{\as}}$, relation $R_{d_{\as}}= \Gamma_{d_{\as}} \cup \Gamma_{d_{\as}}^{-1}$ and set $\T_{d_{\as}} \subset P_1(R_{d_{\as}})$. 

\begin{lemma}
\label{lem:contenuto}
Let $(X,\sfd)$ be a geodesic space. Once $s\in{[0,1]}$ and $\awos\in{\varphi_s(\ee_s(G))}$ are fixed, then for each $\gamma\in{G_{\as}}$ and  for every $0\leq r \leq t \leq 1$, $(\gamma_r,\gamma_t)\in{\Gamma}_{d_{\as}}$. In particular, 
\[
\ee_{[0,1]}(G_{\as}) \subset \T_{\sfd_{\as}}.
\]
\end{lemma}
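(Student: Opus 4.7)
The plan is to show something stronger: along a $(\varphi,q)$-Kantorovich geodesic $\gamma \in G_{a,s}$, the signed distance behaves affinely, namely
\[
d_{a,s}(\gamma_r) = (s-r)\,\ell(\gamma) \quad \text{for every } r \in [0,1].
\]
Once this is established, for $0 \le r \le t \le 1$ one computes
\[
d_{a,s}(\gamma_r) - d_{a,s}(\gamma_t) = (t-r)\,\ell(\gamma) = \sfd(\gamma_r,\gamma_t),
\]
which is exactly the defining condition of $\Gamma_{d_{a,s}}$. The inclusion $\ee_{[0,1]}(G_{a,s}) \subset \T_{d_{a,s}}$ is then immediate from the definition of the transport set.

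The displayed identity splits into a magnitude statement $\dist(\gamma_r, \{\varphi_s = a\}) = |s-r|\,\ell(\gamma)$ and a sign statement $\sgn(\varphi_s(\gamma_r) - a) = \sgn(s-r)$. For the upper bound on the distance, the point $\gamma_s$ itself lies in $\{\varphi_s = a\}$ (this is the definition of $G_{a,s}$), so $\dist(\gamma_r, \{\varphi_s = a\}) \le \sfd(\gamma_r,\gamma_s) = |s-r|\,\ell(\gamma)$. For the lower bound I would exploit the Hopf-Lax semigroup: for $r < s$ one has $\varphi_s = -Q_{s-r}(-\varphi_r)$, which yields for any $y \in X$
\[
\varphi_s(y) \ge \varphi_r(\gamma_r) - \frac{\sfd(y,\gamma_r)^q}{q(s-r)^{q-1}},
\]
and Corollary \ref{cor:diff} gives $\varphi_r(\gamma_r) - \varphi_s(\gamma_s) = (s-r)\ell^q/q$. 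Specializing to $y \in \{\varphi_s = a\}$ and rearranging produces $\sfd(y,\gamma_r) \ge (s-r)\ell$, so $\gamma_s$ is actually the nearest point on the level set. The case $r > s$ is symmetric, using $\varphi_r = -Q_{r-s}(-\varphi_s)$ and hence $\varphi_s(y) \le \varphi_r(\gamma_r) + \sfd(y,\gamma_r)^q / (q(r-s)^{q-1})$.

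The sign of $\varphi_s(\gamma_r) - a$ drops out of the same inequalities by taking $y = \gamma_r$: for $r < s$, one gets $\varphi_s(\gamma_r) \ge \varphi_r(\gamma_r) = a + (s-r)\ell^q/q > a$, and for $r > s$ one gets $\varphi_s(\gamma_r) \le \varphi_r(\gamma_r) = a - (r-s)\ell^q/q < a$. Combining magnitude and sign yields the affine formula for $d_{a,s}\circ\gamma$.

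The only real subtlety is at the endpoints $s \in \{0,1\}$, where $\varphi_s$ need not be continuous and $d_{a,s}$ need not be $1$-Lipschitz across the level set; for $s \in (0,1)$ the argument above is clean and self-contained. I do not foresee a serious obstacle — the key Hopf-Lax variational inequality is applied in both time directions, and the affine evolution of $\varphi_\tau(\gamma_\tau)$ from Corollary \ref{cor:diff} does all the algebraic work.
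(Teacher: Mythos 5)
Your argument is correct, and it proves slightly more than the lemma needs: the pointwise identity $d_{a,s}(\gamma_r)=(s-r)\,\ell(\gamma)$ for every $r\in[0,1]$, from which membership in $\Gamma_{d_{a,s}}$ drops out algebraically. The paper's own proof is organized differently. It establishes the transport ordering only for the endpoint pair $(\gamma_0,\gamma_1)$: it shows that $\gamma_s$ realizes the distance from $\gamma_0$ and from $\gamma_1$ to the level set $\{\varphi_s=a\}$, adds these two estimates along the geodesic, takes an infimum over points of the level set, and then upgrades the resulting inequality $\sfd(\gamma_0,\gamma_1)\le d_{a,s}(\gamma_0)-d_{a,s}(\gamma_1)$ to an equality via the $1$-Lipschitzness of $d_{a,s}$; intermediate pairs are then in the ordering by the general fact, recalled just after \eqref{TransportSet1}, that a geodesic whose endpoints lie in $\Gamma_u$ is ordered throughout. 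One side of the paper's nearest-point argument routes through the time-reversed potential $\bar\varphi_s$ and Lemma~\ref{lem:phibar}, whereas you reach the same variational inequality by invoking the forward Hopf--Lax semigroup identities $-\varphi_s=Q_{s-r}(-\varphi_r)$ and $-\varphi_r=Q_{r-s}(-\varphi_s)$ directly; this is a legitimate shortcut because the geodesic hypothesis guarantees the length-space property needed for the semigroup law. Each version uses the same two ingredients, namely the Hopf--Lax variational inequality and the affine time evolution from Corollary~\ref{cor:diff}; yours trades the appeal to Lemma~\ref{lem:phibar} and the $1$-Lipschitz equality trick for an explicit computation of $d_{a,s}\circ\gamma$. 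Your caveat about $s\in\{0,1\}$ is harmless: in those cases only one of the two time directions $r<s$, $r>s$ is admissible, so only one half of your argument is ever invoked and the problematic side never arises. Finally, since the standing convention has $G\subset G_\varphi^{+}$ (good subsets of geodesics all have positive length), the case $\ell(\gamma)=0$ is excluded a priori, so your sign computation never degenerates.
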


The proof goes along the same lines of \cite[Lemma 10.3]{CMi}
 which we have included for the reader's convenience.

\begin{proof}
 Let us fix  $\gamma\in G_{\as}$. By Corollary \ref{cor:diff} and Lemma \ref{lem:phibar} (2), we have that if $s\in{[0,1)}$ then for any $x\in{\{\varphi_s=\awos\}}$, it holds 
\[
\frac{\sfd^p(\gamma_s,\gamma_1)}{p(1-s)^{p-1}}= \varphi_s(\gamma_s)+ \varphi^c(\gamma_1)= \varphi_s(x)+ \varphi^c(\gamma_1) \leq \bar{\varphi}_s(x)+\varphi^c(\gamma_1) \leq \frac{\sfd^p(x,\gamma_1)}{p(1-s)^{p-1}}.
\]
Hence $\sfd(\gamma_s,\gamma_1)\leq \sfd(x,\gamma_1)$. In the same way, if $s\in{(0,1]}$, then for any $y\in{\{\varphi_s=\awos\}}$ we have that
\[
\frac{\sfd^p(\gamma_s,\gamma_0)}{ps^{p-1}}=\varphi(\gamma_0) - \varphi_s(\gamma_s)= \varphi(\gamma_0)- \varphi_s(y) \leq \frac{\sfd^p(y,\gamma_0)}{ps^{p-1}}.
\]
So   $\sfd(\gamma_s,\gamma_0)\leq \sfd(y,\gamma_0)$, which is also trivially satisfied in the case $s=0$. Thus, for any $x,y\in{\{\varphi_s=\awos\}}$  we have
\[
\sfd(\gamma_0,\gamma_1)\leq \sfd(\gamma_0,x)+\sfd(y,\gamma_1).
\]
 Taking the infimum over $x$ and $y$ we get that 
\[
\sfd(\gamma_0,\gamma_1)\leq d_{\as}(\gamma_0)-d_{\as}(\gamma_1),
\]
where the sign of $\sfd_{\as}$ was determined by the fact that $s\mapsto \varphi_s(\gamma_s)$ is decreasing. More precisely,
the latter relation turns out to hold as an equality by $1$-Lipschitz regularity of $d_{\as}$, thus $(\gamma_0,\gamma_1)\in{\Gamma_{\sfd_{\as}}}$. This implies that   for every $0\leq r \leq t \leq 1$, $(\gamma_r,\gamma_t)\in{\Gamma}_{d_{\as}}$.
\end{proof}
By Theorem \ref{teo:cm}  we have that, 
choosing $u=d_{\as}$, the following disintegration formula holds
\begin{equation}
\mm \llcorner_{\T_{\sfd_{\as}}}= \int_{Q} \hat{\mm}^{\as}_{\alpha} \hat{\q}^{\as}(d\alpha),
\end{equation}
where $Q$ is a section of the partition of $\T^b_{d_{\as}}$ given by the equivalence classes $\{R^b_{d_{\as}}(\alpha)\}_{\alpha\in{Q}}$, and for $\hat{q}^{\as}$-a.e. $\alpha\in{Q}$, $\hat{\mm}^{\as}_{\alpha}$ is a probability measure supported on the transport ray $X_{\alpha}=R_{d_{\as}}(\alpha)$ and $(X_{\alpha,}\sfd, \hat{\mm}^{\as}_{\alpha})$ verifies $\CD(K,N)$.
By Lemma \ref{lem:contenuto}, it follows that:
\[
\mm \llcorner_{\ee_{[0,1]}(G_{\as})}= \int_{Q} \hat{\mm}^{\as}_{\alpha}\llcorner_{\ee_{[0,1]}(G_{\as})} \hat{\q}^{\as}(d\alpha).
\]
From the very definition of $G_{\as}$ and the $p$-essentially non-branching property, in the previous disintegration formula the quotient set $Q$ can be naturally identified with $\ee_{s}(G_{\as})$; moreover, we can consider the 
Borel parametrization
$$
g^{\as}:\ee_s(G_{\as})\times[0,1]\to X, \quad g^{\as}(\alpha,\cdot)=
 (\ee_s{\llcorner_{G_\as}})^{-1}(\alpha),
$$ 
yielding the following disintegration formula:
\begin{equation}\label{E:h}
\m\llcorner_{\ee_{[0,1]}(G_{\as})}= \int_{\ee_s(G_{\as})} 
g^{\as}(\alpha,\cdot)_{\sharp}\left( h^{\as}_{\alpha}\cdot \L^1\llcorner_{[0,1]} \right) \q^{\as}(d\alpha),
\end{equation}
where $\q^{\as}$ is a Borel measure concentrated on ${\ee_s(G_{\as})}$, 
and for $\q^{\as}$-a.e. $\alpha\in{\ee_s(G_{\as})}$, 
 $h^{\as}_{\alpha}$ is a $\CD(\ell_s(\alpha)^2K,N)$ density on $[0,1]$. Notice 
 that the factor $\ell_s(\alpha)^2  =\mathcal{H}^1(X_\alpha)^2$ is due to the reparametrization of 
 the transport ray on $[0,1]$. 

This permits, invoking Fubini's theorem, to reverse the order of integration 
so to have:
\begin{equation}\label{E:disintL1}
\mm\llcorner_{\ee_{[0,1]}(G_{\as})}= \int_{[0,1]} g^{\as}(\cdot, t)_{\sharp}(h^{\as}_{\cdot}(t)\cdot \q^{\as}) \L^1(dt)
= \int_{[0,1]} \mm_t^{\as}\, \L^1(dt),
\end{equation}
where we defined
\[
\mm_t^{\as}:=g^{\as}(\cdot,t)_{\sharp}( h^{\as}_{\cdot}(t)\cdot \q^{\as}).
\]
Finally, the previous disintegration formula does not change if  we
multiply and divide conditional measures by $h_{\alpha}^{\as}(s)$; 
therefore, changing $\qq^{\as}$, we can assume $h_{\alpha}^{\as}(s)=1$,
yielding  $\mm_s^{\as}=\q^{\as}$ and 
\begin{equation}
\label{equ:mtms}
\mm_t^{\as}:=g^{\as}(\cdot,t)_{\sharp}( h^{\as}_{\cdot}(t)\cdot \mm_s^{\as} ).
\end{equation}
Moreover (see \cite[Proposition 10.7]{CMi}),
for any $s\in (0,1)$ and $\awos \in \varphi_s(\ee_s(G))$, the map 
$$
(0,1) \ni t \mapsto \mm^{\as}_{t}
$$
is continuous in the weak topology and if $\mm(\ee_{[0,1]}(G_{\as})) > 0$, 
then $\mm^{\as}_{t}(\ee_{t}(G_{\as}))  > 0$,  for all $t \in (0,1)$.
Finally,
$$
\forall t\in [0,1] \;\;\; \mm^{\as}_{t}(\ee_{t}(G_{\as})) =  \| \mm^{\as}_{t} \| \leq C \; \mm(\ee_{[0,1]}(G_{\as})),
$$
for some $C> 0$ depending only on $K$, $N$ and $\{\ell(\gamma): \gamma \in G_{\as}\}$.
\subsection{$L^q$ partition}\label{Ss:LqOT}
We will now consider a decomposition of $\mm$ into conditional measures induced
by Kantorovich potentials. 

Hence for any $s,t\in{(0,1)}$, let us consider $\awos\in{\Phi_s^t(\ee_t(G))=\varphi_s(\ee_s(G))}$. 
With such a choice of $\awos$, the compact set $\ee_t(G)$  admits a partition given by $\ee_t(G)\cap\{\Phi_s^t=\awos\}_{\awos\in{\R}}$ . 

Continuity of  $\Phi_s^t$ makes it possible to apply the Disintegration Theorem. 
 Since $\mm[\ee_t(G)]<\infty$,  there exists an essentially unique disintegration of 
$\mm \llcorner_{\ee_t(G)}$ strongly consistent with respect to the 
quotient map $\Phi_s^t$:
\begin{equation}
\label{equ:lppartition}
\mm \llcorner_{\ee_{t}(G)}= \int_{\varphi_s(\ee_s(G))} 
\hat{\mm}^t_{\as}\q^{t}_{s}(d\awos)
\end{equation}
where $\q_s^t=(\Phi_s^t)_{\sharp}\mm \llcorner_{\ee_t(G)}$ 
and $\hat{\mm}^t_{\as}$ is  a probability measure concentrated on 
the set $\ee_t(G)\cap\{\Phi^t_s=\awos\}=\ee_t(G_{\as})$.

Notice that, as one would expect,  being the image of a
time propagation of an intermediate Kantorovich potential,
the quotient set $\varphi_s(\ee_s(G))$ does not depend on $t$.

The next follows with no modification from \cite[Proposition 10.8]{CMi}.

\begin{proposition}
The following properties hold true:
\begin{itemize}
\item For any $s,t,\tau\in{(0,1)}$, the quotient measures $\q^t_s$ and $\q^{\tau}_s$ are mutually absolutely continuous;

\item For any $s,t\in{(0,1)}$, the quotient measure $\q_s^t$ is absolutely continuous with respect to Lebesgue measure $\mathcal{L}^1$ on $\mathbb{R}$.
\end{itemize}
\end{proposition}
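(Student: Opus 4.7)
For the first statement, note that on $\ee_t(G)$ the propagated potential admits the factorization $\Phi_s^t = \varphi_s \circ \ee_s \circ (\ee_t|_G)^{-1}$ already recorded at the end of Section \ref{Ss:IntermediateKantorovichPotential}, which is unambiguous because $\ee_t|_G$ is injective by goodness condition~(5) of Definition~\ref{GoodGeodesics}. Consequently, for every Borel $A\subset \mathbb{R}$,
\[
(\Phi_s^t)^{-1}(A) \cap \ee_t(G) = \ee_t(G_A), \qquad G_A := \{\gamma\in G : \varphi_s(\gamma_s)\in A\},
\]
so the label set $G_A$ depends on $s$ and $A$ but \emph{not} on $t$. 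Goodness condition~(3), reinforced by the a priori bounds of Corollary~\ref{cor:version}, ensures $\rho_t > 0$ on $\ee_t(G)$; the injectivity of $\ee_t|_G$ then gives $\nu(G_A) = \mu_t(\ee_t(G_A))$, whence
\[
\q_s^t(A) = \mm(\ee_t(G_A)) = 0 \iff \mu_t(\ee_t(G_A)) = 0 \iff \nu(G_A) = 0.
\]
The rightmost condition is manifestly $t$-independent, so $\q_s^t$ and $\q_s^\tau$ share the same null sets.

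For the second statement, I would first invoke the first part to reduce to the case $t=s$, in which $\Phi_s^s = \varphi_s$ and $\q_s^s = (\varphi_s)_\sharp(\mm\llcorner_{\ee_s(G)})$, and then exploit the $L^1$ needle decomposition already at our disposal. For each $a_0\in \varphi_s(\ee_s(G))$, Theorem~\ref{teo:cm} applied to the $1$-Lipschitz signed distance $u = d_{a_0,s}$ from the zero set $\{\varphi_s = a_0\}$ yields a disintegration $\mm\llcorner_{\T_u} = \int \hat \mm_\alpha^{a_0,s}\, \hat\q^{a_0,s}(d\alpha)$ with $\hat \mm_\alpha^{a_0,s} = h_\alpha^{a_0,s} \mathcal{L}^1$ concentrated on a transport ray $X_\alpha$; by Lemma~\ref{lem:contenuto}, every $\gamma\in G_{a_0,s}$ is traced inside such an $X_\alpha$ through the point $\gamma_s\in\{\varphi_s=a_0\}$. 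The crux is to show that $\varphi_s|_{X_\alpha}$ is bi-Lipschitz with constants depending only on the velocity bound $\ell\in[c,1/c]$ of goodness condition~(2), so that $(\varphi_s|_{X_\alpha})_\sharp(h_\alpha^{a_0,s}\mathcal{L}^1) \ll \mathcal{L}^1$ for $\hat \q^{a_0,s}$-a.e.\ $\alpha$. A Fubini argument then gives that the contribution of $\T_u$ to $\q_s^s(A)$ vanishes for every $\mathcal{L}^1$-null Borel $A$; covering $\ee_s(G)$ by countably many $\T_{d_{a_0,s}}$ (as $a_0$ ranges over a countable dense subset of $\varphi_s(\ee_s(G))$, using the continuity of $\varphi_s$) and invoking $\sigma$-additivity concludes the argument.

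The principal obstacle is establishing the quantitative bi-Lipschitz behavior of $\varphi_s$ along each needle in the nonsmooth setting. In a smooth Riemannian model this would follow from the Hamilton--Jacobi identity $|\nabla \varphi_s| = \ell_s^{p-1}$ together with the uniform two-sided bound $\ell \in [c,1/c]$ on the good set $G$; in the metric setting gradients are not directly at our disposal, but the relevant ingredients are present. Proposition~\ref{prop:Phi} identifies $\partial_t \Phi_s^t$ with $\ell_t^p$, Theorem~\ref{teo:loclip} provides nondegenerate two-sided bounds on the speed functions along characteristics, the needles of $d_{a_0,s}$ are parametrized by arc length, and Lemma~\ref{lem:contenuto} places the Kantorovich geodesics inside these needles. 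The work is to use these tools to compare increments of $\varphi_s$ against increments of $d_{a_0,s}$ along each $X_\alpha$ in a manner uniform in $\alpha$, thereby promoting the local Lipschitz regularity of Corollary~\ref{cor:loclipfi} to a quantitative two-sided estimate.
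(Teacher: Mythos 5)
Your argument for the first bullet is correct. The identity $(\Phi_s^t)^{-1}(A)\cap \ee_t(G)=\ee_t(G_A)$ follows from the factorization $\Phi_s^t=\varphi_s\circ \ee_s\circ(\ee_t|_G)^{-1}$ on $\ee_t(G)$, the equivalence $\mm(\ee_t(G_A))=0\iff\mu_t(\ee_t(G_A))=0$ uses the goodness requirements $\mu_t\ll\mm$ and $\rho_t>0$ on $\ee_t(G)$, and the further equivalence with $\nu(G_A)=0$ uses that $\nu$ is tacitly concentrated on $G$ (standing assumption at the end of Section \ref{S:anyp}) together with injectivity of $\ee_t|_G$. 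This is the same null-set transport argument used in \cite[Proposition 10.8]{CMi}.

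\textbf{Part 2.} Here there is a genuine gap, and moreover I do not think the route you sketch can be rescued without substantial new input. You reduce to showing $(\varphi_s)_\sharp(\mm\llcorner_{\ee_s(G)})\ll\L^1$, and you want to disintegrate via the needle decomposition of $\T_{d_{a_0,s}}$ (Theorem \ref{teo:cm}) and argue that $\varphi_s$ is bi-Lipschitz along each needle $X_\alpha$. Two things are missing. First, the uniform two-sided speed bound $\ell\in[c,1/c]$ from Definition \ref{GoodGeodesics} controls only the geodesics in $G$, hence only those needles $X_\alpha$ with $\alpha\in\ee_s(G_{a_0,s})$; for $\alpha\in\ee_s(G)\setminus\ee_s(G_{a_0,s})$, the transport ray $X_\alpha$ of $d_{a_0,s}$ through $\alpha$ is a geodesic realizing the distance to the level set $\{\varphi_s=a_0\}$, not the $(\varphi,q)$-Kantorovich geodesic $\gamma^\alpha\in G$ through $\alpha$. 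Lemma \ref{lem:contenuto} only places $G_{a_0,s}$ in $\T_{d_{a_0,s}}$; it says nothing about $G_{a,s}$ for $a\neq a_0$, and there is no result in the paper identifying the transport rays of $d_{a_0,s}$ with those of $d_{a,s}$ away from the respective level sets. Consequently your ``covering by countably many $\T_{d_{a_0,s}}$'' covers the \emph{points} of $\ee_s(G)$, but it does not cover them by needles along which you have any quantitative control on $\varphi_s$. Second, even along the good needles $X_\alpha$ with $\alpha\in\ee_s(G_{a_0,s})$, the claimed bi-Lipschitz property of $\varphi_s|_{X_\alpha}$ is not established and is not a consequence of the ingredients you cite: Proposition \ref{prop:Phi} controls $\partial_t\Phi_s^t(x)$, a partial derivative in the time variable at fixed $x$, while a bi-Lipschitz bound for $\varphi_s$ restricted to the arc-length--parametrized needle is a statement about spatial variation of the single potential $\varphi_s$ transversally to its level sets; these are not equated anywhere in the paper. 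The only elementary consequence of Lemma \ref{lem:contenuto} is that $d_{a_0,s}(\gamma_t)=(s-t)\ell(\gamma)$, which forces the sign of $\varphi_s(\gamma_t)-a_0$ but gives no two-sided estimate on $|\varphi_s(\gamma_t)-a_0|$ and in particular does not preclude non-monotone oscillation of $\varphi_s$ along the needle. So the second bullet remains unproved in your write-up.
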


Employing what we obtained so far, we can rewrite \eqref{equ:lppartition} in the following way:
\begin{equation}\label{E:disintlq}
\mm \llcorner_{\ee_{t}(G)}= \int_{\varphi_s(\ee_s(G))} 
\mm^t_{\as}\mathcal{L}^1(d\awos),
\end{equation}
where $\mm^t_{\as}:= (d\q^t_s/d \mathcal{L}^1)\cdot \hat{\m}^t_{\as}$
is concentrated on $\ee_t(G_{\as})$ for $\mathcal{L}^1$-a.e. $\awos\in{\varphi_s(e_s(G))}$. 

Over the set $\ee_{t}(G)$ we also have the measure $\mu_{t}$; as it can be lifted 
to the set $\Geo(X)$, it makes sense to notice that 
the family of sets $\{G_{\as}\}_{\awos\in{\R}}$ provides a partition of $G$. Hence an application of the  Disintegration Theorem guarantees the existence of an essentially unique disintegration of $\nu$   strongly consistent with respect $\varphi_s\circ \ee_s$:
\begin{equation}
\label{equ:disintnu}
\nu=\int_{\varphi_s(\ee_s(G))} \nu_{\as} \q_s^{\nu}(d\awos)
\end{equation}
where the probability measure $\nu_{\as}$ is concentrated on $G_{\as}$ for $\q_s^{\nu}$-a.e. $\awos\in{\varphi_s(\ee_s(G))}$. In particular, $q^{\nu}_s(\varphi_s(\ee_s(G)))=||\nu||=1$.

Multiplying \eqref{E:disintlq} by $\rho_{t}$ and applying $(\ee_{t})_{\sharp}$
to \eqref{equ:disintnu} produces the same measure $\mu_{t}$: this 
permits to deduce what follows. For all the missing details
we refer to \cite[Corollary 10.10]{CMi}.
\begin{corollary}\label{cor:1010}
We have the following
\begin{enumerate}
\item For any $s\in{(0,1)}$, the quotient measure $\mathfrak{q}^{\nu}_s$ is mutually absolutely continuous with respect to $\mathfrak{q}^{s}_s$. In particular, it is absolutely continuous with respect to $\L^1$.

\item For any $s,t\in{(0,1)}$ and $\L^1$-a.e. $\awos\in{\varphi_s(\ee_s(G))}$:
\[
\rho_t\cdot \mm^t_{\as}=q^{\nu}_s(\awos)\cdot (\ee_t)_{\sharp}\nu_{\as},
\]
where $q_{s}^{\nu}:=d\mathfrak{q}^{\nu}_s/d\L^1$. 
In particular, $\m^t_{\as}$ and $(\ee_t)_{\sharp}\nu_{\as}$ 
are mutually absolutely continuous for $\q_s^{\nu}$-a.e. 
$\awos\in{\varphi_s(\ee_s(G))}$.

\item For any $s\in{(0,1)}$ and $\mathfrak{q}_s^{\nu}$-a.e. $\awos\in{\varphi_s(e_s(G))}$, the maps 
\[
[0,1] \ni t \mapsto\rho_t \cdot \m_{\as}^t, \quad [0,1]\ni t \mapsto (\ee_t)_{\sharp} \nu_{\as}
\]
coincide  for $\L^1$-a.e. $t\in{[0,1]}$  up to a  positive multiplicative constant $C_{\as}$ depending only on $\as$.
\end{enumerate}
\end{corollary}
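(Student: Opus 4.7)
The plan is to compare two natural disintegrations of the interpolated measure $\mu_t$ indexed by the level sets of the time propagated potential $\Phi_s^t$. One arises from the fact that $\mu_t \ll \mm$ combined with the $L^q$-disintegration \eqref{E:disintlq} of $\mm\llcorner_{\ee_t(G)}$; the other arises by pushing forward the disintegration \eqref{equ:disintnu} of $\nu$ via $\ee_t$. By the essential uniqueness of disintegrations guaranteed under the standing assumption $\mm(X)=1$, these must coincide, and all three statements will follow.

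First I would establish (1). Since $\q_s^s = (\varphi_s)_\sharp(\mm\llcorner_{\ee_s(G)})$ by definition, and $\q_s^\nu = (\varphi_s\circ \ee_s)_\sharp \nu = (\varphi_s)_\sharp \mu_s$, the claim reduces to showing mutual absolute continuity of $\mu_s$ and $\mm\llcorner_{\ee_s(G)}$. This follows from property (3) of good collections, which gives $\rho_s(\gamma_s) > 0$ for every $\gamma\in G$, combined with the fact that $\ee_s|_G$ is injective so that $\mu_s = \rho_s\, \mm \llcorner_{\ee_s(G)}$ with $\rho_s > 0$ on $\ee_s(G)$. Consequently $q_s^\nu := d\q_s^\nu/d\L^1$ exists and is strictly positive $\q_s^\nu$-a.e.

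Next I would prove (2). Apply $(\ee_t)_\sharp$ to \eqref{equ:disintnu} to obtain
\begin{equation*}
\mu_t = \int_{\varphi_s(\ee_s(G))} (\ee_t)_\sharp \nu_{a,s}\; \q_s^\nu(da) = \int_{\varphi_s(\ee_s(G))} q_s^\nu(a)\,(\ee_t)_\sharp \nu_{a,s}\; \L^1(da).
\end{equation*}
On the other hand, multiplying the disintegration \eqref{E:disintlq} by $\rho_t$ gives
\begin{equation*}
\mu_t = \rho_t\, \mm\llcorner_{\ee_t(G)} = \int_{\varphi_s(\ee_s(G))} \rho_t\, \mm_{a,s}^t\; \L^1(da).
\end{equation*}
Both expressions are strongly consistent disintegrations of $\mu_t$ with respect to the quotient map $\Phi_s^t$, because $\ee_t(G_{a,s}) = \ee_t(G)\cap\{\Phi_s^t = a\}$ supports both $(\ee_t)_\sharp\nu_{a,s}$ and $\mm_{a,s}^t$. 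The essential uniqueness of disintegration, together with the fact that both quotient measures equal $(\Phi_s^t)_\sharp \mu_t$, forces
\begin{equation*}
\rho_t\, \mm_{a,s}^t = q_s^\nu(a)\,(\ee_t)_\sharp \nu_{a,s} \qquad \text{for } \L^1\text{-a.e.\ } a.
\end{equation*}

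Finally, for (3), the subtlety is upgrading from ``for each $t$, for a.e. $a$'' to ``for a.e. $a$, for a.e. $t$.'' I would introduce the signed measure-valued map
\begin{equation*}
F(a,t) := \rho_t\, \mm_{a,s}^t - q_s^\nu(a)\,(\ee_t)_\sharp \nu_{a,s}
\end{equation*}
and test it against a countable family of continuous functions $\{\varphi_k\}$ which is norm-determining on $\mathcal{P}(\ee_{[0,1]}(G))$. For each $k$ the scalar function $(a,t)\mapsto \int \varphi_k\, dF(a,t)$ is jointly Borel measurable (using continuity of $t \mapsto \mm_{a,s}^t$ from \cite[Proposition 10.7]{CMi}, continuity of $t\mapsto (\ee_t)_\sharp\nu_{a,s}$, and measurability in $a$ coming from the Disintegration Theorem), and it vanishes in $a$ for each fixed $t$. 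Fubini then gives vanishing on a full-measure subset of $(a,t)$ in $\varphi_s(\ee_s(G))\times[0,1]$, and a standard application of Fubini swapping the roles gives that for $\q_s^\nu$-a.e. $a$, the equality $\rho_t\,\mm_{a,s}^t = q_s^\nu(a)\,(\ee_t)_\sharp \nu_{a,s}$ holds for $\L^1$-a.e. $t$. Setting $C_{a,s}:= q_s^\nu(a) > 0$ (by (1)) yields the claim.

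The main obstacle is the joint measurability required for the Fubini step in (3); beyond this, the proof is a standard but delicate bookkeeping exercise with disintegrations. The continuity of $t \mapsto \mm_{a,s}^t$ noted after \eqref{equ:mtms} and the good-geodesics assumptions guaranteeing positivity and regularity of $\rho_t$ along $G$ ensure that no pathologies obstruct the measurability, so the argument goes through cleanly.
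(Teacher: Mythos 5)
Your proposal follows exactly the paper's approach: multiply the $L^q$-disintegration \eqref{E:disintlq} by $\rho_t$, push forward \eqref{equ:disintnu} by $\ee_t$, and compare the two resulting expressions for $\mu_t$ using essential uniqueness of strongly consistent disintegrations (which is available because $\mm(X)=1$). Parts (1) and (2) are correct as written. In particular, reducing (1) to mutual absolute continuity of $\mu_s$ and $\mm\llcorner_{\ee_s(G)}$ via the positivity and continuity of $\rho_s$ on the compact set $\ee_s(G)$ (property (3) of Definition \ref{GoodGeodesics}) and the stability of equivalence under push-forward is the right move, and your observation that both families of conditionals are supported on $\ee_t(G_{\as}) = \ee_t(G)\cap\{\Phi_s^t = \awos\}$, so that both are strongly consistent disintegrations of $\mu_t$ with respect to $\Phi_s^t$, is the key point making (2) an instance of essential uniqueness.

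There is, however, a genuine slip in step (3). The Fubini argument needs joint Borel measurability of $(\awos,t)\mapsto\langle \rho_t\,\mm^t_{\as},\varphi_k\rangle$, and you justify this by citing ``continuity of $t\mapsto\mm^t_{\as}$'' and attributing it to \cite[Proposition 10.7]{CMi} and the discussion after \eqref{equ:mtms}. That discussion establishes weak continuity of $t\mapsto\mm^{\as}_{t}$ — the $L^1$-ray conditionals from \eqref{E:disintL1} — not of $t\mapsto\mm^{t}_{\as}$, the level-set conditionals from \eqref{E:disintlq} that actually appear in the statement and in your map $F(\awos,t)$. These are different objects; they are related only through the factor $\partial_t\Phi_s^t$ in Theorem \ref{thm:comparison}, which comes after Corollary \ref{cor:1010} in the logical order (and relies on it). So you cannot invoke continuity of $t\mapsto\mm^{t}_{\as}$ here without circularity. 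The joint measurability has to be obtained differently — e.g.\ from joint continuity of $(x,t)\mapsto\Phi_s^t(x)$ on $D_\ell$ (Proposition \ref{prop:Phi}(1)), measurability in $t$ of $\q_s^t=(\Phi_s^t)_\sharp\mm\llcorner_{\ee_t(G)}$, and a measurable version of the disintegration theorem for the parametrized family $t\mapsto\mm\llcorner_{\ee_t(G)}$ — which is precisely the bookkeeping the paper delegates to \cite[Corollary 10.10]{CMi}. Apart from this misattribution, your plan matches the intended proof.
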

\subsection{Comparison between conditional measures}\label{Ss:comparison}
We will now link the seemingly unrelated disintegrations 
\eqref{E:disintL1} and \eqref{E:disintlq}.

Observe that $\mm^t_{\as}$ and $\mm^{\as}_t$ are concentrated on $\ee_t(G_{\as})$, for each $t\in(0,1)$ for $\L^1$-a.e. $\awos\in{\varphi_s(e_s(G))}$ and for each $\awos\in{\varphi_s(e_s(G))}$ and all $t\in{(0,1)}$, respectively.

The common feature of the two families of conditional measures
$\mm^t_{\as}$ and $\mm^{\as}_t$ is that they are both coming from 
a disintegration formula with quotient measure the Lebesgue measure.
We can exploit this property in the next lemma.
\begin{lemma}
For every $s,t\in{(0,1)}$ and $\awos\in{\varphi_s(e_s(G))}$, the limit 
\[
\m_t^{\as}=\lim_{\epsilon\to 0} \frac{1}{2\varepsilon}\m\llcorner_{e_{[t-\varepsilon, t+\varepsilon]}(G_{\as})}
\]
holds true in the weak topology.
\end{lemma}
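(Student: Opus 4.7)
The plan is to combine the disintegration formula \eqref{E:disintL1} with the weak continuity of $r\mapsto\mm^{\as}_r$ recalled at the end of Section~\ref{Ss:L1OT}, and to conclude via Lebesgue differentiation.

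The first step is to prove the measure identity
\begin{equation}\label{sketch:avg}
\mm\llcorner_{\ee_{[t-\varepsilon,t+\varepsilon]}(G_{\as})}=\int_{t-\varepsilon}^{t+\varepsilon}\mm^{\as}_r\,\L^1(dr).
\end{equation}
This is the ``strong consistency'' of the disintegration \eqref{E:disintL1} with respect to the time slicing, and it reduces to checking that the Borel parametrization $g^{\as}:\ee_s(G_{\as})\times[0,1]\to\ee_{[0,1]}(G_{\as})$ appearing in \eqref{equ:mtms} is essentially injective in the joint variable $(\alpha,r)$. Injectivity in the ray label is a consequence of the fact that $d_{\as}$ is an isometry along each transport ray, so each ray meets the zero level set $\{d_{\as}=0\}=\{\varphi_s=\awos\}$ in at most one point; hence distinct $\alpha_i\in\ee_s(G_{\as})$ index distinct rays, and two distinct rays may intersect only at branching points, whose $\mm$-measure vanishes by \eqref{E:measurezero} (which in turn uses the $p$-essential non-branching assumption through Theorem~\ref{teo:branch}). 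Injectivity in the time variable $r$ along a fixed ray is immediate from property $(2)$ of Definition~\ref{GoodGeodesics}, since each $\gamma\in G$ has $\ell(\gamma)>0$. Fubini applied to \eqref{E:disintL1} then yields \eqref{sketch:avg}.

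The second step is to test \eqref{sketch:avg} against an arbitrary $f\in C_b(X)$:
\[
\frac{1}{2\varepsilon}\int f\,d\bigl(\mm\llcorner_{\ee_{[t-\varepsilon,t+\varepsilon]}(G_{\as})}\bigr)
=\frac{1}{2\varepsilon}\int_{t-\varepsilon}^{t+\varepsilon}\Bigl(\int f\,d\mm^{\as}_r\Bigr)\,\L^1(dr).
\]
By the weak continuity of $r\mapsto\mm^{\as}_r$, the integrand $r\mapsto\int f\,d\mm^{\as}_r$ is continuous at $r=t$, and the standard Lebesgue differentiation theorem for continuous integrands (equivalently, the fundamental theorem of calculus for continuous functions) gives
\[
\lim_{\varepsilon\to 0}\frac{1}{2\varepsilon}\int_{t-\varepsilon}^{t+\varepsilon}\int f\,d\mm^{\as}_r\,\L^1(dr)=\int f\,d\mm^{\as}_t.
\]
Arbitrariness of $f\in C_b(X)$ delivers the desired weak convergence, noting that all the approximating measures are supported in the fixed compact set $\ee_{[0,1]}(G_{\as})$.

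The principal technical obstacle is the first step, namely the essential injectivity of the ray parametrization $g^{\as}$. This is where every structural hypothesis enters: the $p$-essential non-branching property and the resulting negligibility of branching points, the definition of a good collection of geodesics, and the geometric fact that a signed distance function is an isometry along its own transport rays. Once \eqref{sketch:avg} is established, the second step is a routine differentiation argument made possible by the already-available weak continuity of $r\mapsto\mm^{\as}_r$.
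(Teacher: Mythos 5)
Your argument is correct and follows essentially the same route the paper takes: restrict the disintegration \eqref{E:disintL1} to the strip $\ee_{[t-\varepsilon,t+\varepsilon]}(G_{\as})$, test against $f\in C_b(X)$, and invoke the weak continuity of $r\mapsto\mm^{\as}_r$. The paper states the restriction step and the continuity step in a single line and leaves the consistency of the restriction implicit; you spell out why that restriction is legitimate (essential injectivity of $g^{\as}$, which follows from disjointness of the rays in $\T^b_{d_{\as}}$, negligibility of branching via Theorem~\ref{teo:branch}, and positive length of the geodesics in $G$). That is exactly the content the paper is silently relying on, so your proposal is a correct and slightly more explicit version of the paper's proof rather than a different one.
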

\begin{proof}
Since $(0,1) \ni t\mapsto \mm^{\as}_{t}$ is continuous in the weak topology, and so together with \eqref{E:disintL1}, we see that for any $f \in C_b(X)$:
 \[
\lim_{\ve \to 0} \frac{1}{2\ve} \int_X f(z) \mm\llcorner_{\ee_{[t-\ve,t+\ve]}(G_{\as})}(dz)  = \lim_{\ve \to 0}\frac{1}{2\ve} \int_{t-\ve}^{t+\ve} \bigg(  \int_{X} f(z)\mm^{\as}_{\tau}(dz) \bigg)  \mathcal{L}^{1}(d \tau) = \int_{X} f(z) \mm^{\as}_{t}(dz) ,
\]
thereby concluding the proof. 
\end{proof}

We are now in position to compare $\mm^t_{\as}$ and $\mm^{\as}_t$ by 
comparing $\mm$ in a neighborhood of $\ee_{t}(G_{\as})$ obtained 
varying $t$ and then varying $\awos$. We refer to 
\cite[Theorem 11.3]{CMi} for all the details in the case $q = 2$  and simply note that the argument 
works for any $q> 1$; (the main ingredients needed for the proof are the disintegration formulas 
\eqref{E:disintL1}, \eqref{E:disintlq} and temporal regularity of $\Phi_{s}^{t}$
obtained in Section \ref{S:HopfLax}).                                

\begin{theorem}[Relating factorization by potential values and by $\varphi$-Kantorovich geodesics via Fubini]
\label{thm:comparison}
For any $s\in{(0,1)}$,
\[
\mm_s^{\as}=\ell^p_s\cdot \mm_{\as}^s, \quad \text{for}\quad \L^1\text{-a.e.} \,\, \awos\in{\varphi_s(\ee_s(G))}.
\]
Moreover, for any $s\in{(0,1)}$ and $\L^1$-a.e.  $t\in(0,1)$ including at $t=s$, $\partial_t \Phi^t_s(x)$ exists and is positive, and for $\L^1$-a.e. $\awos\in{\varphi_s(\ee_s(G))}$ and $\mm_{\as}^t$-a.e. $x$ we have:
\begin{equation}
\label{equ:mtas}
\mm_t^{\as}=\partial_t \Phi^t_s\cdot \mm_{\as}^t, \quad \text{for}\quad \L^1\text{-a.e.}\quad  \awos\in{\varphi_s(\ee_s(G))}.
\end{equation}
\end{theorem}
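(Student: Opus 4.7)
The plan is to identify the two disintegrations \eqref{E:disintL1} and \eqref{E:disintlq} by testing both against a common $f \in C_b(X)$ on shrinking neighbourhoods of $\ee_t(G_{\as})$. The first identity will emerge as the specialization $t=s$ of the second: since $\Phi_s^s = \varphi_s$ (the linear correction vanishes) and $\partial_t\Phi_s^t|_{t=s}(x) = \ell_s^p(x)$ by Proposition \ref{prop:Phi}(2), the identity $\mm_t^{\as} = \partial_t\Phi_s^t \cdot \mm_{\as}^t$ at $t=s$ reduces to $\mm_s^{\as} = \ell_s^p \cdot \mm_{\as}^s$. It thus suffices to establish the second assertion.

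Fix $s, t \in (0,1)$, $a \in \varphi_s(\ee_s(G))$, and $f \in C_b(X)$. The preceding lemma yields
\[
\int f\, d\mm_t^{\as} = \lim_{\ve \to 0}\frac{1}{2\ve}\int f \, d\mm\llcorner_{\ee_{[t-\ve,t+\ve]}(G_{\as})}.
\]
By the injectivity requirement in Definition \ref{GoodGeodesics}(5), every $z$ in the time tube has a unique representation $z = \gamma_\tau$ with $\gamma \in G_{\as}$ and $\tau \in [t-\ve, t+\ve]$. The constancy $\Phi_s^\tau(\gamma_\tau) = \varphi_s(\gamma_s) = a$ along any $\gamma \in G_{\as}$, combined with temporal differentiability of $\Phi_s^\cdot$ at $t$ ($\mathcal{L}^1$-a.e. $t$, by Proposition \ref{prop:Phi}(2)), gives
\[
\Phi_s^t(\gamma_\tau) - a = \Phi_s^t(\gamma_\tau) - \Phi_s^\tau(\gamma_\tau) = (t-\tau)\,\partial_t \Phi_s^t(\gamma_t) + o(\ve),
\]
so that $\Phi_s^t$ maps the tube $\ee_{[t-\ve,t+\ve]}(G_{\as})$ approximately onto the interval $\bigl(a-\ve\,\partial_t\Phi_s^t,\, a+\ve\,\partial_t\Phi_s^t\bigr)$. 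Feeding this into the $L^q$ disintegration \eqref{E:disintlq} and applying Lebesgue differentiation in the $a'$-variable converts the preceding limit into
\[
\lim_{\ve \to 0}\frac{1}{2\ve}\int_{a - \ve\,\partial_t\Phi_s^t}^{a + \ve\,\partial_t\Phi_s^t} \int f\, d\mm_{a',s}^t\, da' = \int f \, \partial_t\Phi_s^t \, d\mm_{\as}^t.
\]
Arbitrariness of $f$ produces the desired identity $\mm_t^{\as} = \partial_t\Phi_s^t \cdot \mm_{\as}^t$, while positivity of the Jacobian follows from Proposition \ref{prop:Phi}(4) together with $\ell_t > 0$ on $\ee_t(G)$ (Definition \ref{GoodGeodesics}(2)).

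The delicate point is the replacement of the time tube by a level-set slab in the penultimate display: one must control simultaneously the uniformity of the $o(\ve)$ remainder across $\ee_t(G_{\as})$ and the $\mm$-symmetric difference between the tube and the slab. The uniformity is supplied by the joint Lipschitz regularity of $(r,x) \mapsto \Phi_s^r(x)$ on $D_\ell$ (Proposition \ref{prop:Phi}(1)) together with compactness of $G$ (Definition \ref{GoodGeodesics}(1)); the symmetric-difference bound is a Vitali-type density estimate, handled by Proposition \ref{P:densityreversed}. The full Fubini-type argument is carried out in \cite[Theorem 11.3]{CMi} for $q = 2$, and since all $q$-dependence has been absorbed into properties of $\Phi_s^t$ already established in Section \ref{S:HopfLax}, it transfers verbatim to arbitrary $q > 1$.
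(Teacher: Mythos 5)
Your proposal is correct and follows essentially the same route as the paper, which for this theorem simply refers to \cite[Theorem 11.3]{CMi} and observes that all the $q$-dependence has already been absorbed into the regularity of $\Phi_s^t$ developed in Section \ref{S:HopfLax}. You have filled in a sketch of what that cited argument does (the tube/slab Lebesgue-differentiation comparison, uniformity of the remainder via compactness of $G$, the Vitali-type estimate from Proposition \ref{P:densityreversed}), and correctly observe that the first displayed identity is the $t=s$ specialization of the second via $\Phi_s^s=\varphi_s$ and $\partial_t\big|_{t=s}\Phi_s^t=\ell_s^p$; the only minor slip is attributing joint Lipschitz regularity of $(r,x)\mapsto\Phi_s^r(x)$ to Proposition \ref{prop:Phi}(1), which only claims continuity (local Lipschitz in $t$ along $\mathring G_\varphi(x)$ is item (3)), but this does not affect the substance of the argument.
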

\medskip
\subsection{Change of variable formula}\label{Ss:change}
Building on Theorem \ref{thm:comparison}, we are now in position 
to write the Jacobian associated to the evolution of $\mu_{t}$ 
as the product of two factors. 

All the results obtained until now  will be used to prove the following:
\begin{theorem}[Change of variables formula]
\label{teo:changeof}
Let $(X,\sfd,\mm)$ be a $p$-essentially non branching m.m.s. satisfying $\CD_p(K,N)$ and assume it is also $q$-essentially non branching. 

Let us consider  $\mu_0,\mu_1\in{\mathcal{P}_q(X,\sfd,\mm)}$ and let  $\nu$ denote the unique element of $\OptGeo_{q}(\mu_0,\mu_1)$. Setting $\mu_t=(\ee_t)_{\sharp}\nu\ll\mm$, we will consider the densities $\rho_t:=d\mu_t/d\m,\,t\in{[0,1]}$, given by Corollary \ref{cor:version}.

Then for any $s\in{(0,1)}$, for $\L^1$-a.e. $t\in{(0,1})$ and $\nu$-a.e. $\gamma\in{G_{\varphi}^{+}}$, $\partial_{\tau}|_{\tau=t} \Phi_s^{\tau}(\gamma_t)$ exists and the following formula holds:
\begin{equation}
\label{equ:changeofv}
\frac{\rho_t(\gamma_t)}{\rho_s(\gamma_s)}= \frac{\partial_{\tau}|_{\tau=t} \Phi_s^{\tau}(\gamma_t)}{\ell^p(\gamma)}\cdot \frac{1}{h_{\gamma_s}^{\varphi_s(\gamma_s),s}(t)}.
\end{equation}
Here $h_{\gamma_s}^{\varphi_s(\gamma_s), s}$ is the $\CD(\ell(\gamma)^2K,N)$ density on $[0,1]$ from \eqref{E:h}, renormalized in such a way $h_{\gamma_s}^{\varphi_s(\gamma_s), s}(s)=1$.
Finally,  for all $\gamma\in{G}^{0}_{\varphi}$, it holds:
\begin{equation}
\rho_t(\gamma_t)=\rho_s(\gamma_s), \,\quad\forall t,s\in{[0,1]}.
\end{equation}
\end{theorem}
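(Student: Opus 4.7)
The plan is to derive \eqref{equ:changeofv} by combining the two disintegrations of $\mm$ restricted to $\ee_{[0,1]}(G_{a,s})$ constructed in Sections \ref{Ss:L1OT}--\ref{Ss:LqOT} together with the comparison provided by Theorem \ref{thm:comparison}. First I would reduce to the case where $\nu$ is concentrated on a \emph{good} compact subset $G \subset G_\varphi^+$ in the sense of Definition \ref{GoodGeodesics}. As noted just before Section \ref{Ss:L1OT}, under $q$-essential non-branching and $\MCP(K,N)$ (the latter a consequence of $\CD_p(K,N)$), for every $\varepsilon > 0$ there exists a good compact $G^\varepsilon$ with $\nu(G^\varepsilon) \geq \nu(G_\varphi^+) - \varepsilon$, and these may be taken increasing along a countable sequence. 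Establishing \eqref{equ:changeofv} on each $G^\varepsilon$ will yield the full statement on $G_\varphi^+$ by exhaustion.

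Working on a fixed good $G$ with $s \in (0,1)$, I would assemble three identities valid for $\mathfrak{q}_s^\nu$-a.e.\ $a \in \varphi_s(\ee_s(G))$ and $\L^1$-a.e.\ $t \in (0,1)$: (i) $\rho_t \cdot \mm_{a,s}^t = q_s^\nu(a) \cdot (\ee_t)_\sharp \nu_{a,s}$ with $q_s^\nu := d\mathfrak{q}_s^\nu/d\L^1$, from Corollary \ref{cor:1010}(2); (ii) $\mm_t^{a,s} = \partial_\tau|_{\tau=t}\Phi_s^\tau \cdot \mm_{a,s}^t$ and $\mm_s^{a,s} = \ell_s^p \cdot \mm_{a,s}^s$, from Theorem \ref{thm:comparison}; and (iii) from \eqref{equ:mtms} with the normalization $h^{a,s}_\alpha(s) = 1$, the push-forward identity
\[
\mm_t^{a,s} = g^{a,s}(\cdot,t)_\sharp\!\big(h^{a,s}_\cdot(t) \cdot \mm_s^{a,s}\big) = g^{a,s}(\cdot,t)_\sharp\!\big(h^{a,s}_\cdot(t) \cdot \ell^p \cdot \mm_{a,s}^s\big).
\]
At $t = s$ one notes that $\partial_\tau|_{\tau=s}\Phi_s^\tau(x) = \ell_s^p(x) = \ell(\gamma)^p$ on $\ee_s(G_{a,s})$ by Theorem \ref{teo:loclip}(1) combined with Proposition \ref{prop:Phi}(2), so identities (i)--(iii) are mutually compatible at time $s$.

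Combining (i)--(iii) and using that on a good subset $\ee_s: G_{a,s} \to \ee_s(G_{a,s})$ is a homeomorphism so that $(\ee_t)_\sharp \nu_{a,s} = g^{a,s}(\cdot,t)_\sharp (\ee_s)_\sharp \nu_{a,s}$, elimination of $\nu_{a,s}$ and $\mm_{a,s}^t$ yields
\[
g^{a,s}(\cdot,t)_\sharp\!\Big(\rho_s \cdot \mm_{a,s}^s\Big) = g^{a,s}(\cdot,t)_\sharp\!\left(\frac{\rho_t \circ g^{a,s}(\cdot,t) \cdot h^{a,s}_\cdot(t) \cdot \ell^p}{\partial_\tau|_{\tau=t}\Phi_s^\tau \circ g^{a,s}(\cdot,t)} \cdot \mm_{a,s}^s\right).
\]
Injectivity of $g^{a,s}(\cdot,t)$ from Definition \ref{GoodGeodesics}(5) then produces the pointwise identity $\rho_s(\gamma_s) = \rho_t(\gamma_t) \cdot h^{a,s}_{\gamma_s}(t) \cdot \ell(\gamma)^p / \partial_\tau|_{\tau=t}\Phi_s^\tau(\gamma_t)$ for $(\ee_s)_\sharp \nu_{a,s}$-a.e.\ $\gamma_s$, which rearranges into \eqref{equ:changeofv}. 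Integrating against $\mathfrak{q}_s^\nu(da)$ via \eqref{equ:disintnu} upgrades this to a $\nu$-a.e.\ statement in $\gamma$ for a.e.\ $t$, and the continuity of $t \mapsto \rho_t(\gamma_t)$ on $(0,1)$ from Corollary \ref{cor:version} allows one to fix a common full-measure set of $t$'s for $\nu$-a.e.\ $\gamma$.

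For the null geodesics $\gamma \in G_\varphi^0$, one has $\gamma_t \equiv \gamma_0$ for every $t$, so decomposing $\nu = \nu\llcorner_{G_\varphi^0} + \nu\llcorner_{G_\varphi^+}$ the first summand satisfies $(\ee_t)_\sharp \nu\llcorner_{G_\varphi^0} = (\ee_0)_\sharp \nu\llcorner_{G_\varphi^0}$ for all $t$; hence the contribution of the null component to $\mu_t$ is time-independent, giving $\rho_t(\gamma_t) = \rho_s(\gamma_s)$ for $\nu\llcorner_{G_\varphi^0}$-a.e.\ $\gamma$ and all $s,t \in [0,1]$. The main obstacle I anticipate is the book-keeping of the various a.e.\ conditions: identities (i)--(ii) hold for $\L^1$-a.e.\ $a$ and then for a.e.\ $t$, while the conclusion must be phrased $\nu$-a.e.\ in $\gamma$. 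Invoking Fubini together with \eqref{equ:disintnu} and the mutual absolute continuity of $\mathfrak{q}_s^\nu$ and $\mathfrak{q}_s^s$ from Corollary \ref{cor:1010}(1) is the cleanest way to fuse these qualifiers.
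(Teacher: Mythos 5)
Your approach is correct and relies on the same core ingredients as the paper's proof — the three identities from Corollary \ref{cor:1010}(2), Theorem \ref{thm:comparison} and \eqref{equ:mtms}, injectivity of $\ee_t$ on a good compact subset, and the exhaustion $G^\varepsilon \nearrow G_\varphi^+$ — but you organize them differently, and in a way that is in fact a mild simplification. The paper's route does not invoke Corollary \ref{cor:1010}(2) directly at time $s$; instead it uses Corollary \ref{cor:1010}(3) to obtain, for a.e.\ $a$, a full-measure set $T\subset(0,1)$ of times $t$ at which the integral quantity is constant, compares the resulting identity at two times $t,t'\in T$, and then passes to the limit $T\ni t'\to s$ using continuity of $h^{a,s}_{\gamma_s}$ and $\rho_\cdot(\gamma_\cdot)$ together with $\lim_{t'\to s}\partial_\tau|_{\tau=t'}\Phi_s^\tau(\gamma_{t'})=\ell(\gamma)^p$, precisely because membership of $s$ in $T$ is not a priori guaranteed. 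You instead use Corollary \ref{cor:1010}(2) at $t=s$ directly, together with the clause of Theorem \ref{thm:comparison} stating $\mm_s^{a,s}=\ell_s^p\cdot\mm_{a,s}^s$ and Proposition \ref{prop:Phi}(2) identifying $\partial_\tau|_{\tau=s}\Phi_s^\tau=\ell_s^p$; this eliminates $\nu_{a,s}$ and $\mm_{a,s}^t$ in one stroke and avoids the limiting step. Both routes are valid; the trade-off is that your version front-loads the a.e.-in-$a$ bookkeeping (you must intersect the exceptional sets for Corollary \ref{cor:1010}(2) at both $t$ and $s$ and for the two clauses of Theorem \ref{thm:comparison}), which you acknowledge and correctly dispatch by Fubini together with the mutual absolute continuity $\q_s^\nu\sim\q_s^s$ from Corollary \ref{cor:1010}(1). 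The handling of $G_\varphi^0$ is straightforward and matches the paper's.
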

\begin{proof}
By \cite[Lemma 6.11]{CMi} and the discussion below Definition \ref{D:Ohta1},
$(X,\sfd,\m)$ verifies $\MCP(K,N)$ and Corollary \ref{cor:version} guarantees the existence of versions of the densities satisfying \eqref{E:Lipestim}.
For any $\varepsilon>0$, there exists a \emph{good} compact subset $G^{\varepsilon}\subset G^{+}_{\varphi}$ such that $\nu(G^{\varepsilon})\geq \nu(G_{\varphi}^{+})-\varepsilon$ and such that $G_{\varepsilon}$ increases along a sequence of $\varepsilon$ decreasing to $0$.
 Fixing $\varepsilon>0$ on this sequence and the good subset $G^{\varepsilon}$, let us set
\[
\nu^{\varepsilon}= \frac{1}{\nu(G^{\varepsilon})} \nu\llcorner_{G^{\varepsilon}}, \quad \mu_t^{\varepsilon}:=(\ee_t)_{\sharp} \nu^{\varepsilon} \ll \m.
\]
In particular we have that
$ \mu^{\varepsilon}_t=\frac{1}{\nu(G^{\varepsilon})}\mu\llcorner_{\ee_t(G_{\varepsilon})}$,  for all $t \in[0,1]$ 
and therefore:
\[
\mu^{\varepsilon}_t=\rho^{\varepsilon}_t \m, \,\,\,\,\rho^{\varepsilon}_t:= \frac{1}{\nu(G^{\varepsilon})}\rho_t|_{\ee_t(G^{\varepsilon})}, \,\, \forall t\in{[0,1]}.
\]
As we proved in Corollary \ref{cor:1010}, for each $s\in{(0,1)}$ and $\mathfrak{q}_s^{\varepsilon, s}$-a.e. $\awos\in{\varphi_s(\ee_s(G^{\varepsilon}))}$, 
the map $[0,1]\ni t \mapsto \rho_t \cdot \m_{\as}^{\varepsilon,t}$ coincides for $\L^1$-a.e. $t\in{[0,1]}$ with the geodesic $t\mapsto (\ee_t)_{\sharp}\nu_{\as}^{\varepsilon}$ up to a constant $C^{\varepsilon}_{\as}>0$.
Hence, for such $s$ and $\awos$, for $\L^1$ a.e $t\in{[0,1]}$, we have that for any Borel set $H\subset G^{\varepsilon}$ the quantity
\begin{equation}
\label{equ:x}
\int_{\ee_{t}(H)} \rho^{\varepsilon}_t(x)\m_{\as}^{\varepsilon,t}(dx)=C^{\varepsilon}_{\as} \int_{\ee_{t}(H)} (\ee_t)_{\sharp} \nu^{\varepsilon}_{\as}(dx)=C^{\varepsilon}_{\as}\nu^{\varepsilon}_{\as}(H)
\end{equation}
is constant in $t$, where in the last equality we used the injectivity 
of the map  $\ee_t :G^{\varepsilon}\to X$.  
By Theorem \ref{thm:comparison}, for $\L^1$-a.e. $t\in{(0,1)}$  and $\L^1$-a.e. $\awos\in{\varphi_s(G^{\varepsilon}_s)}$, 
$\partial_t \Phi^t_s(x)$ exists and is positive 
for $\mm_{\as}^{\varepsilon,t}$-a.e. $x$;  moreover \eqref{equ:mtas} holds. 
Thus, for all $\as$ and $t$ for which the previous condition and \eqref{equ:x} hold, we have 
\begin{align}
\label{equ:cont}
C^{\varepsilon}_{\as}\nu^{\varepsilon}_{\as}(H)&=\int_{\ee_t(H)} \rho^{\varepsilon}_t(x)\m_{\as}^{\varepsilon,t}(dx)=\int_{e_t(H)} \rho^{\varepsilon}_t(x)(\partial_t \Phi^t_s(x))^{-1}\m_{t}^{\varepsilon,\as}(dx)\\
&= \int_{\ee_s(H)} \rho^{\varepsilon}_t(g^{\as}(\alpha,t))(\partial_{\tau}|_{\tau=t} \Phi^{\tau}_s(g^{\as}(\alpha,t)))^{-1} h^{\as}_{\alpha}(t)\m_{s}^{\varepsilon,\as}(d\alpha)\notag\\ 
&= \int_{\ee_s(H)} \rho^{\varepsilon}_t(g^{\as}(\alpha,t))(\partial_{\tau}|_{\tau=t} \Phi^{\tau}_s(g^{\as}(\alpha,t)))^{-1} h^{\as}_{\alpha}(t) \ell^p_s(\alpha)\m_{\as}^{\varepsilon,s}(d\alpha)\notag 
\end{align}
where the two last equalities  follow  from  
\eqref{equ:mtms} and Theorem \ref{thm:comparison}, respectively.

Since the left-hand side of \eqref{equ:cont} does not depend on $t$,  it follows that for all $s\in{(0,1)}$ and for $q_s^{\varepsilon,s}$-a.e. $\awos\in{\varphi_s(e_s(G^\varepsilon))}$, there exists a subset $T\subset (0,1)$ of  full $\L^1$ measure such that for all $H\subset G^{\varepsilon}_{\as}$  the map
\[
T\ni t \mapsto \int_{\ee_s(H)} \rho^{\varepsilon}_{t}(g^{\as}(\alpha, t))(\partial_{\tau}|_{\tau=t} \Phi^{\tau}_s(g^{\as}(\alpha,t)))^{-1} h^{\as}_{\alpha}(t) \ell^p_s(\alpha)\m_{\as}^{\varepsilon,s}(d\alpha),
\]
is constant. 
In particular, since any Borel subset of $\ee_s(G_{\as})$ can be written in the form $\ee_s(H)$, we have that for $t,t'\in{T}$
\begin{equation}
\label{equ:confrontodensita}
\rho^{\varepsilon}_{t'}(\gamma_{t'})(\partial_{\tau}|_{\tau=t'} \Phi^{\tau}_s(\gamma_{t'}))^{-1} h^{\as}_{\gamma_s}(t') =\rho^{\varepsilon}_{t}(\gamma_{t})(\partial_{\tau}|_{\tau=t} \Phi^{\tau}_s(\gamma_{t}))^{-1} h^{\as}_{\gamma_s}(t),
\end{equation}
for $\m_{\as}^{\varepsilon,s}$-a.e. $\alpha\in{\ee_s(G^{\varepsilon}_{\as})}$ where $\gamma=\ee_s^{-1}(\alpha)=g^{\as}(\alpha,\cdot)\in{G^{\varepsilon}_{\as}}$, 
with the exceptional set depending on $t,t'$. 
Recall that, by Corollary  \ref{cor:1010}, given $t'\in{T}$,  
$\partial_{\tau}|_{\tau=t'} \Phi^{\tau}_s(\gamma^{\alpha}_{t'})$ exists for 
$\mm_{\as}^{\varepsilon,s}$-a.e. $\alpha\in \ee_s(G^{\varepsilon}_{\as})$.
Thus, in particular, the equality \eqref{equ:confrontodensita} holds for a  countable sequence of $\{ t'\} \subset T$ dense in $(0,1)$. 
Using  the normalization $h_{\gamma_s}^{\as}(s)=1$, the continuity of $h^{\as}_{\gamma_s}(\cdot)$, $\rho^{\varepsilon}_{\cdot}(\gamma_{\cdot})$ and the fact that
\[
\lim_{T\ni t'\to s} \partial_{\tau}|_{\tau=t'} \Phi^{\tau}_s(\gamma^{\alpha}_{t'})= \ell_s(\gamma_s^{\alpha})^p=\ell(\gamma^{\alpha})^p,
\]
 it is possible to pass to the limit for $t'\to s$ in \eqref{equ:confrontodensita}
\begin{equation}
\rho^{\varepsilon}_{s}(\gamma_s)\ell(\gamma)^{-p}=\rho^{\varepsilon}_{t}(\gamma_t)( \partial_{\tau}|_{\tau=t} \Phi_s^{\tau}(\gamma_t))^{-1}h^{\as}_{\gamma_s}(t),
\end{equation}
for $\mm_{\as}^{\varepsilon,s}$-a.e. $\alpha\in \ee_s(G^{\varepsilon}_{\as})$, 
with $\gamma=\ee_s^{-1}(\alpha)\in{G^{\varepsilon}_{\as}}$.

By corollary \ref{cor:1010}, the measures $\mm_{\as}^{\varepsilon, s}$ and 
$(\ee_s)_{\sharp}\nu_{\as}^{\varepsilon}$ are mutually absolutely continuous for 
$\qq_s^{\varepsilon,s}$-a.e. $\awos\in{\varphi_s(\ee_s(G^{\varepsilon}))}$. 
In particular, this implies that for all $s\in{(0,1)}$, for 
$\qq_s^{\varepsilon,s}$-a.e. $\awos\in{\varphi_s(\ee_s(G^{\varepsilon}))}$   
and $\L^1$-a.e. $t\in{(0,1)}$,  the equality \eqref{equ:confrontodensita} holds  for $\nu_{\as}$-a.e. $\gamma$. 
By Corollary \ref{cor:1010}, it follows that the measures  $\qq_s^{\varepsilon,s}$ and  $\qq_s^{\varepsilon,\nu}$ are mutually absolutely continuous; thus, by the disintegration formula \eqref{equ:disintnu}, it follows that for all $s\in{(0,1)}$ and $\L^1$-a.e. $t\in{(0,1)}$:
\[
\rho^{\varepsilon}_{s}(\gamma_s)\ell(\gamma)^{-p}=\rho^{\varepsilon}_{t}(\gamma_t)( \partial_{\tau}|_{\tau=t} \Phi_s^{\tau}(\gamma_t))^{-1}h_{\gamma_s}^{\varphi_s(\gamma_s), s}(t),
\]
for $\nu$-a.e. $\gamma\in{G^{\varepsilon}}$. 
Passing to the limit as $\varepsilon\to 0$ along the chosen sequence, it turns out that all $s\in{(0,1)}$, $\L^1$-a.e. $t \in{(0,1)}$ and $\nu$-a.e. $\gamma\in{G_{\varphi}^{+}}$  satisfy
\[
\rho_{s}(\gamma_s)\ell(\gamma)^{-p}=\rho_{t}(\gamma_t)( \partial_{\tau}|_{\tau=t} \Phi_s^{\tau}(\gamma_t))^{-1}h_{\gamma_s}^{\varphi_s(\gamma_s), s}(t).
\]
By Fubini 's Theorem, for $\nu$-a.e. $\gamma\in{G^{+}_{\varphi}}$, we have that \eqref{equ:changeofv} holding for $\L^1$-a.e. $s,t\in{(0,1)}$.
\end{proof}

\begin{remark}
\label{rem:phiphibar}

All of the results of this section also hold for $\bar \Phi_s^t$ in place of $\Phi_s^t$. Indeed, recall that for all $x \in X$, $\Phi_s^t(x) = \Phic_s^t(x)$ for $t \in \mathring G_\varphi(x)$, and that by Proposition \ref{prop:Phi}, 
$\partial_t \Phi_s^t(x) = \partial_t \Phic_s^t(x)$ for a.e. $t \in \mathring G_\varphi(x)$. As these were the only two properties used in the above derivation  the assertion follows. 
\end{remark}
By Proposition \ref{prop:Phi}, we know that the differentiability points of $\tau \mapsto \tilde{\Phi}^{\tau}_s(x)$ and $\tau \mapsto \tilde{\ell}_{\tau}^p(x)$ coincide for all $\tau\neq s$ and at these points 
\[
\partial_{\tau} \tilde{\Phi}^{\tau}_s(x)= \tilde{\ell}^p_{\tau}(x)+(\tau-s)\partial_{\tau} \frac{\tilde{\ell}^p_{\tau}(x)}{p}.
\]
Hence by Remark \ref{rem:phiphibar}, we deduce that for $\nu$-a.e. geodesic $\gamma\in{G^{+}_{\varphi}}$ and for a.e. $t\in{(0,1)}$  both quantities 
\[
  \partial_{\tau}|_{\tau=t} {\ell^{p}_{\tau}(\gamma_t)} 
= \partial_{\tau}|_{\tau=t} {\bar{\ell}^{p}_{\tau}(\gamma_t)}
\]
exist and  coincide.  
We can therefore rewrite the change of variable formula in the following way: for $\nu$-a.e. geodesic $\gamma\in{G^{+}_{\varphi}}$
\begin{equation}
\label{equ:change}
\frac{\rho_s(\gamma_s)}{\rho_t(\gamma_t)}=\frac{h_{\gamma_s}^{\varphi_s(\gamma_s) ,s}(t)}{1+(t-s)\frac{\partial_{\tau}|_{\tau=t} \ell_{\tau}^p(\gamma_t)}{ p\ell(\gamma)^p}}
=\frac{h_{\gamma_s}^{\varphi_s(\gamma_s) ,s}(t)}{1+(t-s)\partial_{\tau}|_{\tau=t} \log \bar{\ell}_{\tau}(\gamma_t)}, 
\quad \text{for a.e.}\,\, t,s \in{(0,1)}.
\end{equation}
\noindent
For sake of brevity, once the geodesic $\gamma$ is fixed, we will use the following notation: 
$\rho(t)=\rho_t(\gamma_t)$, $h_s(t):=h_{\gamma_s}^{\varphi_s(\gamma_s)}(t)$ and $K_0=K\cdot\ell(\gamma)^2$. 
We recall that, by Corollary \ref{cor:version} and \eqref{E:h}, given by Theorem \ref{teo:cm}, the following properties hold true for $\nu$-a.e $\gamma\in{G^{+}_{\varphi}}$:
\begin{itemize}
\item[(A)] $(0,1)\ni t\mapsto {\rho(t)}$ is locally 
Lipschitz and strictly positive.

\item[(B)] For all $s\in{(0,1)}$, $h_s$ is 
a $\CD(K_0,N)$  density on $[0,1]$ satisfying $h_s(s)$=1.
\end{itemize}
Fix now a geodesic $\gamma\in{G^{+}_{\varphi}}$ satisfying the change of variable formula \eqref{equ:rig}, (A), (B) above.

The formula \eqref{equ:change} implies that there exists a set $I\subset (0,1)$ of full measure such that for all $s\in{I}$ the functions
\[
t \mapsto \partial_{\tau}|_{\tau=t} \frac{\tilde \ell^{p}_{\tau}/p(\gamma_t)}{\tilde  \ell(\gamma)^p},\quad t\mapsto z_s(t):=
\frac{\frac{\rho(t)} {\rho(s)} h_{s}(t) -1}{t-s}
\]
coincide a.e. on $(0,1)$ for both $\tilde{\ell} \in \{\ell,\bar{\ell}\}$, 
with $z_{s}$ defined on $(0,1) \setminus \{s\}$.
Hence, by continuity,  the functions $\{z_s\}_{s\in{I}}$ must all coincide, where defined, with a unique function $t\mapsto z(t)$ defined  on $(0,1)$ such that
\begin{equation}\label{def-z}
z(t) = \frac{\partial}{\partial{\tau}}\bigg|_{\tau=t} \log{\ell_{\tau}(\gamma_t)}
= \frac{\partial}{\partial{\tau}}\bigg|_{\tau=t} \log {\bar{\ell}_{\tau}(\gamma_t)}
,\,\, \text{for a.e.} \,\,t\in{(0,1)}.
\end{equation}

Since $\CD(K,N)$ densities are locally Lipschitz in the interior of the domain where they are defined,
we see that  $z$ is locally Lipschitz in $(0,1)$ from \eqref{equ:rig}. Combining \eqref{def-z} with
the third order information provided by Theorem \ref{teo:zz} (up to constant factors) yields:
\begin{itemize}
\item[(C)]$(0,1)\ni t \mapsto z(t)$ is locally Lipschitz. Moreover, for any $\delta\in{(0,1/2)}$ there exists $C_{\delta}>0$ so that:
\[
\frac{z(t)-z(s)}{t-s}\geq (1-C_{\delta}(t-s))|z(s)||z(t)|, \,\, \forall \, 0 < \delta \leq s<t\leq 1-\delta<1.
\]
In particular,  $z'(t)\geq z^2(t)$ for a.e. $t\in{(0,1)}$.
\end{itemize}

\noindent
To summarize, the change of variable formula can be rewritten in the following form:
\begin{equation}
\label{equ:rig}
\frac{\rho(s)}{\rho(t)}=\frac{h_s(t)}{1+(t-s)z(t)},\qquad \text{for all}\ t,s\in{(0,1)},
\end{equation}
where $z(t)$ coincides for all $t \in (0,1)$ with the second Peano derivative of $\tau \mapsto \varphi_{\tau}(\gamma_t)$ and of $\tau \mapsto \varphic_{\tau}(\gamma_t)$ at $\tau = t$. 
These second Peano derivatives exist for all $t \in (0,1)$ and are a continuous function. 
We are therefore in position to obtain the aforementioned 
factorization of the ``Jacobian''. 
It has been already proved in \cite{CMi} (see Theorem 12.3) that 
properties (A), (B), (C) together with the change of variable formula 
\eqref{equ:rig} are enough to obtain a factorization of the real function 
$1/\rho(t)$ into a product $L(t)Y(t)$,  
in which the first factor $L(t)$ is concave due to dilational and
dimensional effects (analogous to the Brunn-Minkowski inequality on 
$(\R^n,|\cdot|,\L^n)$), 
while the latter term $Y(t)$ captures the effects of the curvature of $(X,\sfd,\mm)$.
In the smooth case $\rho(t)^{-1/n}$ would be interpreted as the mean-free path between particles during transport.
\begin{theorem}[Isolating curvature effects in the volume distortion along the direction transported
{\cite[Theorem 12.3]{CMi}}]\label{T:decomposition}
If the change of variable formula \eqref{equ:rig} holds   and the properties (A), (B), (C) are satisfied, then
\[
\frac{1}{\rho_t(\gamma_t)}= L(t)Y(t) \quad \forall t\in{(0,1)},
\]
where $L$ is concave and $Y$ is a $\CD(K_0,N)$ density on $(0,1)$.
\end{theorem}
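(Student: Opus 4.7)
The idea is to peel off from $1/\rho(t)$ the ``linear stretching'' factor $(1+(t-s)z(t))^{-1}$ present in \eqref{equ:rig} — packaging it as a concave function $L(t)$ — while leaving the $\CD(K_{0},N)$ density $h_{s}$ from (B) essentially intact as $Y(t)$. Fix any reference point $r_{0}\in(0,1)$ and set
\[
L(t) := \exp\!\Bigl(-\int_{r_{0}}^{t} z(r)\, dr\Bigr), \qquad Y(t) := \frac{1}{\rho(t) L(t)} .
\]
The factorization $1/\rho(t) = L(t) Y(t)$ is then tautological, and the problem reduces to two claims: $L$ is concave, and $Y$ is a $\CD(K_{0},N)$ density on $(0,1)$.

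\textbf{Concavity of $L$.} By (A) and the local Lipschitz regularity of $z$ from (C), $L$ is positive and locally Lipschitz on $(0,1)$, with $L'(t) = -z(t)L(t)$ and
\[
L''(t) = L(t)\bigl(z(t)^{2} - z'(t)\bigr)
\]
almost everywhere. The third-order inequality $z'(t)\geq z(t)^{2}$ supplied by (C) forces $L''(t)\leq 0$ a.e., so $L$ is concave on $(0,1)$. This is exactly the content of Lemma \ref{lem:concavita}, after accounting for the normalization of $z$ in \eqref{def-z}.

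\textbf{Identifying $Y$.} Substituting \eqref{equ:rig} with $s=r_{0}$ into the definition of $Y$ yields the explicit representation
\[
Y(t) = \frac{h_{r_{0}}(t)}{\rho(r_{0})}\cdot \frac{\exp\bigl(\int_{r_{0}}^{t} z(r)\,dr\bigr)}{1 + (t-r_{0})z(t)} .
\]
By (B), $h_{r_{0}}$ is itself a $\CD(K_{0},N)$ density; hence the remaining task is to show that the multiplicative correction $F(t):=\exp(\int_{r_{0}}^{t} z)/(1+(t-r_{0})z(t))$ preserves this property, i.e.\ that $(Y^{1/(N-1)})'' + \tfrac{K_{0}}{N-1} Y^{1/(N-1)}\leq 0$ holds in the distributional sense. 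A direct check gives $F(r_{0})=1$, $(\log F)'(r_{0})=0$, and
\[
(\log F)'(t) = \frac{(t-r_{0})\bigl(z(t)^{2}-z'(t)\bigr)}{1+(t-r_{0})z(t)},
\]
which by (C) has the sign of $-(t-r_{0})$. So $F\le 1$ with equality at $r_{0}$, and its derivatives at $r_{0}$ are exactly the ones dictated by matching $1/\rho$ to the infinitesimal data $z(r_{0}), z'(r_{0})$.

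\textbf{Main obstacle.} The technical core is upgrading the above pointwise information about $F$ to the distributional $\CD(K_{0},N)$ inequality for $Y$. Working with the single fixed reference $s=r_{0}$ is too rigid, because the bookkeeping of $F''$ brings in $z''$, for which (C) supplies no direct control. The cleanest path I would take is to instead use the full one-parameter family \eqref{equ:rig} with $s$ varying: differentiating the identity $\rho(s) h_{s}(t) = \rho(t)(1+(t-s)z(t))$ in $s$ at $s=t$ and combining with the $\CD(K_{0},N)$ weak inequality for each $h_{s}$ from (B), one obtains a pointwise inequality for $Y^{1/(N-1)}(t)$ in which $z'-z^{2}\geq 0$ (property (C)) is exactly the term needed to absorb the extra contribution and close the estimate. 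This is the strategy carried out in detail in \cite[Theorem 12.3]{CMi}, whose hypotheses are precisely (A), (B), (C) and \eqref{equ:rig}; the conclusion then follows.
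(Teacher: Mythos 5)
Your proposal is consistent with how the paper handles this result: the paper supplies no proof of its own, simply invoking \cite[Theorem 12.3]{CMi}, and your write-up ultimately does the same for the hard half. The definitions you choose are the right ones — $L(t) = \exp\bigl(-\int_{r_0}^t z\bigr)$ with $z$ in its normalized form \eqref{def-z}, and $Y = 1/(\rho L)$ — and your concavity argument for $L$ (namely $L'' = L(z^2 - z') \le 0$ a.e.\ by (C)) is exactly the content of Lemma \ref{lem:concavita}. Your computation that $(\log F)'(t) = (t - r_0)(z^2 - z')/(1 + (t - r_0)z)$ is also correct.

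For $Y$ you do not close the estimate and you say so explicitly, which is fine given the paper itself defers to CMi here. One small correction to the plan you sketch: the distributional $\CD(K_0,N)$ inequality for $Y$ is not obtained by differentiating the identity $\rho(s)h_s(t)=\rho(t)(1+(t-s)z(t))$ \emph{in $s$}; that produces only first-order data. The right move is to use the $s$-indexed factorization $Y^{1/(N-1)}(t) = \rho(s)^{-1/(N-1)}\,g_s(t)\,G_s(t)$ with $g_s = h_s^{1/(N-1)}$ and $G_s(t) = \bigl(e^{\int_{r_0}^t z}/(1+(t-s)z(t))\bigr)^{1/(N-1)}$, and then evaluate the \emph{second $t$-derivative} at the diagonal $t=s$: there $G_s'(s)=0$ (killing the cross term), and $(\log G_s)''(s) = (z(s)^2 - z'(s))/(N-1) \le 0$ by (C), while $g_s'' \le -\tfrac{K_0}{N-1}g_s$ supplies the curvature term. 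Upgrading this pointwise-on-the-diagonal information to the distributional inequality requires the more careful argument of \cite[Theorem 12.3]{CMi}, since $z$ is only Lipschitz and the inequality for $g_s$ holds only weakly — but this is exactly what the paper outsources, so acknowledging the gap and citing CMi, as you do, is the intended reading.
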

%
\subsection{Main Theorems}
Finally, putting together the result proved so far in 
Section \ref{S:firstimplication} and Section \ref{S:anyp} we close the circle by proving:
\begin{theorem}[Non-branching $\CD_p$ spaces are $\CD^1_{Lip}$ hence $\CD_q$]
\label{T:main2}
Let $(X,\sfd,\m)$ be a $p$-essentially non-branching 
m.m.s.\ verifying $\CD_p(K,N)$ for some $p >1$. 
If $(X,\sfd,\m)$ is also $q$-essentially non-branching for some $q > 1$, then 
 it verifies $\CD_q(K,N)$.
\end{theorem}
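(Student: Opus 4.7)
The plan is to combine the two halves of the paper. Assume $(X,\sfd,\mm)$ is $p$-essentially non-branching and satisfies $\CD_p(K,N)$, and is also $q$-essentially non-branching. First I would invoke Theorem \ref{teo:cm} to conclude that $(X,\sfd,\mm)$ satisfies the enhanced $\CD^1_{Lip}(K,N)$ described in Remark \ref{R:CD1strong}: for every $1$-Lipschitz $u$, the essentially unique disintegration of $\mm\llcorner_{\T_u}$ induced by the partition $R^b_u$ has conditional measures $\mm_\alpha = h_\alpha \L^1\llcorner_{X_\alpha}$ whose one-dimensional densities satisfy the $\CD(K,N)$ inequality. Next, since $\CD_p(K,N) \Rightarrow \MCP(K,N)$ and $(X,\sfd,\mm)$ is $q$-essentially non-branching, Theorem \ref{teo:kell} and Corollary \ref{cor:version} give, for any $\mu_0,\mu_1\in\mathcal{P}_q(X,\sfd,\mm)$, a unique $\nu\in\OptGeo_q(\mu_0,\mu_1)$ with Lipschitz interpolating densities $\rho_t$ satisfying \eqref{E:Lipestim}.

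The heart of the argument is to verify the entropy convexity inequality \eqref{eq:CDKN-def} along this $W_q$-geodesic. To do this I would fix $s\in(0,1)$ and apply the $L^1$-decomposition of Section \ref{Ss:L1OT} to each signed-distance function $d_{a,s}$ from the level set $\{\varphi_s = a\}$ of the $q$-Kantorovich potential $\varphi_s$ evolved from $\varphi$. By Lemma \ref{lem:contenuto}, every $\varphi$-Kantorovich geodesic is an ordered curve for $d_{a,s}$, so the enhanced $\CD^1_{Lip}(K,N)$ applies and yields the densities $h_\alpha^{a,s}$ on each ray that satisfy a $\CD(\ell(\gamma)^2 K, N)$ inequality. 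Comparing this $L^1$ disintegration with the $L^q$ disintegration of $\mm\llcorner_{\ee_t(G)}$ along the level sets of $\Phi_s^t$ (Section \ref{Ss:LqOT}) through Theorem \ref{thm:comparison} produces the change of variables formula \eqref{equ:changeofv} of Theorem \ref{teo:changeof}, rewritten in the form \eqref{equ:rig}.

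Once this is in place, I would apply Theorem \ref{T:decomposition} to factor $1/\rho_t(\gamma_t) = L(t) Y(t)$ along $\nu$-a.e.\ characteristic $\gamma$, where $L$ is concave and $Y$ is a $\CD(\ell(\gamma)^2 K, N)$ density on $(0,1)$. Setting $N' \geq N$, the concavity of $L$ together with the distortion inequality satisfied by $Y$ produces the pointwise estimate
\[
\rho_t(\gamma_t)^{-1/N'} \geq \tau_{K,N'}^{(1-t)}(\ell(\gamma))\, \rho_0(\gamma_0)^{-1/N'} + \tau_{K,N'}^{(t)}(\ell(\gamma))\, \rho_1(\gamma_1)^{-1/N'}
\]
(on the $\ell(\gamma)=0$ part of $\nu$ the inequality is trivial from the last statement of Theorem \ref{teo:changeof}). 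Integrating this inequality against $\nu$, and using $\mu_t = (\ee_t)_\sharp \nu \ll \mm$ together with the identification $(\ee_0,\ee_1)_\sharp \nu = \pi$, gives exactly the $\CD_q(K,N)$ inequality \eqref{eq:CDKN-def}.

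The main obstacle is the third step, namely passing from \eqref{equ:rig} with its associated properties (A), (B), (C) to the $L$-$Y$ factorization. This is where the third-order inequality \eqref{equ:terzordine} established in Theorem \ref{teo:zz} is essential: without it the function $z(t)$ in \eqref{def-z} would not satisfy the Riccati-type bound $z' \geq z^2$ needed to quotient off the dilational part $L$ and leave a genuine $\CD(K_0, N)$ density $Y$. Fortunately, the proof of this factorization is carried out in \cite[Theorem 12.3]{CMi} verbatim for $p=2$, and inspection shows the argument only uses (A), (B), (C) and \eqref{equ:rig}, all of which we have secured for general $q > 1$ via Section \ref{S:HopfLax} and Theorem \ref{teo:changeof}.
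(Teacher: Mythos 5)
Your proposal is correct and follows essentially the same route as the paper: invoke Theorem \ref{teo:cm} for the enhanced $\CD^1_{Lip}(K,N)$, combine with Theorem \ref{teo:kell} and Corollary \ref{cor:version} for uniqueness and regularity of the $W_q$-geodesic, produce the change of variables formula via Theorem \ref{teo:changeof}, factor $1/\rho_t(\gamma_t) = L(t)Y(t)$ by Theorem \ref{T:decomposition}, and conclude by the H\"older/distortion estimate. The only cosmetic omission is that the $L$--$Y$ factorization and the resulting H\"older estimate are a priori only available for $t_0,t_1\in(0,1)$, and one must invoke the upper semi-continuity of $t\mapsto\rho_t(\gamma_t)$ at the endpoints (from Corollary \ref{cor:version}) to extend \eqref{equ:cd1dim} to $t_0=0$, $t_1=1$ before integrating against $\nu$; the paper's proof spells this out explicitly.
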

\begin{proof}
Consider $\mu_0,\mu_1\in{\mathcal{P}_q(X,\sfd,\mm)}$. 
 Recall that $\CD_p(K,N)$ implies $(X,\sfd)$ to be  a geodesic space,
hence the same is true for $(\mathcal{P}_{q}(X),W_{q})$.
Moreover, it implies  $(X,\sfd)$ is $\MCP(K,N)$, hence qualitatively 
non-degenerate.
Since $(X,\sfd,\mm)$ is assumed to be  $q$-essentially non-branching, Theorem~\ref{teo:kell} yields
a unique $\nu \in \OptGeo_{q}(\mu_0,\mu_1)$ and 
$$
[0,1] \ni t \mapsto \mu_t:=(\ee_t)_{\sharp}\nu \ll \mm.
$$
Let   $\rho_t:=d\mu_t/d\m$ be the versions of the densities guaranteed by Corollary \ref{cor:version}. 

Finally let $\varphi:X\to \mathbb{R}$ be a Kantorovich potential for the  optimal transport problem from $\mu_0$ to $\mu_1$, with cost $c:=\sfd^q/q$. Recall that $G_{\varphi}\subset \Geo(X)$ denote the set of ($\varphi, q)$-Kantorovich geodesics, i.e. all the geodesics $\gamma$ for which
\[
\varphi(\gamma_0)+\varphi^c(\gamma_1)=\frac{\sfd^q(\gamma_0,\gamma_1)}{q}.
\]
As already observed, $\nu$ will be concentrated on $G_{\varphi}=G_{\varphi}^{+}\cup G_{\varphi}^{0}$, where $G_{\varphi}^{+}$ and $G_{\varphi}^{0}$ denote the subsets of positive and zero length  ($\varphi, q)$-Kantorovich geodesics  respectively.

\noindent
By the change of variables formula obtained in
 Theorem \ref{teo:changeof}  (which relies on the $\CD^1_{Lip}(K,N)$ conclusion of Theorem \ref{teo:cm}),  
for $\nu$-a.e. geodesic $\gamma\in{G_{\varphi}^{+}}$:
\begin{equation}
\frac{\rho_s(\gamma_s)}{\rho_t(\gamma_t)}=\frac{h_{\gamma_s}^{\varphi_s(\gamma_s) ,s}(t)}{1+(t-s)\frac{\partial_{\tau}|_{\tau=t} \ell_{\tau}^p(\gamma_t)}{ p\ell(\gamma)^p}}=\frac{h_{\gamma_s}^{\varphi_s(\gamma_s) ,s}(t)}{1+(t-s)\frac{\partial_{\tau}|_{\tau=t} \bar{\ell}_{\tau}^p(\gamma_t)}{ p\ell(\gamma)^p}}, \quad \text{for a.e.}\,\, t,s \in{(0,1)}
\end{equation}
where for all $s\in{(0,1)}, h_s=h_{\gamma_s}^{\varphi_s(\gamma_s) ,s}$ is a $\CD(K_0,N)$ density, with $K_0=\ell(\gamma)^2K$ and $h_s(s)=1$.
Since Corollary \ref{cor:version} implies the Lipschitz regularity of $t\mapsto \rho_t(\gamma_t)$, assumptions (A) and 
(B) of the Theorem \ref{T:decomposition} are satisfied. Moreover, the third order information on the Kantorovich potential $\varphi$ guarantees also the validity of the assumption (C) of the Theorem \ref{T:decomposition}.
Hence for $\nu$-a.e. $\gamma\in{G^{+}_{\varphi}}$, it holds 
\[
\frac{1}{\rho_t(\gamma_t)}= L(t)Y(t),\,\, \forall t\in{(0,1)}
\]
 where $L$ is a concave function and $Y$ is a $\CD(K_0,N)$ density on $(0,1)$. 

It is now a standard application of H\"older's inequality 
that gives us the validity of the $\CD_{q}(K,N)$ inequality along 
the $W_{q}$-geodesic $\mu_{t}$:
fix $t_0,t_1\in{(0,1)}$ and set $t_{\alpha}=\alpha t_1+(1-\alpha)t_0$, where $\alpha\in{[0,1]}$. 
Using that $\sigma_{K_0,N}^{(\alpha)}(\theta)=\sigma_{K,N}^{(\alpha)}(\theta \ell(\gamma))$, it holds true:
\begin{align}
\label{equ:cd1dim}
\rho_{t_{\alpha}}^{-\frac{1}{N}}(\gamma_{t_{\alpha}})&= L^{\frac{1}{N}}(t_{\alpha})Y^{\frac{1}{N}}(t_{\alpha})\notag \\
&\geq \bigl(\alpha L(t_1)+(1-\alpha)L(t_0)\bigr)^{\frac{1}{N}}\cdot \bigl( \sigma_{K_0,N-1}^{(\alpha)}(|t_1-t_0|)Y^{\frac{1}{N-1}}(t_1)+ \sigma_{K_0,N-1}^{(1-\alpha)}(|t_1-t_0|)Y^{\frac{1}{N-1}}(t_0) \bigr)^{\frac{N-1}{N}}\notag\\
&\geq \alpha^{\frac{1}{N}}\sigma_{K_0,N-1}^{(\alpha)}(|t_1-t_0|)^{\frac{N-1}{N}}Y^{\frac{1}{N}}(t_1)L^{\frac{1}{N}}(t_1) +(1-\alpha)^{\frac{1}{N}}\sigma_{K_0,N-1}^{(1-\alpha)}(|t_1-t_0|)^{\frac{N-1}{N}}Y^{\frac{1}{N}}(t_0)L^{\frac{1}{N}}(t_0)\notag\\
&=\alpha^{\frac{1}{N}}\sigma^{(\alpha)}_{K,N-1}(|t_1-t_0|\ell(\gamma))^{\frac{N-1}{N}}\rho_{t_1}^{-\frac{1}{N}}(\gamma_{t_1})+(1-\alpha)^{\frac{1}{N}}\sigma_{K,N-1}^{(1-\alpha)}(|t_1-t_0|\ell(\gamma))^{\frac{N-1}{N}}{\rho}_{t_0}^{-\frac{1}{N}}(\gamma_{t_0})\notag\\
&=\tau_{K,N}^{(\alpha)}(\sfd(\gamma_{t_0},\gamma_{t_1}))\rho_{t_1}^{-\frac{1}{N}}(\gamma_{t_1})+\tau_{K,N}^{(1-\alpha)}(\sfd(\gamma_{t_0},\gamma_{t_1}))\rho_{t_0}^{-\frac{1}{N}}(\gamma_{t_0}).
\end{align}
Recall that, by Corollary \ref{cor:version}, the function $t\mapsto \rho_t(\gamma_t)$ is upper semi-continuous at the endpoints; so, it follows that  for $\nu$-a.e. $\gamma\in{G_{\varphi}^{+}}$ the inequality \eqref{equ:cd1dim} holds true for all $t_0,t_1\in{[0,1]}$.   In particular, setting $t_0=0,t_1=1$, we have that  for all $\alpha\in{[0,1]}$:
\begin{equation}
\label{equ:cd1dimfin}
\rho_{\alpha}^{-\frac{1}{N}}(\gamma_{\alpha})\geq \tau_{K,N}^{(\alpha)}(\sfd(\gamma_{0},\gamma_{1}))\rho_{1}^{-\frac{1}{N}}(\gamma_{1})+\tau_{K,N}^{(1-\alpha)}(\sfd(\gamma_{0},\gamma_{1}))\rho_{0}^{-\frac{1}{N}}(\gamma_{0});
\end{equation}
the latter inequality being satisfied for $\nu$-a.e.$\gamma\in{G^{+}_{\varphi}}$. We now claim that  \eqref{equ:cd1dimfin} is also satisfied for every $\gamma\in{G^{0}_{\varphi}}$, confirming in this way  the validity of the $\CD(K,N)$ condition. Indeed, in this case  the map $\alpha \mapsto \rho_{\alpha}(\gamma_\alpha)$  turns out to be constant by the Theorem \ref{teo:changeof}  and then \eqref{equ:cd1dimfin} is trivially satisfied  as an equality, since $\tau_{K,N}^{(\alpha)}(0)=\alpha$, for every $\alpha\in{[0,1]}$. Thus, the claim.
\end{proof}
\begin{corollary}[Local-to-Global]\label{C:loctoglob}
Fix any $p>1$ and $K,N \in \R$ with $N>1$. 
Let $(X,\sfd,\mm)$ be a $p$-essentially non-branching metric measure space verifying $\CD_{p,loc}(K,N)$ and such that 
$(X,\sfd)$ is a length space with $\spt (\mm) = X$. 
Then $(X,\sfd,\mm)$ verifies $\CD_{p}(K,N)$.
\end{corollary}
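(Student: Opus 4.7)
The plan is to observe that Corollary \ref{C:loctoglob} is essentially the $q = p$ case of Theorem \ref{T:main2}, once we notice that every step of the chain used there begins from $\CD_{p,loc}(K,N)$ rather than from the global $\CD_p(K,N)$ assumption. In particular, Theorem \ref{teo:cm} is already formulated with $\CD_{p,loc}(K,N)$ as hypothesis, and so the whole ``$L^1$-localization'' step is immediately available.

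First, I would record that under the stated hypotheses the space satisfies $\MCP(K,N)$: by Remark \ref{R:locp}, $\CD_{p,loc}(K,N)$ on a $p$-essentially non-branching length space with $\spt(\mm)=X$ implies $\MCP(K,N)$. This is what unlocks the qualitative non-degeneracy of $\mm$ and hence Theorem \ref{teo:kell}, giving uniqueness of $p$-optimal dynamical plans from absolutely continuous marginals together with absolute continuity of the interpolants, as well as Corollary \ref{cor:version} with its Lipschitz-type two-sided bounds \eqref{E:Lipestim} on the densities along characteristics. Second, apply Theorem \ref{teo:cm} directly (with the same $p$): this yields the enhanced $\CD^1_{Lip}(K,N)$ formulation of Remark \ref{R:CD1strong}, i.e.\ for every $1$-Lipschitz $u$ the canonical disintegration of $\mm\llcorner_{\T_u}$ over the non-branched transport rays has conditional densities satisfying the one-dimensional $\CD(K,N)$ concavity inequality.

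Third, I would run the machinery of Section \ref{S:anyp} with $q=p$. Fix $\mu_0,\mu_1\in \mathcal{P}_p(X,\sfd,\mm)$ and let $\varphi$ be a $p$-Kantorovich potential. Since $(X,\sfd,\mm)$ is $p$-essentially non-branching, the space is both the $p$-essentially non-branching space verifying $\CD^1_{Lip}(K,N)$ needed for the $L^1$-partition of Section \ref{Ss:L1OT}, and the $q$-essentially non-branching space (with $q=p$) needed for the $L^q$-partition of Section \ref{Ss:LqOT}. Apply these to the signed distance functions $d_{\varphi_s - a}$ attached to the level sets of the interpolated potential $\varphi_s$. The third-order estimates of Theorem \ref{teo:zz}, proved pointwise on good subsets $G\subset G_\varphi^+$, together with the comparison Theorem \ref{thm:comparison}, then produce the change of variables formula of Theorem \ref{teo:changeof}:
\[
\frac{\rho_s(\gamma_s)}{\rho_t(\gamma_t)}=\frac{h^{\varphi_s(\gamma_s),s}_{\gamma_s}(t)}{1+(t-s)\,\partial_\tau|_{\tau=t}\log \bar\ell_\tau(\gamma_t)},
\]
which supplies the hypotheses (A), (B), (C) needed for Theorem \ref{T:decomposition}. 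That theorem yields the ``$LY$'' factorization $1/\rho_t(\gamma_t) = L(t)Y(t)$, with $L$ concave and $Y$ a $\CD(K\ell(\gamma)^2,N)$ density on $(0,1)$.

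Finally, the same Hölder-type computation used at the end of the proof of Theorem \ref{T:main2}, combining the concavity of $L$ with the $\CD$-inequality for $Y$ via $\sigma_{K_0,N-1}^{(\alpha)}(\theta)=\sigma_{K,N-1}^{(\alpha)}(\theta\,\ell(\gamma))$, furnishes the pointwise bound
\[
\rho_{t_\alpha}^{-1/N}(\gamma_{t_\alpha})\geq \tau^{(\alpha)}_{K,N}(\sfd(\gamma_{t_0},\gamma_{t_1}))\,\rho_{t_1}^{-1/N}(\gamma_{t_1})+\tau^{(1-\alpha)}_{K,N}(\sfd(\gamma_{t_0},\gamma_{t_1}))\,\rho_{t_0}^{-1/N}(\gamma_{t_0})
\]
along $\nu$-a.e.\ $\gamma\in G_\varphi^+$, with the $G_\varphi^0$ geodesics handled trivially since the density is constant there. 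Integrating against $\nu$ against the marginals yields $\CD_p(K,N)$. Where I expect the only real subtlety (though it is already absorbed in Remark \ref{R:locp} and Theorem \ref{teo:cm}) is in justifying that the step ``$\CD_{p,loc}\Rightarrow \CD^1_{Lip}$'' really goes through without the global assumption: one must verify that the proof of \cite[Theorem 4.2]{CM17a} used inside Theorem \ref{teo:cm} is truly local in nature --- which is the content of Lemma \ref{lem:pmonotone} and the length-space hypothesis, ensuring each individual $L^1$-transport problem attached to a rational level set of $\varphi_s$ stays within a single $X_o$ neighborhood where the local $\CD_p$ inequality is available.
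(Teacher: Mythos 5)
Your proposal is correct and matches the argument the paper implicitly has in mind: since Theorem~\ref{teo:cm} is already stated with the $\CD_{p,loc}(K,N)$ hypothesis, and Remark~\ref{R:locp} supplies $\MCP(K,N)$ from the local condition plus the length-space and essentially-non-branching hypotheses, the entire chain through Theorems~\ref{teo:changeof}, \ref{T:decomposition}, and \ref{T:main2} can be rerun with $q=p$ without ever invoking the global $\CD_p(K,N)$ condition. The worry you raise in your final paragraph is already dispatched by the paper itself: the localization argument of \cite[Theorem~4.2]{CM17a} invoked inside Theorem~\ref{teo:cm} is genuinely local (the uniform cylinders in Remark~\ref{Theorem4.4Proof} can be chosen within a single $X_o$), so no additional verification is needed.
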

\footnotesize


\begin{thebibliography}{GMS13}


\bibitem{Agueh03}  {\sc M. Agueh}:
\newblock {\em Rates of decay to equilibria for p-Laplacian type equations.}
\newblock {Nonlinear Anal.}, {\bf 68} (2008), no. 7, 1909--1927.

 \bibitem{AmbrosioFeng14} {\sc L. Ambrosio and J. Feng}:
\newblock {\em On a class of first order {H}amilton-{J}acobi equations in metric spaces.}
\newblock In {J. Differential Equations} {\bf 256}, (2014), 2194--2245.

\bibitem{ambro:userguide} {\sc L.~Ambrosio and N.~Gigli:}
\newblock {\em A user's guide to optimal transport}.
 Modelling and Optimisation of Flows on Networks, Piccoli, B., Rascle, M. (eds), 
 volume 2062 of {\em Lecture Notes in Math.}, pages 1--155. Springer, Heidelberg, 2013.

\bibitem{AGSbook}
\textsc{L. Ambrosio, N. Gigli, and G. Savar\'e}:
{\em Gradient flows in metric spaces and in the space of probability measure}. 
Second edition. Lectures in Mathematics ETH Z\"urich, Birkh\"auser Verlag, Basel, 2008.

\bibitem{AGS11a}
\textsc{L. Ambrosio, N. Gigli, and G. Savar\'e}:
{\em Calculus and heat flow in  metric measure spaces and 
applications to spaces with {R}icci bounds from  below}, 
Invent. Math.,   \textbf{195},  2,  (2014), 289--391.

\bibitem{AGS11b} 
\textsc{L. Ambrosio, N. Gigli, and G. Savar\'e:}
{\em Metric measure spaces with {R}iemannian {R}icci curvature bounded from below},
Duke Math. J., \textbf{163}, (2014), 1405--1490.

\bibitem{Bessi20+} {\sc U. Bessi}:
\newblock {\em Hamilton-Jacobi in metric spaces with a homological terms.}
\newblock Preprint at {\tt arXiv:2001.11823}


\bibitem{biacar:cmono} \textsc{S. Bianchini and L. Caravenna:}
{\em On the extremality, uniqueness and optimality of transference plans},
Bull. Inst. Math. Acad. Sin.(N.S.),  \textbf{4},  (2009), 353--454.

\bibitem{cava:MongeRCD} {\sc F.~Cavalletti:}
{\em  Monge problem in metric measure spaces with Riemannian curvature-dimension condition },
 Nonlinear Anal.  {\bf 99}, (2014), 136--151.

\bibitem{cava:decomposition}{\sc F.~Cavalletti:}
{\em Decomposition of geodesics in the {W}asserstein space and the  globalization property},
 Geom. Funct. Anal.,  {\bf 24}, (2014), 493 -- 551.

\bibitem{CGS20}
{\sc F. Cavalletti, N. Gigli and F. Santarcangelo:}
{\em Displacement convexity of Entropy and the distance cost Optimal Transportation},
arXiv:2005.00243

\bibitem{CHues} {\sc F. Cavalletti and M. Huesmann:}
{\em Existence and uniqueness of optimal transport maps}, 
Ann. I. H. Poincar\'e AN, {\bf 32}, (2015), 1367--1377.

\bibitem{CMi} {\sc F. Cavalletti and E. Milman:}
{\em The Globalization Theorem for the Curvature Dimension Condition},
preprint arXiv:1612.07623.
	
\bibitem{CM17a}
{\sc F. Cavalletti and A. Mondino:}
{\em Sharp and rigid isoperimetric inequalities in metric-measure spaces 
with lower Ricci curvature bounds},
Invent. Math., {\bf 208}, (2017), 803--849.  

\bibitem{CM18a}
{\sc F. Cavalletti and A. Mondino:}
{\em New formulas for the Laplacian of distance functions and applications}, 
Analysis \& PDE, arXiv:1803.09687.

\bibitem{CM20a}
{\sc F. Cavalletti and A. Mondino:}
{\em Optimal transport in Lorentzian synthetic spaces, synthetic timelike Ricci curvature lower bounds and applications},
arXiv:2004.08934. 

\bibitem{cavasturm:MCP}
{\sc F.~Cavalletti and K.-T. Sturm:}
{\em Local curvature-dimension condition implies measure-contraction  property},
 J. Funct. Anal.,  {\bf 262}, (2012), 5110--5127.

\bibitem{CorderoMcCannSchmuckenschlager01}
{\sc D.~Cordero-Erausquin, R.J.~McCann and M.~Schmuckenschl{\"a}ger}:
\newblock {\em A Riemannian interpolation inequality \`a la Borell, Brascamp and
  Lieb}.
\newblock {Invent. Math.} {\bf 146}, (2001), 219--257.
 
 \bibitem{Evans:PDE}
 {\sc L. C. Evans}:
 \newblock{\em Partial Differential Equations Second Edition} volume 19 of {\em Graduate Studies in Mathematics.}
 \newblock American Mathematical Society, Providence, RI, 2010

\bibitem{EvansGangbo99}
{\sc L.~C.~Evans and W.~Gangbo:}
\newblock {\em Differential equations methods for the {M}onge-{K}antorovich
              mass transfer problem},
  {Mem. Amer. Math. Soc.}, {\bf 137}, (1999), 1--66.
  
 \bibitem{EvGa:MeasureTheory}
 {\sc L. C. Evans and R. F. Gariepy}:
 \newblock{\em Measure Theory and Fine Properties of Functions} volume 5 of {\em Studies in Advanced Mathematics}

\bibitem{EKS} 
\textsc{M. Erbar, K. Kuwada and K.T. Sturm:}
{\em On the Equivalence of the Entropic Curvature-Dimension Condition and Bochner's Inequality on Metric Measure Space},
Invent. Math., \textbf{201}, (2015), no. 3, 993--1071.
	
\bibitem{Fre:measuretheory4} 
{\sc D.H. Fremlin:},
{\em Measure Theory}, volume~4.
 Torres Fremlin, (2002).
 
 \bibitem{GangboSwiech15} {\sc W. Gangbo and A. \'Swi{\c e}ch:}
 \newblock {\em Metric viscosity solutions of Hamilton-Jacobi equations depending on local slopes.}
 \newblock  {Calc. Var. Partial Differential Equations}, {\bf 54}, (2015), 1183--1218.

\bibitem{GozlanRobertoSamson14} {\sc N. Gozlan, C. Roberto and P.M. Samson:}
\newblock {\em Hamilton Jacobi equations on metric spaces and transport entropy inequalities.}
\newblock {Rev. Mat. Iberoam.}, {\bf 30}, (2014), 133--163.

\bibitem{KellENB} {\sc M.~Kell:}
{\em Transport maps, non-branching sets of geodesics and measure rigidity},
Adv.  Math., \textbf{320}, (2017), 520--573.

\bibitem{KellWp} {\sc M.~Kell:}
{\em On interpolation and curvature via Wasserstein geodesics},
Adv. Calc. Var., \textbf{10}(2), (2017),125--167.

\bibitem{klartag} 
{\sc B.~Klartag:}
{\em Needle decomposition in Riemannian geometry},
Mem.  Amer. Math. Soc.,  \textbf{249}, (2017), no. 1180, v + 77 pp.

\bibitem{lottvillani:metric} 
{\sc J.Lott and C.Villani:}
{\em Ricci curvature for metric-measure spaces via optimal transport},
Ann. of Math. (2), \textbf{169}, (2009), 903--991.

\bibitem{McCann20p} {\sc R.J.~McCann:}
\newblock in preparation.

%
\bibitem{McCann} {\sc R.J.~McCann}:
\newblock {\em Displacement convexity of Boltzmann's entropy characterizes the strong energy condition from general relativity,} 
\newblock{To appear in Camb. Math. J., Preprint} at {\tt arXiv1808.1536v2.} 
	
\bibitem{MoSu} {\sc A.~Mondino and S.~Suhr:}
\newblock  {\em An optimal transport formulation of the Einstein equations of general relativity,} 
\newblock{Preprint} at {\tt arXiv:1810.13309v2.}
	
\bibitem{Ohta1}  {\sc S.-I.~Ohta}:
\newblock  {\em On the measure contraction property of metric measure spaces},
\newblock  {\rm Comment. Math. Helv.},  {\bf 82}, (2007), 805--828.

\bibitem{Ohta2}{\sc S.-I.~Ohta}:
\newblock {\em Examples of spaces with branching geodesics satisfying the curvature-dimension condition},
\newblock {Bull. Lond. Math. Soc.} \textbf{46} (2014), no. 1, 19-25.

\bibitem{Otto90s} {\sc F. Otto}:
\newblock {\em Doubly degenerate diffusion equations as steepest descent}
\newblock Unpublished manuscript, circa 1998.

\bibitem{Pass12}{\sc B.~Pass:}
\newblock {\em On the local structure of optimal measures in the
              multi-marginal optimal transportation problem},
\newblock {\rm Calc. Var. Partial Differential Equations} {\bf 43}, (2012), 529--536.

\bibitem{RS2014} 
{\sc T. Rajala and K.T. Sturm:}
{\em Non-branching geodesics and optimal maps in strong $\CD(K,\infty)$-spaces},
Calc. Var. Partial Differential Equations, \textbf{50},  (2014), 831--846.

\bibitem{SturmvonRenesse05}
{\sc M.-K.~von Renesse and K.T.~Sturm:}
\newblock {Transport inequalities, gradient estimates, entropy and Ricci
  curvature}.
\newblock {\em Comm. Pure Appl. Math.} {\bf 58}, (2005), 923--940.

\bibitem{Srivastava} {\sc S.M.~Srivastava}: 
{\em A course on Borel sets},
\newblock Graduate Texts in Mathematics, Springer 1998.

\bibitem{sturm:I} {\sc K.T. Sturm:}
{\em On the geometry of metric measure spaces. I},
Acta Math. \textbf{196}, (2006), 65--131.

\bibitem{sturm:II}  {\sc K.T. Sturm:}
{\em On the geometry of metric measure spaces. II},
Acta Math. \textbf{196}, (2006), 133--177.

\bibitem{Vil:topics} {\sc C.~Villani:}
{\em Topics in optimal transportation}, 
volume~58 of {\em Graduate  Studies in Mathematics}.
\newblock American Mathematical Society, Providence, RI, 2003.

\bibitem{Vil} {\sc C.~Villani:}
\newblock {\em Optimal transport - old and new}, volume 338 of {\em Grundlehren
  der Mathematischen Wissenschaften [Fundamental Principles of Mathematical
  Sciences]}.
\newblock Springer-Verlag, Berlin, 2009.
\end{thebibliography}
\end{document}